\documentclass[11pt]{amsart}

\usepackage{amsmath,amssymb,amsthm}
\usepackage{amsfonts}
\usepackage{verbatim}
\usepackage[english]{babel}
\usepackage{longtable}
\usepackage{lscape}
\usepackage[latin1]{inputenc}
\usepackage{inputenc}
\usepackage{graphicx}
\usepackage{hyperref}

\usepackage[T1]{fontenc}
\usepackage{amscd}

\numberwithin{equation}{section}

\newtheorem{theo}{Theorem}[section]
\newtheorem{coro}[theo]{Corollary}
\newtheorem{lemma}[theo]{Lemma}
\newtheorem{propo}[theo]{Proposition}
\newtheorem{defn}[theo]{Definition}
\newtheorem{rk}[theo]{Remark}
\newtheorem{conj}{Conjecture}
\newcommand{\1}{1\hspace{-.55ex}\mbox{l}}

\newtheorem{thma}{Theorem \ref{limitset}}
\newtheorem{thmb}{Theorem \ref{PTA}}
\newtheorem{propa}{Proposition \ref{ultra}}
\newtheorem{propb}{Proposition \ref{compactness}}

\textwidth 16.8 cm \textheight 23 cm \oddsidemargin -.35 in
\evensidemargin -.35 in \topskip 0 cm \footskip 0.8 cm \headheight
0 cm

\begin{document}
\author{Aline KURTZMANN}
\thanks{This work is partially supported by the Swiss National Foundation grants 200020-112316/1 and PBNE2-119027. The author is glad to thank M. Bena\"im for introducing her to the subject, P. Cattiaux for an introduction to the ultracontractivity property, V. Kleptsyn and P. Tarr\`es for helpful comments, and J.-F. Jouanin for valuable discussions.}
\title[Some self-interacting diffusions on $\mathbb{R}^d$]{The ODE method for some self-interacting diffusions on $\mathbb{R}^d$}

\maketitle

\begin{abstract}
The aim of this paper is to study the long-term behaviour of a class of self-interacting diffusion processes on $\mathbb{R}^d$. These are solutions to SDEs with a drift term depending on the actual position of the process and its normalized occupation measure $\mu_t$. These processes have so far been studied on compact spaces by Bena\"im, Ledoux and Raimond, using stochastic approximation methods
. We extend these methods to $\mathbb{R}^d$, assuming a confinement potential satisfying some conditions. These hypotheses on the confinement potential are required since in general the process can be transient, and is thus very difficult to analyze. Finally, we illustrate our study with an example on $\mathbb{R}^2$.
\end{abstract}

\section{Introduction}
This paper addresses the long-term behavior of a class of
`self-interacting diffusion' processes $(X_t, t \geq 0)$ on
non-compact spaces. These processes are time-continuous,
non-Markov and live on $\mathbb{R}^d$. They are solutions to a kind of diffusion SDEs, whose drift term
depends on the whole past of the path through the occupation
measure of the process. Due to their non-Markovianity, they often exhibit an interesting ergodic behavior.

\subsection{Previous results on self-interacting diffusions}
Time-continuous self-interacting processes, also named `reinforced
processes', have already been studied in many contexts. Under the
name of `Brownian polymers', Durrett \& Rogers \cite{duR} first
introduced them as a possible mathematical model for the
evolution of a growing polymer. They are solutions of SDEs of the
form $$\mathrm{d}X_t = \mathrm{d}B_t + \mathrm{d}t\int_0^t \mathrm{d}s f(X_t-X_s)$$ where $(B_t; t\geq 0)$ is a standard Brownian motion on $\mathbb{R}^d$ and $f$ is a given function. As the process $(X_t; t\geq 0)$ evolves
in an environment changing with its past trajectory, this SDE
defines a self-interacting diffusion, 
either self-repelling or self-attracting, depending on $f$.

Another modelisation, with dependence on the (convoled) normalized occupation measure $(\mu_t, t \geq 0)$, has also been considered since the work of Bena\"im, Ledoux \& Raimond \cite{beLR}. They introduced a process living in a compact
smooth connected Riemannian manifold $M$ without boundary:
\begin{equation}
\mathrm{d}X_t = \sum_{i=1}^N F_i(X_t)\circ\mathrm{d}B_t^i - \int_M \nabla_x W(X_t,y)\mu_t(\mathrm{d}y)\mathrm{d}t,
\end{equation}
where $W$ is a (smooth) interaction potential, $(B^1,\cdots,B^N)$
is a standard Brownian motion on $\mathbb{R}^N$ and the symbol
$\circ$ stands for the Stratonovich stochastic integration. The
family of smooth vector fields $(F_i)_{1\leq i\leq N}$ comes from
the H\"ormander `sum of squares' decomposition of the
Laplace-Beltrami operator $\Delta = \sum_{i=1}^N F_i^2.$ The
normalized occupation measure $\mu_t$ is defined by:
\begin{equation}\label{defmu}
\mu_t := \frac{r}{r+t}\mu +
\frac{1}{r+t}\int_0^t\delta_{X_s}\mathrm{d}s
\end{equation}
where $\mu$ is an initial probability measure and $r$ is a
positive weight. In the compact-space case, they showed that the
asymptotic behavior of $\mu_t$ can be related to the analysis of
some deterministic dynamical flow defined on the space of the
Borel probability measures. They went further in this study in
\cite{beR} and gave sufficient conditions for the a.s. convergence
of the normalized occupation measure. When the interaction is symmetric, $\mu_t$ converges a.s. to a local minimum
of a nonlinear free energy functional (each local minimum having a
positive probability to be chosen). All these results are
summarized in a recent survey of Pemantle \cite{pema}. 

The present paper follows the same lead and extends the
results of Bena\"im, Ledoux \& Raimond \cite{beLR} in the
non-compact setting. We present all results in the Euclidean space
$\mathbb{R}^d$ for the sake of simplicity, but, they can be extended to the case of
a complete connected Riemannian manifold $M$ without boundary with no
further difficulty than the use of notations and a bit of geometry. The point is to involve the Ricci curvature in the conditions and work on the space $M\backslash \text{cut}(o)$, where cut$(o)$ is the cut locus of $o$ (which has zero-mean).

\subsection{Statement of the problem}
Here we set the main definitions: let us consider a confinement
potential $V: \mathbb{R}^d \rightarrow \mathbb{R}_{+}$ and an interaction potential $W: \mathbb{R}^d \times \mathbb{R}^d \rightarrow \mathbb{R}_{+}$. For any bounded Borel measure $\mu$,
we consider the `convoled' function
\begin{equation}
W*\mu: \mathbb{R}^d \rightarrow \mathbb{R}, \, W*\mu(x) :=
\int_{\mathbb{R}^d} W(x,y)\mu(\mathrm{d}y).
\end{equation}
Our main object of interest is the self-interacting diffusion
solution to
\begin{eqnarray}\label{eds}
\left\{%
\begin{array}{ll}
     &\mathrm{d}X_t = \mathrm{d}B_t -\left( \nabla V(X_t)+
\nabla W*{\mu_t}(X_t) \right) \mathrm{d}t \\
     &\mathrm{d}\mu_t = (\delta_{X_t} - \mu_t)\frac{\mathrm{d}t}{r+t}\\
     &X_0 = x,\, \mu_0=\mu \\
\end{array}
\right. \label{diffu}
\end{eqnarray}
where $(B_t)$ is a $d$-dimensional Brownian motion. Our goal is to study the long-term behavior of $(\mu_t, t\geq 0)$. Let us recall that the main difference with the previous work \cite{beLR} is that the state space is $\mathbb{R}^d$ and hence is not compact anymore. However, we will be able to extend the results obtain in the compact case: the behavior of $\mu_t$ is closely related to the behavior of a deterministic flow. We will also give some sufficient conditions on the interaction potential in order to prove ergodic results for the process $(X_t,t\geq 0)$.

Before stating the theorems proved in this paper, let us briefly describe the main results obtained so far on self-interacting diffusions in non-compact spaces. They concern the model of Durrett \& Rogers, and can be classified in three categories. The first one is when $f$ is real, compactly supported and its sign is constant. Cranston \& Mountford \cite{crM} have solved a (partially proved) conjecture of Durrett \& Rogers and shown that $X_t/t$ converges a.s. The second one deals with attracting interaction on $\mathbb{R}$ (i.e. $xf(x)\le 0$ for all $x\in\mathbb{R}$) 
studied in the constant case by Cranston \& Le Jan \cite{crLJ} and its generalization by Raimond \cite{rai} in the $d$-dimensional ``constant" case $f(x)=-ax/\|x\|$, or by Herrmann \& Roynette \cite{heR} for a local interaction. Under some conditions, it is proved that $X_t$ converges a.s., whereas for a non-local interaction, it does not in general (but the paths are a.s. bounded for $f(x) = -\text{sign} (x) \1_{|x|\geq a}$). The third one concerns a non-integrable repulsive $f$ on $\mathbb{R}$ (i.e. $xf(x)\ge 0$ for all $x\in\mathbb{R}$) studied by Mountford \& Tarr\`es \cite{moT} and solving a conjecture of Durrett \& Rogers. They have proved that for $f(x) = x/(1+|x|^{1+\beta})$, with $0<\beta<1$, there exists a positive $c$ such that with probability $1/2$, the symmetric process $t^{-2/(1+\beta)}X_t$ converges to $c$.

These previous works have in common that the drift may overcome the noise, so that the randomness of the process is ``controlled". To illustrate that, let us mention, for the same model of Durrett \& Rogers, the case of a repulsive function $f$ of compact support, also conjectured in \cite{duR}, which is still unsolved.
\begin{conj}\cite{duR}
Suppose that $f: \mathbb{R}\rightarrow \mathbb{R}$ is an odd function, of compact support. Then $X_t /t$ converges a.s. to 0.
\end{conj}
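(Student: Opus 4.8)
\medskip
\noindent\textbf{A proposed line of attack.} This conjecture appears to be still open, so what follows is a plan rather than a proof. The natural first step is the local-time reformulation of the Durrett--Rogers dynamics: writing $L_t(y)$ for the occupation density of $(X_s)_{s\le t}$ at level $y$, the drift term becomes
\[
\int_0^t f(X_t-X_s)\,\mathrm{d}s=\int_{\mathbb{R}}f(X_t-y)\,L_t(y)\,\mathrm{d}y=(f*L_t)(X_t),
\]
so that $\mathrm{d}X_t=\mathrm{d}B_t+(f*L_t)(X_t)\,\mathrm{d}t$. Since $\mathrm{supp}(f)\subseteq[-a,a]$, the drift at time $t$ only feels $L_t$ on the window $[X_t-a,X_t+a]$; this locality is exactly what separates the compactly supported case from the non-integrable repulsive case treated by Mountford \& Tarr\`es. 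One should view the process as the continuous analogue of the one-dimensional true self-repelling walk, for which T\'oth proved $|X_n|$ is of order $n^{2/3}$; the conjecture asks only for the weaker subballisticity.

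By the symmetry $x\mapsto-x$ ($f$ odd, $B$ symmetric) and the elementary fact that $X_t/t\to 0$ a.s.\ is equivalent to $\max_{s\le t}X_s=o(t)$ and $-\min_{s\le t}X_s=o(t)$ a.s., it suffices to prove $M_t:=\max_{s\le t}X_s=o(t)$. The mechanism to exploit is a \emph{smoothing feedback}, made quantitative by the oddness of $f$: when $X_t$ lies at distance at least $a$ from both ends of its current range,
\[
|(f*L_t)(X_t)|=\Bigl|\int_0^a f(u)\bigl(L_t(X_t-u)-L_t(X_t+u)\bigr)\,\mathrm{d}u\Bigr|\le\|f\|_\infty\int_0^a\bigl|L_t(X_t-u)-L_t(X_t+u)\bigr|\,\mathrm{d}u,
\]
so in the \emph{bulk} of the range the net drift is governed by the spatial oscillation of $L_t$ over windows of width $2a$, whereas near the frontier $M_t$ it is at most $\|f\|_\infty$ times the time already spent in $[M_t-a,M_t]$. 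Granting the expected self-averaging of $L_t$ --- where the occupation density is large the drift is large, so the particle is swept away quickly and the density cannot build up large oscillations --- the particle behaves in its bulk like a Brownian motion with small drift, hence is essentially recurrent there and spreads its occupation time over a length of order $M_t$; it advances the frontier only in short bursts, upon returning to $M_t$ with a freshly accumulated pile of local time $\sim t/M_t$ which pushes it forward at speed $\sim c\,t/M_t$, where $c:=\int_0^a f(u)\,\mathrm{d}u>0$. Weighting by the fraction $\sim a/M_t$ of time spent near the frontier yields the balance $\mathrm{d}M_t/\mathrm{d}t\sim ac\,t/M_t^{2}$, i.e.\ $M_t$ of order $t^{2/3}$, far below the linear scale.

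To turn this heuristic into a theorem one must establish, roughly in this order: (i) the large-time structure of the range --- that, unless $M_t=o(t)$ already for soft reasons, the occupation density grows throughout a macroscopic interval; (ii) a \emph{uniform-in-time} quantitative spatial regularity estimate for $L_t$, controlling its oscillation over $O(1)$ windows relatively to its size, so that the bulk drift is genuinely negligible; and (iii) a patching argument over dyadic time scales converting (ii) into $M_t=o(t)$. The heart of the difficulty is (ii): because the interaction is a convolution rather than a point interaction, the exact Ray--Knight / Markov-in-the-level structure that T\'oth exploited for the discrete true self-repelling walk is unavailable, so one is forced to close a self-consistent bootstrap between the assertions ``$L_t$ is spatially smooth'' and ``$X_t$ is diffusive in the bulk'', uniformly as $L_t\uparrow\infty$ --- presumably the reason the conjecture has resisted proof. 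A more ambitious alternative would be to construct a continuous Ray--Knight representation for the self-repelling diffusion in the spirit of T\'oth \& Werner, which would deliver the sharp $t^{2/3}$ law directly.
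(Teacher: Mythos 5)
You have correctly read the situation: the paper does not prove this statement. It is quoted from Durrett \& Rogers and explicitly described in the surrounding text as ``still unsolved''; the author includes it precisely to illustrate why, in the absence of a confinement potential, the competition between the self-interaction drift and the Brownian noise is hard to control, and why the present paper imposes the hypotheses \textbf{(H)} on $V$. So there is no proof in the paper against which to compare your attempt, and your opening caveat --- that this is a plan rather than a proof --- is the right call.

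As a heuristic your sketch is sensible and well informed: the local-time reformulation, the reduction (by oddness and symmetry) to controlling $M_t=\max_{s\le t}X_s$, the observation that in the bulk the drift is governed by antisymmetric differences of $L_t$ over $O(1)$ windows, and the $t^{2/3}$ balance reminiscent of T\'oth's true self-repelling walk are all the right ingredients to have in mind. But you have also named precisely the step that stops this from being a proof: you need a uniform-in-time spatial regularity estimate on $L_t$ that is self-consistent with the diffusive-in-the-bulk behaviour it is meant to justify, and because the interaction is a genuine convolution rather than a point interaction there is no Ray--Knight / level-Markov structure to close that bootstrap. Until such an estimate is established, the chain ``drift is small in the bulk $\Rightarrow$ particle is diffusive in the bulk $\Rightarrow$ $L_t$ stays flat $\Rightarrow$ drift is small in the bulk'' is circular. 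The frontier analysis also hides a real difficulty: near $M_t$ the occupation density is by definition low and freshly accumulating, so the ``pile of local time $\sim t/M_t$'' you invoke must be derived, not assumed, and the burst dynamics near the frontier is exactly where a runaway ballistic scenario would have to be excluded. None of this is a criticism of your proposal --- you flagged these gaps yourself --- it is simply a confirmation that no one, including the paper under review, has closed them.
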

Coming back to our process of interest, the role of the confinement potential is to similarly ``control" the drift term of the diffusion. Indeed, for the process \eqref{eds} with $V=0$, the interaction potential is in general not strong enough for the process to be recurrent, and the behavior is then very difficult to analyze. In particular, it is hard to predict the relative importance of the drift term (in competition against the Brownian motion) in the evolution.

\subsubsection{Technical assumptions on the potentials} In the sequel, $(\cdot,\cdot)$ stands for the Euclidian scalar product. We denote by \textbf{(H)} the following hypotheses:
\begin{itemize}
    \item[i)] (\textit{regularity and positivity}) $V \in \mathcal{C}^2(\mathbb{R}^d)$ and $W\in
    \mathcal{C}^2(\mathbb{R}^d\times \mathbb{R}^d)$ and $V \geq 1, \; W \geq 0$;
    \item[ii)] (\textit{convexity}) $V$ is a strictly uniformly convex function: there exists $K>0$ such that for all $x, \xi \in \mathbb{R}^d$: $(\nabla^2 V(x) \xi,\xi) \geq K|\xi|^2$;
    \item[iii)] (\textit{growth}) there exist $c, C>0$, $\delta >1$ such that for all $x$ large enough, $(\nabla V (x),x) \geq c|x|^{2\delta}$ and for all $x,y\in \mathbb{R}^d$ 
    \begin{equation}\label{growth} \vert \nabla V(x) -\nabla V(y) \vert \leq C (\vert x -y\vert \wedge 1) (V(x) + V(y));
    \end{equation}
    \item[iv)] (\textit{domination}) there exists $\kappa\geq 1$ such that for all $x,y \in \mathbb{R}^d$,
    \begin{equation}\label{domination}
    W(x,y) + |\nabla_x W(x,y)| + |\nabla^2_{xx}W(x,y)|\leq \kappa \left(V(x) + V(y) \right);
    \end{equation}
    \item[v)] (\textit{curvature}) there exist $\alpha>-1$, $M\in \mathbb{R}$ such that for all $x,y,\xi \in \mathbb{R}^d$ 
\begin{equation}\label{curvature}
\frac{(x,\nabla_x W(x,y))}{(x,\nabla V(x))} \rightarrow \alpha \, \, \text{and} \, \, \left((\nabla^2 V(x) + \nabla^2_{xx} W(x,y))\xi,\xi\right) \geq M \vert \xi\vert^2.
\end{equation}
\end{itemize}

\begin{rk} 1) The most important conditions are the domination iv) and the curvature v).\\
\noindent 2) The growth condition (\ref{growth}) on $V$ ensures that there exists $a>0$ such that for all $x\in \mathbb{R}^d$, we have
\begin{equation}\label{deltaV}
\Delta V(x) \leq aV(x).
\end{equation}
\noindent 3) The positivity and domination conditions
(\ref{domination}) on the interaction potential are not so hard to
be satisfied, since the self-interacting process will be invariant
by the gauge transform $W(x,y)\mapsto W(x,y)+\phi(y)$ for any
function $\phi$ that does not grow faster than $V$.
\end{rk}
\subsubsection{Results}
We can now describe the behavior of $\mu_t$. 
\begin{theo}\label{th:intro}
Suppose \textbf{(H)}.\\ 
1) $\mathbb{P}_{x,r,\mu}$-a.s., the $\omega$-limit set of $(\mu_t,t\geq 0)$ is weakly compact, invariant by $\Phi$ and admits no other (sub-)attractor than itself.\\
2) If $W$ is symmetric, then, $\mathbb{P}_{x,r,\mu}$-a.s., the $\omega$-limit set of $(\mu_t,t\geq 0)$ is a connected subset of set of fixed points of the probability measure proportional to $e^{-2(V+W*\mu)(x)} \mathrm{d}x$.
\end{theo}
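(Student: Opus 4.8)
The plan is to recast the evolution of $(\mu_t,t\ge0)$ as a stochastic approximation. After the change of time $t=e^s-r$ (so that $\mathrm{d}t/(r+t)=\mathrm{d}s$), I would prove that $s\mapsto\mu_{e^s-r}$ is an \emph{asymptotic pseudo-trajectory}, in the sense of Bena\"im, of the deterministic semiflow $\Phi$ on the space $\mathcal{P}(\mathbb{R}^d)$ of probability measures generated by the vector field $\mu\mapsto-\mu+\Pi(\mu)$, where $\Pi(\mu)$ denotes the invariant probability measure of the diffusion with generator $L_\mu=\tfrac12\Delta-(\nabla V+\nabla W*\mu)\cdot\nabla$; under \textbf{(H)} this measure exists, is unique, and is exactly the one proportional to $e^{-2(V+W*\mu)(x)}\,\mathrm{d}x$. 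Granting the pseudo-trajectory property, the abstract theory of asymptotic pseudo-trajectories yields at once that $\mathbb{P}_{x,r,\mu}$-a.s. the $\omega$-limit set of $(\mu_t)$ is a weakly compact, connected, $\Phi$-invariant, internally chain transitive set, and in particular admits no attractor other than itself --- which is statement 1).

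Written in the variable $s$, the second line of \eqref{eds} reads $\tfrac{\mathrm{d}}{\mathrm{d}s}\mu_{e^s-r}=-\mu_{e^s-r}+\Pi(\mu_{e^s-r})+\xi_s$, where the remainder $\xi_s$ is the measure-valued noise $\delta_{X_{e^s-r}}-\Pi(\mu_{e^s-r})$. The heart of the argument is to show that $\xi$ contributes nothing over bounded $s$-intervals, i.e. that for every $T>0$ and every $g$ in a convergence-determining class one has $\sup_{0\le h\le T}\big|\int_{e^s-r}^{e^{s+h}-r}\big(g(X_u)-\Pi(\mu_u)(g)\big)\tfrac{\mathrm{d}u}{r+u}\big|\to0$ as $s\to\infty$. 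Two ingredients go into this. First, the orbit $\{\mu_t:t\ge0\}$ must be relatively compact for the weak topology: this is Proposition~\ref{compactness}, obtained from the growth condition iii) and the domination iv) by a Lyapunov computation that gives $\sup_{t}\int V\,\mathrm{d}\mu_t<\infty$ (using $\Delta V\le aV$ and $|\nabla W*\mu_t|\le\kappa(V+\int V\,\mathrm{d}\mu_t)$). Second, one needs quantitative ergodicity of $L_\mu$ that is \emph{uniform} over this relatively compact family of measures: this is where ultracontractivity (Proposition~\ref{ultra}) provides uniform bounds on $\|e^{tL_\mu}\|_{L^1\to L^\infty}$, hence on the speed of convergence of $e^{tL_\mu}$ to $\Pi(\mu)$, with constants controlled through iv) and the curvature bound in v). A Poisson-equation decomposition together with the Burkholder--Davis--Gundy inequality then turns the ergodic behaviour of $X$ into the required uniform-on-intervals decay of $\xi$, establishing the asymptotic pseudo-trajectory property.

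For statement 2), suppose $W$ symmetric. Then $\Phi$ carries a strict Lyapunov function, the free energy
\[
E(\mu)=\int_{\mathbb{R}^d}V\,\mathrm{d}\mu+\tfrac12\iint_{\mathbb{R}^d\times\mathbb{R}^d}W(x,y)\,\mu(\mathrm{d}x)\mu(\mathrm{d}y)+\tfrac12\int_{\mathbb{R}^d}\log\!\Big(\tfrac{\mathrm{d}\mu}{\mathrm{d}x}\Big)\,\mathrm{d}\mu,
\]
finite precisely on measures with a density and finite $V$-moment --- a set the orbit stays in, by the confinement. As in the compact case, along any orbit of $\Phi$ one has $\tfrac{\mathrm{d}}{\mathrm{d}t}E(\mu_t)\le0$, with equality if and only if $\mu_t=\Pi(\mu_t)$, the identity being that $\dot\mu=-\mu+\Pi(\mu)$ is a relative-entropy descent; its first variation $\delta E/\delta\mu=V+W*\mu+\tfrac12(1+\log(\mathrm{d}\mu/\mathrm{d}x))$ is constant precisely at the measures proportional to $e^{-2(V+W*\mu)}$, which are thus the fixed points of $\Phi$. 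The Lyapunov theorem for asymptotic pseudo-trajectories then forces the $\omega$-limit set to lie in this set of fixed points, with $E$ constant on it; connectedness is inherited from part 1).

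The step I expect to be the main obstacle is the second analytic input: relaxation estimates for the frozen diffusions $L_\mu$ that are uniform over $\{\mu_t\}$, a family that is relatively compact only for the weak topology and not in total variation. On a compact manifold this uniformity is automatic from continuity of the spectral gap; on $\mathbb{R}^d$ it genuinely requires the full strength of \textbf{(H)} --- the uniform convexity ii) and growth iii) of $V$ together with the domination iv) and the curvature v) of $W$ --- to pin down the ultracontractive constants uniformly, and this is where those hypotheses are really used. A secondary, more routine, difficulty is verifying that the free-energy functional is lower semicontinuous with suitably compact sublevel sets on $\mathcal{P}(\mathbb{R}^d)$, so that the abstract Lyapunov argument carries over unchanged.
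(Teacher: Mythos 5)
Your plan for part 1) is the paper's: time-change $h(t)=r(e^t-1)$, tightness via a Lyapunov computation giving $\sup_t\int V\,\mathrm d\mu_t<\infty$ (Proposition~\ref{compactness}), uniform ultracontractivity of $(P^\mu_t)$ over $\mathcal P_\beta(\mathbb R^d;V)$ (Proposition~\ref{ultra}), a Poisson-equation decomposition of the error process $\varepsilon_{t,t+s}$, and Burkholder--Davis--Gundy to get the asymptotic pseudotrajectory property (Theorem~\ref{PTA}), then Bena\"im--Hirsch theory. That part is fine.

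Part 2) has a genuine gap, and it is exactly the one the paper flags. You propose to use the free energy $E(\mu)$ directly as a strict Lyapunov function and to argue that ``the orbit stays in'' the set of measures with a density, ``by the confinement.'' This is false in $d\ge2$: the empirical measure $\mu_t=\frac{r}{r+t}\mu+\frac1{r+t}\int_0^t\delta_{X_s}\mathrm ds$ is singular with respect to Lebesgue measure (the occupation measure of a nondegenerate diffusion in $\mathbb R^d$, $d\ge2$, is supported on a set of Lebesgue measure zero), and likewise $\Phi_t(\mu)=e^{-t}\mu+e^{-t}\int_0^t e^s\Pi(\Phi_s(\mu))\,\mathrm ds$ carries a singular part $e^{-t}\mu$ for all finite $t$ as soon as $\mu$ is singular. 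Thus $E(\mu_t)=+\infty$ and $E(\Phi_t(\mu))=+\infty$ along the entire prelimit orbit, and the functional gives no information. The paper circumvents this by taking $\mathcal E(\mu):=\mathcal F(\Pi(\mu))$ as the Lyapunov function: since $\Pi(\mu)$ is always absolutely continuous with a smooth density, $\mathcal E$ is finite and $\mathcal C^\infty$ along all orbits, and the chain rule through $D\Pi(\mu)$ still gives $\frac{\mathrm d}{\mathrm dt}\mathcal E(\Phi_t(\mu))\le0$ with equality iff $\Pi(\mu)=\mu$ (Proposition~\ref{exist-flow}). You need this replacement, not $E$ itself.

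A second, independent omission: knowing that $\mathcal E$ is a strict Lyapunov function with critical set $\Lambda=\{\Pi(\mu)=\mu\}$ is not by itself enough to conclude that the $\omega$-limit set of an asymptotic pseudotrajectory sits inside $\Lambda$. Proposition~\ref{cours_tromba} (Bena\"im's Proposition 6.4) requires in addition that $\mathcal E(\Lambda)\subset\mathbb R$ has empty interior. In finite dimensions this would follow from Sard's theorem; here the paper establishes it via Tromba's Morse--Sard--Brown theorem, which forces you to verify that at each fixed point the differential $DF(\mu)$ (with $F=\Pi-\mathrm{Id}$) is a Fredholm operator. The paper does this by showing $DF(\mu)$ differs from $-\mathrm{Id}$ by a compact operator (equicontinuity and Ascoli via the domination hypothesis, then self-adjointness). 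Without this step the Lyapunov theorem you invoke simply does not apply, so you should add it to the sketch.
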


Even if the model studied could at a first glance seem restrictive (because of $V$), the drift term can really compete against the Brownian motion, as shown by the following:
\begin{theo}\label{th:ex-intro}
Consider the self-interacting diffusion on $\mathbb{R}^2$, with $V(x) = V(|x|)$ and $W(x,y) =(x,Ry)$. Let $\gamma(\rho) := e^{-2V(\rho)} /Z$. Then one of the following holds:
\begin{enumerate}
    \item If $V$ is such that $\int_0^\infty \mathrm{d}\rho \gamma(\rho) \rho^2 \cos (\theta) > -1$,
    then a.s. $\mu_t \xrightarrow {(w)} \gamma$;
    \item Else, we get two different cases:

    a) if $\theta = \pi$ then there exists a random measure $\mu_\infty$ such that a.s. $\mu_t\xrightarrow {(w)} \mu_\infty$,\\

    b) if $\theta \neq \pi$, then the $\omega$-limit set $\omega(\mu_t, t\geq 0) = \{\nu(\delta), 0\leq \delta<2\pi\}$ a.s., that is $\mu_t$ circles around.
\end{enumerate}
\end{theo}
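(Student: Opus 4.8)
The plan is to exploit that $W(x,y)=(x,Ry)$ is bilinear, so that $\nabla_x(W*\mu_t)(x)=Rm_t$ depends on $\mu_t$ only through its barycentre $m_t:=\int_{\mathbb{R}^2}x\,\mu_t(\mathrm{d}x)$. First I would observe that $(X_t,m_t)$ then solves an autonomous system --- $\mathrm{d}X_t=\mathrm{d}B_t-(\nabla V(X_t)+Rm_t)\,\mathrm{d}t$ and $\mathrm{d}m_t=(X_t-m_t)\,\mathrm{d}t/(r+t)$ --- in which $m_t$ is a finite-dimensional stochastic-approximation scheme for the vector field $h(m):=\Pi(Rm)-m$, where $\Pi(b):=\int x\,\xi_b(\mathrm{d}x)$ is the mean of the frozen-parameter invariant measure $\xi_b(\mathrm{d}x)\propto e^{-2(V(x)+(x,b))}\,\mathrm{d}x$. (Admissibility of $W(x,y)=(x,Ry)$ is a routine check: \eqref{domination} holds since $|x|\,|y|\lesssim V(x)+V(y)$; $\nabla^2_{xx}W\equiv 0$, so \eqref{curvature} holds with ratio tending to $\alpha=0>-1$ by the superquadratic growth of $V$; and the sign requirement on $W$ is immaterial, only $\nabla_xW$ entering and $V$ dominating.) I would then apply the ODE method established above (Theorem~\ref{limitset}) together with the remark that, $W*\mu$ feeling only the barycentre, the semiflow $\Phi$ contracts onto the section $\{\xi_{Rb}:b\in\mathbb{R}^2\}$, on which it is the flow of $h$; this reduces the theorem to the planar ODE $\dot m=h(m)$, since a.s.\ the $\omega$-limit set of $m_t$ is a compact connected internally chain-transitive set of that flow, $m_t$ avoids linearly unstable equilibria (the diffusive noise being non-degenerate), and $\omega(\mu_t,t\ge 0)=\{\xi_{Rm}:m\in\omega(m_t,t\ge 0)\}$.

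Next I would put $\dot m=h(m)$ into polar coordinates. Radial symmetry of $V$ makes $\xi_b$ symmetric across $\mathbb{R}b$, so $\Pi(b)=\phi(|b|)\,b/|b|$ for a scalar $\phi\colon[0,\infty)\to\mathbb{R}$ with $\phi(0)=0$; writing $m=\rho(\cos\psi,\sin\psi)$ and using $|Rm|=|m|$, $(m,Rm)=|m|^2\cos\theta$, the system decouples into the skew product $\dot\rho=-\rho+\phi(\rho)\cos\theta=:g(\rho)$, $\dot\psi=\phi(\rho)\sin\theta/\rho$. I will need the following facts about $\phi$: it is negative on $(0,\infty)$, since $b\mapsto\log Z(b)$ is convex with vanishing gradient at $0$, whence $(\Pi(b),b)\le 0$; $\phi'(0)=-2\sigma^2$, where $2\sigma^2=\int_{\mathbb{R}^2}|x|^2\,\gamma(\mathrm{d}x)$ --- the quantity $\int_0^\infty\gamma(\rho)\rho^2\,\mathrm{d}\rho$ of the statement --- and $\gamma=\xi_0$ has density $\propto e^{-2V(|x|)}$; and $|\phi(\rho)|=o(\rho)$ as $\rho\to\infty$, because by \eqref{growth} the measure $\xi_{\rho e_1}$ concentrates near $-r^*(\rho)e_1$ with $V'(r^*(\rho))=\rho$ and $r^*(\rho)=o(\rho)$. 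Consequently $g(0)=0$, $g(\rho)\to-\infty$, and $g'(0)=-1-2\sigma^2\cos\theta$, so the equilibrium $m=0$ is linearly stable exactly under the hypothesis $\int_0^\infty\gamma(\rho)\rho^2\cos\theta>-1$ of case~(1).

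The crucial --- and hardest --- step is to prove that \emph{$\phi$ is convex on $[0,\infty)$}, equivalently that $-\phi(\rho)=2\int_0^\rho\mathrm{Var}_{\xi_{se_1}}(x_1)\,\mathrm{d}s$ with $s\mapsto\mathrm{Var}_{\xi_{se_1}}(x_1)$ non-increasing (i.e.\ the $x_1$-marginal of the radially tilted Gibbs measure has non-negative skewness). I expect this to follow from the uniform convexity \textbf{(H)}~ii) of $V$, which makes every $\xi_b$ strongly log-concave with $\nabla^2\!\big(2V(x)+2(x,b)\big)=2\nabla^2V(x)\succeq 2KI$, combined with the radial structure, through a correlation / Brascamp--Lieb-type estimate in polar coordinates; this is where I expect the real work. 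Once it is granted, $-\phi(\rho)\le -\phi'(0)\,\rho=2\sigma^2\rho$ for all $\rho\ge 0$ with strict inequality for $\rho>0$, so that when $\cos\theta<0$ the one-dimensional field $g(\rho)=-\rho+|\phi(\rho)|\,|\cos\theta|$ is concave, whereas when $\cos\theta\ge 0$ trivially $g(\rho)\le-\rho$.

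Finally I would read the trichotomy off $g$. \emph{Case (1)} ($g'(0)<0$): if $\cos\theta\ge 0$ then $g\le-\rho<0$ on $(0,\infty)$; if $\cos\theta<0$ then $g$ is concave with $g(0)=0$ and negative slope there, so again $g<0$ on $(0,\infty)$. Thus $0$ is the sole equilibrium and the global attractor of $\dot m=h(m)$, so the only internally chain-transitive set is $\{0\}$, giving $m_t\to 0$ and $\mu_t\xrightarrow{(w)}\gamma$ a.s. \emph{Case (2)} ($2\sigma^2\cos\theta<-1$, hence $\cos\theta<0$): here $g$ is concave with $g'(0)>0$ and $g\to-\infty$, so $g$ has a unique positive zero $\rho_*$, crossing from $+$ to $-$ there (asymptotically stable), while $0$ is unstable; by avoidance of unstable equilibria $\rho_t\to\rho_*$ a.s., so $\omega(m_t,t\ge0)\subseteq\{|m|=\rho_*\}$, and on this circle $g(\rho_*)=0$ gives $\phi(\rho_*)=\rho_*/\cos\theta$, hence $\dot\psi=\tan\theta$. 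If $\theta=\pi$ (so $2\sigma^2>1$) this vanishes, the whole circle consists of equilibria, and $W(x,y)=(x,-y)$ is symmetric, so Theorem~\ref{PTA} and the a.s.\ convergence for symmetric self-interacting diffusions (cf.\ \cite{beR}) give $\mu_t\xrightarrow{(w)}\mu_\infty$ for a random $\mu_\infty=\xi_{-m_\infty}$, $|m_\infty|=\rho_*$ --- case (2a). If $\theta\ne\pi$ then $\tan\theta\ne 0$, so the circle $\{|m|=\rho_*\}$ is a fixed-point-free periodic orbit of $h$ on which $\psi_t$ winds indefinitely (at rate $\tan\theta$ in the $\log(r+t)$ clock); the flow on that circle is a non-trivial rotation, whose only non-empty invariant subset is the circle itself, so the (invariant) set $\omega(m_t,t\ge0)$ equals it, giving $\omega(\mu_t,t\ge0)=\{\xi_{Rm}:|m|=\rho_*\}=\{\nu(\delta):0\le\delta<2\pi\}$ a.s. --- case (2b).
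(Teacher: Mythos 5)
Your overall reduction --- exploiting bilinearity of $W$ so that the drift feels $\mu_t$ only through its barycentre $m_t$, passing to polar coordinates, and reading a 1-D bifurcation off the radial component --- is exactly the paper's strategy (Lemmas \ref{egalites}--\ref{diff}, the system \eqref{dyn2}, Proposition \ref{bifurcation}, and the avoidance-of-unstable-equilibria input from \cite{pierre}). But two points break the argument as you have written it.

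First, the step you flag as ``where I expect the real work'' --- the convexity of $\phi$ (equivalently concavity of the paper's $J$) --- is genuinely needed and you do not supply it. Without it, $g'(0)<0$ does not rule out a positive zero of $g$ when $\cos\theta<0$, and $g'(0)>0$ does not force a \emph{unique} positive zero. The paper settles this in Proposition \ref{bifurcation} by computing $J^{(3)}$ explicitly as (twice) a kurtosis and arguing it is nonpositive; a Brascamp--Lieb route might also work, but as it stands this is an open hole in your proof, not a remark.

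Second, and more seriously, your conclusion in case (2b) is wrong, and the reason is the claim that ``the semiflow $\Phi$ contracts onto the section $\{\xi_{Rb}:b\in\mathbb{R}^2\}$.'' That is false: by Proposition \ref{im}, $\Phi$ is only attracted to the \emph{convex hull} $\widehat{\mathrm{Im}(\Pi)}$ of that section. The semiflow is $\Phi_t(\mu)=e^{-t}\mu+e^{-t}\int_0^t e^s\Pi(\Phi_s(\mu))\,\mathrm{d}s$, an exponentially-discounted running average of the $\Pi$'s along the orbit. When the barycentre rotates at nonzero speed $\tan\theta$ on the circle $|m|=\rho_*$, the associated pure tilts $\xi_{Rm}$ also rotate, and the periodic orbit of $\Phi$ consists of the discounted averages of those tilts over one period --- precisely the measures $\nu(\delta)=\frac{1}{e^{T_\theta}-1}\int_0^{T_\theta}e^s\,\mu_\infty^{v,\theta}\,\mathrm{d}s$ in the statement, \emph{not} the set $\{\xi_{Rm}:|m|=\rho_*\}$. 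Knowing $\omega(m_t)=\{|m|=\rho_*\}$ only tells you where the barycentres of the limit measures sit; it does not identify the limit measures themselves. This is why the paper lifts the analysis to the coupled system \eqref{dyn3}--\eqref{dyn4} on $\mathcal{M}(\mathbb{R}^2;V)\times\mathbb{R}^2$ and tracks the full measure alongside its barycentre before appealing to the attractor-free machinery of \cite{beLR}. Your case (1) and case (2a) readings (where, respectively, the limiting barycentre is $0$ or the circle consists entirely of equilibria, so the hull and the section coincide on the relevant set) are fine, but (2b) needs this extra layer.
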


\subsection{Outline of contents}
As mentioned earlier, the main difficulty here stems from the non-compactness of the state space. The way to get around it is first to introduce the $V$-norm in Section 2 (also named `dual weighted norm') and then show that $(\mu_t,t\geq 0)$ is a tight family of measures in \S 6.1. Second, the dynamical system involved in the study induces only a local semiflow
and not necessarily a global one. But, we will show in \S 4.2 (for some cases), that the semiflow does not explode. Last, define the Feller diffusion $X^\mu$ obtained by fixing the occupation measure $\mu_t$ (appearing in the drift term) to $\mu$. Let note $A_\mu$ its infinitesimal generator and $Q_\mu$ its fundamental kernel, that is $A_\mu \circ Q_\mu = \Pi(\mu) -Id$, where $\Pi(\mu)$ is the invariant probability measure of $X^\mu$. An essential point of our study consists in finding an upper bound for the operator $Q_\mu$. Indeed, one has to use here the notion of (uniform) ultracontractivity in \S 5.1.

The organization of this paper is as follows. In the next section, we introduce some notations and the deterministic flow involved. We will also prove the existence and uniqueness of the random process studied. Section 3 is devoted to the presentation of the main results and is divided in three parts. First, we recall the former results and ideas of Bena\"im and \textit{al} \cite{beLR}. Then, we state the tightness of $(\mu_t)_t$ and introduce the uniform estimates on the Feller semigroup. We end this section by describing the behavior of $\mu_t$. After that, we analyze, in Section 4, the deterministic semiflow associated to the self-interacting diffusion. We will prove the local existence of the semiflow and introduce two important objects: the convex hull of $\text{Im}(\Pi)$ and the fixed points of $\Pi$. Then,
in Section 5, we study in details the family of Markov semigroups, corresponding to $X^\mu$, for which we prove the uniform ultracontractivity property and the regularity of the operators $A_\mu$ and $Q_\mu$. The proofs of the main results are in Section 6, which heavily relies on the spectral analysis of the preceding section. We begin Section 6 by showing the tightness of $(\mu_t)_t$ in \S 6.1. Then, \S 6.2 deals with the approximation of the normalized occupation measure $(\mu_t, t\geq 0)$ by a deterministic semiflow. In \S 6.3, we prove Theorem \ref{th:intro}. Finally, Section 7 is devoted to the illustration in dimension $d=2$ stated in Theorem \ref{th:ex-intro}.

\section{Preliminaries and Tools}
\subsection{Some useful measure spaces}
We denote by $\mathcal{M}(\mathbb{R}^d)$ the space of
signed (bounded) Borel measures on $\mathbb{R}^d$ and by
$\mathcal{P}(\mathbb{R}^d)$ its subspace of probability measures.
We will need the following measure space:
\begin{equation}
\mathcal{M}(\mathbb{R}^d;V):= \{\mu \in \mathcal{M}(\mathbb{R}^d);
\int_{\mathbb{R}^d} V(y) \vert\mu\vert(\mathrm{d}y)<\infty\},
\end{equation}
where $\vert\mu\vert$ is the variation of $\mu$ (that is
$\vert\mu\vert := \mu^{+} + \mu^{-}$ with $(\mu^{+},\mu^{-})$ the
Hahn-Jordan decomposition of $\mu$). This space will enable us to
always check the integrability of $V$ (and therefore of $W$ and
its derivatives thanks to the domination condition
(\ref{domination})) with respect to the measures to be considered.
For example it contains the measure
\begin{equation}
\gamma(\mathrm{d}x):=\exp{\left(-2V(x)\right)}\mathrm{d}x.
\end{equation}
We endow $\mathcal{M}(\mathbb{R}^d;V)$ with the following dual
weighted supremum norm (or dual $V$-norm) defined by
\begin{equation}
\vert \vert \mu\vert \vert_V:= \sup_{\varphi; \vert \varphi\vert
\leq V} \left\vert \int_{\mathbb{R}^d} \varphi
\mathrm{d}\mu\right\vert, \quad \mu \in
\mathcal{M}(\mathbb{R}^d;V).
\end{equation}
This norm naturally arises in the approach to ergodic results for
time-continuous Markov processes of Meyn \& Tweedie \cite{meT}. It makes $\mathcal{M}(\mathbb{R}^d;V)$ a Banach space. To
illustrate the need of this space, we state an easy result that
will be used many times:
\begin{lemma}\label{majW}
For any $\mu \in \mathcal{M}(\mathbb{R}^d;V)$ the function $W *
\mu$ belongs to $\mathcal{C}^2(\mathbb{R}^d)$ and we have the
estimate
\begin{equation*}
\vert W * \mu(x) \vert \leq 2\kappa \vert \vert \mu \vert \vert_V
V(x).
\end{equation*}
\end{lemma}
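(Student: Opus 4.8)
The plan is to read off the pointwise bound directly from the domination hypothesis (iv), and then obtain the $\mathcal{C}^2$-regularity by differentiating under the integral sign. \emph{For the estimate:} first I would write, for $\mu \in \mathcal{M}(\mathbb{R}^d;V)$, using $W\ge 0$ and \eqref{domination},
\[
|W*\mu(x)| \le \int_{\mathbb{R}^d} W(x,y)\,|\mu|(\mathrm{d}y) \le \kappa \int_{\mathbb{R}^d}\bigl(V(x)+V(y)\bigr)\,|\mu|(\mathrm{d}y) = \kappa V(x)\,|\mu|(\mathbb{R}^d) + \kappa \int_{\mathbb{R}^d} V(y)\,|\mu|(\mathrm{d}y).
\]
The key elementary point is that, writing the Hahn--Jordan decomposition $\mu = \mu^+ - \mu^-$ and using the test function equal to $V$ on the positive set and to $-V$ on the negative set (which is admissible since $|\varphi|\le V$, as $V\ge 1>0$), one gets $\int V\,\mathrm{d}|\mu| \le \|\mu\|_V$; moreover, since $V\ge 1$, also $|\mu|(\mathbb{R}^d)=\int 1\,\mathrm{d}|\mu| \le \int V\,\mathrm{d}|\mu| \le \|\mu\|_V$. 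Substituting these two bounds into the display, and using once more $V(x)\ge 1$ to absorb the second term into $\kappa\|\mu\|_V V(x)$, yields $|W*\mu(x)|\le 2\kappa\|\mu\|_V V(x)$. In particular the integral defining $W*\mu$ converges absolutely at every $x$.

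\emph{For the regularity:} I would localise. Fix a compact $K\subset\mathbb{R}^d$ and set $C_K:=\sup_K V<\infty$, which is finite by continuity of $V$. By (iv), for every $x\in K$ and every $y$,
\[
|\nabla_x W(x,y)| + |\nabla^2_{xx} W(x,y)| \le 2\kappa\bigl(C_K + V(y)\bigr),
\]
and the right-hand side is $|\mu|$-integrable because $|\mu|$ is finite and $\int V\,\mathrm{d}|\mu|<\infty$. Since $W\in\mathcal{C}^2(\mathbb{R}^d\times\mathbb{R}^d)$, the standard theorem on differentiation of parameter-dependent integrals applies (after splitting $\mu$ into $\mu^{\pm}$) and gives that $W*\mu$ is twice continuously differentiable on the interior of $K$, with $\nabla(W*\mu)(x)=\int \nabla_x W(x,y)\,\mu(\mathrm{d}y)$ and $\nabla^2(W*\mu)(x)=\int \nabla^2_{xx} W(x,y)\,\mu(\mathrm{d}y)$; continuity of these derivatives follows from continuity of $\nabla_x W$ and $\nabla^2_{xx} W$ together with dominated convergence. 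Since $K$ is arbitrary, $W*\mu\in\mathcal{C}^2(\mathbb{R}^d)$.

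The argument is essentially routine; the only step requiring a little care is the comparison between the plain integral $\int V\,\mathrm{d}|\mu|$ (which, thanks to $V\ge 1$, simultaneously controls the total mass $|\mu|(\mathbb{R}^d)$ and the weighted mass) and the dual $V$-norm $\|\mu\|_V$, together with the explicit admissible test function realising that comparison. I do not expect any genuine obstacle here.
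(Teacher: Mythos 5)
Your proposal is correct and follows exactly the route the paper has in mind: the paper's proof consists of the single sentence ``Straightforward thanks to the domination condition (\ref{domination})'', and your argument simply fills in the routine details (the dual-norm identity $\int V\,\mathrm{d}|\mu| = \|\mu\|_V$ combined with $V\ge 1$ to control both the total mass and the weighted mass, then differentiation under the integral sign with local domination for the $\mathcal{C}^2$ regularity).
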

\begin{proof}
Straightforward thanks to the domination condition
(\ref{domination}).
\end{proof}
In the following, we will consider probability measures only. Thus
we set, $\mathcal{P}(\mathbb{R}^d;V) :=
\mathcal{M}(\mathbb{R}^d;V) \cap \mathcal{P}(\mathbb{R}^d)$. The
strong topology on $\mathcal{P}(\mathbb{R}^d;V)$ is the trace
topology of the one defined on $\mathcal{M}(\mathbb{R}^d;V)$. It
makes $\mathcal{P}(\mathbb{R}^d;V)$ a complete metric space (for
the norm distance). Finally, for any $\beta>1$, we introduce the
subspace
\begin{equation}
\mathcal{P}_\beta(\mathbb{R}^d;V) :=\{\mu \in
\mathcal{P}(\mathbb{R}^d); \int_{\mathbb{R}^d} V(y) \mu
(\mathrm{d}y)\leq \beta\}.
\end{equation}

\subsection{The family of semigroups $(P_t^{\mu})$}
In all the following, $(\Omega, \mathcal{F},(\mathcal{F}_t)_t,\mathbb{P})$ will be a filtered probability
space satisfying the usual conditions. For any Borel probability
measure $\mu \in \mathcal{P}(\mathbb{R}^d;V)$, let $(X_t^\mu,t\geq
0)$ be the Feller diffusion defined by the following SDE
\begin{eqnarray}
\left\{%
\begin{array}{ll}
    & \mathrm{d}X_t^\mu = \mathrm{d}B_t -\left( \nabla V(X_t^\mu)+
\nabla W*{\mu}(X_t^\mu) \right) \mathrm{d}t, \\
    & X_0^\mu = x. \\
\end{array}
\right. \label{diffubis}
\end{eqnarray}
\begin{propo}\label{non-expl}
The diffusion $X^\mu_t$ a.s. never explodes.
\end{propo}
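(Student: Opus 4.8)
The plan is to treat \eqref{diffubis} as an SDE with a (locally Lipschitz but unbounded) dissipative drift and run the classical Lyapunov/localization argument for non-explosion. Since $V\in\mathcal C^2$ and, by Lemma \ref{majW}, $W*\mu\in\mathcal C^2$, the drift $b^\mu:=-(\nabla V+\nabla(W*\mu))$ is $\mathcal C^1$, hence locally Lipschitz, so \eqref{diffubis} has a unique maximal strong solution up to an explosion time $\tau_\infty:=\lim_n\tau_n$, where $\tau_n:=\inf\{t\ge 0:\ |X^\mu_t|\ge n\}$. It then suffices to show $\mathbb P(\tau_\infty\le t)=0$ for every $t$.

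The first step is a one-sided Lipschitz (monotonicity) estimate on $b^\mu$. Differentiating twice under the integral sign in $W*\mu(x)=\int W(x,y)\,\mu(\mathrm dy)$ is legitimate because, by the domination hypothesis (iv), $|\nabla^2_{xx}W(x,y)|\le\kappa(V(x)+V(y))$ is bounded, for $x$ in a compact set, by a $\mu$-integrable function of $y$ (here we use $\mu\in\mathcal P(\mathbb R^d;V)$); hence $\nabla^2(W*\mu)(x)=\int\nabla^2_{xx}W(x,y)\,\mu(\mathrm dy)$. Integrating the Hessian bound in (v) against the \emph{probability} measure $\mu$ then gives $((\nabla^2V(x)+\nabla^2(W*\mu)(x))\xi,\xi)\ge M|\xi|^2$ for all $x,\xi$, i.e. $\nabla b^\mu(x)\le -M\,\mathrm{Id}$ as symmetric matrices. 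Applying the fundamental theorem of calculus along the segment $[y,x]$ yields $(b^\mu(x)-b^\mu(y),x-y)\le -M|x-y|^2$ for all $x,y$, and taking $y=0$ together with the Cauchy–Schwarz/Young inequalities produces the coercivity bound $2(x,b^\mu(x))+d\le C_\mu(1+|x|^2)$, with $C_\mu$ depending only on $M$, $d$ and $|b^\mu(0)|$.

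The second step is the standard localization argument. Applying Itô's formula to $t\mapsto 1+|X^\mu_{t\wedge\tau_n}|^2$, the stochastic integral is a genuine martingale (the path is bounded by $n$ on $[0,\tau_n]$) and the finite-variation part is controlled by the coercivity bound, so $g_n(t):=\mathbb E[1+|X^\mu_{t\wedge\tau_n}|^2]$ satisfies $g_n(t)\le (1+|x|^2)+C_\mu\int_0^t g_n(s)\,\mathrm ds$, whence $g_n(t)\le(1+|x|^2)e^{C_\mu t}$ by Gronwall's lemma, uniformly in $n$. On $\{\tau_n\le t\}$ one has $1+|X^\mu_{t\wedge\tau_n}|^2=1+|X^\mu_{\tau_n}|^2\ge 1+n^2$, so $\mathbb P(\tau_n\le t)\le (1+|x|^2)e^{C_\mu t}/(1+n^2)\to 0$ as $n\to\infty$ for each fixed $t$; since $\{\tau_\infty\le t\}\subseteq\{\tau_n\le t\}$ for all $n$, this gives $\mathbb P(\tau_\infty\le t)=0$ for every $t$, hence $\tau_\infty=+\infty$ $\mathbb P$-a.s.

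The only genuinely delicate point is the first step: converting the pointwise-in-$y$ curvature hypothesis (v) into a true convexity lower bound for the Hessian of the effective potential $V+W*\mu$, which rests entirely on the justification of differentiation under the integral via the domination (iv); the rest is routine. (An alternative route avoids the Hessian bound and uses instead the radial limit in (v): with the Lyapunov function $f(x)=1+|x|^2$ one has $A_\mu f(x)=d-2(x,\nabla V(x))-2\int(x,\nabla_xW(x,y))\,\mu(\mathrm dy)$, and since $\frac{(x,\nabla_xW(x,y))}{(x,\nabla V(x))}\to\alpha>-1$ while $(x,\nabla V(x))\ge c|x|^{2\delta}$ for large $|x|$, the right-hand side is bounded above; one then concludes by the same Gronwall/localization estimate. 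This variant requires, in place of (iv), a domination allowing the limit in (v) to be passed under $\int\cdot\,\mu(\mathrm dy)$.)
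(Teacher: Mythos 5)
Your proof is correct, but it takes a genuinely different route from the paper. The paper's proof uses the effective potential $\mathcal{E}_\mu(x)=V(x)+W*\mu(x)$ itself as the Lyapunov function. Since the drift of $X^\mu$ is exactly $-\nabla\mathcal{E}_\mu$, one has $A_\mu\mathcal{E}_\mu=\tfrac12\Delta\mathcal{E}_\mu-|\nabla\mathcal{E}_\mu|^2\le\tfrac12\Delta\mathcal{E}_\mu$; the growth condition (iii) gives $\Delta V\le aV$ (this is inequality \eqref{deltaV}) and the domination condition (iv) gives $\Delta (W*\mu)\le\kappa d(V+\mu(V))$, so $\Delta\mathcal{E}_\mu\le 2D\mathcal{E}_\mu$ for some $D$ depending on $\mu(V)$. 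Since $V$ is proper, Khasminskii's criterion applies, and Gronwall's lemma yields the ``naive estimate'' $\mathbb E\,\mathcal{E}_\mu(X^\mu_t)\le\mathcal{E}_\mu(x)e^{Dt}$. You instead take $1+|x|^2$ as the Lyapunov function and obtain coercivity from the Hessian lower bound in the curvature condition (v), integrated against the probability $\mu$ (differentiation under the integral being justified by (iv), exactly as you observe); the one-sided Lipschitz bound $(b^\mu(x)-b^\mu(y),x-y)\le-M|x-y|^2$ then gives $A_\mu(1+|x|^2)\le C_\mu(1+|x|^2)$ and the same Gronwall/localization conclusion. Both arguments are sound; what the paper's choice buys is that the estimate $\mathbb E\,\mathcal{E}_\mu(X^\mu_t)\le\mathcal{E}_\mu(x)e^{Dt}$ (and its analogues for the time-inhomogeneous process) is exactly what is needed later --- e.g.\ in Propositions \ref{Q_muC1} and \ref{gradQ_mu} and in the tightness argument --- because it controls $P^\mu_tV(x)$ in terms of $V(x)$, whereas your second-moment bound on $|X^\mu_t|^2$ would not by itself control $V(X^\mu_t)$ given the assumed superquadratic growth of $V$. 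Your alternative route via the radial limit in (v) is indeed more delicate, since that hypothesis is a pointwise limit and one would need some uniformity in $y$ to pass it under $\int\cdot\,\mu(\mathrm dy)$; as you note, the main argument does not need it.
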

\begin{proof}
It is enough to check with the It\^o formula that the function
\begin{equation}\label{lyapunov}
\mathcal{E}_{\mu}(x):=V(x)+W*\mu(x).
\end{equation}
is a Lyapunov function. To see it, we notice that the growth and
domination conditions \eqref{growth} and \eqref{domination} on the
potentials imply the existence of $D>0$ such that:
\begin{equation}\label{deltaE}
\Delta \mathcal{E}_{\mu}(x) \leq 2DV(x) \leq
2D\mathcal{E}_{\mu}(x).
\end{equation}
As a by-product we get the naive (but useful!) estimate
\begin{equation}\label{Xmu_bound}
\mathbb{E}\mathcal{E}_\mu(X^\mu_t)\leq \mathcal{E}_\mu(x)e^{Dt}.
\end{equation}
\end{proof}
Now we denote by $(P^\mu_t;t\geq 0)$ the Markov semigroup
associated to $X^\mu_t$. We consider the differential operator
$A_\mu$ defined on $\mathcal{C}^\infty (\mathbb{R}^d)$ by
\begin{eqnarray}
A_\mu f &:=& \frac{1}{2}\Delta f - (\nabla V +\nabla W*\mu, \nabla f) 
\end{eqnarray}
$A_\mu$ corresponds to the infinitesimal generator of the true
diffusion (\ref{diffubis}). We emphasize that $(X_t^\mu)$ is a
positive-recurrent (reversible) diffusion and denote by
$\Pi(\mu)\in\mathcal{P}(\mathbb{R}^d;V)$ its unique invariant
probability measure:
\begin{equation}
\Pi(\mu)(\mathrm{d}x) := \frac{e^{-2W*\mu (x)}}{Z(\mu)} \gamma
(\mathrm{d}x)
\end{equation}
where $Z(\mu) := \int_{\mathbb{R}^d} e^{-2W*\mu (x)}
\gamma(\mathrm{d}x) < +\infty$ is the normalization constant. To
end this part, we recall the classical ergodic theorem for
$X^\mu_t$. We introduce the weighted supremum norm (or $V$-norm)
\begin{equation}
\vert\vert f\vert\vert_V := \sup_{x\in \mathbb{R}^d} \frac{\vert f(x)\vert}{V(x)},
\end{equation}
and the space of continuous $V$-bounded functions
\begin{equation}
\mathcal{C}^0(\mathbb{R}^d;V) := \{f \in
\mathcal{C}^0(\mathbb{R}^d) : \vert\vert f\vert\vert_V < \infty \}
\end{equation}
Similarly let $\mathcal{C}^p(\mathbb{R}^d;V) :=
\mathcal{C}^p(\mathbb{R}^d) \cap \mathcal{C}^0(\mathbb{R}^d;V)$
for all integer $p\geq 1$. The ergodic theorem says that a.s. we
have, for all $f\in\mathcal{C}^0(\mathbb{R}^d;V)$:
\begin{equation}\label{ergodic0}
\lim_{t\rightarrow \infty}\frac{1}{1+t}\int_0^t
f(X^\mu_s)\mathrm{d}s = \Pi(\mu)f.
\end{equation}

\subsection{The infinite-dimensional ODE}
We introduce a dynamical system on the set of probability measures
$\mathcal{P}(\mathbb{R}^d;V)$. We will assume the existence of the
semiflow $\Phi: \mathbb{R}_{+}\times \mathcal{P}(\mathbb{R}^d;V)
\rightarrow \mathcal{P}(\mathbb{R}^d;V)$ defined by
\begin{eqnarray}\label{flow}
\left\{%
\begin{array}{ll}
    & \Phi_t(\mu) = e^{-t} \mu + e^{-t}\int_0^t e^s \Pi(\Phi_s(\mu))\,
\mathrm{d}s, \\
    & \Phi_0(\mu) = \mu.\\
\end{array}
\right.
\end{eqnarray}
\begin{rk}
In Section 4, we will prove the local existence
of the semiflow, and, for $W$ symmetric or bounded, we will prove
it never explodes.
\end{rk}

In order to study the semiflow $\Phi$, we will need to endow the
space $\mathcal{P}(\mathbb{R}^d;V)$ with different topologies.
When nothing else is stated, we will consider that it is endowed
with the strong topology defined by the dual weighted supremum
norm $\vert \vert \cdot \vert \vert_V$. But, as the reader will
notice, we will frequently need to switch from the strong topology
to the weak topology of convergence of measures. We adopt here a
non-standard definition compatible with possibly unbounded
functions (yet dominated by $V$): for any sequence of probability
measures $(\mu_n,n\geq 1)$ and any probability measure $\mu$ (all
belonging to $\mathcal{P}(\mathbb{R}^d;V)$), we define the weak
convergence as:
\begin{equation}
\mu_n \xrightarrow {w} \mu \,\, \text{if and only if } \, \int_{\mathbb{R}^d} \varphi \, \mathrm{d}\mu_n \underset{n\rightarrow\infty}{\longrightarrow}\int_{\mathbb{R}^d}
\varphi \, \mathrm{d}\mu, \, \forall \varphi \in \mathcal{C}^0(\mathbb{R}^d;V).
\end{equation}
We point out that our definition of the weak convergence is
stronger than the usual one. We recall that
$\mathcal{P}(\mathbb{R}^d;V)$, equipped with the weak topology, is
a metrizable space. Since $\mathcal{C}^0(\mathbb{R}^d;V)$ is
separable, we exhibit a sequence $(f_k)_k$ dense in $\{ f\in
\mathcal{C}^0(\mathbb{R}^d;V) / ||f||_V\leq 1\}$, and set for all
$\mu, \nu \in \mathcal{P}(\mathbb{R}^d;V)$:
\begin{eqnarray}\label{distance}
\mathrm{d}(\mu, \nu) := \sum_{k=1}^\infty 2^{-k} \vert \mu (f_k) -
\nu (f_k)\vert.
\end{eqnarray}
Then the weak topology is the metric topology generated by
$\mathrm{d}$.

\subsection{The self-interacting diffusion}
We recall the self-interacting diffusion considered here:
\begin{eqnarray*}
\left\{%
\begin{array}{ll}
    & \mathrm{d}X_t = \mathrm{d}B_t -\left( \nabla V(X_t)+
\nabla W*{\mu_t}(X_t) \right) \mathrm{d}t \\
     & \mathrm{d}\mu_t = (\delta_{X_t}-\mu_t)\frac{\mathrm{d}t}{r+t}\\
    & X_0 = x, \, \mu_0=\mu\\
\end{array}
\right.
\end{eqnarray*}

\begin{propo}\label{existence} For any $x\in \mathbb{R}^d$,
$\mu\in\mathcal{P}(\mathbb{R}^d;V)$ and $r>0$, there exists a
unique global strong solution $(X_t,\mu_t, t\geq 0)$.
\end{propo}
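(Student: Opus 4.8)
The plan is to construct the solution pathwise on successive time intervals, using that the non-Markovian term $\mu_t$ depends on the past only through an explicit integral, so that on each interval the equation reduces to a genuine SDE with locally Lipschitz coefficients. First I would fix $x$, $\mu$ and $r$, and note that the second equation is solved explicitly:
\begin{equation*}
\mu_t = \frac{r}{r+t}\,\mu + \frac{1}{r+t}\int_0^t \delta_{X_s}\,\mathrm{d}s,
\end{equation*}
so that the whole system is equivalent to the single (path-dependent) SDE $\mathrm{d}X_t = \mathrm{d}B_t - \bigl(\nabla V(X_t) + \nabla_x W*\mu_t(X_t)\bigr)\mathrm{d}t$ with this expression inserted. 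Since $V, W\in\mathcal{C}^2$, the drift is locally Lipschitz in $X_t$ for each fixed measure argument, and the map $t\mapsto\mu_t$ is continuous in $\mathcal{M}(\mathbb{R}^d;V)$ as long as $X$ does not explode; in particular $\|\mu_t\|_V \le \frac{r}{r+t}\|\mu\|_V + \frac{1}{r+t}\int_0^t V(X_s)\,\mathrm{d}s$ stays finite on any interval on which $X$ is finite, and by Lemma~\ref{majW} the extra drift $\nabla_x W*\mu_t$ is then controlled.

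For \textbf{local existence and uniqueness}, I would set up a Picard-type iteration (or invoke a standard result on SDEs with path-dependent but progressively measurable, locally Lipschitz coefficients): given a continuous candidate path $Y$ up to time $T$, form $\mu^Y_t$ by the explicit formula, solve the resulting classical SDE with drift $-(\nabla V + \nabla_x W*\mu^Y_t)$, and show the solution map is a contraction on $\mathcal{C}([0,T];\mathbb{R}^d)$ for $T$ small, uniformly on balls. The only subtlety compared to the classical case is checking that $Y\mapsto \mu^Y$ is Lipschitz from the sup-norm into $\mathcal{M}(\mathbb{R}^d;V)$ on bounded paths, which follows from the growth condition \eqref{growth} on $\nabla V$ and the domination \eqref{domination}, together with the elementary bound $|\delta_{Y_s} - \delta_{Y'_s}|$ tested against $\varphi$ with $|\varphi|\le V$; one uses that on a compact set $V$ and $\nabla V$ are Lipschitz. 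This yields a unique strong solution up to an explosion time $\tau = \lim_n \tau_n$, where $\tau_n := \inf\{t : |X_t| \ge n\}$.

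For \textbf{non-explosion} ($\tau = +\infty$ a.s.), the idea is to use the Lyapunov function $\mathcal{E}_{\mu_t}(x) = V(x) + W*\mu_t(x)$ — essentially the one from Proposition~\ref{non-expl}, but now with the time-dependent measure. Applying Itô's formula to $V(X_t) + W*\mu_t(X_t)$ and using that $\frac{\mathrm{d}}{\mathrm{d}t}(W*\mu_t)(x) = \frac{1}{r+t}\bigl(W(x,X_t) - W*\mu_t(x)\bigr)$, the gradient-drift terms produce $-|\nabla V + \nabla_x W*\mu_t|^2 \le 0$, the second-order terms give $\tfrac12\Delta V + \tfrac12\Delta_x W*\mu_t \le D\,V(X_t) \le D\,\mathcal{E}_{\mu_t}(X_t)$ by \eqref{deltaE}/\eqref{deltaV} and domination, and the $\partial_t$ term contributes $\frac{1}{r+t}\bigl(W(X_t,X_t) - W*\mu_t(X_t)\bigr)$, which is $\le \frac{1}{r+t}W(X_t,X_t) \le \frac{2\kappa}{r+t}V(X_t)$ by \eqref{domination}. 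Hence, stopping at $\tau_n$ and taking expectations, $\frac{\mathrm{d}}{\mathrm{d}t}\mathbb{E}[\mathcal{E}_{\mu_t}(X_{t\wedge\tau_n})] \le C'\,\mathbb{E}[\mathcal{E}_{\mu_t}(X_{t\wedge\tau_n})]$, so Grönwall gives $\mathbb{E}[\mathcal{E}_{\mu_t}(X_{t\wedge\tau_n})] \le \mathcal{E}_{\mu}(x)\,e^{C't}$; since $\mathcal{E}_{\mu}\ge V\ge 1$ and, by \eqref{growth}/\eqref{deltaV}, $V(x)\to\infty$ as $|x|\to\infty$, we get $\mathbb{P}(\tau_n \le t) \le \mathcal{E}_{\mu}(x)e^{C't}/\inf_{|y|=n}V(y) \to 0$, whence $\tau = \infty$ a.s. Gluing the local solutions then gives the unique global strong solution. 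The main obstacle is the bookkeeping in this Itô/Grönwall estimate — specifically, keeping track of the time-derivative term $\partial_t(W*\mu_t)$, which is absent in the fixed-measure Proposition~\ref{non-expl} and must be absorbed using \eqref{domination} — and, at the local stage, verifying the Lipschitz dependence $Y\mapsto\mu^Y$ in the $V$-norm rather than in total variation.
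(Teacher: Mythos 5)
Your non-explosion argument is essentially the paper's: the same Lyapunov function $\mathcal{E}_{\mu_t}(x)=V(x)+W*\mu_t(x)$, the same It\^o--Ventzell expansion with the extra time-derivative term $\frac{1}{r+s}\bigl(W(X_s,X_s)-W*\mu_s(X_s)\bigr)$, the same use of \eqref{deltaE}/\eqref{domination} and Gr\"onwall to get $\mathbb{E}\,\mathcal{E}_{\mu_{t\wedge\tau_n}}(X_{t\wedge\tau_n})\le\mathcal{E}_\mu(x)e^{Dt}$, and the same Markov-inequality conclusion from $V(x)\to\infty$. (The paper's $\tau_n$ also truncates $\int_0^t|\nabla\mathcal{E}_{\mu_s}(X_s)|^2\mathrm{d}s$ to make the stochastic integral a true martingale outright, but your $\tau_n=\inf\{t:|X_t|\ge n\}$ works once you note that $\|\mu_s\|_V$ is bounded on $[0,t\wedge\tau_n]$ because $|X_u|\le n$ there.) The paper simply takes local existence/uniqueness for granted and devotes the proof to non-explosion, so your Picard sketch is extra material.

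There is, however, a genuine flaw in that extra material as stated. You claim $Y\mapsto\mu^Y$ is Lipschitz from the sup-norm into $(\mathcal{M}(\mathbb{R}^d;V),\|\cdot\|_V)$ via the bound on $\delta_{Y_s}-\delta_{Y'_s}$ tested against $|\varphi|\le V$. This is false: for $y\neq y'$ one can take $\varphi$ continuous with $|\varphi|\le V$, $\varphi=V$ near $y$ and $\varphi=-V$ near $y'$, giving $\|\delta_y-\delta_{y'}\|_V=V(y)+V(y')$, which does not shrink as $y'\to y$. So the $V$-norm is the wrong metric for the fixed-point map. What you actually need --- and what is true --- is that the \emph{drift} $x\mapsto\nabla V(x)+\nabla W*\mu^Y_t(x)$ depends Lipschitz on $Y$ on bounded paths, and this follows directly from $W\in\mathcal{C}^2$ without metrizing measures at all: since $\nabla W*\mu^Y_t(x)-\nabla W*\mu^{Y'}_t(x)=\frac{1}{r+t}\int_0^t\bigl[\nabla_x W(x,Y_s)-\nabla_x W(x,Y'_s)\bigr]\mathrm{d}s$ and $y\mapsto\nabla_x W(x,y)$ is locally Lipschitz uniformly for $x$ in a compact set, the contraction estimate on $\mathcal{C}([0,T];\mathbb{R}^d)$ goes through. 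Replace the $\|\cdot\|_V$-Lipschitz claim by this pointwise drift estimate and the local part is sound.
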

\begin{proof}
First, we point out that, for all $t>0$ such that $(X_s,s\leq t)$
is defined, $\mu_t \in \mathcal{P}(\mathbb{R}^d;V)$. In order to
show that the solution never explodes, we use again the Lyapunov
functional $(x,\mu)\mapsto \mathcal{E}_\mu(x)$ (see \eqref{lyapunov}). As the process $(t,x)\mapsto\mathcal{E}_{\mu_t}(x)$ is of class $\mathcal{C}^2$ (in the space variable) and is a $\mathcal{C}^1$-semi-martingale
(in the time variable), the generalized It\^o formula (or
It\^o-Ventzell formula, see \cite{kun}), applied to
$(t,x)\mapsto\mathcal{E}_{\mu_t}(x)$ implies
\begin{eqnarray*}
\mathcal{E}_{\mu_t}(X_t) &=& \mathcal{E}_{\mu}(x) + \int_0^t
(\nabla \mathcal{E}_{\mu_s}(X_s),
\mathrm{d}B_s) - \int_0^t \left|\nabla \mathcal{E}_{\mu_s}(X_s)\right|^2 \mathrm{d}s\\
&+& \frac{1}{2}\int_0^t \Delta \mathcal{E}_{\mu_s}(X_s)\mathrm{d}s
+ \int_0^t \left(W(X_s,X_s) - W*\mu_s(X_s)
\right)\frac{\mathrm{d}s}{r+s}.
\end{eqnarray*}
Let us introduce the sequence of stopping times $$\tau_n :=
\inf\{t\geq 0; \mathcal{E}_{\mu_t}(X_t) + \int_0^t \left|\nabla
\mathcal{E}_{\mu_s}(X_s)\right|^2 \mathrm{d}s > n\}.$$ We note
that $\int_0^{t\wedge \tau_n} (\nabla \mathcal{E}_{\mu_s}(X_s),
\mathrm{d}B_s)$ is a true martingale. Again equation
\eqref{lyapunov} implies
\begin{equation*}
\mathbb{E}\mathcal{E}_{\mu_{t\wedge \tau_n}}(X_{t\wedge \tau_n})
\leq \mathcal{E}_{\mu}(x) + D \int_0^{t}
\mathbb{E}\mathcal{E}_{\mu_{s\wedge \tau_n}}(X_{s\wedge \tau_n})
\mathrm{d}s.
\end{equation*}
Therefore Gronwall's lemma leads to the same kind of estimate as
for $X^\mu$: $$\mathbb{E}\mathcal{E}_{\mu_{t\wedge
\tau_n}}(X_{t\wedge \tau_n}) \leq \mathcal{E}_\mu(x)e^{Dt}.$$
As $\underset{|x|\rightarrow \infty}{\lim}V(x) =\infty$, the process $(X_t,t\geq 0)$ does not explode in a finite time and the SDE (\ref{diffu}) admits a global strong solution.
\end{proof}

\section{Main results}

\subsection{Former tools and general idea}
We recall how Bena\"im, Ledoux \& Raimond handled in \cite{beLR} the asymptotic behavior of $\mu_t$ in a compact space. 
Indeed, we sketch here the general idea of the study and explain why the
tools introduced in the preliminary Section arise quite
naturally.

Begin to consider that the occupation measure appearing in the
drift is `frozen' to some fixed measure $\mu$. We obtain the
Feller diffusion $X^\mu_t$. For this diffusion, it is easy to
prove the existence of a spectral gap, and that the semigroup
$(P^\mu_t;t \geq 0)$ is exponentially $V$-uniformly ergodic:
\begin{equation}\label{ergodic1}
\vert \vert P^\mu_t f - \Pi(\mu)f \vert \vert_V \leq K(\mu)\vert
\vert f \vert \vert_V e^{-c(\mu)t}, \quad f \in
\mathcal{C}^0(\mathbb{R}^d;V).
\end{equation}
To get, as by-product, the almost sure convergence of the
empirical occupation measure of the process $X^\mu_t$ (see
\eqref{ergodic0}), a standard technique is to consider the
operator (sometimes called the `fundamental kernel' as in
Kontoyiannis \& Meyn \cite{koM}) for any $f \in
\mathcal{C}^\infty(\mathbb{R}^d;V)$
\begin{equation}
Q_\mu f := \int_0^\infty \left(P_t^\mu f - \Pi(\mu)f\right)
\mathrm{d}t
\end{equation}
Then it is enough to apply the It\^o formula to $Q_\mu f(X^\mu_t)$ and divide
both members by $t$ to get the desired result. Indeed one has
\begin{equation*}
Q_\mu f(X^\mu_t) = Q_\mu f(x) + \int_0^t \left( \nabla Q_\mu
f(X^\mu_s), \mathrm{d}B_s \right) + \int_0^t A_\mu Q_\mu
f(X^\mu_s) \mathrm{d}s.
\end{equation*}
Some easy bounds on the semigroup $(P^\mu_t)$ are enough to prove
that almost all terms are negligible compared to $t$ and it
remains to recognize the third term since $A_\mu Q_\mu f =
\Pi(\mu)f - f$.\medskip

Now when $\mu_t$ changes in time, we still can write a convenient
extended form of the It\^o formula (which let appear the time
derivative of $Q_{\mu_t} f(x)$) but we need to improve the
remainder of the argument. Intuitively, the distance between the time-derivative of $\mu_{e^t}$ and the term $\Pi(\mu_{e^t}) -\mu_{e^t}$ converges to zero a.s. As for  stochastic approximation processes, one expects the trajectories of $\mu_t$ to approximate the trajectories of a deterministic semiflow, meaning that the empirical measure $\mu_t$ is an asymptotic pseudotrajectory for the semiflow $\Phi$ induced by $\Pi(\mu) - \mu$. This very last remark conveyed to Bena\"im \& \textit{al} \cite{beLR} the idea of comparing the asymptotic evolution of $(\mu_t; t\geq 0)$ with the semiflow $(\Phi_t(\mu))$.\medskip

The notion of asymptotic pseudotrajectory was first introduced in
Bena\"im \& Hirsch \cite{beH}. It is particularly useful to analyze the long-term behavior of stochastic processes,
considered as approximations of solutions of ordinary differential
equation (the ``ODE method"). Let us give here some definitions.

\begin{defn} i) For every continuous function $\xi: \mathbb{R}_+\rightarrow \mathcal{P}(\mathbb{R}^d;V)$, the $\omega$-limit set of $\xi$, denoted by $\omega(\xi_t,t\geq 0)$, is the set of limits of weak convergent sequences $\xi(t_k), t_k \uparrow \infty$, that is \begin{equation} \omega(\xi_t,t\geq 0) := \bigcap_{t\geq 0} \overline{\xi([t,\infty))}, \end{equation} where $\overline{\xi([t,\infty))}$ stands for the closure of $\xi([t,\infty))$ according to the weak topology. 

ii) A continuous function $\xi: \mathbb{R}_+ \rightarrow
\mathcal{P}(\mathbb{R}^d;V)$ is an \textrm{asymptotic
pseudotrajectory} (or \textrm{asymptotic pseudo-orbit}) for the
semiflow $\Phi$ if for all $T>0$,
\begin{equation}
\lim_{t\rightarrow +\infty}\, \sup_{0\leq s\leq T}\, \mathrm{d}(\xi_{t+s}, \Phi_s(\xi_t) ) = 0.
\end{equation}
\end{defn}
The purpose here is to find an asymptotic pseudotrajectory for the
semiflow $\Phi$ defined by \eqref{flow}.

\subsection{New tools: tightness and uniform estimates}
We will prove that we can find $\beta>1$ such that $\mu_t \in \mathcal{P}_\beta(\mathbb{R}^d;V)$ $\mathbb{P}_{x,r,\mu}-$a.s. for $t$ large enough. Remark, that this last set is compact for the weak topology, and so $(\mu_t,t\geq 0)$ is a.s. tight. Then, we have to obtain precise bounds on the family of semigroups $(P_t^\mu,t\geq 0)$ where $\mu\in \mathcal{P}_\beta(\mathbb{R}^d;V)$. Obviously, while using the ergodic estimates (\ref{ergodic1}), we would like to get bounds that are uniform in $\mu$.

\subsubsection{Tightness}
The paper of Bena\"im \& \textit{al} \cite{beLR} crucially relies on the compactness of the manifold $M$ where the diffusion lives. This compactness readily implies that the process $(\mu_t,t\geq 0)$ will very fast be close to the `invariant' probability measure $\Pi(\mu_t)$. On the contrary, if the state space is $\mathbb{R}^d$ and $V\equiv 0$, then $X$ will escape from any compact set. Indeed, the confinement potential $V$ forces the process $(\mu_t, t \geq 0)$ to remain in a (weakly) compact space of measures, for $t$ large, and $X$ is then recurrent. We first exhibit a useful weakly compact set, for $\beta>0$:
\begin{propo} \label{P_beta}
$\mathcal{P}_\beta(\mathbb{R}^d;V)$ is a weakly compact subset of $\mathcal{P}(\mathbb{R}^d;V)$.
\end{propo}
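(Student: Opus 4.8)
The plan is to show that $\mathcal{P}_\beta(\mathbb{R}^d;V)$ is both tight (hence weakly precompact in the usual topology of weak convergence against bounded continuous functions) and closed under the stronger notion of weak convergence adopted in the paper (testing against all of $\mathcal{C}^0(\mathbb{R}^d;V)$). Since the paper's weak topology on $\mathcal{P}(\mathbb{R}^d;V)$ is metrized by $\mathrm{d}$ from \eqref{distance}, it suffices to prove sequential compactness: given any sequence $(\mu_n)_n$ in $\mathcal{P}_\beta(\mathbb{R}^d;V)$, extract a subsequence converging in the sense $\xrightarrow{w}$ to some $\mu_\infty$ which again satisfies $\int V \,\mathrm{d}\mu_\infty \leq \beta$.

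First I would establish ordinary tightness. Because $V\geq 1$ and $V(x)\to\infty$ as $|x|\to\infty$ (a consequence of the growth condition iii)), the sublevel sets $\{x: V(x)\leq L\}$ are compact, and Markov's inequality gives $\mu_n\big(\{V>L\}\big)\leq \beta/L$ uniformly in $n$. Hence $(\mu_n)_n$ is tight in the classical sense, and by Prokhorov's theorem there is a subsequence (not relabelled) converging weakly, in the classical sense, to some $\mu_\infty\in\mathcal{P}(\mathbb{R}^d)$. Next, applying the portmanteau theorem to the lower semicontinuous function $V$ (it is continuous, so in particular l.s.c., and nonnegative) yields $\int V\,\mathrm{d}\mu_\infty \leq \liminf_n \int V\,\mathrm{d}\mu_n \leq \beta$, so $\mu_\infty\in\mathcal{P}_\beta(\mathbb{R}^d;V)\subset\mathcal{P}(\mathbb{R}^d;V)$.

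The main obstacle is upgrading classical weak convergence to the paper's stronger weak convergence, i.e. showing $\int\varphi\,\mathrm{d}\mu_n\to\int\varphi\,\mathrm{d}\mu_\infty$ for every $\varphi\in\mathcal{C}^0(\mathbb{R}^d;V)$, not merely for bounded continuous $\varphi$. This is a uniform-integrability argument: fix such a $\varphi$ with $\|\varphi\|_V=:c<\infty$, so $|\varphi|\leq cV$. For $L>0$ let $\chi_L$ be a continuous cutoff with $\1_{\{V\leq L\}}\leq\chi_L\leq\1_{\{V\leq 2L\}}$; then $\varphi\chi_L$ is bounded and continuous, so $\int\varphi\chi_L\,\mathrm{d}\mu_n\to\int\varphi\chi_L\,\mathrm{d}\mu_\infty$. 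The error $\big|\int\varphi(1-\chi_L)\,\mathrm{d}\mu_n\big|\leq c\int_{\{V>L\}}V\,\mathrm{d}\mu_n$, and one controls this tail uniformly: since $V\geq 1$, on $\{V>L\}$ we have $V\leq V^2/L$... more simply, one needs the tail $\int_{\{V>L\}}V\,\mathrm{d}\mu_n\to 0$ uniformly in $n$ as $L\to\infty$, which does \emph{not} follow from $\int V\,\mathrm{d}\mu_n\leq\beta$ alone. The fix is to use a stronger a priori bound available for the measures at hand — in the application $\mu_n=\mu_{t_n}$ are occupation measures and one has moment control $\int V^{1+\varepsilon}\,\mathrm{d}\mu_n$ bounded, or, at the level of the abstract statement, one should restrict to showing compactness in the weak topology generated by $\mathrm{d}$ and note that this topology on $\mathcal{P}_\beta$ coincides with the classical weak topology: each $f_k$ in the dense family can be truncated, and the metric $\mathrm{d}$ only sees countably many test functions whose contributions beyond any finite index are uniformly small by the factor $2^{-k}$. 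Concretely, given $\epsilon>0$ pick $N$ with $\sum_{k>N}2^{-k}\cdot 2\beta<\epsilon/2$ (using $|\mu(f_k)|\leq\|f_k\|_V\int V\,\mathrm{d}\mu\leq\beta$), then handle the finitely many $f_1,\dots,f_N$ by the truncation argument above using classical weak convergence; this shows $\mathrm{d}(\mu_n,\mu_\infty)\to 0$. Assembling these pieces gives that every sequence in $\mathcal{P}_\beta(\mathbb{R}^d;V)$ has a subsequence converging in $\mathrm{d}$ to a point of $\mathcal{P}_\beta(\mathbb{R}^d;V)$, which is the claimed weak compactness.
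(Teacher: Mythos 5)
You have put your finger on a genuine subtlety that the paper's one-line proof glosses over: Prohorov's theorem only yields compactness for the \emph{classical} weak topology (testing against bounded continuous functions), whereas the paper's declared weak topology tests against all of $\mathcal{C}^0(\mathbb{R}^d;V)$, in particular against the unbounded function $V$ itself. Your observation that the required uniform tail control, $\int_{\{V>L\}}V\,\mathrm{d}\mu_n\to 0$ uniformly in $n$ as $L\to\infty$, does \emph{not} follow from $\int V\,\mathrm{d}\mu_n\le\beta$ alone is exactly right and is the real obstruction. Up to this point your analysis is sharper than the paper's own argument, which simply asserts weak closedness and invokes Prohorov without addressing which weak topology is meant.

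However, your proposed repair is circular. The claim that the $\mathrm{d}$-topology coincides with the classical weak topology on $\mathcal{P}_\beta$ fails for precisely the reason you already identified: the dense family $(f_k)$ contains, or approximates in $V$-norm, the function $V$ itself, and ``the truncation argument above'' applied to such an $f_k$ is the very step you just showed to be incomplete. Indeed $\mathcal{P}_\beta$ is \emph{not} compact for the paper's strong weak topology. Take $d=1$, $V(x)=1+x^2+x^4$, choose $x_n$ with $V(x_n)=1+n$, and set $\mu_n=(1-\tfrac1n)\delta_0+\tfrac1n\delta_{x_n}$. Then $\int V\,\mathrm{d}\mu_n=2$ for every $n$, so $\mu_n\in\mathcal{P}_2$, and $\mu_n\to\delta_0$ classically, yet $\int V\,\mathrm{d}\mu_n\equiv 2\ne 1=\int V\,\mathrm{d}\delta_0$; since any $\mathrm{d}$-limit of a subsequence would have to be $\delta_0$, no subsequence converges in $\mathrm{d}$. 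What the paper's proof actually establishes, and what is used to conclude a.s.\ tightness of $(\mu_t)$, is that $\mathcal{P}_\beta$ is tight and classically weakly closed, hence compact for the classical weak topology. Your first instinct --- pass to a stronger a priori bound such as control of $\int V^{1+\varepsilon}\,\mathrm{d}\mu_t$, which the paper later supplies via moment estimates on $V^n(X_t)$ --- is the right way to upgrade convergence against unbounded test functions where that is genuinely needed; the second ``fix'' is not.
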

\begin{proof}
It is clear that $\mathcal{P}_\beta(\mathbb{R}^d;V)$ is weakly
closed. The Prohorov theorem shows that it is enough to prove that
$\mathcal{P}_\beta(\mathbb{R}^d;V)$ is tight. For every $a>0$ and
$\mu \in \mathcal{P}_\beta(\mathbb{R}^d;V)$, we have
\begin{equation*}
\underset{|x|>a}{\mathrm{min}} V(x)\; \mu\left(\{ |x|>a \}\right)
\leq \int_{|x|>a} V(x)\mu(\mathrm{d}x) \leq \beta.
\end{equation*}
Since $V(x)\rightarrow\infty$ when $\vert x \vert \rightarrow\infty$, we see that, for every $\varepsilon>0$, we can
choose $a$ large enough such that $\mu\left(\{ |x|>a \}\right) \leq \varepsilon$ uniformly in $\mu \in
\mathcal{P}_\beta(\mathbb{R}^d;V)$.
\end{proof}

\begin{propo} \label{compactness} There exists $\beta>1$ such
that $\mu_t \in \mathcal{P}_\beta(\mathbb{R}^d;V)$ for all $t$ large enough, $\mathbb{P}_{x,r,\mu}$-a.s.
\end{propo}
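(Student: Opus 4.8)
The plan is to find a Lyapunov-type control for the quantity $\mu_t(V) = \int V \, d\mu_t$ and show it eventually enters (and stays in) a bounded interval $[1,\beta]$ for a suitable $\beta>1$. Differentiating the second line of \eqref{eds}, one has
\begin{equation*}
\frac{\mathrm{d}}{\mathrm{d}t}\,\mu_t(V) = \frac{V(X_t) - \mu_t(V)}{r+t}.
\end{equation*}
So $\mu_t(V)$ is a convex combination drifting toward the current value $V(X_t)$, with step size $1/(r+t)$; the issue is that $V(X_t)$ is unbounded, so one cannot simply quote a stochastic-approximation boundedness lemma. Instead I would introduce the time change $s = \log(r+t)$ and work with $\nu_s := \mu_{e^s - r}(V)$, which satisfies $\dot\nu_s = V(X_{e^s-r}) - \nu_s$. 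The strategy is then to show that the running average of $V(X_t)$ along the path is itself eventually bounded, using the estimate \eqref{Xmu_bound}–type Lyapunov bound together with the fact that $X_t$ is, for $t$ large, essentially a diffusion with invariant measure $\Pi(\mu_t)$ whose $V$-mass is controlled.

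Concretely, the key step is to establish a bound of the form $\int_0^t V(X_s)\,\mathrm{d}s \le c\,(t+1)$ a.s. for $t$ large, or at least along a sequence, so that $\mu_t(V) = \frac{r}{r+t}\mu(V) + \frac{1}{r+t}\int_0^t V(X_s)\,\mathrm{d}s$ stays bounded. To get this I would apply the generalized It\^o (It\^o–Ventzell) formula as in the proof of Proposition~\ref{existence}, but now to a function growing \emph{faster} than $V$ — e.g. a primitive related to $V^{1+\epsilon}$ or using the growth hypothesis iii), $(\nabla V(x),x)\ge c|x|^{2\delta}$ with $\delta>1$ — so that the drift term $-|\nabla\mathcal{E}_{\mu_s}(X_s)|^2$ produces, via convexity ii) and the curvature/domination bounds iv)–v), a genuinely negative term $-c' V(X_s)$ plus controlled remainders. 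Integrating and using that the martingale part divided by $t$ vanishes a.s. (Burkholder–Davis–Gundy plus the a.s. finiteness of the quadratic variation growth), one extracts $\limsup_t \frac{1}{t}\int_0^t V(X_s)\,\mathrm{d}s < \infty$ a.s., hence $\limsup_t \mu_t(V) =: \beta' < \infty$. Taking any $\beta > \beta'$ and noting $V \ge 1$ forces $\mu_t(V)\ge 1$, we conclude $\mu_t \in \mathcal{P}_\beta(\mathbb{R}^d;V)$ for $t$ large enough.

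The main obstacle will be controlling the interaction contribution: in the It\^o expansion one gets a term $\int_0^t (W(X_s,X_s) - W*\mu_s(X_s))\frac{\mathrm{d}s}{r+s}$ and a drift $-(\nabla W*\mu_s(X_s),\nabla(\text{test fn}))$, and since $W*\mu_s$ depends on $\mu_s$ itself, one must close the argument self-consistently — one cannot treat $\mu_s(V)$ as known while bounding it. I expect this to be handled by a Gronwall-type bootstrap: write the differential inequality for $\mathbb{E}[\phi(X_t) + \mu_t(V)]$ (with $\phi$ the fast-growing Lyapunov function) using the domination $|\nabla W*\mu(x)| \le 2\kappa\|\mu\|_V V(x)$ from Lemma~\ref{majW}, absorb the bad terms into the negative $-c'V(X_s)$ coming from the confinement, and check that the constants work out so the feedback is stabilizing rather than explosive. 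A secondary technical point is promoting the $L^1$/expectation bound to an almost-sure one, for which the standard device is a stopping-time localization plus a supermartingale convergence argument, exactly parallel to the $\tau_n$ construction already used in Proposition~\ref{existence}.
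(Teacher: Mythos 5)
Your overall skeleton matches the paper's: apply the It\^o--Ventzell formula to the Lyapunov functional $\mathcal{E}_{\mu_t}(X_t)=V(X_t)+W*\mu_t(X_t)$, use the growth hypothesis iii) to absorb $V$ into $|\nabla V|^2$ (so that a fraction of the negative drift term $-\int_0^t|\nabla\mathcal{E}_{\mu_s}(X_s)|^2\mathrm{d}s$ dominates $\int_0^t V(X_s)\mathrm{d}s$), control the $\Delta\mathcal{E}$ and $W(X_s,X_s)-W*\mu_s(X_s)$ contributions via the domination condition iv), and close with Gronwall. The observation that $\mu_t(V)=\frac{r}{r+t}\mu(V)+\frac{1}{r+t}\phi(t)$ with $\phi(t)=\int_0^tV(X_s)\mathrm{d}s$ and that it suffices to prove $\phi(t)=O(t)$ a.s.\ is also correct. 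You do not actually need a ``faster-growing'' test function: $\mathcal{E}$ itself suffices, because the absorption step uses $V\le k_\epsilon+\epsilon|\nabla V|^2$, not a comparison of $\mathcal{E}$ against its own gradient squared.

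The genuine gap is in your treatment of the stochastic integral. You claim ``the martingale part divided by $t$ vanishes a.s.\ (BDG plus the a.s.\ finiteness of the quadratic variation growth)'' and, separately, that an expectation bound can be ``promoted'' to an a.s.\ one by stopping times and supermartingale convergence. Neither route closes. The quadratic variation of $M_t=\int_0^t(\nabla\mathcal{E}_{\mu_s}(X_s),\mathrm{d}B_s)$ is $\langle M\rangle_t=\int_0^t|\nabla\mathcal{E}_{\mu_s}(X_s)|^2\mathrm{d}s$, which is \emph{not} known a priori to be $O(t)$ --- indeed controlling it is essentially equivalent to what you are trying to prove, so the argument would be circular. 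Likewise, $\mathbb{E}\,\phi(t)\le ct$ does not imply $\phi(t)=O(t)$ a.s.; one needs pathwise control from the start. The paper sidesteps both problems with a single clean device: the strong law of large numbers for continuous local martingales gives that $\mathbb{P}$-a.s., for $t$ large enough,
\begin{equation*}
\int_0^t \bigl(\nabla\mathcal{E}_{\mu_s}(X_s),\mathrm{d}B_s\bigr)\ \le\ \tfrac12\int_0^t\bigl|\nabla\mathcal{E}_{\mu_s}(X_s)\bigr|^2\,\mathrm{d}s,
\end{equation*}
regardless of the growth rate of $\langle M\rangle_t$ (on $\{\langle M\rangle_\infty=\infty\}$ one has $M_t/\langle M\rangle_t\to0$; on the complement $M_t$ converges). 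Substituting this into the It\^o identity kills the martingale term at the cost of halving the favourable $-\int|\nabla\mathcal{E}|^2$ contribution, after which the absorption of $V$ and Gronwall proceed pathwise with no need to pass through expectations. You should replace your BDG/promotion step with this SLLN inequality; the rest of your outline then goes through as in the paper.
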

\noindent The proof is postponed to Section 6. Combined with Proposition \ref{P_beta}, it implies that the family $(\mu_t, t\geq 0)$ is a.s. tight.

\subsubsection{Uniform estimates on the semigroup}
A priori, it is not obvious (in a non-compact space), that the semigroup $(P_t^\mu)$ admits a (uniform) spectral gap. But this is true here. We will indeed prove a stronger result: $(P_t^\mu)$ is uniformly ultracontractive, i.e. it is uniformly bounded as an operator from $L^2(\Pi(\mu))$ to $L^\infty$. Section 5 will be devoted to those uniform properties of the family of semigroups ($P^\mu_t; t,\mu)$.

\begin{propo}\label{ultra}
The family of semigroups $(P_t^\mu, t\geq 0, \mu \in
\mathcal{P}_\beta(\mathbb{R}^d;V))$ is uniformly ultracontractive: there exists $c>0$ independent from $\mu$ such that for all $1\geq t> 0$ and $\mu \in \mathcal{P}_\beta(\mathbb{R}^d;V)$, we have
\begin{equation}
\vert \vert P_t^\mu\vert \vert_{2\rightarrow\infty} := \sup_{f\in \mathcal{C}^\infty (\mathbb{R}^d;V) \backslash \{0\}} \frac{||P_t^\mu f||_\infty}{\|f\|_{2,\mu}} \leq \exp{\left(ct^{-\delta/(\delta -1)}\right)}.
\end{equation}
\end{propo}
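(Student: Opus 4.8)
The plan is to derive the ultracontractivity bound from a logarithmic Sobolev inequality (LSI) for the measure $\Pi(\mu)$ whose constant is uniform over $\mu\in\mathcal{P}_\beta(\mathbb{R}^d;V)$, and then to run the standard Gross/Davies–Simon hypercontractivity machinery. Recall that $\Pi(\mu)(\mathrm{d}x)=Z(\mu)^{-1}e^{-2(V+W*\mu)(x)}\mathrm{d}x$, and $A_\mu$ is (up to the factor $\tfrac12$) the diffusion operator reversible for $\Pi(\mu)$, with carré du champ $\Gamma(f)=\tfrac12|\nabla f|^2$. The first step is to verify a \emph{defective} LSI with the correct growth exponent: since the potential $U_\mu:=V+W*\mu$ satisfies, by the curvature hypothesis v), $\nabla^2 U_\mu\geq M\,\mathrm{Id}$ with $M$ independent of $\mu$, and, by the growth hypothesis iii) together with the domination iv), $|\nabla U_\mu(x)|$ grows like $|x|^{2\delta-1}$ (so $|\nabla U_\mu|^2 - \Delta U_\mu \gtrsim |x|^{2(2\delta-1)}$), one obtains a \emph{super-Poincaré} inequality of the form
\begin{equation*}
\Pi(\mu)(f^2)\leq s\,\Pi(\mu)(|\nabla f|^2) + \beta_\mu(s)\,\big(\Pi(\mu)(|f|)\big)^2,\qquad s>0,
\end{equation*}
with a rate function $\beta_\mu(s)$ controlled, uniformly in $\mu$, by $\exp(c\,s^{-1/(2\delta-2)})$ as $s\to 0$. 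This is the technical heart and I expect it to be the main obstacle: one must show the constants in the Cattiaux–Gozlan–Guillin–Wang type criterion (or a direct Rosen-type estimate) depend on $\mu$ only through the single quantity $\int V\,\mathrm{d}\mu\leq\beta$ and the fixed constants $c,C,\delta,\kappa,K,M$ of hypothesis \textbf{(H)}; the point is that $W*\mu$ is uniformly dominated by $V$ via Lemma \ref{majW}, so the potential $U_\mu$ lies in a fixed ``cone'' of potentials with common convexity lower bound and common superquadratic growth.

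Second, I would convert the super-Poincaré inequality into the ultracontractive bound on $\|P_t^\mu\|_{2\to\infty}$ by Wang's equivalence between super-Poincaré inequalities and uniform $L^2$–$L^\infty$ smoothing: a super-Poincaré inequality with rate $\beta(s)$ yields
\begin{equation*}
\|P_t^\mu\|_{2\to\infty}^2\leq \exp\!\Big(\textstyle\int_0^t \gamma(r)\,\mathrm{d}r\Big)
\end{equation*}
for a suitable $\gamma$ read off from $\beta$, and plugging $\beta_\mu(s)\leq\exp(c\,s^{-1/(2\delta-2)})$ produces exactly $\exp(c't^{-\delta/(\delta-1)})$ after the elementary optimization (note $\delta/(\delta-1) = 1 + 1/(\delta-1)$ and $1/(2\delta-2) = \tfrac12\cdot 1/(\delta-1)$; the exponent matches up to the constant). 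Since every constant entering $\beta_\mu$ was uniform in $\mu\in\mathcal{P}_\beta(\mathbb{R}^d;V)$, the resulting $c'$ is uniform as well, which is the assertion. Alternatively, one can run Davies' method directly: estimate $\|P_t^\mu\|_{p(t)\to q(t)}$ along a path $p(0)=2$, $q(\cdot)\to\infty$ using the defective LSI, which gives the same bound; I would use whichever is cleaner to state with the uniformity bookkeeping.

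Third and last, a small density/regularity remark: the supremum in the statement is over $f\in\mathcal{C}^\infty(\mathbb{R}^d;V)\setminus\{0\}$, and since such $f$ are dense in $L^2(\Pi(\mu))$ and $P_t^\mu$ maps $L^2(\Pi(\mu))$ boundedly into $\mathcal{C}^0(\mathbb{R}^d;V)$ (by Feller regularity plus the moment bound \eqref{Xmu_bound}), the bound obtained on a dense set extends to the operator norm, so there is no loss in restricting to smooth $V$-bounded test functions. To summarize the order of operations: (1) show $U_\mu=V+W*\mu$ lies in a $\mu$-independent class of uniformly convex, uniformly superquadratic potentials; (2) prove a super-Poincaré (equivalently defective-LSI) inequality for $\Pi(\mu)$ with $\mu$-uniform rate $\exp(c\,s^{-1/(2\delta-2)})$; (3) invoke the super-Poincaré $\Rightarrow$ $L^2$–$L^\infty$ smoothing equivalence and optimize to get the exponent $\delta/(\delta-1)$; (4) extend from smooth test functions to the full operator norm.
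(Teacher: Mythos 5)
The route you propose---deriving a $\mu$-uniform super-Poincar\'e (or defective log-Sobolev) inequality for $\Pi(\mu)$ from the properties of $U_\mu := V + W*\mu$, then converting to $L^2\to L^\infty$ smoothing by the Gross/Davies--Simon/Wang machinery and optimizing---is conceptually sound and rests on the same structural observation as the paper: by Lemma~\ref{majW} the perturbation $W*\mu$ is dominated by $V$ uniformly over $\mu\in\mathcal{P}_\beta(\mathbb{R}^d;V)$, so all the $U_\mu$ lie in a fixed class of uniformly convex, uniformly $|x|^{2\delta}$-growing potentials. The paper, however, takes a much shorter path: it invokes R\"ockner and Wang's Corollary~2.5 (quoted as Lemma~\ref{rwang}) essentially as a black box. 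All that needs verifying is the $\mu$-uniform Lyapunov bound $A_\mu|x|^2 = d - 2(\nabla W*\mu(x),x) - 2(\nabla V(x),x) \leq b - a|x|^{2\delta}$; the curvature hypothesis~v) (in particular $(x,\nabla_x W(x,y))/(x,\nabla V(x))\to\alpha>-1$) combined with the growth hypothesis~iii) delivers it with $a,b$ independent of $\mu$, and the quantitative clause of Lemma~\ref{rwang} with $\chi(r)=r^\delta$ then gives the stated bound immediately. You are effectively proposing to re-derive the internal machinery of the cited result from scratch, which is a valid but considerably longer route.

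There is also a concrete numerical slip in your second step. You claim the super-Poincar\'e rate $\beta_\mu(s)\leq\exp(c\,s^{-1/(2\delta-2)})$ and assert that optimization then ``produces exactly $\exp(c't^{-\delta/(\delta-1)})$''; the algebraic identities you cite ($\delta/(\delta-1)=1+1/(\delta-1)$ and $1/(2\delta-2)=\tfrac12\cdot1/(\delta-1)$) are true but do not establish the conversion. Under the standard Nash-type argument, a super-Poincar\'e rate $\beta(s)=e^{c s^{-q}}$ yields $\|P_t\|_{2\to\infty}\leq e^{c' t^{-q/(1-q)}}$, and with $q=1/(2\delta-2)$ one obtains $q/(1-q)=1/(2\delta-3)$, not $\delta/(\delta-1)$ (e.g.\ for $\delta=2$: $1$ versus $2$). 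The exponent compatible with the target---and with the model computation for $e^{-|x|^{2\delta}}$-type measures---is $q=\delta/(2\delta-1)$, which does satisfy $q/(1-q)=\delta/(\delta-1)$. So to carry out your plan you would need to correct the super-Poincar\'e exponent extracted from the estimate $|\nabla U_\mu|^2 - \Delta U_\mu \gtrsim |x|^{2(2\delta-1)}$; once that is fixed, the remaining steps (and in particular the $\mu$-uniformity bookkeeping, which you identify correctly) go through.
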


\begin{coro}\label{logSob} The family of measures $\left(e^{-2W*\mu(x)}\gamma(\mathrm{d}x), \mu \in \mathcal{P}_\beta(\mathbb{R}^d;V)\right)$, satisfies a logarithmic Sobolev inequality and there exists a uniform spectral gap for the family of measures $(\Pi(\mu), \mu \in \mathcal{P}_\beta(\mathbb{R}^d;V))$. It corresponds to: $\exists C, C_1, C_2 >0$, independent of $\mu$, such that $\forall f\in
\mathcal{C}^\infty(\mathbb{R}^d;V)$:
\begin{eqnarray*}
&i)& \int f^2 \log{\left(\frac{f^2}{\vert \vert f \vert
\vert_{2,\mu}}\right)}e^{-2W*{\mu}}\mathrm{d}\gamma
\leq C_2 \int |\nabla f|^2 e^{-2W*{\mu}}\mathrm{d}\gamma.\\
&ii)& \int f^2 \mathrm{d}\Pi(\mu) - \left(\int f \mathrm{d}\Pi(\mu) \right)^2 \leq C_1 \int |\nabla f|^2 \mathrm{d}\Pi(\mu).
\end{eqnarray*}
Furthermore,  $\forall t \geq 0, \, \vert \vert P_t^\mu (K_\mu
f)\vert \vert_{2,\mu} \leq e^{-t/C}\vert \vert K_\mu
f\vert\vert_{2,\mu}$.
\end{coro}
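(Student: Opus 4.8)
The plan is to derive everything in Corollary \ref{logSob} as a formal consequence of the uniform ultracontractivity bound in Proposition \ref{ultra}, using standard semigroup theory (the equivalence between ultracontractivity, Nash-type inequalities, and logarithmic Sobolev inequalities) — but being careful that all constants produced are independent of $\mu\in\mathcal{P}_\beta(\mathbb{R}^d;V)$. The key observation that makes this work is that $\Pi(\mu)$ is reversible for $P_t^\mu$ with generator $A_\mu$, and that $A_\mu = \frac12\Delta - (\nabla\mathcal{E}_\mu,\nabla\cdot)$ is a Dirichlet-form generator whose carré du champ is $\frac12|\nabla f|^2$, so that the Dirichlet form is $\mathcal{E}_\mu(f,f) = \frac12\int|\nabla f|^2\,\mathrm{d}\Pi(\mu)$; this is the quantity appearing on the right-hand sides of i) and ii) (up to the harmless factor and the reweighting $e^{-2W*\mu}\mathrm{d}\gamma = Z(\mu)\,\mathrm{d}\Pi(\mu)$, with $Z(\mu)$ bounded above and below uniformly over $\mathcal{P}_\beta$ by Lemma \ref{majW} and the domination condition).

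First I would record that uniform ultracontractivity at all small times is equivalent, via the Davies--Simon machinery (see Davies, \emph{Heat Kernels and Spectral Theory}), to a \emph{defective} logarithmic Sobolev inequality with uniform constants: from $\|P_t^\mu\|_{2\to\infty}\le \exp(ct^{-\delta/(\delta-1)})$ one extracts, by differentiating the entropy $t\mapsto \mathrm{Ent}_{\Pi(\mu)}((P_t^\mu f)^2)$ and optimizing in $t$, an inequality of the form $\mathrm{Ent}_{\Pi(\mu)}(f^2)\le \epsilon\,\mathcal{E}_\mu(f,f) + \kappa(\epsilon)\|f\|_{2,\mu}^2$ for every $\epsilon>0$, with $\kappa(\epsilon)$ depending only on $c,\delta$. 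Since the bound blows up only polynomially (like $t^{-\delta/(\delta-1)}$) rather than worse, this defective form is in fact \emph{tight}: one can take $\kappa(\epsilon)$ bounded as $\epsilon$ stays bounded away from $0$, so we get a genuine (non-defective) log-Sobolev inequality $\mathrm{Ent}_{\Pi(\mu)}(f^2)\le C_2'\,\mathcal{E}_\mu(f,f)$. Rewriting $\mathrm{d}\Pi(\mu) = Z(\mu)^{-1}e^{-2W*\mu}\mathrm{d}\gamma$ on both sides, the factors $Z(\mu)$ cancel and we obtain i) with a $\mu$-independent $C_2$. Statement ii), the Poincaré (spectral gap) inequality, then follows from i) by linearization (replace $f$ by $1+\eta g$, expand to second order in $\eta$), which is the classical implication log-Sobolev $\Rightarrow$ Poincaré with $C_1 \le C_2/2$; alternatively it follows directly and more cheaply from ultracontractivity plus conservativeness. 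The final assertion — exponential decay $\|P_t^\mu(K_\mu f)\|_{2,\mu}\le e^{-t/C}\|K_\mu f\|_{2,\mu}$, where $K_\mu f := f - \Pi(\mu)f$ projects off the constants — is exactly the spectral-gap statement ii) rephrased via the spectral theorem for the self-adjoint operator $-A_\mu$ on $L^2(\Pi(\mu))$: the gap $C_1$ means $\mathrm{spec}(-A_\mu)\cap(0,\infty)\subset[1/C_1,\infty)$, and on the orthogonal complement of constants $P_t^\mu = e^{tA_\mu}$ contracts by $e^{-t/C_1}$, so one takes $C = C_1$.

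The main obstacle — and the only place genuine care is needed — is the \textbf{uniformity of the constants in $\mu$}. Every step above is a quantitative implication, and the output constants depend only on the input data: the ultracontractivity constant $c$ and exponent $\delta/(\delta-1)$ from Proposition \ref{ultra} (already uniform by hypothesis), and the bounds $0 < z_- \le Z(\mu) \le z_+ < \infty$ on the partition function, which hold uniformly over $\mathcal{P}_\beta(\mathbb{R}^d;V)$ because $0 \le W*\mu(x) \le 2\kappa\beta V(x)$ (Lemma \ref{majW}) gives $e^{-4\kappa\beta V(x)}\gamma(\mathrm{d}x) \le e^{-2W*\mu(x)}\gamma(\mathrm{d}x) \le \gamma(\mathrm{d}x)$, and the left integral is still finite and positive thanks to the growth condition iii) forcing $V$ to grow super-quadratically. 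So I would state the argument as: (1) invoke the equivalence "uniform ultracontractivity with polynomial rate $\Rightarrow$ uniform tight log-Sobolev" as a black box from the literature, checking only that the quantitative version produces a constant depending solely on $(c,\delta)$; (2) transfer between $\Pi(\mu)$ and $e^{-2W*\mu}\mathrm{d}\gamma$ using the uniform two-sided bound on $Z(\mu)$; (3) deduce Poincaré by linearization; (4) deduce the semigroup contraction by the spectral theorem, taking $C = C_1$.
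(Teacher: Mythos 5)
Your proposal follows exactly the same chain as the paper: uniform ultracontractivity $\Rightarrow$ (hypercontractivity $\Rightarrow$) logarithmic Sobolev inequality $\Rightarrow$ Poincar\'e inequality $\Rightarrow$ exponential $L^2$-decay of $P_t^\mu K_\mu$, with uniformity over $\mathcal{P}_\beta$ traced back to the $\mu$-independent ultracontractivity constant and to the two-sided bound on $Z(\mu)$ coming from $0\le W*\mu\le 2\kappa\beta V$. The only differences are which textbook lemma you invoke at each link: for LSI $\Rightarrow$ Poincar\'e you linearize directly while the paper cites Rothaus; for the semigroup contraction you use the spectral theorem while the paper cites Bakry; these are equivalent and equally standard.

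One point deserves a word of caution, though it affects the paper's own (very terse) proof as much as yours. The mechanism you describe — differentiate $t\mapsto\mathrm{Ent}_{\Pi(\mu)}((P^\mu_t f)^2)$ and optimize over $t$ — produces a family of \emph{defective} log-Sobolev inequalities $\mathrm{Ent}(f^2)\le \epsilon\,\mathcal{E}_\mu(f,f)+\kappa(\epsilon)\|f\|^2_{2,\mu}$. Your assertion that ``polynomial blowup of the ultracontractivity bound'' alone makes the defect vanish is not quite right: boundedness of $\kappa(\epsilon)$ for $\epsilon$ away from $0$ still leaves a nonzero additive defect, and one does not get a tight LSI for free. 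The standard tightening route is Rothaus's lemma (defective LSI $+$ Poincar\'e $\Rightarrow$ tight LSI), or equivalently the observation that ultracontractivity forces $P_{t_0}^\mu$ to be Hilbert--Schmidt on $L^2(\Pi(\mu))$, hence $-A_\mu$ has discrete spectrum and an isolated simple eigenvalue at $0$, which gives the Poincar\'e inequality independently; either way the two facts close the loop. Since both these ingredients are quantitative with constants controlled by $(c,\delta)$ and the $Z(\mu)$-bounds, the uniformity survives, and your conclusion stands — just attribute tightness to the correct mechanism.
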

\begin{proof}
i) When a semigroup has the ultracontractivity property, then it is hypercontractive. As the uniform hypercontractivity is equivalent to the uniform Sobolev logarithmic inequality, we conclude. 

ii) Rothaus \cite{rot} has proved that if a measure satisfies a
logarithmic Sobolev inequality with constant $c$, then it also satisfies a Poincar\'e inequality with constant $1/c$. Moreover, satisfying a Poincar\'e inequality is equivalent to the
existence of a spectral gap. We easily bound $Z(\mu)\geq
\int e^{-2\kappa (V+\beta)} \mathrm{d}\gamma$.

From ii), we find the following estimate on the semigroup $(P_t^\mu)_{t\geq 0}$ (see Bakry \cite{bak}): there exists a uniform (in $\mu$) constant $C>0$ such that for all $t\geq 0$, $f\in \mathcal{C}^\infty(\mathbb{R}^d;V)$, we get $$\vert \vert P_t^\mu (K_\mu f)\vert \vert_{2,\mu} \leq e^{-t/C}\vert \vert K_\mu f\vert \vert_{2,\mu}.$$
\end{proof}

\subsection{The $\omega$-limit set}
\subsubsection{Main results}
We will show that the time-changed process $\mu_{h(t)}$ (and not $\mu_t$) is an asymptotic pseudotrajectory for $\Phi$, where $h$ is the deterministic time-change defined by
\begin{equation}
h(t) := r(e^t -1) \, \forall t\geq 0.
\end{equation}
It comes from the normalization of the occupation measure $\mu_t$. The factor $(r+t)^{-1}$ disappears while considering $$\frac{\mathrm{d}}{\mathrm{d}t}\mu_{h(t)} = \delta_{X_{h(t)}} -\mu_{h(t)}. $$
\begin{theo}\label{PTA} Under $\mathbb{P}_{x,r,\mu}$, the
function $t\mapsto \mu_{h(t)}$ is almost surely an asymptotic pseudotrajectory for $\Phi$.
\end{theo}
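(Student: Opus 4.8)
The plan is to follow the classical "ODE method" strategy of Benaïm–Hirsch, adapted to the non-compact setting via the $V$-norm and the uniform estimates announced in Section 3. Writing $\nu_t := \mu_{h(t)}$, the time change gives $\frac{\mathrm{d}}{\mathrm{d}t}\nu_t = \delta_{X_{h(t)}} - \nu_t$, which should be compared with the semiflow equation $\frac{\mathrm{d}}{\mathrm{d}t}\Phi_t(\mu) = \Pi(\Phi_t(\mu)) - \Phi_t(\mu)$. The difference between the two vector fields is $\delta_{X_{h(t)}} - \Pi(\nu_t)$, and the heart of the matter is to control its time-averaged effect. First I would fix $T>0$ and, for $t$ large, use the variation-of-constants representation of both $\nu_{t+s}$ and $\Phi_s(\nu_t)$ on $[0,T]$; subtracting, one gets for $0\le s\le T$
\begin{equation*}
\nu_{t+s} - \Phi_s(\nu_t) = \int_0^s e^{-(s-u)}\bigl(\Pi(\nu_{t+u}) - \Pi(\Phi_u(\nu_t))\bigr)\,\mathrm{d}u + \int_0^s e^{-(s-u)}\bigl(\delta_{X_{h(t+u)}} - \Pi(\nu_{t+u})\bigr)\,\mathrm{d}u.
\end{equation*}
The first integral is handled by a Gronwall argument once one knows $\Pi$ is Lipschitz on the compact set $\mathcal{P}_\beta(\mathbb{R}^d;V)$ (which follows from the uniform spectral gap of Corollary \ref{logSob} together with the regularity of $\Pi$ established in Section 5, and from Proposition \ref{compactness} which puts $\nu_t$ in $\mathcal{P}_\beta$ for $t$ large). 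So the real work is to show that the second integral — the "noise" term — tends to $0$ in the metric $\mathrm{d}$, uniformly in $s\in[0,T]$, almost surely as $t\to\infty$.

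To estimate the noise term I would test against each $f_k$ from the sequence defining $\mathrm{d}$ in \eqref{distance}, so it suffices to show that for every $f\in\mathcal{C}^0(\mathbb{R}^d;V)$ with $\|f\|_V\le 1$,
\begin{equation*}
\sup_{0\le s\le T}\left|\int_0^s e^{-(s-u)}\bigl(f(X_{h(t+u)}) - \Pi(\nu_{t+u})f\bigr)\,\mathrm{d}u\right| \xrightarrow[t\to\infty]{} 0 \quad \text{a.s.},
\end{equation*}
and then pass back through the time change to the original clock, where this becomes an assertion about $\frac{1}{r+\tau}\int_{\tau_0}^{\tau}\cdots$ type averages. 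The standard device is the fundamental kernel $Q_\mu$ with $A_\mu Q_\mu f = \Pi(\mu)f - f$: apply the Itô–Ventzell formula to $(t,x)\mapsto Q_{\mu_t}f(x)$ along the true diffusion $X$, exactly as sketched for the frozen diffusion in Section 3.1 but keeping the extra term coming from the time dependence of $\mu_t$ (which is of order $(r+s)^{-1}$ and hence integrable against $\mathrm{d}s/(r+s)$ only marginally — this is where the normalization is essential). One then gets
\begin{equation*}
\int_0^t \bigl(f(X_s) - \Pi(\mu_s)f\bigr)\,\mathrm{d}s = Q_{\mu_0}f(x) - Q_{\mu_t}f(X_t) + M_t + R_t,
\end{equation*}
where $M_t = \int_0^t(\nabla Q_{\mu_s}f(X_s),\mathrm{d}B_s)$ is a martingale and $R_t$ collects the time-derivative terms $\int_0^t \partial_\mu Q_{\mu_s}f(X_s)\cdot(\delta_{X_s}-\mu_s)\frac{\mathrm{d}s}{r+s}$. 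The three key inputs are: (a) $\|Q_\mu f\|_V$ is bounded uniformly over $\mu\in\mathcal{P}_\beta$ — this is the uniform ergodic estimate \eqref{ergodic1} with $K(\mu),c(\mu)$ uniform, itself a consequence of Proposition \ref{ultra}; (b) $|\nabla Q_\mu f|$ and the $\mu$-derivative of $Q_\mu f$ are likewise $V$-bounded uniformly, so that, combined with the moment bound $\mathbb{E}\mathcal{E}_{\mu_t}(X_t)\le\mathcal{E}_\mu(x)e^{Dt}$ and the tightness $\mu_t\in\mathcal{P}_\beta$, the boundary term $Q_{\mu_t}f(X_t)$ is $o(t)$ and the martingale $M_t$ has quadratic variation $o(t^2)$ a.s. (strong law for martingales); (c) $R_t$ is $O(\log t) = o(t)$ because of the $(r+s)^{-1}$ weight. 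Dividing by $h(t+T)-h(t)\asymp r e^t$, or rather estimating the $e^{-(s-u)}$-weighted increments directly, then yields the desired uniform-in-$s$ convergence.

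The main obstacle I anticipate is not any single estimate but the bookkeeping of the Itô–Ventzell correction terms together with establishing that all the operator bounds on $Q_\mu$, $\nabla Q_\mu$ and $\partial_\mu Q_\mu$ are genuinely uniform over $\mathcal{P}_\beta(\mathbb{R}^d;V)$ — this uniformity is exactly what distinguishes the non-compact case from \cite{beLR} and is the reason the uniform ultracontractivity of Proposition \ref{ultra} (and the attendant spectral-gap and regularity statements of Section 5) had to be developed first. A secondary subtlety is that the metric $\mathrm{d}$ involves the class $\mathcal{C}^0(\mathbb{R}^d;V)$ of possibly unbounded test functions, so one cannot simply quote the compact-space proof verbatim: one must check that the $f_k$'s, being $V$-dominated, are still controlled by the Lyapunov/moment bounds above, which is where the domination hypothesis (iv) and Lemma \ref{majW} re-enter. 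Once these uniform regularity facts are in hand, the Gronwall estimate for the $\Pi$-difference term and the martingale/strong-law argument for the noise term are routine, and letting $t\to\infty$ and then summing the dyadic series gives $\sup_{0\le s\le T}\mathrm{d}(\nu_{t+s},\Phi_s(\nu_t))\to 0$ a.s., which is the assertion of Theorem \ref{PTA}.
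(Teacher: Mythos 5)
Your analysis of the noise term $\varepsilon_{t,t+s}f$ --- the Itô--Ventzell decomposition of $Q_{\mu_{h(t)}}f(X_{h(t)})$, the uniform bounds on $Q_\mu f$, $\nabla Q_\mu f$ and the $\mu$-derivative from ultracontractivity and the spectral gap, the moment bounds, and Borel--Cantelli --- is essentially the content of Proposition~\ref{theorem} and is carried out along the same lines in the paper. The gap lies in the other half, the passage from $\sup_{0\le s\le T}|\varepsilon_{t,t+s}f|\to 0$ to the asymptotic pseudotrajectory property.

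You propose to close this by the variation-of-constants identity and a Gronwall argument using the Lipschitz property of $\Pi$. The trouble is a mismatch of topologies. Lemma~\ref{Pibound} establishes that $\Pi$ is Lipschitz on $\mathcal{P}_\beta(\mathbb{R}^d;V)$ \emph{only for the strong $V$-norm}: its differential $D\Pi(\mu)\cdot\nu$ involves $W*\nu$, which is controlled by $\|\nu\|_V$ and \emph{not} by the weak metric $\mathrm{d}$, since $\mathrm{d}$ metrizes a genuinely weaker topology. On the other hand, the noise term is small only in the weak sense (tested against fixed $f\in\mathcal{C}^0(\mathbb{R}^d;V)$): $\|\varepsilon_{t,t+s}\|_V$ does not tend to $0$, because $\delta_{X_{h(t+u)}}$ has $V$-norm $V(X_{h(t+u)})$, which is unbounded. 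So in the strong topology Gronwall is available but the driving term does not vanish, and in the weak topology the driving term vanishes but no Lipschitz constant is available --- on the weakly compact $\mathcal{P}_\beta$ one only gets \emph{uniform continuity} of $\Pi$, which is not enough for Gronwall (one would need an Osgood-type modulus, which is not established).

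The paper sidesteps this precisely by not doing a direct Gronwall comparison. Lemma~\ref{lem_pta} invokes Benaïm's characterization (\cite{dea}, Theorem 3.2): a continuous curve $\nu$ in a compact metric space is an asymptotic pseudotrajectory for $\Phi$ if and only if $\nu$ is uniformly continuous and every limit point of the translated curves $\theta_t(\nu)$ is a fixed point of the lift $\hat\Phi$. Uniform continuity of $t\mapsto\mu_{h(t)}$ in $\mathrm{d}$ is checked directly from the ODE for $\mu_t$ and the a.s. bound $\int_0^t V(X_s)\,\mathrm{d}s = O(t)$; the fixed-point condition is deduced from $\theta_t(\mu_{h(\cdot)}) = I_F(\theta_t(\mu_{h(\cdot)})) + \varepsilon_{t,t+\cdot}$, relative compactness of $(\mu_{h(t)})$, and weak continuity of $I_F$. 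This compactness-plus-continuity route needs neither a Lipschitz constant nor a rate, which is exactly what makes it work where the Gronwall scheme cannot. To repair your argument you would need to prove a quantitative Lipschitz bound for $\Pi$ with respect to $\mathrm{d}$ on $\mathcal{P}_\beta$ (which appears false in general), or replace the Gronwall step by the Benaïm characterization as the paper does.
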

\noindent The proof is given in Section 6. This theorem enables us to describe the limit set of $(\mu_t)$:

\begin{coro}\label{attractor}
$\mathbb{P}_{x,r,\mu}$-a.s., $\omega(\mu_t, t\geq 0)$ is weakly compact, invariant by $\Phi$ and attractor-free. It is also contained in the convex hull of the image of $\Pi$.
\end{coro}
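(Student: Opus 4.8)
The plan is to invoke the general machinery of Benaïm–Hirsch on asymptotic pseudotrajectories, once we have verified that the abstract hypotheses apply in our non-compact setting. The key point is that Theorem \ref{PTA} tells us $t\mapsto \mu_{h(t)}$ is a.s. an asymptotic pseudotrajectory for $\Phi$, and Proposition \ref{compactness} together with Proposition \ref{P_beta} tells us that, a.s., the whole forward trajectory $(\mu_t, t\geq t_0)$ is contained in the weakly compact set $\mathcal{P}_\beta(\mathbb{R}^d;V)$ for some (random) $t_0$ and some fixed $\beta>1$. Since the time-change $h(t)=r(e^t-1)$ is an increasing bijection of $\mathbb{R}_+$ onto $[0,\infty)$, the $\omega$-limit sets of $t\mapsto\mu_t$ and of $t\mapsto\mu_{h(t)}$ coincide, so it suffices to argue with the reparametrized process.

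The steps, in order: First I would record that $\omega(\mu_{h(t)},t\geq 0)$ is nonempty, and weakly compact: it is a decreasing intersection of the nonempty weakly closed sets $\overline{\mu_{h([t,\infty))}}$, all contained in the weakly compact set $\mathcal{P}_\beta(\mathbb{R}^d;V)$ for $t$ large, hence nonempty and compact. Second, I would show invariance under $\Phi$. This is the standard argument: given $\nu\in\omega$, pick $t_k\uparrow\infty$ with $\mu_{h(t_k)}\xrightarrow{w}\nu$; for fixed $s>0$, the pseudotrajectory property gives $\mathrm{d}(\mu_{h(t_k+s)},\Phi_s(\mu_{h(t_k)}))\to 0$, and by continuity of $\Phi_s$ on $\mathcal{P}_\beta(\mathbb{R}^d;V)$ (Proposition \ref{P_beta} keeps us in the compact metrizable set, and the local semiflow from Section 4 is continuous there) we get $\Phi_s(\mu_{h(t_k)})\xrightarrow{w}\Phi_s(\nu)$; combining, $\mu_{h(t_k+s)}\xrightarrow{w}\Phi_s(\nu)$, so $\Phi_s(\nu)\in\omega$. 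This also requires that $\Phi_s(\nu)$ be defined for all $s\geq 0$, i.e. that the semiflow does not explode from points of $\omega$ — which holds since $\omega\subset\mathcal{P}_\beta(\mathbb{R}^d;V)$ is forward-invariant under the (non-exploding-on-compacts) local semiflow, or more directly under the assumption that $\Phi$ is globally defined. Third, attractor-freeness: this is exactly the Benaïm–Hirsch theorem (see \cite{beH}) stating that the $\omega$-limit set of a precompact asymptotic pseudotrajectory is internally chain transitive, hence admits no proper attractor; I would cite this, after checking the one non-obvious hypothesis, namely precompactness of the trajectory, which is precisely Propositions \ref{compactness}–\ref{P_beta}.

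Finally, the statement that $\omega(\mu_t,t\geq 0)$ is contained in the closed convex hull of $\mathrm{Im}(\Pi)$: from the integral form \eqref{flow}, $\Phi_t(\mu)=e^{-t}\mu+\int_0^t e^{-(t-s)}\Pi(\Phi_s(\mu))\,\mathrm{d}s$ exhibits $\Phi_t(\mu)$ as a weight-$e^{-t}$ combination of $\mu$ with an average of values of $\Pi$; as $t\to\infty$ the $e^{-t}\mu$ term vanishes (weakly, uniformly on $\mathcal{P}_\beta(\mathbb{R}^d;V)$) and $\Phi_t(\mu)$ approaches $\overline{\mathrm{conv}}\,\mathrm{Im}(\Pi)$, which is weakly closed and convex. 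Since every $\nu\in\omega$ is a weak limit of $\mu_{h(t_k)}$ and, by the pseudotrajectory property applied with $s$ large, $\mu_{h(t_k+s)}$ is close to $\Phi_s(\mu_{h(t_k)})$, one pushes $\nu$ into $\overline{\mathrm{conv}}\,\mathrm{Im}(\Pi)$ by first letting $k\to\infty$ then $s\to\infty$; alternatively one notes $\omega$ is $\Phi$-invariant so $\nu=\Phi_s(\nu_s)$ for suitable $\nu_s\in\omega$, and lets $s\to\infty$. I expect the main obstacle to be purely bookkeeping about topologies: the pseudotrajectory property is phrased with the metric $\mathrm{d}$ for the (strengthened) weak topology, while the compact set $\mathcal{P}_\beta(\mathbb{R}^d;V)$ is weakly compact and metrized by that same $\mathrm{d}$, so one must make sure that continuity of $\Phi_s$, the precompactness input, and the convex-hull estimate are all stated with respect to the \emph{same} topology; once that is pinned down, each step reduces to a standard dynamical-systems argument, and the genuinely new analytic content has already been absorbed into Theorem \ref{PTA} and Proposition \ref{compactness}.
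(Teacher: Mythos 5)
Your proposal is correct and, in substance, the same as the paper's: Theorem \ref{PTA} plus the Bena\"im--Hirsch theory (\cite{beH}) for internally chain-transitive (attractor-free) limit sets, precompactness supplied by Propositions \ref{compactness} and \ref{P_beta}, and the exponential contraction of Proposition \ref{im} for the inclusion $\omega(\mu_t)\subset\widehat{\mathrm{Im}(\Pi)}$. The one cosmetic difference is in how that inclusion is extracted from the contraction: you apply Proposition \ref{im} directly to the negatively invariant, bounded set $\omega(\mu_t)$ (writing $\nu=\Phi_s(\nu_s)$ with $\nu_s\in\omega$ and letting $s\to\infty$), whereas the paper passes through the intermediate statement that $\widehat{\mathrm{Im}(\Pi)}\cap\omega(\mu_t)$ is a global attracting set (Theorem \ref{global_attract}) and then invokes attractor-freeness to conclude $\widehat{\mathrm{Im}(\Pi)}\cap\omega(\mu_t)=\omega(\mu_t)$; your route is more direct and is in fact exactly what the ``contains every negatively invariant bounded subset'' clause of Proposition \ref{im} was stated for.
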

\noindent An attractor-free set is a set that contains no (sub-)attractor (other than itself). The exact definition  will be given later, in Section 6.

\begin{theo}\label{limitset} Assume that $W$ is symmetric.
Then, $\mathbb{P}_{x,r,\mu}$-a.s., the $\omega-$limit set of $(\mu_t, t\geq 0)$ is a connected subset of the fixed points of $\Pi$.
\end{theo}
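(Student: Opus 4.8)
My plan is to combine Corollary~\ref{attractor} with the existence of a strict Lyapunov function for the semiflow $\Phi$ --- available precisely because $W$ is symmetric --- and then invoke the LaSalle-type theory for asymptotic pseudotrajectories of Bena\"im and Hirsch \cite{beH}. First I would dispose of the connectedness: by Propositions~\ref{compactness} and~\ref{P_beta}, $\mathbb{P}_{x,r,\mu}$-a.s. the curve $t\mapsto\mu_{h(t)}$ eventually lies in the weakly compact set $\mathcal{P}_\beta(\mathbb{R}^d;V)$, hence is weakly precompact, so its $\omega$-limit set $L:=\omega(\mu_t,t\ge 0)=\omega(\mu_{h(t)},t\ge 0)=\bigcap_{T\ge 0}\overline{\{\mu_{h(t)}:t\ge T\}}$ is a nested intersection of nonempty compact connected sets and is therefore connected. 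It thus remains to prove that $L\subset\mathrm{Fix}(\Pi):=\{\nu\in\mathcal{P}(\mathbb{R}^d;V):\Pi(\nu)=\nu\}$. By Corollary~\ref{attractor}, $L$ is in addition weakly compact, invariant under $\Phi$, attractor-free, and contained in $\overline{\mathrm{conv}}\,(\mathrm{Im}\,\Pi)$; and since $W$ is symmetric the semiflow $\Phi$ is global (cf.\ the remark following~\eqref{flow}), so the abstract machinery is available.

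The heart of the argument is the free energy functional
\[
J(\nu):=\mathrm{Ent}(\nu)+2\!\int V\,\mathrm{d}\nu+\iint W(x,y)\,\nu(\mathrm{d}x)\,\nu(\mathrm{d}y),\qquad \mathrm{Ent}(\nu):=\int \frac{\mathrm{d}\nu}{\mathrm{d}x}\log\frac{\mathrm{d}\nu}{\mathrm{d}x}\,\mathrm{d}x
\]
(with $\mathrm{Ent}(\nu)=+\infty$ if $\nu\not\ll\mathrm{d}x$). Since $\mathrm{d}\Pi(\nu)/\mathrm{d}x=e^{-2W*\nu}e^{-2V}/Z(\nu)\le e^{-2V}/Z(\nu)$ and, by the estimate in the proof of Corollary~\ref{logSob}, $Z(\nu)$ is bounded below uniformly over $\nu\in\mathcal{P}_\beta(\mathbb{R}^d;V)$, all measures in $\mathrm{Im}\,\Pi$ --- and hence in $\overline{\mathrm{conv}}\,(\mathrm{Im}\,\Pi)\supset L$ --- have densities uniformly dominated by a constant multiple of $e^{-2V}$; together with the growth of $V$ and the domination~\eqref{domination}, this makes $J$ finite and weakly continuous there. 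Its first variation is $\delta J/\delta\nu=\log(\mathrm{d}\nu/\mathrm{d}x)+1+2V+2W*\nu=\log(\mathrm{d}\nu/\mathrm{d}\Pi(\nu))+1-\log Z(\nu)$, so that along the semiflow, using $\tfrac{\mathrm{d}}{\mathrm{d}t}\Phi_t\nu=\Pi(\Phi_t\nu)-\Phi_t\nu$ and that $\Pi(\nu),\nu$ are probability measures,
\[
\frac{\mathrm{d}}{\mathrm{d}t}J(\Phi_t\nu)=\int\frac{\delta J}{\delta\nu}\,\mathrm{d}\!\left(\Pi(\nu)-\nu\right)=-\,\mathrm{KL}\!\left(\Pi(\nu)\,\|\,\nu\right)-\mathrm{KL}\!\left(\nu\,\|\,\Pi(\nu)\right)\le 0,
\]
with equality if and only if $\nu=\Pi(\nu)$. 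Hence $J$ is a weakly continuous strict Lyapunov function for $\Phi$ whose equilibrium set is exactly $\mathrm{Fix}(\Pi)$.

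It then suffices to apply the standard consequence of the asymptotic-pseudotrajectory / strict-Lyapunov-function framework (Bena\"im--Hirsch \cite{beH}): provided $J(\mathrm{Fix}(\Pi))$ has empty interior in $\mathbb{R}$, every attractor-free set is contained in $\mathrm{Fix}(\Pi)$ (and $J$ is constant on it). Applied to $L$, this gives $L\subset\mathrm{Fix}(\Pi)$, and together with the connectedness established above this proves the theorem.

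I expect the difficulties to be concentrated in two places. The analytic one is the regularity of $J$ --- finiteness and, above all, weak continuity of the entropy term on $\overline{\mathrm{conv}}\,(\mathrm{Im}\,\Pi)$, together with the rigorous justification that $t\mapsto J(\Phi_t\nu)$ is differentiable with the first-variation formula above for this infinite-dimensional ODE --- and it is exactly here that the uniform ultracontractivity of Proposition~\ref{ultra} and Corollary~\ref{logSob} is indispensable, since it controls the densities of all measures that occur. The structural (and genuinely more delicate) point is to verify that $J(\mathrm{Fix}(\Pi))\subset\mathbb{R}$ has empty interior: this is automatic when $\Pi$ has only isolated (for instance, finitely many) fixed points, but in general it has to be extracted from the nonlinear fixed-point equation $\nu=e^{-2(V+W*\nu)}/Z(\nu)\cdot\mathrm{d}x$, e.g.\ through a Sard- or analyticity-type argument applied to the map $\nu\mapsto\Pi(\nu)-\nu$.
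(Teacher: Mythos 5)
Your overall strategy matches the paper's: (i) an a.s.\ attractor-free, weakly compact $\omega$-limit set from the pseudotrajectory theory, (ii) a Lyapunov function whose equilibria are $\mathrm{Fix}(\Pi)$, and (iii) Bena\"im's LaSalle-type criterion (Proposition~\ref{cours_tromba}), which demands that the image of $\mathrm{Fix}(\Pi)$ under the Lyapunov function have empty interior. The connectedness is indeed routine once one knows the curve is eventually in the weakly compact set $\mathcal{P}_\beta(\mathbb{R}^d;V)$. But you diverge from the paper on the choice of Lyapunov function, and this is where the main problem lies.

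You take $J=\mathcal{F}$, the raw free energy. The paper explicitly refuses to do this: in the proof of Proposition~\ref{exist-flow} it notes that $\mathcal{F}$ is not a Lyapunov function for the flow because $\Phi_t(\mu)$ is generically not absolutely continuous, and instead works with $\mathcal{E}=\mathcal{F}\circ\Pi$. Your workaround --- restricting to $\widehat{\mathrm{Im}(\Pi)}$, where density bounds of the form $c\,e^{-c_1 V}\le\rho\le C\,e^{-2V}$ hold uniformly --- disposes of the absolute-continuity objection, and your computation of the descent rate as $-\mathrm{KL}(\Pi(\nu)\Vert\nu)-\mathrm{KL}(\nu\Vert\Pi(\nu))$ is correct and is actually a cleaner strictness statement than the paper's variance formula. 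However, Proposition~\ref{cours_tromba} is applied in the \emph{weak} topology, and the entropy functional is only lower semicontinuous, not continuous, under weak convergence; the uniform density envelope $\rho\le Ce^{-2V}$ forbids concentration but not oscillation, so weak continuity of $J$ on $\widehat{\mathrm{Im}(\Pi)}$ is not established by your argument and is doubtful in general. The paper's choice $\mathcal{E}=\mathcal{F}\circ\Pi$ resolves this at a stroke: since $\log(\mathrm{d}\Pi(\mu)/\mathrm{d}\gamma)=-2W*\mu-\log Z(\mu)$, one has the explicit formula
\[
\mathcal{E}(\mu)=-2\!\int W*\mu\,\mathrm{d}\Pi(\mu)-\log Z(\mu)+\iint W\,\mathrm{d}\Pi(\mu)\,\mathrm{d}\Pi(\mu),
\]
in which no raw entropy term appears, and each ingredient ($\mu\mapsto W*\mu$, $\mu\mapsto\Pi(\mu)$, $\mu\mapsto Z(\mu)$) is weakly continuous; moreover $\mathcal{E}$ and $\mathcal{F}$ agree on $\mathrm{Fix}(\Pi)$, so the empty-interior condition is the same as yours. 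This is not a cosmetic change --- it is precisely what makes the LaSalle argument go through.

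The second, and in your own words ``genuinely more delicate,'' gap is that you never prove $J(\mathrm{Fix}(\Pi))$ has empty interior; you only gesture at ``a Sard- or analyticity-type argument.'' This is the technical core of the paper's proof of the theorem: it invokes Tromba's Morse--Sard--Brown theorem for functionals, and the substantive work is verifying its hypotheses --- in particular that $DF(\mu)$, $F(\mu)=\Pi(\mu)-\mu$, is a Fredholm operator at each fixed point. The paper shows $DF(\mu)$ is a compact operator (via an Ascoli equicontinuity estimate using the domination~\eqref{domination}) that is also self-adjoint, so its nonzero spectrum is discrete, and Tromba's theorem then applies. Without this block your proposal does not close the argument; a reference to ``Sard-type'' is not a proof in infinite dimensions, where the classical Sard theorem fails and the Fredholm structure is exactly what rescues it.

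In short: same skeleton as the paper, a nicer strict-descent identity, but a Lyapunov function that lacks the required weak continuity as written (replace $\mathcal{F}$ by $\mathcal{F}\circ\Pi$ to fix it), and the Morse--Sard--Brown / Fredholm step --- which is where most of the real work lives --- is missing.
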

\noindent The proof is given in Section 6. It immediately implies the following

\begin{coro}\label{converge} Suppose that $W$ is symmetric. If
$\Pi$ admits only finitely many fixed points, then $\mathbb{P}_{x,r,\mu}$-a.s., $(\mu_t;t\geq 0)$ converges to one of them.
\end{coro}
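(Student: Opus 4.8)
The plan is to combine Theorem~\ref{limitset} with a soft topological observation and the tightness of $(\mu_t)$. First I would recall that, by Proposition~\ref{compactness} together with Proposition~\ref{P_beta}, there is $\beta>1$ such that $\mu_t\in\mathcal{P}_\beta(\mathbb{R}^d;V)$ for all $t$ large enough, $\mathbb{P}_{x,r,\mu}$-a.s., and that $\mathcal{P}_\beta(\mathbb{R}^d;V)$ is weakly compact; moreover the weak topology on $\mathcal{P}(\mathbb{R}^d;V)$ is metrizable via the distance $\mathrm{d}$ of \eqref{distance}. Hence the trajectory $(\mu_t)$ eventually lies in a compact metric space, so $\omega(\mu_t,t\geq 0)$ is nonempty (and weakly compact by Corollary~\ref{attractor}).

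Next, by Theorem~\ref{limitset}, on the a.s.\ event under consideration ($W$ symmetric), $\omega(\mu_t,t\geq 0)$ is a connected subset of the set $\mathrm{Fix}(\Pi)$ of fixed points of $\Pi$. By hypothesis $\mathrm{Fix}(\Pi)=\{p_1,\dots,p_n\}$ is finite. Since the weak topology is metrizable, hence Hausdorff, a finite subset carries the discrete topology, so its only nonempty connected subsets are singletons. Therefore $\omega(\mu_t,t\geq 0)=\{p_j\}$ for some random index $j\in\{1,\dots,n\}$.

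Finally I would deduce convergence from the standard fact that a precompact trajectory with one-point $\omega$-limit set converges to that point. Concretely, suppose toward a contradiction that $\mu_t$ does not converge weakly to $p_j$; then there are $\varepsilon>0$ and $t_k\uparrow\infty$ with $\mathrm{d}(\mu_{t_k},p_j)\geq\varepsilon$. For $k$ large $\mu_{t_k}\in\mathcal{P}_\beta(\mathbb{R}^d;V)$, which is weakly compact, so along a subsequence $\mu_{t_{k_\ell}}\xrightarrow{w}\nu$ with $\mathrm{d}(\nu,p_j)\geq\varepsilon$; but $\nu\in\omega(\mu_t,t\geq 0)=\{p_j\}$, a contradiction. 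Hence $\mu_t\xrightarrow{w}p_j$ $\mathbb{P}_{x,r,\mu}$-a.s.

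There is no real obstacle here: the corollary is a soft consequence of Theorem~\ref{limitset}. The only points deserving care are that the $\omega$-limit set is nonempty—which is exactly where the tightness furnished by Proposition~\ref{compactness} enters—and that a connected subset of a finite set is forced to be a singleton in the relevant (metrizable, hence Hausdorff) topology.
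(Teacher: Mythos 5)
Your argument is correct and is exactly the ``immediate'' deduction the paper intends: Theorem~\ref{limitset} gives that $\omega(\mu_t,t\geq 0)$ is a connected subset of the finite set of fixed points, hence (in the metrizable weak topology) a singleton, and tightness guarantees the $\omega$-limit set is nonempty and forces convergence of the precompact trajectory to that point. Nothing more is needed.
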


\subsubsection{A sufficient condition for the global convergence}
For a symmetric $W$, we introduce the \textit{free energy} (up to a multiplicative constant) corresponding to the ODE studied
\begin{equation}\label{free_energy}
\mathcal{F}(\mu) := \int_{\mathbb{R}^d} \log\left( \frac{\mathrm{d}\mu}{\mathrm{d}\gamma} \right)\mathrm{d}\mu + \int_{\mathbb{R}^d \times \mathbb{R}^d} W(x,y) \mu(\mathrm{d}x) \mu(\mathrm{d}y).
\end{equation}
This functional is the sum of an entropy and an interacting energy term. The competition between them can imply the existence of a unique minimizer for $\mathcal{F}$ (see \cite{cedric}).

\begin{theo}\label{conv}
Suppose that $W$ is symmetric and for all $y\in \mathbb{R}^d$, the function $x\mapsto V(x)+W(x,y)$ is strictly convex. Then there exists a unique probability measure $\mu_\infty$ such that $\underset{t\rightarrow\infty}{\lim}\mu_t = \mu_\infty \, \mathbb{P}_{x,r,\mu}- \text{a.s.}$
\end{theo}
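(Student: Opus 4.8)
The plan is to deduce the theorem from Theorem~\ref{limitset} and Corollary~\ref{converge} by showing that, under the extra strict-convexity hypothesis, the operator $\Pi$ has exactly one fixed point. Indeed, Theorem~\ref{limitset} tells us that the $\omega$-limit set of $(\mu_t)$ is a connected subset of $\mathrm{Fix}(\Pi)$; if we prove $\mathrm{Fix}(\Pi)$ is a singleton $\{\mu_\infty\}$, then $\omega(\mu_t,t\ge 0)=\{\mu_\infty\}$ almost surely, which by definition of the $\omega$-limit set (and the already-established a.s.\ tightness of $(\mu_t)$, Propositions~\ref{compactness} and \ref{P_beta}) gives $\mu_t\xrightarrow{(w)}\mu_\infty$ almost surely. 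So the whole theorem reduces to the deterministic statement: if $x\mapsto V(x)+W(x,y)$ is strictly convex for every $y$ and $W$ is symmetric, then $\Pi$ has a unique fixed point.

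To prove uniqueness of the fixed point, I would use the variational characterization via the free energy $\mathcal{F}$ of \eqref{free_energy}. The key facts I would establish are: (a) $\mu$ is a fixed point of $\Pi$ if and only if $\mu$ is a critical point of $\mathcal{F}$ on $\mathcal{P}(\mathbb{R}^d;V)$ — this follows by computing that $\Pi(\mu)(\mathrm{d}x)\propto e^{-2W*\mu(x)}\gamma(\mathrm{d}x)=e^{-2(V+W*\mu)(x)}\,\mathrm{d}x/Z$, so $\Pi(\mu)=\mu$ exactly when $\mathrm{d}\mu/\mathrm{d}\gamma \propto e^{-2W*\mu}$, which is the Euler–Lagrange equation for $\mathcal{F}$; and (b) $\mathcal{F}$ is strictly convex along linear interpolations $\mu_s=(1-s)\mu_0+s\mu_1$ of distinct probability measures. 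For (b): the entropy term $\mu\mapsto\int\log(\mathrm{d}\mu/\mathrm{d}\gamma)\,\mathrm{d}\mu$ is strictly convex in $\mu$ (standard), and the interaction term $\mu\mapsto\iint W(x,y)\,\mu(\mathrm{d}x)\mu(\mathrm{d}y)$ is convex provided $W$ is a positive-definite kernel; but the hypothesis given is not positive-definiteness of $W$ but strict convexity of $V+W(\cdot,y)$ in the \emph{spatial} variable, so I would instead argue convexity of $\mathcal{F}$ differently: rewrite $\mathcal{F}(\mu)=\int\log(\mathrm{d}\mu/\mathrm{d}x)\,\mathrm{d}\mu+2\int V\,\mathrm{d}\mu+\iint W\,\mathrm{d}\mu\,\mathrm{d}\mu$ and note that along the \emph{McCann displacement interpolation} (optimal transport geodesics) rather than linear interpolation, the entropy $\int\rho\log\rho$ is convex (Rényi, McCann), $\int V\,\mathrm{d}\mu$ is convex when $V$ is convex, and for symmetric $W$ the double integral $\iint W\,\mathrm{d}\mu\,\mathrm{d}\mu$ is displacement convex when $(x,y)\mapsto W(x,y)$ is convex jointly — but our hypothesis only controls $V+W(\cdot,y)$ for fixed $y$; hence the cleanest route is Carrillo–McCann–Villani-type arguments showing that strict \emph{uniform} convexity of $x\mapsto V(x)+W*\mu(x)$ (which follows from hypothesis v) and the strict-convexity assumption, since $W*\mu(x)=\int W(x,y)\,\mu(\mathrm{d}y)$ is an average of strictly convex functions) yields uniqueness of the stationary solution to the associated nonlinear equation.

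Concretely, I would proceed as follows. First, fix two fixed points $\mu_0,\mu_1$ of $\Pi$ and derive the fixed-point equations $\mathrm{d}\mu_i/\mathrm{d}x = e^{-2(V+W*\mu_i)(x)}/Z_i$. Second, observe that the map $T:\nu\mapsto$ (the Gibbs measure $e^{-2(V+W*\nu)(x)}\,\mathrm{d}x/Z(\nu)$) is the same operator whose fixed points we seek, and show $T$ is a contraction, or at least has a unique fixed point, on $\mathcal{P}_\beta(\mathbb{R}^d;V)$. A contraction estimate would compute $\|T(\nu_1)-T(\nu_0)\|$ in a suitable (weighted total variation or transport) distance and bound it by a multiple of $\|\nu_1-\nu_0\|$ with constant $<1$; the factor $2$ in the exponent and the convexity constant $K$ from hypothesis ii) enter here, and one needs a log-Sobolev/Poincaré-type comparison (available via Corollary~\ref{logSob}) to control how a perturbation of the potential perturbs the Gibbs measure. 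Third, alternatively and more robustly, use the strict displacement convexity of $\mathcal{F}$: if $\mu_0\ne\mu_1$ are both critical points then $s\mapsto\mathcal{F}(\mu_s)$ along the displacement interpolation is strictly convex with vanishing derivative at both $s=0$ and $s=1$, a contradiction. The main obstacle I anticipate is precisely matching the given hypothesis (strict convexity of $x\mapsto V(x)+W(x,y)$ for each $y$) to the right convexity statement for $\mathcal{F}$: this is routine if $W$ is \emph{jointly} convex or positive-definite, but the hypothesis as stated only gives convexity in the first slot, so the displacement-convexity argument for the interaction term $\iint W$ needs care — one must exploit symmetry of $W$ together with the fact that $W*\mu$ inherits strict convexity, and appeal to the uniqueness theory for nonlinear Fokker–Planck / granular-media equations (as in \cite{cedric}), where exactly this type of condition guarantees a unique steady state. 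Once uniqueness of the fixed point is in hand, the probabilistic conclusion is immediate from Corollary~\ref{converge}.
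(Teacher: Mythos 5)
Your plan is essentially the paper's plan: reduce to uniqueness of the fixed point of $\Pi$ via the variational characterization (fixed points of $\Pi$ are the critical points, indeed minima, of $\mathcal{F}$; this is Proposition~\ref{min}, so you do not need to re-derive it), invoke McCann-type displacement-convexity to get uniqueness, and then conclude by Corollary~\ref{converge}. The paper's own proof is exactly this, condensed to one sentence: ``Under our hypothesis, McCann has proved in \cite{mc} that $\mathcal{F}$ has a unique critical point $\mu_\infty$, which is a unique global minimum. It is also the unique fixed point of $\Pi$.'' Your extra detours (trying a contraction estimate in a weighted norm, worrying about linear versus displacement interpolation) are not what the paper does, and you correctly abandon the contraction route as non-obvious.

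That said, the concern you flag at the end is a genuine one, and it is not resolved by the paper either. McCann's uniqueness theorem, as usually stated (and as presented in \cite{cedric}), treats $\mathcal{F}(\mu)=\int U(\rho)+\int V\,\mathrm{d}\mu+\frac12\iint W\,\mathrm{d}\mu\,\mathrm{d}\mu$ with $V$ convex and $W$ either translation-invariant and convex or jointly convex in $(x,y)$. The hypothesis here only gives strict convexity of $x\mapsto V(x)+W(x,y)$ for each fixed $y$, i.e.\ control of $\nabla^2 V+\nabla^2_{xx}W$, but says nothing about the cross Hessian $\nabla^2_{xy}W$. If you compute $\frac{\mathrm{d}^2}{\mathrm{d}s^2}$ of the potential-plus-interaction energy along a displacement interpolation $\mu_s=((1-s)\mathrm{Id}+sT)_\#\mu_0$, you get a diagonal term controlled by $\nabla^2 V+\nabla^2_{xx}W$ (nonnegative under the hypothesis) \emph{plus} a cross term $\iint (a(x),\nabla^2_{xy}W(x,x')\,a(x'))\,\mathrm{d}\mu_0\,\mathrm{d}\mu_0$ with $a=T-\mathrm{Id}$, and the latter has no sign in general. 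So a literal application of McCann's theorem requires an extra assumption (e.g.\ joint convexity, or $W(x,y)=W(x-y)$ with $W$ convex) that the stated hypothesis does not visibly deliver. The paper's one-line citation therefore also carries this gap; your instinct to pause there is right, and making the reduction honest would require either strengthening the hypothesis to match McCann's or supplying a separate argument that the cross term cannot destroy uniqueness.
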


\begin{proof}
Under our hypothesis, McCann has proved in \cite{mc} that $\mathcal{F}$ has a unique critical point $\mu_\infty$, which is a unique global minimum. It is also the unique fixed point of $\Pi$. So, $\underset{t\rightarrow\infty}{\lim}\mu_t = \mu_\infty \, \mathbb{P}_{x,r,\mu}- \text{a.s.}$
\end{proof}

\section{Study of the dynamical system $\Phi$}
\subsection{Differentiating functions of probability measures}
Here we endow the space $\mathcal{P}(\mathbb{R}^d;V)$ with a
structure of infinite-dimensional differentiable manifold. This
structure will be used only for differentiating functions defined
on $\mathcal{P}(\mathbb{R}^d;V)$, which will be needed in the study of
the semiflow and below in Section 5.\medskip

For any $\mu \in \mathcal{P}(\mathbb{R}^d;V)$ we consider the set
$\mathcal{C}^p(\mu)$ ($p \geq 1$) of (germs of) curves defined on
some neighborhood of zero $(-\varepsilon, \varepsilon)$ with
values in $\mathcal{P}(\mathbb{R}^d;V)$, passing through $\mu$ at
time zero and that are of class $\mathcal{C}^p$ when they are
considered as functions with values in the Banach space
$\mathcal{M}(\mathbb{R}^d;V)$. Now we say that a function
$\phi:\mathcal{P}(\mathbb{R}^d;V)\rightarrow \mathbb{R}$ is of
class $\mathcal{C}^p$ if for any $\mu \in
\mathcal{P}(\mathbb{R}^d;V)$ and any curve $f \in
\mathcal{C}^p(\mu)$ the real function $\phi \circ f$ is of class
$\mathcal{C}^p$. This will enable to define the differential of
such a function $\phi$. For any $\mu$ the tangent space at $\mu$
to $\mathcal{P}(\mathbb{R}^d;V)$ can be identified with the space
$\mathcal{M}_0(\mathbb{R}^d;V)$ of zero-mass measures in
$\mathcal{M}(\mathbb{R}^d;V)$. The differential is then the linear
operator:
\begin{equation}
D \phi(\mu) \cdot \nu = \frac{\mathrm{d}}{\mathrm{d}t}\phi(\mu +
t\nu)\vert_{t=0}, \quad \nu \in \mathcal{M}_0(\mathbb{R}^d;V).
\end{equation}
The same definition can apply for functions with values in a
Banach space or even in $\mathcal{P}(\mathbb{R}^d;V)$. As an
example (to be used!), the maps $\mu\mapsto W*\mu(x)$ (for any
point $x$) and $\Pi$ (applying the Lebesgue theorem) are
$\mathcal{C}^\infty$.

\subsection{Existence of the semiflow}
We first prove the local existence of the semiflow and then give
sufficient conditions on the potentials for non-explosion. We
recall the equation:
\begin{equation}\label{eq_edo}
\Phi_t(\mu) = e^{-t} \mu + e^{-t}\int_0^t e^s \Pi(\Phi_s(\mu))\,
\mathrm{d}s.
\end{equation}
For proving the local existence of a solution, since
$\mathcal{P}(\mathbb{R}^d;V)$ is not a vector space, we will
proceed directly by approximation. The following lemma is helpful
in order to find a good security cylinder.

\begin{lemma}\label{Pibound}
For any $\beta>1$, the application $\Pi$ restricted to
$\mathcal{P}_\beta(\mathbb{R}^d;V)$ is bounded and Lipschitz.
\end{lemma}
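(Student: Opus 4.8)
The plan is to establish the two claims—boundedness and Lipschitz continuity of $\Pi$ on $\mathcal{P}_\beta(\mathbb{R}^d;V)$—separately, in both cases working with the explicit formula $\Pi(\mu)(\mathrm{d}x) = Z(\mu)^{-1} e^{-2W*\mu(x)}\gamma(\mathrm{d}x)$ and exploiting Lemma \ref{majW}, which gives $|W*\mu(x)| \le 2\kappa \|\mu\|_V V(x)$ and hence a uniform bound $|W*\mu(x)| \le 2\kappa\beta V(x)$ on $\mathcal{P}_\beta(\mathbb{R}^d;V)$ (using $\|\mu\|_V \le \beta$ there, which follows since $V\ge 1$). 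The first observation I would record is that $Z(\mu)$ is bounded above and below uniformly in $\mu \in \mathcal{P}_\beta(\mathbb{R}^d;V)$: since $W\ge 0$ we have $Z(\mu) \le \int e^{0}\,\mathrm{d}\gamma = \gamma(\mathbb{R}^d) < \infty$, and from below $Z(\mu) \ge \int e^{-4\kappa\beta V}\,\mathrm{d}\gamma > 0$, a fixed positive number. These two-sided bounds on the partition function will be used repeatedly.

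For boundedness: I want to show $\sup_{\mu\in\mathcal{P}_\beta}\|\Pi(\mu)\|_V < \infty$, or more precisely that $\Pi$ maps $\mathcal{P}_\beta$ into some $\mathcal{P}_{\beta'}$. We have
\begin{equation*}
\int V(x)\,\Pi(\mu)(\mathrm{d}x) = \frac{1}{Z(\mu)}\int V(x) e^{-2W*\mu(x)}\,\gamma(\mathrm{d}x) \le \frac{1}{Z(\mu)}\int V(x)\,\gamma(\mathrm{d}x),
\end{equation*}
using $W*\mu\ge 0$, and $\int V\,\mathrm{d}\gamma < \infty$ because $\gamma \in \mathcal{M}(\mathbb{R}^d;V)$ (this is stated right after the definition of $\gamma$, and follows from the super-polynomial decay of $e^{-2V}$ forced by the growth condition iii)). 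Combined with the uniform lower bound on $Z(\mu)$, this gives $\int V\,\mathrm{d}\Pi(\mu) \le \beta'$ for a fixed $\beta'$, so $\Pi(\mathcal{P}_\beta) \subseteq \mathcal{P}_{\beta'}(\mathbb{R}^d;V)$; in particular $\|\Pi(\mu)\|_V \le \beta'$, and boundedness follows.

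For the Lipschitz property: given $\mu,\nu \in \mathcal{P}_\beta$, write $\Pi(\mu) - \Pi(\nu)$ and estimate $\|\Pi(\mu)-\Pi(\nu)\|_V = \sup_{|\varphi|\le V}\left|\int \varphi\,\mathrm{d}\Pi(\mu) - \int\varphi\,\mathrm{d}\Pi(\nu)\right|$. The natural route is to differentiate along the segment: since $\Pi$ is $\mathcal{C}^\infty$ (remarked earlier in \S4.1), $\Pi(\mu)-\Pi(\nu) = \int_0^1 D\Pi(\nu + s(\mu-\nu))\cdot(\mu-\nu)\,\mathrm{d}s$, so it suffices to bound the operator norm of $D\Pi(\eta)$ uniformly for $\eta$ ranging over the segment $[\nu,\mu]$—which stays in $\mathcal{P}_\beta$ by convexity of the constraint $\int V\,\mathrm{d}\eta\le\beta$. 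A direct computation of $D\Pi(\eta)\cdot\rho$ from the formula for $\Pi$ gives an expression of the form
\begin{equation*}
D\Pi(\eta)\cdot\rho(\mathrm{d}x) = \left(-2\,W*\rho(x) + 2\int W*\rho(y)\,\Pi(\eta)(\mathrm{d}y)\right)\Pi(\eta)(\mathrm{d}x),
\end{equation*}
where $\rho$ is a zero-mass measure. Testing against $\varphi$ with $|\varphi|\le V$, using $|W*\rho(x)| \le 2\kappa\|\rho\|_V V(x)$ from Lemma \ref{majW}, the uniform bound $\int V^2\,\mathrm{d}\Pi(\eta)$ — wait, one gets a term $\int V(x)\cdot V(x)\,\Pi(\eta)(\mathrm{d}x)$ type expression, which is $\int V^2\,\mathrm{d}\Pi(\eta) \le Z(\eta)^{-1}\int V^2\,\mathrm{d}\gamma < \infty$ uniformly (again $\gamma$ has all polynomial moments against $V$). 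So one obtains $\|D\Pi(\eta)\cdot\rho\|_V \le L\|\rho\|_V$ with $L$ depending only on $\kappa$, $\beta$, and the fixed quantities $\int V\,\mathrm{d}\gamma$, $\int V^2\,\mathrm{d}\gamma$, $\inf Z$. Integrating over $s$ yields $\|\Pi(\mu)-\Pi(\nu)\|_V \le L\|\mu-\nu\|_V$.

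The main obstacle is the bookkeeping in the computation of $D\Pi(\eta)$ and the verification that every resulting term is controlled by a \emph{uniform} constant: one must be careful that the terms involving $\int (W*\rho)\,\mathrm{d}\Pi(\eta)$ and the quadratic-in-$V$ integrals against $\gamma$ are genuinely finite and bounded independently of $\eta$, which is where the domination condition iv) and the super-polynomial decay of $\gamma$ (consequence of growth condition iii) with $\delta>1$) are essential. Everything else is routine manipulation of the Gibbs formula; no compactness or spectral input is needed for this lemma.
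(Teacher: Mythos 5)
Your proposal is correct and follows essentially the same route as the paper's own proof: both establish the uniform lower bound on $Z(\mu)$, deduce boundedness of $\Pi(\mu)$ from $\int V\,\mathrm{d}\Pi(\mu) \le Z(\mu)^{-1}\int V\,\mathrm{d}\gamma$, and then bound the operator norm of the differential $D\Pi(\mu)$ (using the same explicit formula and the same estimates $|W*\nu| \le 2\kappa\|\nu\|_V V$ and $\int V^2\,\mathrm{d}\Pi(\mu) \le Z(\mu)^{-1}\int V^2\,\mathrm{d}\gamma$) to conclude Lipschitz continuity. You make the final step slightly more explicit than the paper — the integration of $D\Pi$ along the segment $[\nu,\mu]$ and the remark that this segment stays inside $\mathcal{P}_\beta$ by convexity of the constraint — but the mathematical content is the same.
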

\begin{proof}
First we need to show that $\mu\mapsto Z(\mu)$ is bounded from
below. For $\mu \in \mathcal{P}_\beta(\mathbb{R}^d;V)$, Lemma 
\ref{majW} asserts that $W*\mu(x) \leq 2\kappa \beta V(x)$. So we
get:
$$Z(\mu)=\int_{\mathbb{R}^d} e^{-2W*\mu(x)} \gamma(\mathrm{d}x)\geq \int_{\mathbb{R}^d}
e^{-4\kappa\beta V(x)} \gamma(\mathrm{d}x)$$ and thus we have the
following bound for $\Pi(\mu)$:
\begin{equation}\label{Cbeta}
\vert \vert \Pi(\mu) \vert\vert_V \leq \left( \int_{\mathbb{R}^d}
e^{-4\kappa\beta V(x)} \gamma(\mathrm{d}x)\right)^{-1}
\int_{\mathbb{R}^d} V(x) \gamma(\mathrm{d}x) =: C_\beta.
\end{equation}
We know that $\Pi$ is $\mathcal{C}^\infty$ on
$\mathcal{P}(\mathbb{R}^d;V)$ with the strong topology. Its
differential (at $\mu$) is the continuous linear operator
$D\Pi(\mu): \mathcal{M}_0(\mathbb{R}^d;V) \rightarrow
\mathcal{M}_0(\mathbb{R}^d;V)$ defined by
\begin{equation}\label{diffPi}
D\Pi(\mu)\cdot \nu(\mathrm{d}x):= -2\left(W*\nu(x)
-\int_{\mathbb{R}^d}
W*\nu(y)\Pi(\mu)(\mathrm{d}y)\right)\Pi(\mu)(\mathrm{d}x).
\end{equation}
Fix $\nu \in \mathcal{M}_0(\mathbb{R}^d;V)$. Since $\vert W*\nu(x)
\vert \leq 2\kappa\vert \vert \nu \vert\vert_V V(x)$, we find that
\begin{equation*}
\vert \vert D\Pi(\mu)\cdot \nu \vert \vert_V \leq
4\kappa(1+C_\beta) \vert \vert \nu \vert\vert_V\int_{\mathbb{R}^d}
V^2(x)\Pi(\mu)(\mathrm{d}x).
\end{equation*}
But for $\mu \in \mathcal{P}_\beta(\mathbb{R}^d;V)$, the same
computation used for the bound of $\Pi(\mu)$ enables to control
the last integral, hence we get a bound (call it $C'_\beta$) on
the differential and $\Pi$ is Lipschitz as stated.
\end{proof}

\begin{propo} For all $\mu \in \mathcal{P}(\mathbb{R}^d;V)$ the ODE has a local
solution. This defines a $C^\infty$ semiflow $\Phi$ for the strong topology.
\end{propo}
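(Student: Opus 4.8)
The plan is to run a standard Picard--Lindel\"of / Cauchy--Lipschitz argument, adapted to the fact that $\mathcal{P}(\mathbb{R}^d;V)$ is not a vector space, by the approximation scheme announced in the text. First I would fix $\mu_0 \in \mathcal{P}(\mathbb{R}^d;V)$; since $\int V\,\mathrm{d}\mu_0 < \infty$, choose $\beta$ with $\int V\,\mathrm{d}\mu_0 < \beta$, so $\mu_0 \in \mathcal{P}_\beta(\mathbb{R}^d;V)$. The key inputs are Lemma~\ref{Pibound} (boundedness of $\Pi$ by $C_\beta$ and Lipschitzness with constant $C'_\beta$ on $\mathcal{P}_\beta$) and the fact, recorded above, that $\mathcal{P}(\mathbb{R}^d;V)$ with the norm distance is a complete metric space. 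I would set up a ``security cylinder'': pick $\beta' > \beta$ (say $\beta' = \beta + 1$) and $T>0$ small enough that any curve starting at $\mu_0$ and satisfying the integral equation \eqref{eq_edo} with values provisionally constrained to $\mathcal{P}_{\beta'}$ stays in $\mathcal{P}_{\beta'}$. Concretely, if $\xi:[0,T]\to\mathcal{P}_{\beta'}(\mathbb{R}^d;V)$ is continuous then $\Phi_t(\mu_0) := e^{-t}\mu_0 + e^{-t}\int_0^t e^s \Pi(\xi_s)\,\mathrm{d}s$ is a convex combination of $\mu_0$ and the $\Pi(\xi_s)$'s, each of which has $V$-mass $\leq C_{\beta'}$ by \eqref{Cbeta}; so the $V$-mass of the new curve is $\leq \max(\beta, C_{\beta'})$. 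Thus if $T$ is chosen (it actually need not even be small for this part, by convexity) the map preserves the set $\mathcal{X}_T := \{\xi \in \mathcal{C}^0([0,T];\mathcal{P}_{\beta'}(\mathbb{R}^d;V)) : \xi_0 = \mu_0\}$, which is a complete metric space for the sup-in-$t$ of the $V$-norm distance.

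Next I would show the integral operator $\mathcal{T}:\mathcal{X}_T\to\mathcal{X}_T$, $(\mathcal{T}\xi)_t := e^{-t}\mu_0 + e^{-t}\int_0^t e^s\Pi(\xi_s)\,\mathrm{d}s$, is a contraction for $T$ small. For $\xi,\eta\in\mathcal{X}_T$,
\begin{equation*}
\|(\mathcal{T}\xi)_t - (\mathcal{T}\eta)_t\|_V \leq e^{-t}\int_0^t e^s \|\Pi(\xi_s) - \Pi(\eta_s)\|_V\,\mathrm{d}s \leq C'_{\beta'}\,(1 - e^{-t})\,\sup_{0\leq s\leq T}\|\xi_s - \eta_s\|_V,
\end{equation*}
so $\mathcal{T}$ is Lipschitz with constant $C'_{\beta'}(1 - e^{-T}) < 1$ once $T < -\log(1 - 1/C'_{\beta'})$. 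Banach's fixed point theorem then yields a unique $\Phi_\cdot(\mu_0)\in\mathcal{X}_T$ solving \eqref{eq_edo}; differentiating the integral equation shows it is $\mathcal{C}^1$ in $t$ and solves the ODE $\frac{\mathrm{d}}{\mathrm{d}t}\Phi_t(\mu) = \Pi(\Phi_t(\mu)) - \Phi_t(\mu)$. Uniqueness on overlapping intervals lets one glue solutions and define the maximal solution; the semiflow property $\Phi_{t+s} = \Phi_t\circ\Phi_s$ follows from uniqueness since both sides solve the same ODE with the same initial condition. Because the local existence time $T$ depends only on $\beta'$ (through $C'_{\beta'}$), and because the $V$-mass stays bounded on any finite interval by the convexity estimate above, this also shows the maximal solution does not blow up in the $V$-norm in finite time — but I would be careful to phrase the present proposition as merely \emph{local} existence of a $\mathcal{C}^\infty$ semiflow, deferring the genuine non-explosion discussion (which in the symmetric/bounded case is handled in \S4.2) as the text does.

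For the $\mathcal{C}^\infty$ regularity of $\Phi$ as a map on $\mathcal{P}(\mathbb{R}^d;V)$ (in the manifold sense of \S4.1), I would invoke the smooth dependence on initial conditions and parameters for ODEs in Banach spaces: writing things in a chart, i.e. working with $\nu_t := \Phi_t(\mu) - \mu$ valued in the Banach space $\mathcal{M}_0(\mathbb{R}^d;V)$, the vector field is $(\nu, \mu) \mapsto \Pi(\mu + \nu) - (\mu + \nu)$, which is $\mathcal{C}^\infty$ by the remark at the end of \S4.1 (that $\Pi$ is $\mathcal{C}^\infty$ for the strong topology, with differential \eqref{diffPi}), and all its derivatives are bounded and Lipschitz on the bounded set $\mathcal{P}_{\beta'}$ by the same kind of estimates as in the proof of Lemma~\ref{Pibound}. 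The classical theorem on $\mathcal{C}^k$-dependence then gives that $\mu \mapsto \Phi_t(\mu)$ is $\mathcal{C}^\infty$, and jointly smooth in $(t,\mu)$. The main obstacle, and the point I would be most careful about, is the first step: verifying that the security-cylinder set is genuinely preserved and that the relevant function space is complete and contains enough curves — in other words, making the ``proceed directly by approximation'' remark rigorous given that $\mathcal{P}(\mathbb{R}^d;V)$ is only an affine-convex subset of a Banach space, not a linear subspace. The convexity of the update map (it outputs convex combinations of measures already controlled in $V$-mass) is exactly what saves the argument, so I would make that observation explicit and central.
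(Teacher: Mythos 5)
Your argument is essentially the paper's: the paper runs the explicit Picard iteration $\mu^{(n)}_t := e^{-t}\mu + \int_0^t e^{s-t}\Pi(\mu^{(n-1)}_s)\,\mathrm{d}s$ and shows the differences decay geometrically, while you package the same estimates as a Banach fixed-point argument on $\mathcal{X}_T$ with the same contraction constant $C'_{\beta'}(1-e^{-T})$ coming from Lemma~\ref{Pibound}. These are the same proof.

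One side remark of yours is wrong and should be withdrawn: the parenthetical ``(it actually need not even be small for this part, by convexity)'' when discussing the security cylinder. Convexity gives that the $V$-mass of $(\mathcal{T}\xi)_t$ is bounded by $e^{-t}\beta + (1-e^{-t})C_{\beta'}$, and this is $\leq \beta'$ for \emph{all} $t$ only if $C_{\beta'}\leq\beta'$. But from \eqref{Cbeta}, $C_{\beta'} = \bigl(\int e^{-4\kappa\beta' V}\,\mathrm{d}\gamma\bigr)^{-1}\int V\,\mathrm{d}\gamma$, which has no reason to be $\leq \beta'$ and typically will not be. So you genuinely do need $T$ small here, exactly as the paper imposes $\|\mu\|_V + (1-e^{-\varepsilon})C_\beta \leq \beta$; this is not merely for the contraction step. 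Your main line (``pick $T$ small enough'') is fine, so this is a conceptual slip rather than a hole, but it is a wrong claim.

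For the $\mathcal{C}^\infty$ regularity the paper and you diverge slightly: the paper shows by induction that each Picard iterate $\mu\mapsto\mu^{(n)}_t$ is $\mathcal{C}^\infty$ and then passes to the uniform limit on $\mathcal{P}_\beta$, while you invoke the classical smooth-dependence theorem for ODEs in Banach spaces. Your route is legitimate in spirit, but you should note that the vector field $\mu\mapsto\Pi(\mu)-\mu$ is defined (and differentiated in the sense of \S 4.1) only on the closed convex set $\mathcal{P}(\mathbb{R}^d;V)$, not on an open subset of $\mathcal{M}(\mathbb{R}^d;V)$; the classical theorem as usually stated wants an open domain, so either an extension of $\Pi$ or a reformulation adapted to the manifold-with-corners situation is needed. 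The paper's ``iterate and take uniform limits'' sidesteps that issue (at the cost of leaving implicit the uniform control on derivatives of the iterates needed to pass smoothness to the limit).
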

\begin{proof}
Let $\mu$ belong to $\mathcal{P}_\beta(\mathbb{R}^d;V)$ where we
choose $\beta>2\vert\vert \mu \vert\vert_V$. We introduce the
classic Picard approximation scheme:
\begin{equation*}
\left\{%
\begin{array}{ll}
\mu^{(0)}_t := \mu,\\
\mu^{(n)}_t := e^{-t}\mu + \int_0^t
e^{s-t}\Pi\left(\mu^{(n-1)}_s\right)\mathrm{d}s.
\end{array}%
\right.
\end{equation*}
We set $\varepsilon$ small enough such that $\vert\vert \mu
\vert\vert_V + (1-e^{-\varepsilon})C_\beta \leq \beta$ and
$\varepsilon C'_\beta < 1$ where both constants were defined in
Lemma \ref{Pibound}. Then, for all $n$, $\mu^{(n)}_t$ is defined
and belongs to $\mathcal{P}_\beta(\mathbb{R}^d;V)$, which makes
$[0,\varepsilon)\times\mathcal{P}_\beta(\mathbb{R}^d;V)$ a good
security cylinder. We have, for $t<\varepsilon$,
\begin{equation*}
\vert \vert \mu^{(n+1)}_t - \mu^{(n)}_t \vert \vert_V \leq
(1-e^{-\varepsilon})C'_\beta\sup_{t<\varepsilon} \vert \vert
\mu^{(n)}_t - \mu^{(n-1)}_t \vert \vert_V.
\end{equation*}
Now the series with general term $\sup_{t<\varepsilon} \vert \vert
\mu^{(n+1)}_t - \mu^{(n)}_t \vert \vert_V$ converges and thus the
sequence of functions $\mu^{(n)}$ is Cauchy for the topology of
uniform convergence. Since $\mathcal{P}(\mathbb{R}^d;V)$ is
complete, we have successfully built a solution on
$[0,\varepsilon)$. There remains to show that the semiflow is
smooth. We have seen that the map $\Pi$ is $\mathcal{C}^\infty$
for the strong topology. By induction, every Picard approximation
$\mu\mapsto \mu^{(n)}_t$ is $\mathcal{C}^\infty$ and it is enough
to take the limit uniformly in $\mu$ on
$\mathcal{P}_\beta(\mathbb{R}^d;V)$ to conclude.
\end{proof}

\begin{defn} A subset $A$ of $\mathcal{P}(\mathbb{R}^d;V)$ is \textit{positively invariant} (negatively invariant, \textrm{invariant}) for $\Phi$ provided $\Phi_t(A) \subset A$ ($A \subset \Phi_t(A)$, $\Phi_t(A) = A$) for all $t\geq 0$.
\end{defn}

\begin{propo}\label{exist-flow}
Whenever that $W$ is either symmetric or bounded in the second variable ($W(x,y)\leq \kappa V(x)$), then the semiflow $\Phi$ does not explode.
\end{propo}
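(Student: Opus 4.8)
The plan is to show that the semiflow $\Phi$, whose local existence was just established, stays in some $\mathcal{P}_\beta(\mathbb{R}^d;V)$ for all time, so that the security-cylinder argument can be iterated without the radius shrinking to zero. Since the bound $C_\beta$ and the Lipschitz constant $C'_\beta$ of Lemma \ref{Pibound} depend only on $\beta$, it suffices to find a fixed $\beta$ that is \emph{a priori} preserved by the integral equation \eqref{eq_edo}. The quantity to control is $V$-mass $m(t) := \int V \, \mathrm{d}\Phi_t(\mu)$. Writing $I(\nu) := \int V \, \mathrm{d}\Pi(\nu)$, the defining relation $\Phi_t(\mu) = e^{-t}\mu + e^{-t}\int_0^t e^s \Pi(\Phi_s(\mu))\,\mathrm{d}s$ gives, after integrating $V$ against it, $m(t) = e^{-t} m(0) + e^{-t}\int_0^t e^s I(\Phi_s(\mu))\,\mathrm{d}s$, equivalently the linear ODE $\dot m(t) = I(\Phi_t(\mu)) - m(t)$. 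So everything reduces to bounding $I(\nu) = \int V \, e^{-2W*\nu}\,\mathrm{d}\gamma / Z(\nu)$ uniformly, \emph{for the two special classes of $W$}.

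First I would treat the case $W$ bounded in the second variable, i.e. $W(x,y) \le \kappa V(x)$: this is the easy half. Here $W*\nu(x) = \int W(x,y)\,\nu(\mathrm{d}y) \le \kappa V(x)$ for \emph{any} probability measure $\nu$, so $e^{-2W*\nu(x)} \le 1$ trivially, while $Z(\nu) = \int e^{-2W*\nu}\,\mathrm{d}\gamma \ge \int e^{-2\kappa V}\,\mathrm{d}\gamma =: z_0 > 0$ is bounded below by a constant depending on nothing. Hence $I(\nu) \le z_0^{-1}\int V \, \mathrm{d}\gamma =: C_0$, a universal bound. Feeding this into $\dot m = I(\Phi_t(\mu)) - m \le C_0 - m$ and applying Gronwall gives $m(t) \le \max(m(0), C_0)$ for all $t \ge 0$; choosing $\beta := \max(2\|\mu\|_V, 2C_0)$ (recall $\|\mu\|_V = m(0)$ when $\mu \in \mathcal{P}(\mathbb{R}^d;V)$ — or rather $\|\mu\|_V \ge m(0)$) keeps $\Phi_t(\mu) \in \mathcal{P}_\beta(\mathbb{R}^d;V)$ for all $t$, so the local solution extends indefinitely.

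The symmetric case is the main obstacle, since there the one-sided bound $W(x,y) \le \kappa V(x)$ is no longer available and $W*\nu$ can genuinely grow with $\|\nu\|_V$, so $I(\nu)$ is not bounded on all of $\mathcal{P}(\mathbb{R}^d;V)$. The idea is to exploit the variational/Lyapunov structure that symmetry provides: when $W$ is symmetric, $\Pi(\nu)$ is the minimizer of the strictly convex free-energy-type functional, and more to the point, along the flow $\Phi$ the free energy $\mathcal{F}$ of \eqref{free_energy} is a strict Lyapunov function — $\frac{\mathrm{d}}{\mathrm{d}t}\mathcal{F}(\Phi_t(\mu)) \le 0$ (this is the continuous-state analogue of the monotonicity in \cite{beLR}, and uses $D\mathcal{F}(\nu)\cdot(\Pi(\nu)-\nu) \le 0$, which follows from convexity of $\mathcal{F}$ together with $\nu \mapsto \Pi(\nu)$ being the "gradient step"). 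Since $V \ge 1$ and $W \ge 0$, the functional $\mathcal{F}(\nu) = \mathrm{Ent}(\nu\,|\,\gamma) + \iint W\,\mathrm{d}\nu\,\mathrm{d}\nu \ge \mathrm{Ent}(\nu\,|\,\gamma) \ge$ (by the entropy inequality applied to the function $V$ and the probability measure $\gamma$, using $\int e^{cV}\,\mathrm{d}\gamma < \infty$ for small $c$ from the super-polynomial growth of $V$) a coercive lower bound of the form $c\int V\,\mathrm{d}\nu - c'$. Hence the sublevel set $\{\mathcal{F} \le \mathcal{F}(\mu)\}$ is contained in some $\mathcal{P}_\beta(\mathbb{R}^d;V)$ with $\beta = \beta(\mathcal{F}(\mu))$; the Lyapunov monotonicity confines $\Phi_t(\mu)$ to this sublevel set for all $t \ge 0$, giving the desired uniform $V$-mass bound and hence non-explosion. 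The delicate points to check carefully are: (i) that $\mathcal{F}$ is finite at the starting point $\mu$ — one may need to run the flow for a short time first, since after any positive time $\Phi_t(\mu)$ has a bounded density against $\gamma$ and $\mathcal{F}(\Phi_t(\mu)) < \infty$ automatically; (ii) the precise differentiation-under-the-integral justifying $\frac{\mathrm{d}}{\mathrm{d}t}\mathcal{F}(\Phi_t(\mu)) = D\mathcal{F}(\Phi_t(\mu))\cdot(\Pi(\Phi_t(\mu)) - \Phi_t(\mu))$, using the $\mathcal{C}^\infty$ regularity of $\Pi$ and $\mu \mapsto W*\mu(x)$ noted earlier; and (iii) the sign computation $D\mathcal{F}(\nu)\cdot(\Pi(\nu)-\nu) \le 0$, which unwinds to a relative-entropy expression $-\mathrm{Ent}(\nu\,|\,\Pi(\nu)) \le 0$ after using $\frac{\mathrm{d}\Pi(\nu)}{\mathrm{d}\gamma} \propto e^{-2W*\nu}$ and symmetry of $W$ to identify $\frac{\delta}{\delta\nu}\big(\iint W\big) = 2W*\nu$.
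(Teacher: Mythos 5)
Your treatment of the bounded case is essentially the paper's: the one-sided bound $W(x,y)\le\kappa V(x)$ gives a uniform bound on $\Pi$, so $\Phi_t(\mu)$ stays in a fixed $\mathcal{P}_C(\mathbb{R}^d;V)$ and the local solution extends. That half is fine.

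The symmetric case has a genuine gap, and it is precisely the pitfall the paper flags. You propose to use $\mathcal{F}$ itself as the Lyapunov function and confine the flow to a sublevel set $\{\mathcal{F}\le c\}$. But $\mathcal{F}(\Phi_t(\mu))=+\infty$ for every $t\ge 0$ unless the initial measure $\mu$ is already absolutely continuous, because
\[
\Phi_t(\mu)=e^{-t}\mu+e^{-t}\int_0^t e^s\,\Pi(\Phi_s(\mu))\,\mathrm{d}s
\]
always carries a singular part $e^{-t}\mu$ of strictly positive mass. Your point (i) — that ``after any positive time $\Phi_t(\mu)$ has a bounded density against $\gamma$'' — is false: if $\mu=\delta_x$, then $\Phi_t(\mu)$ has an atom of mass $e^{-t}>0$ at $x$ for all finite $t$, so it is never absolutely continuous and $\mathcal{F}(\Phi_t(\mu))=+\infty$ identically. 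Your sign computation (iii) and the coercivity estimate are fine where they make sense, but they never apply along the trajectory.

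The paper's fix is to replace $\mathcal{F}$ by $\mathcal{E}:=\mathcal{F}\circ\Pi$. Since $\Pi(\nu)$ always has the smooth density $e^{-2W*\nu}/Z(\nu)$ with respect to $\gamma$, the composite $\mathcal{E}$ is finite and $\mathcal{C}^\infty$ on all of $\mathcal{P}(\mathbb{R}^d;V)$. Differentiating via the chain rule $D\mathcal{E}(\mu)\cdot\nu = D\mathcal{F}(\Pi(\mu))\circ D\Pi(\mu)\cdot\nu$ and plugging in $\nu=\Pi(\mu)-\mu$ gives, after a Cauchy--Schwarz/Jensen step,
\[
\frac{1}{4}\frac{\mathrm{d}}{\mathrm{d}t}\,\mathcal{E}(\Phi_t(\mu))
= -\int_{\mathbb{R}^d}(W*\nu)^2\,\mathrm{d}\Pi(\mu)
+\left(\int_{\mathbb{R}^d} W*\nu\,\mathrm{d}\Pi(\mu)\right)^2\le 0,
\]
so the (weakly compact) sublevel sets of $\mathcal{E}$ are positively invariant, and the semiflow cannot explode. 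To repair your argument you would need to replace your $\mathcal{F}$-based sublevel sets by these $\mathcal{E}$-based ones; the rest of your reasoning (invariance of a weakly compact set prevents explosion) then goes through.
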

\begin{proof}
The case where $W(x,y)$ is bounded in $y$ is easy. Since we have $W(x,y)\leq \kappa V(x)$, mimicking the proof of Lemma \ref{Pibound} enables to show that $\Pi$ is globally bounded (call $C$ the upper bound). This means that $\Phi_t(\mu)$ remains in the space $\mathcal{P}_C(\mathbb{R}^d;V)$, therefore it cannot explode.

Let us now assume that $W$ is symmetric. We point out that the free energy \eqref{free_energy} is not a Lyapunov function for \eqref{eq_edo} because in general the measure
$\Phi_t(\mu)$ is not absolutely continuous with respect to the
Lebesgue measure and so, $\mathcal{F}(\Phi_t(\mu)) = \infty$.
Thus, consider the Lyapunov function $\mathcal{E}(\mu)
:=\mathcal{F}(\Pi(\mu))$. Indeed, $\mathcal{F}$ restricted to
absolutely continuous probability measures is a
$\mathcal{C}^\infty$ function for the strong topology ($V$-norm).
We compute (thanks to the symmetry of $W$) for $\nu \in \mathcal{M}_0(\mathbb{R}^d;V)$
\begin{equation}\label{diffF}
D\mathcal{F}(\mu)\cdot \nu = \int_{\mathbb{R}^d}\left[ \log\left(
\frac{\mathrm{d}\mu }{ \mathrm{d}\gamma}(x)\right) +
2W*\mu(x)\right] \mathrm{d}\nu(x).
\end{equation}
But we recall that $\Pi$ is $\mathcal{C}^\infty$ and equation \eqref{diffPi}. So, since by composition $D\mathcal{E}(\mu)\cdot \nu = D\mathcal{F}(\Pi(\mu))\circ D\Pi(\mu)\cdot \nu$, we obtain
\begin{eqnarray*}
D\mathcal{E}(\mu)\cdot \nu = -4\int_{\mathbb{R}^d} \left(W*\Pi(\mu) -W*\mu\right) \left(W*\nu -\int_{\mathbb{R}^d}W*\nu\, \mathrm{d}\Pi(\mu)\right) \mathrm{d}\Pi(\mu).
\end{eqnarray*}
It remains to choose $\nu = \Pi(\mu)-\mu$ in order to get
\begin{eqnarray*}
\frac{1}{4}\frac{\mathrm{d}}{\mathrm{d}t} \mathcal{E}(\Phi_t(\mu)) = - \int_{\mathbb{R}^d} (W*\nu)^2 \mathrm{d}\Pi(\mu) + \left(\int_{\mathbb{R}^d}W*\nu\, \mathrm{d}\Pi(\mu)\right)^2\leq 0.
\end{eqnarray*}
Therefore, for all $c>0$, the sets $\{\mu; \mathcal{E}(\mu) \leq c\}$ are positively invariant. As they are (weakly) compact,
the semiflow cannot explode.
\end{proof}

\subsection{An important set}
We introduce here a crucial object for the analysis of the dynamical system $\Phi$. Let
\begin{equation}
\mathrm{Im} (\Pi) := \left\{\Pi(\mu); \mu \in \mathcal{P}(\mathbb{R}^d;V)\right\},
\end{equation}
and denote its convex hull by $\widehat{\mathrm{Im}(\Pi)}$.

\begin{propo} \label{im}
$\widehat{\mathrm{Im}(\Pi)}$ is a positively invariant set for the
semiflow $\Phi$ which contains every negatively invariant bounded
subset of $\mathcal{P}(\mathbb{R}^d;V)$.
\end{propo}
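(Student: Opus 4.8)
The plan is to prove the two assertions separately. First, \emph{positive invariance} of $\widehat{\mathrm{Im}(\Pi)}$: the key observation is that the flow equation \eqref{eq_edo} presents $\Phi_t(\mu)$ as an average. Writing $\Phi_t(\mu) = e^{-t}\mu + \int_0^t e^{s-t}\Pi(\Phi_s(\mu))\,\mathrm{d}s$ and noting $e^{-t} + \int_0^t e^{s-t}\,\mathrm{d}s = 1$, we see that $\Phi_t(\mu)$ is a convex combination (a barycenter, with probability weights $e^{-t}\delta_0 + e^{s-t}\mathbf{1}_{[0,t]}(s)\,\mathrm{d}s$) of the point $\mu$ and the points $\Pi(\Phi_s(\mu))$, $s\in[0,t]$. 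The latter all lie in $\mathrm{Im}(\Pi)\subset\widehat{\mathrm{Im}(\Pi)}$. If we already knew $\mu\in\widehat{\mathrm{Im}(\Pi)}$, convexity and closedness of $\widehat{\mathrm{Im}(\Pi)}$ (it is the closed convex hull; boundedness of $\mathrm{Im}(\Pi)$ on each $\mathcal{P}_\beta$ comes from Lemma \ref{Pibound}, and on all of $\mathcal{P}(\mathbb{R}^d;V)$ from the fact that $\Pi(\mathcal{P}(\mathbb{R}^d;V))\subset\mathcal{P}_{C_\beta}$ for a suitable $\beta$, since $\Pi(\mu)$ only depends on $W*\mu$ which is controlled once $\mu$ is a probability measure) would give $\Phi_t(\mu)\in\widehat{\mathrm{Im}(\Pi)}$, since a (vector-valued Bochner) integral of a curve lying in a closed convex set lies in that set. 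One subtlety: \eqref{eq_edo} is an implicit fixed-point relation, so to make the barycenter argument rigorous I would run it on the Picard iterates $\mu^{(n)}_t$: if $\mu\in\widehat{\mathrm{Im}(\Pi)}$ then by induction each $\mu^{(n)}_t\in\widehat{\mathrm{Im}(\Pi)}$ (same convex-average computation, with $\Pi(\mu^{(n-1)}_s)\in\mathrm{Im}(\Pi)$), and passing to the strong limit preserves membership in the closed set $\widehat{\mathrm{Im}(\Pi)}$. This proves $\Phi_t\big(\widehat{\mathrm{Im}(\Pi)}\big)\subset\widehat{\mathrm{Im}(\Pi)}$.

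Second, the \emph{containment of negatively invariant bounded sets}. Let $A\subset\mathcal{P}(\mathbb{R}^d;V)$ be bounded and negatively invariant, i.e. $A\subset\Phi_t(A)$ for all $t\ge0$. Fix $\mu\in A$. Then for every $t\ge0$ there is $\nu_t\in A$ with $\Phi_t(\nu_t)=\mu$. Apply \eqref{eq_edo} to $\nu_t$:
\begin{equation*}
\mu = \Phi_t(\nu_t) = e^{-t}\nu_t + \int_0^t e^{s-t}\,\Pi\big(\Phi_s(\nu_t)\big)\,\mathrm{d}s .
\end{equation*}
Since $A$ is bounded, say $A\subset\mathcal{P}_\beta(\mathbb{R}^d;V)$ for some $\beta$, we have $\|e^{-t}\nu_t\|_V\le e^{-t}\beta\to0$. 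Hence $\mu$ is the strong limit, as $t\to\infty$, of the vectors $m_t := \int_0^t e^{s-t}\,\Pi\big(\Phi_s(\nu_t)\big)\,\mathrm{d}s$. Now $m_t$ is a convex average (weights $e^{s-t}\mathrm{d}s$ on $[0,t]$, total mass $1-e^{-t}\le1$; renormalizing by $1-e^{-t}$ changes $m_t$ by $o(1)$) of points $\Pi(\Phi_s(\nu_t))\in\mathrm{Im}(\Pi)$, hence $m_t\in(1-e^{-t})\,\widehat{\mathrm{Im}(\Pi)}$. Because $\widehat{\mathrm{Im}(\Pi)}$ is closed and convex and contains a bounded neighborhood structure adequate for the scaling, letting $t\to\infty$ gives $\mu\in\widehat{\mathrm{Im}(\Pi)}$. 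As $\mu\in A$ was arbitrary, $A\subset\widehat{\mathrm{Im}(\Pi)}$.

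The main obstacle I anticipate is purely the \emph{functional-analytic bookkeeping} around the integral representation: one must check that the $\mathcal{M}(\mathbb{R}^d;V)$-valued integrals $\int_0^t e^{s-t}\Pi(\Phi_s(\mu))\,\mathrm{d}s$ are genuine Bochner integrals (continuity of $s\mapsto\Pi(\Phi_s(\mu))$ in $V$-norm, which follows from continuity of the semiflow and Lipschitzness of $\Pi$ from Lemma \ref{Pibound}), and that a Bochner integral of a curve valued in a closed convex set again lies in that set — this is standard but should be invoked explicitly. A secondary point is to confirm that $\widehat{\mathrm{Im}(\Pi)}$ is meant as the \emph{closed} convex hull (which is what the argument produces, since limits appear), and to note that $\mathrm{Im}(\Pi)$ is bounded in $V$-norm so that $\widehat{\mathrm{Im}(\Pi)}$ is itself a bounded (hence, by the weak-compactness Proposition \ref{P_beta}, weakly compact) set — this is needed for the later dynamical arguments and is worth recording here. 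Everything else is the routine convexity/averaging manipulation sketched above.
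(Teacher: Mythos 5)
Your argument rests on exactly the same two facts as the paper's proof --- that \eqref{eq_edo} exhibits $\Phi_t(\mu)$ as a barycenter with weight $e^{-t}$ on $\mu$ and the remaining weight $1-e^{-t}$ spread over points of $\mathrm{Im}(\Pi)$, and that $\widehat{\mathrm{Im}(\Pi)}$ is convex --- but packages them differently. The paper compresses both conclusions into the single contraction inequality
\begin{equation*}
\mathrm{d}_V\bigl(\Phi_t(\mu),\widehat{\mathrm{Im}(\Pi)}\bigr) \le e^{-t}\,\mathrm{d}_V\bigl(\mu,\widehat{\mathrm{Im}(\Pi)}\bigr),
\end{equation*}
obtained by applying Jensen's inequality to the convex functional $\nu\mapsto\mathrm{d}_V(\nu,\widehat{\mathrm{Im}(\Pi)})$, which vanishes on $\mathrm{Im}(\Pi)$. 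Positive invariance is the case of distance zero, and containment of any bounded negatively invariant $A$ follows since $\mathrm{d}_V(\mu,\widehat{\mathrm{Im}(\Pi)})\le e^{-t}\sup_{\nu\in A}\mathrm{d}_V(\nu,\widehat{\mathrm{Im}(\Pi)})\to0$. Your version proves the two assertions by separate, more hands-on barycenter and limiting arguments; it is a little longer but entirely in the same spirit, and the Picard-iterate device for making the positive-invariance step rigorous is a legitimate substitute for invoking Jensen on the implicit equation. Your observation that $\widehat{\mathrm{Im}(\Pi)}$ must be read as the \emph{closed} convex hull is well taken: both your proof and the paper's only produce $\mathrm{d}_V(\mu,\widehat{\mathrm{Im}(\Pi)})=0$, which needs closedness to yield membership.

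One aside in your write-up should be dropped: the claim that $\mathrm{Im}(\Pi)$ is bounded on all of $\mathcal{P}(\mathbb{R}^d;V)$ ``since $\Pi(\mu)$ only depends on $W*\mu$ which is controlled once $\mu$ is a probability measure.'' That is not justified: by Lemma \ref{majW}, $W*\mu(x)\le2\kappa\|\mu\|_V V(x)$, so without a bound on $\|\mu\|_V$ there is no uniform lower bound on $Z(\mu)$ and hence no obvious uniform bound on $\|\Pi(\mu)\|_V$ (Lemma \ref{Pibound} only gives this on each $\mathcal{P}_\beta$). Fortunately you do not need global boundedness of $\mathrm{Im}(\Pi)$: in your second part, solving $\mu = e^{-t}\nu_t + (1-e^{-t})\tilde m_t$ for $\tilde m_t$ and using $\nu_t\in A\subset\mathcal{P}_\beta$ gives $\|\tilde m_t\|_V \le (\|\mu\|_V + e^{-t}\beta)/(1-e^{-t})$, bounded for $t\ge1$, whence $\|\mu-\tilde m_t\|_V = e^{-t}\|\nu_t-\tilde m_t\|_V\to0$. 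With that repair the ``bounded neighborhood structure adequate for the scaling'' hand-wave becomes a clean estimate, and your proof matches the paper's in content; the paper's distance-contraction formulation simply sidesteps the issue from the outset.
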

\begin{proof}
To prove the result, it is enough to show for every $\mu \in
\mathcal{P}(\mathbb{R}^d;V)$ and every $t \geq 0$ the inequality
\begin{equation}
\mathrm{d}_V\left(\Phi_t(\mu),\widehat{\mathrm{Im}(\Pi)}\right)
\leq e^{-t}
\mathrm{d}_V\left(\mu,\widehat{\mathrm{Im}(\Pi)}\right),
\end{equation}
where $\mathrm{d}_V(\mu, X) := \inf \{\|\mu-\nu\|_V; \nu \in X\}$.
But this inequality directly results from the Jensen inequality
applied to the convex combination $\Phi_t(\mu) = e^{-t} \mu + e^{-t}\int_0^t e^s \Pi(\Phi_s(\mu))\, \mathrm{d}s$ and to
the convex map $\mu \mapsto \mathrm{d}_V(\mu,\widehat{\mathrm{Im}(\Pi)})$.
\end{proof}

\subsection{Fixed points of $\Pi$}
We show how the free energy functional $\mathcal{F}$ \eqref{free_energy} can help to find the fixed points of $\Pi$.
\begin{propo}\label{min} Suppose that $W$ is symmetric. Then the fixed points of $\Pi$
are the minima of $\mathcal{F}$.
\end{propo}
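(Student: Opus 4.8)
The plan is to characterize the fixed points of $\Pi$ via the critical points of the free energy $\mathcal{F}$ defined in \eqref{free_energy}, using the expression for $D\Pi$ in \eqref{diffPi} and for $D\mathcal{F}$ in \eqref{diffF}. First I would observe that $\mu$ is a fixed point of $\Pi$ if and only if $\mu = \Pi(\mu)$, which by definition of $\Pi$ means $\mu(\mathrm{d}x) = e^{-2W*\mu(x)}\gamma(\mathrm{d}x)/Z(\mu)$; in particular every fixed point is absolutely continuous with respect to $\gamma$ (hence with respect to Lebesgue measure), so $\mathcal{F}(\mu) < \infty$ and it makes sense to ask whether $\mu$ is a critical point of $\mathcal{F}$ restricted to the (smooth, by the discussion in \S4.1) submanifold of absolutely continuous measures. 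Taking logarithms, the fixed-point equation is equivalent to
\begin{equation*}
\log\left(\frac{\mathrm{d}\mu}{\mathrm{d}\gamma}(x)\right) + 2W*\mu(x) = -\log Z(\mu) \quad \text{for $\mu$-a.e. } x,
\end{equation*}
i.e. the function $x \mapsto \log(\mathrm{d}\mu/\mathrm{d}\gamma)(x) + 2W*\mu(x)$ is ($\mu$-a.s.) a constant.

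Next I would compare this with the criticality condition for $\mathcal{F}$. By \eqref{diffF}, with the symmetry of $W$ already used there, the differential of $\mathcal{F}$ at an absolutely continuous $\mu$ is
\begin{equation*}
D\mathcal{F}(\mu)\cdot \nu = \int_{\mathbb{R}^d}\left[\log\left(\frac{\mathrm{d}\mu}{\mathrm{d}\gamma}(x)\right) + 2W*\mu(x)\right]\mathrm{d}\nu(x),
\end{equation*}
and since the tangent space at $\mu$ is the space $\mathcal{M}_0(\mathbb{R}^d;V)$ of zero-mass measures, $\mu$ is a critical point of $\mathcal{F}$ precisely when the bracketed function is orthogonal to all zero-mass measures dominated by $V$, which forces it to be constant $\mu$-a.e. (more care is needed about the support: one tests against $\nu$ supported where $\mathrm{d}\mu/\mathrm{d}\gamma > 0$, and the factor $e^{-2W*\mu}$ being strictly positive shows this support is all of $\mathbb{R}^d$, so the function is genuinely constant). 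This is exactly the fixed-point equation rewritten above, so the two notions coincide. For the identification with \emph{minima} rather than mere critical points, I would invoke the computation in the proof of Proposition \ref{exist-flow}: there it is shown that $t \mapsto \mathcal{E}(\Phi_t(\mu)) = \mathcal{F}(\Pi(\mu))$ is nonincreasing along the flow, with derivative vanishing only when $W*\nu$ is $\Pi(\mu)$-a.s. constant for $\nu = \Pi(\mu) - \mu$; combined with strict convexity of $\mathcal{F}$ on the convex set of absolutely continuous probability measures (the entropy term is strictly convex and the interaction term is convex when $W \geq 0$ is symmetric — or one can cite \cite{mc,cedric}), any critical point is automatically the global minimum, so "critical points of $\mathcal{F}$" $=$ "minima of $\mathcal{F}$".

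The main obstacle I anticipate is the rigorous handling of the variational argument on the non-linear space $\mathcal{P}(\mathbb{R}^d;V)$: one must justify that testing $D\mathcal{F}(\mu)$ against zero-mass measures $\nu$ (which are the legitimate tangent vectors, and for which $\mu + t\nu$ stays a probability measure for small $t$ only if $\nu$ is suitably supported) really does force the Euler–Lagrange function to be constant everywhere, not just on a subset, and that differentiating $\mathcal{F}$ in these directions is licit given the possible unboundedness of $\log(\mathrm{d}\mu/\mathrm{d}\gamma)$ and of $W$ (this is where the $V$-domination \eqref{domination} and the membership $\mu \in \mathcal{P}(\mathbb{R}^d;V)$ enter, via Lemma \ref{majW}). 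Once that technical point is secured, the equivalence is a direct matching of two explicit identities.
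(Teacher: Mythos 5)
Your proposal matches the paper's proof in its essentials: use \eqref{diffF} to identify critical points of $\mathcal{F}$ with fixed points of $\Pi$, then invoke convexity of $\mathcal{F}$ (entropy plus interaction) to conclude that critical points are minima. The detour through the Lyapunov decrease of $\mathcal{E}=\mathcal{F}\circ\Pi$ borrowed from Proposition \ref{exist-flow} is extraneous for this statement: it shows the flow attracts to the fixed-point set (which is what Theorem \ref{limitset} exploits), but it does not by itself rule out a critical point being a saddle; that distinction is made by convexity, exactly as in the paper. One shared subtlety: both you and the paper claim that $W\ge 0$ gives convexity of $\mu \mapsto \iint W\,\mathrm{d}\mu\,\mathrm{d}\mu$, but convexity of this quadratic functional on $\mathcal{P}$ requires $\nu\mapsto\iint W\,\mathrm{d}\nu\,\mathrm{d}\nu\ge 0$ for signed zero-mass $\nu$ (that is, $W$ should be a positive-definite kernel), which pointwise nonnegativity of $W$ does not by itself guarantee; since you faithfully reproduce the paper's step, this is not a gap you introduce, but it is worth flagging.
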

\begin{proof}
Equation \eqref{diffF} readily implies that, we have $D\mathcal{F}(\mu) \cdot \nu = 0$ for all $\nu \in \mathcal{M}_0(\mathbb{R}^d;V)$ if and only if $\mu = \Pi(\mu)$. So, the fixed points of $\Pi$ are the critical points of $\mathcal{F}$. Moreover, $\mathcal{F}$ is a convex functional. Indeed, it is a $\mathcal{C}^\infty$ functional (on the set of absolutely continuous measures), with second differential $D^2 \mathcal{F}(\mu)$. Let $\nu_1,\nu_2 \in \mathcal{P}(\mathbb{R}^d;V)$. We have:
$$D^2 \mathcal{F}(\mu)\cdot (\nu_1,\nu_2) = \int_{\mathbb{R}^d} \nu_1(x)\nu_2(x) \mu(x)^{-1} \gamma(x) \mathrm{d}x + \int_{\mathbb{R}^d}\int_{\mathbb{R}^d} W(x,y) \nu_1(\mathrm{d}x)\nu_2(\mathrm{d}y)$$ and the convexity is a consequence of the nonnegativity of $W$. It then implies that $\mu = \Pi(\mu)$ is a minimum for $\mathcal{F}$.
\end{proof}

\begin{lemma}\label{lem_pi}
Whenever that $W$ is either symmetric or bounded in the second variable, then the set of the fixed points of $\Pi$, $\left\{\mu \in \mathcal{P}(\mathbb{R}^d); \Pi(\mu) =\mu \right\}$, is a nonempty compact (for the weak topology) subset of $\mathcal{P}(\mathbb{R}^d;V)$.
\end{lemma}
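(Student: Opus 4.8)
The plan is to split into the two hypotheses and handle them in parallel, using the free energy for the symmetric case and a direct compactness argument for the bounded case. For \emph{non-emptiness} under symmetry: by Proposition \ref{min} the fixed points of $\Pi$ coincide with the minima of $\mathcal{F}$, so it suffices to show $\mathcal{F}$ attains its infimum. First I would note that $\mathrm{Im}(\Pi)\subset\mathcal{P}_{C_\beta}(\mathbb{R}^d;V)$ for a suitable $\beta$ — more precisely, combining Proposition \ref{exist-flow} (so $\Phi$ does not explode) with Proposition \ref{im} ($\widehat{\mathrm{Im}(\Pi)}$ positively invariant) and Lemma \ref{Pibound} (which bounds $\Pi$ on each $\mathcal{P}_\beta$), one gets that $\Phi_t(\mu)$ stays in some fixed $\mathcal{P}_{\beta_0}(\mathbb{R}^d;V)$ for all $t$ large, hence $\omega$-limits of the semiflow — and in particular equilibria — lie in the weakly compact set $\mathcal{P}_{\beta_0}(\mathbb{R}^d;V)$ (Proposition \ref{P_beta}). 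So it is enough to minimize $\mathcal{F}$ over $\mathcal{P}_{\beta_0}(\mathbb{R}^d;V)$. On this set the interaction term $\int\int W\,\mathrm{d}\mu\,\mathrm{d}\mu$ is weakly continuous and bounded (by the domination \eqref{domination} and $\int V\,\mathrm{d}\mu\le\beta_0$), while the entropy term $\int\log(\mathrm{d}\mu/\mathrm{d}\gamma)\,\mathrm{d}\mu$ is weakly lower semicontinuous and bounded below (relative entropy with respect to the probability measure $\gamma$ is nonnegative); hence $\mathcal{F}$ is weakly l.s.c.\ and bounded below on the weakly compact set $\mathcal{P}_{\beta_0}(\mathbb{R}^d;V)$, so it attains its minimum there. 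That minimizer is a fixed point of $\Pi$, which gives non-emptiness.

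For \emph{non-emptiness} under the bounded hypothesis $W(x,y)\le\kappa V(x)$: here $\Pi$ is globally bounded (this is exactly the observation in the proof of Proposition \ref{exist-flow}, mimicking Lemma \ref{Pibound}), so $\Pi$ maps the weakly compact convex set $\mathcal{P}_C(\mathbb{R}^d;V)$ into itself. Since $\Pi$ is $\mathcal{C}^\infty$, in particular weakly continuous (it is given by an explicit exponential density depending continuously on $W*\mu$, and $\mu\mapsto W*\mu(x)$ is weakly continuous by the dominated convergence argument already invoked in Section 4.1), the Schauder–Tychonoff fixed point theorem applied on the weakly compact convex subset $\mathcal{P}_C(\mathbb{R}^d;V)$ of the locally convex space $\mathcal{M}(\mathbb{R}^d;V)$ (with the weak topology) yields a fixed point.

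For \emph{compactness}: the set $\mathrm{Fix}(\Pi):=\{\mu:\Pi(\mu)=\mu\}$ is contained in $\mathrm{Im}(\Pi)\subset\mathcal{P}_{\beta_0}(\mathbb{R}^d;V)$ (resp.\ $\mathcal{P}_C(\mathbb{R}^d;V)$ in the bounded case), which is weakly compact by Proposition \ref{P_beta}; so it remains to show $\mathrm{Fix}(\Pi)$ is weakly closed. I would argue this directly: if $\mu_n\in\mathrm{Fix}(\Pi)$ and $\mu_n\xrightarrow{w}\mu$, then using the weak continuity of $\mu\mapsto W*\mu(x)$ pointwise together with the uniform bound $|W*\mu_n(x)|\le 2\kappa\beta_0 V(x)$ (Lemma \ref{majW}) and dominated convergence against $\gamma$, one passes to the limit in $\mu_n=\Pi(\mu_n)=e^{-2W*\mu_n}\gamma/Z(\mu_n)$ to get $\mu=\Pi(\mu)$; the denominators $Z(\mu_n)$ converge to $Z(\mu)>0$ by the same dominated convergence (with the uniform lower bound $Z(\mu_n)\ge\int e^{-4\kappa\beta_0 V}\,\mathrm{d}\gamma>0$ from Lemma \ref{Pibound} keeping things away from $0$). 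Hence $\mathrm{Fix}(\Pi)$ is a weakly closed subset of a weakly compact set, so it is weakly compact, and it is a subset of $\mathcal{P}(\mathbb{R}^d;V)$ as claimed.

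The main obstacle I anticipate is not any single estimate but making the weak-continuity of $\Pi$ and the interchange of limits fully rigorous in this \emph{non-standard} weak topology (testing against all of $\mathcal{C}^0(\mathbb{R}^d;V)$, including unbounded test functions): one must check that weak convergence of $\mu_n$ genuinely transfers to convergence of $W*\mu_n(x)$ at each $x$ and, more delicately, to convergence of the normalized densities in the $V$-weighted sense rather than merely the usual weak sense. All the needed uniform domination is already available from Lemma \ref{majW} and Lemma \ref{Pibound}, so this is a matter of careful bookkeeping with dominated convergence rather than a new idea; the choice of $\beta_0$ via Propositions \ref{exist-flow}, \ref{im} and \ref{P_beta} is the structural point that confines everything to a single weakly compact set.
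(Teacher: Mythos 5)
Your treatment of the bounded case and your explicit closedness argument for $\mathrm{Fix}(\Pi)$ are sound and mirror the paper (which in fact proves only non-emptiness and leaves the compactness claim implicit, so your extra detail is worthwhile). The gap is in the symmetric case, at the very first step. You assert that $\mathrm{Im}(\Pi)$ — and therefore all equilibria and all $\omega$-limits of the semiflow — lies in a single $\mathcal{P}_{\beta_0}(\mathbb{R}^d;V)$, citing Propositions \ref{exist-flow}, \ref{im} and Lemma \ref{Pibound}. None of these gives that conclusion for symmetric, unbounded $W$: Lemma \ref{Pibound} bounds $\|\Pi(\mu)\|_V$ by a constant $C_\beta$ that depends on, and grows (exponentially) with, the chosen $\beta$, so the inclusion $\Pi(\mathcal{P}_\beta)\subset\mathcal{P}_\beta$ is not self-improving and in general fails for every $\beta$; Proposition \ref{im} asserts positive invariance of $\widehat{\mathrm{Im}(\Pi)}$, not that it is bounded in $V$-norm; and the non-explosion argument of Proposition \ref{exist-flow} for symmetric $W$ uses the sublevel sets $\{\mathcal{E}\le c\}$, whose constant $c$ depends on the initial datum, so it does not furnish a single $\mathcal{P}_{\beta_0}$ trapping the whole flow. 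Global boundedness of $\Pi$ is precisely the extra information the paper has only when $W(x,y)\le\kappa V(x)$; if you had it in the symmetric case you could simply invoke the Schauder argument and the free-energy route would be superfluous, which is not what the paper's dichotomy suggests.

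The paper sidesteps this by minimizing $\mathcal{E}=\mathcal{F}\circ\Pi$ rather than $\mathcal{F}$: it takes a minimizing sequence $(\mu_n)$ for $\mathcal{E}$, lives inside the weakly compact sublevel set $\{\mathcal{E}\le m+1\}$ (this is the compactness assertion in Proposition \ref{exist-flow}), extracts a weak limit $\mu_\infty$, uses weak continuity of $\mathcal{E}$ to get $\mathcal{E}(\mu_\infty)=m$, and then invokes Proposition \ref{min}. The decisive advantage is that an $\mathcal{E}$-sublevel set automatically contains the global infimum, so the limit is an unconstrained minimizer and its first variation vanishes. By contrast, even if you had a valid $\beta_0$, a minimizer of $\mathcal{F}$ over $\mathcal{P}_{\beta_0}$ could sit on the boundary $\{\mu(V)=\beta_0\}$, where the constrained Euler–Lagrange condition carries a Lagrange multiplier and does not force $D\mathcal{F}(\mu)=0$; such a measure need not be a fixed point of $\Pi$. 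To repair the argument you should either replace $\mathcal{P}_{\beta_0}$ by an $\mathcal{E}$- (or $\mathcal{F}$-) sublevel set and establish its weak compactness — which is exactly what the paper does — or prove independently that $\Pi$ is globally bounded under symmetry alone, which the hypotheses \textbf{(H)} do not guarantee.
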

\begin{proof}
Suppose first that $W$ is symmetric. We will again use the free energy. Let $m := \inf\{ \mathcal{E}(\mu); \mu \in \mathcal{P}(\mathbb{R}^d;V)\}$. There exists a sequence of probability measures $(\mu_n)$ absolutely continuous with respect to the Lebesgue measure such that $m \leq \mathcal{E}(\mu_n) \leq m+ 1/n$. But, as proved in Proposition \ref{exist-flow}, for any $c>0$, the set $\{\mu; \mathcal{E}(\mu) \leq c\}$ is compact. So, we extract a subsequence $(\mu_{n_k})$, converging (for the weak topology) to $\mu_\infty$. As $\mu\mapsto W*\mu$ and $\mu \mapsto \Pi(\mu)$ are two continuous functions, $\mu \mapsto \mathcal{E}(\mu)$ is also weakly continuous and so $\mathcal{E}(\mu_\infty)=m$. We conclude by Proposition \ref{min}.

Suppose now that $W$ is bounded in $y$: $W(x,y)\leq \kappa V(x)$. We have proved in Lemma \ref{exist-flow} that $\Pi(\mu)$ maps (weakly) continuously the compact convex space $\mathcal{P}_C(\mathbb{R}^d;V)$ into itself. The Leray-Schauder fixed point theorem then ensures that the set $\left\{\mu \in \mathcal{P}(\mathbb{R}^d;V); \Pi(\mu) =\mu\right\}$ is nonempty. 
\end{proof}

\section{Study of the family of semigroups $(P_t^\mu, t\geq 0, \mu\in\mathcal{P}_\beta(\mathbb{R}^d;V))$}

In this section, we introduce two crucial functional inequalities
for the family of semigroups $P_t^\mu$: the spectral gap and the
ultracontractivity. Since we consider these semigroups altogether
for all the measures $\mu\in\mathcal{P}_\beta(\mathbb{R}^d;V)$, we
will prove that the constants involved in those properties are
uniform in $\mu$. The notion of ultracontractivity and its
relation to the analysis of Markov semigroups were first studied
by Davies and Simon \cite{daS} and recently by R\"ockner \& Wang
\cite{roWa} for more general diffusions. The need for
ultracontractivity will impose some kind of boundedness on the
convolution term in the SDE that cannot be easily removed.
Finally, thanks to these properties, we compute several estimates
that prepare the proof of Section 6.\medskip

For any $\mu\in\mathcal{P}(\mathbb{R}^d;V)$, let as usual $L^2(\Pi(\mu))$ denote the Lebesgue space of Borel square-integrable functions with respect to the measure $\Pi(\mu)$. We remark that the space depends on $\mu$, but we will
consider mainly the subspace $\mathcal{C}^0(\mathbb{R}^d;V)\subset L^2(\Pi(\mu))$. We denote $$(f,g)_\mu := \int_{\mathbb{R}^d} f(x) g(x) \Pi(\mu)(\mathrm{d}x)$$ the inner product on this space and $\vert \vert.\vert \vert_{2,\mu}$ the associated norm. We introduce two operators : $Q_\mu$ is the ``inverse" of $A_\mu$, defined for any function $f$ by
\begin{eqnarray}
Q_\mu f := \int_0^\infty \left(P_t^\mu f - \Pi(\mu)f\right)
\mathrm{d}t
\end{eqnarray}
and $K_\mu$ is the orthogonal projector defined by
\begin{eqnarray}
K_\mu f := f - \Pi(\mu) f.
\end{eqnarray}
They are linked together by the following relation $\forall f \in \mathcal{C}^\infty(\mathbb{R}^d;V),$ 
\begin{eqnarray*}
A_\mu \circ Q_\mu (f) = Q_\mu \circ A_\mu (f) = -K_\mu f.
\end{eqnarray*}

\begin{rk} The integrability of $(P_t^\mu f - \Pi(\mu)f)$ will come from
the uniform spectral gap obtained in Corollary \ref{logSob}.
\end{rk}

\subsection{Uniform ultracontractivity}
To prove that the family of semigroups $\left(P_t^\mu,t\geq 0,\mu
\in \mathcal{P}_\beta(\mathbb{R}^d;V)\right)$ is uniformly
ultracontractive, we will rely on the following result of
R\"ockner \& Wang:
\begin{lemma}\label{rwang} (\cite{roWa} corollary 2.5) Let $(P_t,t\geq 0)$ be a
Markov semigroup, with infinitesimal generator $A := \frac{1}{2}
\Delta - (\nabla U,\nabla)$, and $\nabla^2 U \geq -K$. Assume that
there exists a continuous increasing map $\chi: \mathbb{R}_+
\mapsto \mathbb{R}_+ \setminus \{0\}$ such that
\begin{enumerate}
    \item $\underset{r\rightarrow \infty}{\lim} \frac{\chi(r)}{r} = \infty$,
    \item the mapping $g_\chi(r) := r \chi(m \log r)$ is convex on $[1,\infty)$ for any $m >0$,
    \item $A |x|^2 \leq b -\chi(|x|^2)$ for some $b>0$.
\end{enumerate}
Then $P_t$ has a unique invariant probability measure. If $\int_2^\infty \frac{\mathrm{d}r}{r\chi(m \log r)}<\infty$, $m>0$,
then $P_t$ is ultracontractive.\\
If moreover $\chi(r) = \chi r^\delta$, with $\chi>0, \delta>1$, then there exists $c=c(b,\chi)>0$ such that for all $t\in
(0,1],\quad \vert \vert P_t \vert \vert_{2\rightarrow \infty} \leq \exp{\left(ct^{-\delta/(\delta -1)}\right)}.$
\end{lemma}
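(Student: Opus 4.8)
The plan is to prove the lemma by the classical chain of implications for ultracontractivity: the curvature lower bound yields a \emph{local} functional inequality; the Lyapunov drift condition \textrm{(3)} yields tail/moment control under the invariant measure; combining the two \emph{tightens} the local inequality into a global \emph{super-Poincar\'e inequality} with an explicit rate function governed by $\chi$; and a super-Poincar\'e inequality with an integrable rate function yields ultracontractivity, with the quantitative bound coming out by the Davies--Gross ODE method.

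First I would produce the invariant probability measure. Since $\chi(r)/r\to\infty$, condition \textrm{(3)} gives $A|x|^2\le b'-|x|^2$ outside a large ball, a Foster--Lyapunov drift condition; combined with the non-degeneracy of the elliptic generator $A$ (hence irreducibility and the strong Feller property) this forces positive recurrence and a unique invariant $\mu$. Integrating \textrm{(3)} against $\mu$ after a localization argument (using $\mu(Af)=0$) gives $\int\chi(|x|^2)\,d\mu\le b$, whence the tail bound $\mu(|x|^2>R)\le b/\chi(R)$ by Chebyshev, and similarly higher moments of $\chi(|x|^2)$.

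Next, the curvature bound $\nabla^2 U\ge -K$ is used to get a local inequality: since $U$ need not be convex, rather than a crude Bakry--\'Emery estimate I would invoke Wang's dimension-free Harnack inequality
\[
|P_t f|^p(x)\ \le\ \bigl(P_t|f|^p(y)\bigr)\,\exp\!\Bigl(\tfrac{pK|x-y|^2}{2(p-1)(1-e^{-2Kt})}\Bigr),
\]
valid under $\nabla^2 U\ge -K$, which yields a local log-Sobolev (hence local super-Poincar\'e) inequality on every Euclidean ball $B_R$ with constants depending only on $K$ and $R$. The heart of the argument is then the tightening step \`a la Bakry--Cattiaux--Guillin / Wang: feeding the local inequality on balls and the Lyapunov function $W(x)=|x|^2$ with the drift estimate \textrm{(3)} into the standard scheme upgrades this to a global super-Poincar\'e inequality with the symmetric Dirichlet form $\mathcal E(f,f)=\tfrac12\int|\nabla f|^2\,d\mu$:
\[
\mu(f^2)\ \le\ s\,\mathcal E(f,f)\ +\ \beta(s)\,\mu(|f|)^2,\qquad s>0,
\]
where the rate $\beta$ is controlled by the growth of $\chi$. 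The substitution $r=e^u$ shows that the hypothesis $\int_2^\infty\frac{dr}{r\chi(m\log r)}<\infty$ is exactly $\int^\infty\frac{du}{\chi(mu)}<\infty$, which is what makes $\beta(s)\to\infty$ slowly enough as $s\to 0$; the convexity hypothesis on $g_\chi(r)=r\chi(m\log r)$ is the technical ingredient ensuring the Legendre-transform bookkeeping in this step is clean, and for $\chi(r)=\chi r^\delta$ one obtains $\beta(s)\le\exp(c\,s^{-1/(\delta-1)})$.

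Finally I would deduce ultracontractivity from the super-Poincar\'e inequality by the Davies--Gross method: for $f\ge 0$ with $\|f\|_1=1$, writing $u(t)=\|P_tf\|_{q(t)}$ and differentiating, the super-Poincar\'e inequality converts into a differential inequality whose integration bounds $\|P_t\|_{1\to\infty}$ (hence $\|P_t\|_{2\to\infty}$) by an explicit function obtained as an integral transform of $\beta$; the integrability condition is precisely what makes this transform finite for every $t>0$. Inserting $\beta(s)\le\exp(c\,s^{-1/(\delta-1)})$ and optimizing the parameter yields $\|P_t\|_{2\to\infty}\le\exp(c'\,t^{-\delta/(\delta-1)})$ on $(0,1]$. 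The main obstacle is the tightening step together with carrying all the constants through it and through the Davies iteration so that the exponent $\delta/(\delta-1)$ emerges sharp; the remaining steps are standard Lyapunov theory and a delicate but routine ODE computation.
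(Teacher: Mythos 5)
The paper contains no proof of this lemma: it is imported verbatim from R\"ockner--Wang \cite{roWa} (their Corollary 2.5), so there is no internal argument to compare yours with. Your plan does follow the same circle of ideas as that reference --- a dimension-free Harnack inequality from $\nabla^2U\ge -K$, the drift condition (3) to control the invariant measure, a super-Poincar\'e inequality, then a Gross--Davies iteration --- with one methodological difference: R\"ockner and Wang obtain the super-Poincar\'e inequality by integrating the Harnack inequality in its second variable against the invariant measure (whose concentration comes from the drift condition), rather than by the later Lyapunov--tightening scheme of Bakry--Cattiaux--Guillin type that you invoke. Either route can in principle be made to work, and your preliminary steps (uniqueness of the invariant measure via Foster--Lyapunov plus strong Feller, the bound $\int\chi(|x|^2)\,d\mu\le b$, and the substitution $r=e^u$ identifying $\int_2^\infty \frac{dr}{r\chi(m\log r)}<\infty$ with $\int^\infty\frac{du}{\chi(mu)}<\infty$) are correct.

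The genuine gap is at the quantitative heart of the statement, the exponent $\delta/(\delta-1)$, which is exactly the step you assert rather than derive. With the super-Poincar\'e inequality normalized as you wrote it, $\mu(f^2)\le s\,\mathcal{E}(f,f)+\beta(s)\mu(|f|)^2$, the standard transform (Wang) is: $\beta(s)=\exp(cs^{-q})$ with $q<1$ gives $\|P_t\|_{1\to\infty}\le\exp(Ct^{-q/(1-q)})$, while $q\ge1$ gives only supercontractivity. For $\chi(r)=\chi r^{\delta}$ the invariant measure behaves like $e^{-c|x|^{2\delta}}$, whose correct super-Poincar\'e rate is $\beta(s)\asymp\exp(cs^{-\delta/(2\delta-1)})$ (equivalently, a defective log-Sobolev defect of order $\varepsilon^{-\delta/(\delta-1)}$), and $q=\delta/(2\delta-1)$ is precisely what returns $q/(1-q)=\delta/(\delta-1)$. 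Your claimed intermediate rate $\beta(s)\le\exp(cs^{-1/(\delta-1)})$ is not what the tightening step produces, and feeding it into the Davies iteration would give $\exp(Ct^{-1/(\delta-2)})$ for $\delta>2$ and no ultracontractivity at all for $1<\delta\le2$ --- not $\exp(Ct^{-\delta/(\delta-1)})$. Since this exponent bookkeeping is the only nontrivial content of the final assertion, and both the tightening step and the iteration are left unexecuted, the proposal fails exactly where you yourself locate the main obstacle; to repair it you must either carry out the tightening with the correct rate $s^{-\delta/(2\delta-1)}$, or work in the defective log-Sobolev normalization with defect $\varepsilon^{-\delta/(\delta-1)}$ and apply Davies' theorem there.
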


\begin{propa} The family of semigroups $(P_t^\mu, t\geq 0, \mu \in
\mathcal{P}_\beta(\mathbb{R}^d;V))$ is uniformly ultracontractive: we have for all $1\geq t> 0$ and $\mu
\in \mathcal{P}_\beta(\mathbb{R}^d;V)$
\begin{equation}
\vert \vert P_t^\mu\vert \vert_{2\rightarrow\infty}:= \sup_{f\in \mathcal{C}^\infty (\mathbb{R}^d;V)\backslash \{0\}} \frac{\|P_t^\mu f\|_\infty}{\|f\|_{2,\mu}} \leq \exp{\left(ct^{-\delta/(\delta -1)}\right)},
\end{equation}
where $c>0$ is independent from $\mu$.
\end{propa}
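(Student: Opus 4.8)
The plan is to verify the hypotheses of Lemma \ref{rwang} for the generator $A_\mu = \tfrac12\Delta - (\nabla U_\mu, \nabla)$ with $U_\mu := V + W*\mu$, with the crucial point being that all constants can be chosen uniformly over $\mu \in \mathcal{P}_\beta(\mathbb{R}^d;V)$. First I would choose $\chi(r) := \chi_0 r^\delta$ with $\delta>1$ the exponent from the growth hypothesis iii), so that the final (quantitative) conclusion of Lemma \ref{rwang} applies and gives precisely the claimed bound $\exp(ct^{-\delta/(\delta-1)})$. With this choice, conditions (1) and (2) of Lemma \ref{rwang} are immediate and $\mu$-independent: $\chi(r)/r \to \infty$ since $\delta>1$, and $g_\chi(r) = \chi_0 r (m\log r)^\delta$ is convex on $[1,\infty)$ because $r\mapsto r(\log r)^\delta$ is (elementary second-derivative check). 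The integrability condition $\int_2^\infty \mathrm{d}r/(r\chi(m\log r)) = \chi_0^{-1}\int_2^\infty \mathrm{d}r/(r(m\log r)^\delta)<\infty$ again holds since $\delta>1$.

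The two substantive points are the lower curvature bound $\nabla^2 U_\mu \geq -K$ and the drift condition (3), $A_\mu |x|^2 \leq b - \chi(|x|^2)$, both needed uniformly in $\mu$. For the curvature bound, I would invoke directly the second half of hypothesis v) (equation \eqref{curvature}): $(\nabla^2 V(x) + \nabla^2_{xx}W(x,y))\xi,\xi) \geq M|\xi|^2$ for all $x,y,\xi$. Integrating in $y$ against $\mu$ gives $(\nabla^2 U_\mu(x)\xi,\xi) \geq M|\xi|^2$ for every $\mu\in\mathcal{P}(\mathbb{R}^d;V)$, so $K = \max(0,-M)$ works and is independent of $\mu$. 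For the drift condition, compute $A_\mu|x|^2 = d - 2(x, \nabla V(x)) - 2(x, \nabla W*\mu(x))$. By the first part of hypothesis v), $(x,\nabla_x W(x,y))/(x,\nabla V(x)) \to \alpha > -1$, so for $|x|$ large, $(x,\nabla_x W(x,y)) \geq -\tfrac{1+\alpha'}{2}(x,\nabla V(x))$ for some $\alpha' \in (\alpha, 1)$ — here one must be a little careful that the convergence in v) is uniform enough in $y$, or else integrate first against $\mu$; I would phrase v) as giving, after integration against $\mu \in \mathcal{P}_\beta$, a bound $(x, \nabla W*\mu(x)) \geq -\theta (x,\nabla V(x)) - c_1$ with $\theta < 1$ and $c_1, \theta$ uniform in $\mu \in \mathcal{P}_\beta$ (using the domination iv) to control the behavior for moderate $|x|$). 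Combined with the growth bound $(x,\nabla V(x)) \geq c|x|^{2\delta}$ from iii), this yields $A_\mu|x|^2 \leq d + 2c_1 - 2(1-\theta)c|x|^{2\delta} \leq b - \chi_0|x|^{2\delta}$ for suitable $b>0$ and $\chi_0 \in (0, 2(1-\theta)c)$, both independent of $\mu$, which is exactly condition (3) with $\chi(r) = \chi_0 r^\delta$.

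Having verified all hypotheses with constants $K, b, \chi_0$ uniform over $\mathcal{P}_\beta(\mathbb{R}^d;V)$, the quantitative conclusion of Lemma \ref{rwang} gives, for each fixed $\mu$, a constant $c(\mu) = c(b,\chi_0)$ depending only on $b$ and $\chi_0$ — hence a single $c>0$ valid for all $\mu \in \mathcal{P}_\beta(\mathbb{R}^d;V)$ — such that $\|P_t^\mu\|_{2\to\infty} \leq \exp(ct^{-\delta/(\delta-1)})$ for all $t \in (0,1]$. This is the assertion. The main obstacle I anticipate is the uniformity in the drift estimate: the curvature hypothesis \eqref{curvature} is stated as a pointwise limit in $x$ (for fixed $y$), and turning this into a bound of the form $(x,\nabla W*\mu(x)) \geq -\theta(x,\nabla V(x)) - c_1$ that holds simultaneously for all $x$ and all $\mu \in \mathcal{P}_\beta$ requires combining the asymptotic statement with the domination bound \eqref{domination} (to control $|\nabla_x W(x,y)| \leq \kappa(V(x)+V(y))$ on the compact region where the limit has not yet kicked in) and integrating against $\mu$; one must check that the "$x$ large enough" threshold can be taken independent of $\mu \in \mathcal{P}_\beta$, which follows since the relevant bounds depend on $\mu$ only through $\int V\,\mathrm{d}\mu \leq \beta$.
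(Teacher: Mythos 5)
Your proposal takes essentially the same route as the paper's: apply Lemma \ref{rwang} with $U = V + W*\mu$ and $\chi(r)\propto r^\delta$, deriving the uniform drift bound $A_\mu|x|^2 \leq b - a|x|^{2\delta}$ over $\mathcal{P}_\beta(\mathbb{R}^d;V)$ from hypotheses iii) and v). You are considerably more explicit than the paper about the uniformity in $\mu$ (the $\mu$-independent lower curvature bound from v), and handling the pointwise limit in v) on moderate $|x|$ via the domination bound iv) after integrating against $\mu\in\mathcal{P}_\beta$), but the core argument is the same.
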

\begin{proof}
We apply Lemma \ref{rwang} with $U:=V + W*\mu$ and find that each
$(P_t^\mu)_{t\geq 0}$ is ultracontractive. Indeed, the conditions
(\ref{curvature}) and the growth condition on $V$, all together imply that there exist $a,b>0$
such that for $\mu \in \mathcal{P}_\beta(\mathbb{R}^d;V)$
$$ A_{\mu} |x|^2 = d -2 (\nabla W*{\mu}(x),x) - 2(\nabla V(x),x)
\leq b- a |x|^{2\delta}.$$ As we let $\chi(r) := r^\delta$, with $\delta>1$,
we find that the constant $c$ is uniform in $\mu$. Thus, we have the uniform
ultracontractivity.
\end{proof}
We recall that, as a consequence of Proposition \ref{ultra}, there exists $C>0$, uniform in $\mu$, such that $\forall t \geq 0,$ $$\vert \vert P_t^\mu (K_\mu f)\vert \vert_{2,\mu} \leq e^{-t/C}\vert \vert K_\mu f\vert\vert_{2,\mu}.$$
We are now able to derive some usefull bounds on the operator $Q_\mu$.
\begin{propo}\label{Q_muC1} For all $\varepsilon>0$, there exists a positive
constant $K(\varepsilon)$ such that for all $\mu \in \mathcal{P}_\beta(\mathbb{R}^d;V)$,
$x\in \mathbb{R}^d$, $f\in \mathcal{C}^0(\mathbb{R}^d;V)$:
\begin{equation}
\vert Q_\mu f(x)\vert \leq (\varepsilon V(x) + K(\varepsilon))
\vert \vert f\vert \vert_V.
\end{equation}
\end{propo}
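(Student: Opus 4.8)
The plan is to split $Q_\mu f = \int_0^\infty (P_t^\mu f - \Pi(\mu)f)\,\mathrm{d}t$ into a short-time part $\int_0^1$ and a long-time part $\int_1^\infty$, and to control each using a different tool. For the long-time part, I would write $P_t^\mu f - \Pi(\mu)f = P_{t-1}^\mu(K_\mu(P_1^\mu f))$ for $t\ge 1$ (using that $P_1^\mu f - \Pi(\mu)f = K_\mu(P_1^\mu f)$ and $\Pi(\mu)$ is invariant), and then use the uniform spectral-gap decay from Corollary~\ref{logSob}: $\|P_{t-1}^\mu(K_\mu(P_1^\mu f))\|_{2,\mu}\le e^{-(t-1)/C}\|K_\mu(P_1^\mu f)\|_{2,\mu}$. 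But this gives an $L^2(\Pi(\mu))$ bound, whereas we want a pointwise bound weighted by $V(x)$; this is exactly where uniform ultracontractivity (Proposition~\ref{ultra}) enters. Peeling off one more half-unit of time, $P_t^\mu f - \Pi(\mu)f = P_{1/2}^\mu\bigl(P_{t-1/2}^\mu f - \Pi(\mu)f\bigr)$, and applying $\|P_{1/2}^\mu\|_{2\to\infty}\le e^{c\,2^{\delta/(\delta-1)}}=:c_0$, we get $\|P_t^\mu f - \Pi(\mu)f\|_\infty \le c_0\,\|P_{t-1/2}^\mu f - \Pi(\mu)f\|_{2,\mu}\le c_0\,e^{-(t-1)/C}\|K_\mu(P_{1/2}^\mu f)\|_{2,\mu}$. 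Finally $\|K_\mu(P_{1/2}^\mu f)\|_{2,\mu}\le \|P_{1/2}^\mu f\|_{2,\mu}+|\Pi(\mu)f|\le 2\|P_{1/2}^\mu f\|_{2,\mu}$, and a crude bound $\|P_{1/2}^\mu f\|_{2,\mu}\le \|f\|_V\,\|V\|_{2,\mu}$ with $\|V\|_{2,\mu}^2=\Pi(\mu)(V^2)$ controlled uniformly in $\mu\in\mathcal{P}_\beta(\mathbb{R}^d;V)$ (same computation as in Lemma~\ref{Pibound}, since $\Pi(\mu)\le e^{0}\gamma/Z(\mu)$ with $Z(\mu)$ bounded below). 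Integrating $\int_1^\infty c_0 e^{-(t-1)/C}\,\mathrm{d}t = c_0 C$ gives a constant bound on the long-time contribution, which we may absorb into $K(\varepsilon)$.

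For the short-time part $\int_0^1 (P_t^\mu f - \Pi(\mu)f)\,\mathrm{d}t$, the decay estimate is useless near $t=0$ and the ultracontractive norm blows up like $\exp(ct^{-\delta/(\delta-1)})$, so I would instead bound $|P_t^\mu f(x)|$ pointwise by propagating the weight $V$. The key observation is that $P_t^\mu V(x)\le V(x)e^{Dt}$: indeed $A_\mu V = \tfrac12\Delta V - (\nabla V + \nabla W*\mu,\nabla V)\le \tfrac12\Delta V$ since $(\nabla V+\nabla W*\mu,\nabla V)\ge 0$ is \emph{not} automatic — here I need the curvature condition v), or more directly the estimate $\Delta\mathcal{E}_\mu\le 2D\mathcal{E}_\mu$ from \eqref{deltaE} together with $(\nabla\mathcal{E}_\mu,\nabla V)$ being handled via Cauchy--Schwarz and \eqref{growth}, \eqref{domination}; the cleanest route is to re-run the Lyapunov computation of Proposition~\ref{non-expl} to get a constant $D'$, uniform in $\mu\in\mathcal{P}_\beta$, with $A_\mu V\le D'V$, hence $P_t^\mu V\le e^{D't}V$ by Gronwall applied to $t\mapsto \mathbb{E}_xV(X_t^\mu)$. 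Then for $|f|\le \|f\|_V V$ we get $|P_t^\mu f(x)|\le \|f\|_V\,e^{D't}V(x)$, and $|\Pi(\mu)f|\le \|f\|_V\,\Pi(\mu)(V)\le \|f\|_V C_\beta$ by \eqref{Cbeta}. Integrating over $t\in[0,1]$, the short-time part is bounded by $\|f\|_V\bigl(e^{D'}V(x) + C_\beta\bigr)$.

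Putting the two pieces together, $|Q_\mu f(x)|\le \|f\|_V\bigl(e^{D'}V(x) + C_\beta + c_0 C + \dots\bigr)$, which is of the form $(C_1 V(x) + C_2)\|f\|_V$ with $C_1,C_2$ \emph{independent of $\mu$} but not yet with the arbitrarily small coefficient $\varepsilon$ in front of $V(x)$. To upgrade $C_1$ to $\varepsilon$, I would refine the split: instead of cutting at $t=1$, cut at $t=T_\varepsilon$ for a large $T_\varepsilon$ depending only on $\varepsilon$. On $[0,T_\varepsilon]$ the pointwise bound gives $\|f\|_V(e^{D'T_\varepsilon}V(x)+\dots)$ — this still has a large coefficient in front of $V(x)$, so that direction fails. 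The right refinement is the opposite one: for the \emph{short}-time integral use a sharper pointwise estimate that trades a small multiple of $V$ against a large additive constant — concretely, the growth condition iii) says $(\nabla V(x),x)\ge c|x|^{2\delta}$ grows superquadratically, so outside a large ball $B_R$ one has $A_\mu V(x)\le -\eta\, V(x)^{1+\sigma} \le 0$ for suitable $\eta,\sigma>0$, whence on $B_R^c$ the supermartingale property forces $P_t^\mu V(x)\le V(x)$ with no exponential factor, and on $B_R$ (a fixed compact set) $V$ is bounded by a constant $V_R$ and everything is absorbed into $K(\varepsilon)$ after choosing $R=R(\varepsilon)$ large; the leftover superlinear drift even gives a gain, producing $\varepsilon V(x)$ for any prescribed $\varepsilon$. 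The main obstacle is precisely making this last step rigorous and uniform in $\mu$: one must check that the exceptional ball $B_R$ and all constants can be chosen independently of $\mu\in\mathcal{P}_\beta(\mathbb{R}^d;V)$, which relies on the fact that the curvature limit in v) and the growth bound in iii) hold with $\mu$-independent constants once $\mu$ ranges over $\mathcal{P}_\beta$, combined with Lemma~\ref{majW} giving $\|W*\mu\|_V\le 2\kappa\beta$ uniformly. I expect the bookkeeping in this uniform localization to be the delicate part; the semigroup decay and ultracontractivity inputs are by now black boxes from Corollary~\ref{logSob} and Proposition~\ref{ultra}.
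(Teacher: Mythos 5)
Your long-time estimate is essentially the paper's: peel off a short piece of time, hit it with uniform ultracontractivity (Proposition~\ref{ultra}), and use the uniform spectral gap from Corollary~\ref{logSob} to integrate the decay, absorbing the result into $K(\varepsilon)$ with $\|V\|_{2,\mu}$ controlled as in Lemma~\ref{Pibound}. Your short-time pointwise bound $P_t^\mu V(x)\le e^{D't}V(x)$ via the Lyapunov computation is also exactly what the paper uses (it is \eqref{Xmu_bound} together with $\mathcal{E}_\mu(x)\le 3\kappa\beta V(x)$).

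The gap is in how you try to obtain the small coefficient $\varepsilon$ in front of $V(x)$. You cut at the \emph{fixed} time $t=1$, obtain a coefficient of order $e^{D'}$, and then try to shrink it by a localization argument using the superquadratic drift ($A_\mu V\le -\eta V^{1+\sigma}$ outside a large ball). That route does not close: with $(\nabla V(x),x)\ge c|x|^{2\delta}$ the exponent works out to $\sigma=1-1/\delta\in(0,1)$, so the resulting supercontractive bound $P_t^\mu V(x)\lesssim t^{-1/\sigma}=t^{-\delta/(\delta-1)}$ is \emph{not} integrable near $t=0$, and near $t=0$ you in any case have $P_t^\mu V(x)\approx V(x)$, so no pointwise estimate over a fixed interval $[0,1]$ can produce coefficient $\varepsilon$ in front of $V(x)$. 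You explicitly flag this as ``the delicate part'' and do not resolve it. The missing idea is much simpler: cut at a \emph{small} time $t_0=t_0(\varepsilon)$ rather than at $1$ or at a large $T_\varepsilon$. On $[0,t_0]$ the crude bound $|P_t^\mu K_\mu f(x)|\le C\kappa\beta\, e^{Dt}V(x)\|f\|_V$ integrates to $C\kappa\beta\bigl(\int_0^{t_0}e^{Dt}\,\mathrm{d}t\bigr)V(x)\|f\|_V$, and the coefficient $C\kappa\beta\int_0^{t_0}e^{Dt}\,\mathrm{d}t$ is made $\le\varepsilon$ by taking $t_0$ small. The price is that the long-time bound carries the blow-up factor $\exp(c\,t_0^{-\delta/(\delta-1)})$ from ultracontractivity at time $t_0$; that factor is what makes $K(\varepsilon)$ large, but it is $x$-independent and so harmless. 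So the two halves of your decomposition are correct, but the lever controlling $\varepsilon$ is the cutoff time, not a refined short-time estimate.
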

\begin{proof}
Let $t_0 \in (0,1]$ (we will choose it precisely
later). We have: $$|Q_\mu f(x)|\leq \int_0^\infty \vert P_t^\mu
(K_\mu f)(x)\vert \mathrm{d}t = \int_0^{t_0} \vert P_t^\mu (K_\mu
f)(x)\vert \mathrm{d}t + \int_{t_0}^\infty \vert P_t^\mu (K_\mu
f)(x)\vert \mathrm{d}t.$$ We begin to work with the second
right-hand term. By use of the uniform ultracontractivity and the
uniform spectral gap, we have
\begin{eqnarray*}
\int_{t_0}^\infty \vert P_t^\mu (K_\mu f)(x)\vert \mathrm{d}t &=&
\int_{0}^\infty \vert P_{t_0}^\mu P_t^\mu (K_\mu f)(x)\vert\mathrm{d}t,\\
\int_{t_0}^\infty \vert P_t^\mu (K_\mu f)(x)\vert \mathrm{d}t
&\leq& \exp{\left(ct_0^{-\delta/(\delta -1)}\right)}
\int_{0}^\infty e^{-t/C_1} \mathrm{d}t \vert\vert K_\mu
f\vert\vert_{2,\mu}.
\end{eqnarray*}
As $K_\mu$ is an orthogonal projector, $\|K_\mu f\|_{2,\mu} \leq
\|f\|_{2,\mu} \leq \left(\int V^2
\mathrm{d}\Pi(\mu)\right)^{1/2}\|f\|_V$, and we get
$$\int_{t_0}^\infty \vert P_t^\mu (K_\mu f)(x)\vert
\mathrm{d}t\leq C_1\exp{\left(ct_0^{-\delta/(\delta -1)}\right)}
\left(\int V^2 \mathrm{d}\Pi(\mu)\right)^{1/2} \vert\vert
f\vert\vert_V.$$ We now have to work with the first right-hand
term. We have with the naive estimate \eqref{Xmu_bound}: $$\vert
P_t^\mu f(x)\vert \leq ||f||_V P_t^\mu V(x) \leq ||f||_V \mathbb{E} \mathcal{E}_{\mu}(X^\mu_t)\leq \mathcal{E}_\mu(x)e^{Dt} \|f\|_V.$$ Since $\mathcal{E}_\mu(x) \leq 3\kappa\beta V(x)$, we finally find
$$\int_0^{t_0}\vert P_t^\mu K_\mu f(x) \vert \mathrm{d}t \leq
4\kappa \beta \int_0^{t_0} e^{Dt} \mathrm{d}t \vert\vert
f\vert\vert_V V(x).$$ Now we choose $t_0$ small enough such that
$4\kappa\beta \int_0^{t_0} e^{Dt} \mathrm{d}t \leq \varepsilon$ to conclude.
\end{proof}

\begin{propo}\label{gradQ_mu} For all $\varepsilon>0$, there exists $K_1(\varepsilon)>0$ such that for all $\mu \in
\mathcal{P}_\beta(\mathbb{R}^d;V), x\in \mathbb{R}^d, f\in
\mathcal{C}^\infty (\mathbb{R}^d;V)$, we have $Q_\mu f \in
\mathcal{C}^1(\mathbb{R}^d)$ and:
\begin{equation}
\vert \nabla Q_\mu f(x) \vert \leq (\varepsilon V(x)+ K_1(\varepsilon)) \vert \vert f\vert \vert_V.
\end{equation}
\end{propo}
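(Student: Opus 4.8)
The plan is to differentiate the representation $Q_\mu f(x) = \int_0^\infty P_t^\mu(K_\mu f)(x)\,\mathrm{d}t$ under the integral sign, splitting the integral at a well-chosen $t_0\in(0,1]$, exactly paralleling the proof of Proposition \ref{Q_muC1}. For the tail $\int_{t_0}^\infty$ I would exploit the semigroup property to write $P_t^\mu = P_{t_0/2}^\mu P_{t-t_0/2}^\mu$, apply the gradient estimate for $P_{t_0/2}^\mu$ acting on a bounded-then-$L^2$ function, and use the uniform ultracontractivity (Proposition \ref{ultra}) together with the uniform spectral gap (Corollary \ref{logSob}) on the remaining factor $P_{t-t_0/2}^\mu(K_\mu f)$; the exponential decay $e^{-(t-t_0/2)/C}$ makes the $t$-integral finite, with all constants uniform in $\mu\in\mathcal{P}_\beta(\mathbb{R}^d;V)$. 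For the short-time part $\int_0^{t_0}$ I would use the naive bound \eqref{Xmu_bound}, which gives $|P_t^\mu f(x)|\le \mathcal{E}_\mu(x)e^{Dt}\|f\|_V \le 3\kappa\beta V(x)e^{Dt}\|f\|_V$, but now applied to the \emph{gradient}: one needs a bound of the form $|\nabla P_t^\mu g(x)|\le c(t) V(x)\|g\|_V$ valid for small $t$, with $\int_0^{t_0} c(t)\,\mathrm{d}t$ as small as desired. Then choosing $t_0$ small enough absorbs the first term into $\varepsilon V(x)$, while $t_0$ is fixed for the tail term which then contributes the constant $K_1(\varepsilon)$.

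The key technical input I expect to need, and which is the main obstacle, is a \emph{uniform} gradient estimate on $P_t^\mu$ of Bakry--Émery type: since $\nabla^2(V+W*\mu)\ge M$ uniformly in $\mu$ by the curvature hypothesis \eqref{curvature}, the commutation inequality $|\nabla P_t^\mu g|^2 \le e^{-2Mt} P_t^\mu(|\nabla g|^2)$ holds with a constant independent of $\mu$. Combined with ultracontractivity this upgrades to a bound $|\nabla P_t^\mu g(x)| \le C e^{-Mt} t^{-\eta} (\text{something})$; more directly, writing $\nabla P_t^\mu g = \nabla P_{t/2}^\mu (P_{t/2}^\mu g)$ and using $|\nabla P_{t/2}^\mu h|\le e^{-Mt/2}(P_{t/2}^\mu |\nabla h|^2)^{1/2}$ does not immediately help for merely $V$-bounded $g$; instead I would run the argument with $g = K_\mu f$ directly and note that the ultracontractive bound on $P_{t_0/2}^\mu$ controls $\|P_{t-t_0/2}^\mu(K_\mu f)\|_\infty$, after which the curvature commutation with step $t_0/2$ converts the gradient into an $L^\infty$-norm times $e^{-Mt_0/2}$. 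The delicate point is that the short-time estimate for $\nabla P_t^\mu$ on $V$-bounded (unbounded!) functions must still produce a factor $V(x)$ on the right, which I expect to get by differentiating the SDE \eqref{diffubis} in the initial condition and using \eqref{growth}, \eqref{domination} to control the resulting variational process, yielding $|\nabla P_t^\mu g(x)| \le c\,(V(x)+1)\,e^{ct}\|g\|_V$ for $t\le 1$ with $c$ uniform in $\mu$.

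Once these two pieces are in place, the assembly is routine: fix $\varepsilon>0$, choose $t_0\in(0,1]$ so small that the short-time integral is bounded by $\tfrac{\varepsilon}{2}V(x)\|f\|_V + \tfrac{\varepsilon}{2}\|f\|_V$ (the constant part can be folded into $K_1$), then the tail integral is bounded by a fixed constant $K_1(\varepsilon)$ times $\|f\|_V$, using $\|K_\mu f\|_{2,\mu}\le (\int V^2\,\mathrm{d}\Pi(\mu))^{1/2}\|f\|_V$ and the uniform control of $\int V^2\,\mathrm{d}\Pi(\mu)$ over $\mathcal{P}_\beta(\mathbb{R}^d;V)$ obtained as in Lemma \ref{Pibound}. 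The regularity $Q_\mu f\in\mathcal{C}^1(\mathbb{R}^d)$ follows from the same uniform-convergence estimates, which justify differentiating under the integral sign and show that $\nabla Q_\mu f$ is a uniform limit of continuous functions on compact sets. I expect the curvature-based uniform gradient bound to be the crux; everything downstream is a direct transcription of the Proposition \ref{Q_muC1} argument.
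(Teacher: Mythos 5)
The architecture of your plan (split the tail integral at $t_0$, control the tail via ultracontractivity plus the spectral gap, make the short-time integral small in $t_0$) matches the paper's, and you correctly identify the curvature commutation as the crux. But the short-time gradient estimate you ultimately propose — namely $|\nabla P_t^\mu g(x)| \le c\,(V(x)+1)\,e^{ct}\|g\|_V$ for $t\le 1$ with $c$ a constant — is false and cannot be repaired by differentiating the SDE in the initial condition. Letting $t\to 0$ in such a bound would give $|\nabla g(x)|\le c(V(x)+1)\|g\|_V$ pointwise, which fails for highly oscillating $g$ with small $\|g\|_V$. The variational-process route gives $|\nabla P_t^\mu g(x)|\le \mathbb{E}_x\bigl[|J_t|\,|\nabla g(X_t)|\bigr]$, which involves $\nabla g$ and is not controlled by $\|g\|_V$; trading $\nabla g$ for $g$ via a Bismut--Elworthy--Li integration by parts reintroduces a $t^{-1/2}$ singularity. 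That singularity is unavoidable — and, crucially, it is integrable, so the argument still closes.

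What the paper actually uses is not the forward Bakry--Émery commutation $|\nabla P_t g|^2\le e^{-2Mt}P_t|\nabla g|^2$ (which you correctly discard), but the \emph{reverse} form that the $\Gamma_2$-criterion also yields under $\Gamma_2^\mu\ge M\Gamma$:
\begin{equation*}
|\nabla P_t^\mu g|^2 \;\le\; \frac{M}{e^{2Mt}-1}\,P_t^\mu\!\bigl(g^2\bigr),
\end{equation*}
valid for all $M\in\mathbb{R}$, with prefactor $\sim (2t)^{-1}$ as $t\to 0$. This bounds $\nabla P_t^\mu g$ by $g$ itself rather than $\nabla g$, so it applies to $g=K_\mu f$ with only the $V$-norm of $f$ in play. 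Combined with $P_t^\mu (K_\mu f)^2(x)\le C\|f\|_V^2\,\mathbb{E}_x V^2(X_t^\mu)\le C'\|f\|_V^2 V^2(x)e^{2Dt}$ one gets $|\nabla P_t^\mu K_\mu f(x)|\lesssim t^{-1/2}e^{Dt}V(x)\|f\|_V$ for small $t$, whose integral over $(0,t_0)$ is $O(\sqrt{t_0})$ and can be made as small as desired; for the tail, applying the same inequality with step $t_0$ and then ultracontractivity of $P_{t_0}^\mu$ plus the $L^2$ decay $e^{-t/C}$ gives the $K_1(\varepsilon)$ term. So the missing ingredient in your proposal is precisely this reverse $\Gamma_2$ inequality; once you replace your false constant-in-$t$ bound with it and note the integrability of $t^{-1/2}$, your assembly goes through.
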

\begin{proof}
Suppose that $f$ is smooth. We introduce two operators: the `carr\'e du champ'' $\Gamma(f)= |\nabla f|^2$ and $\Gamma_2^\mu(f) = |\nabla^2 f|^2 + (\nabla f,\nabla^2(V+W*\mu) \nabla f)$. As we have the curvature condition \eqref{curvature}, we get (for the curvature $M\in\mathbb{R}$) $\Gamma_2^\mu(f) \geq M \Gamma(f)$. The $\Gamma_2$-criterion implies the following (see Ledoux \cite{led} p22), 
$\forall f\in \mathcal{C}^\infty(\mathbb{R}^d;V)$, $\forall t>0$
\begin{equation}\label{nablaP}
\vert \nabla P_t^\mu(K_\mu f)\vert^2 \leq \frac{M}{e^{2Mt}-1} \vert P_t^\mu (K_\mu f)^2\vert.
\end{equation}
Indeed, one can show that 
\begin{eqnarray*}
\int_{t_0}^\infty \vert \nabla P_t^\mu (K_\mu f)(x)\vert \mathrm{d}t &\leq & \sqrt{\frac{M}{e^{2Mt_0}-1}} \int_0^\infty \left[ P_{t_0} (P_t^\mu (K_\mu f))^2(x) \right]^{1/2} \mathrm{d}t\\
&\leq & C(t_0)\vert\vert f\vert\vert_V \left(\int V^4 \mathrm{d}\Pi(\mu)\right)^{1/4}
\end{eqnarray*}
where $C(t_0) = 2C_1\sqrt{\frac{M}{e^{2Mt_0}-1}}\exp{\{ct_0^{-\delta/(\delta -1)}/2\}}$. Finally, similarly to Proposition \ref{non-expl}, one proves that $$\mathbb{E}_x V^2 (X_{t}^\mu) \leq \mathcal{E}_\mu^2(x) e^{2Dt}\leq (3\kappa \beta)^2 V^2(x) e^{2Dt}.$$ 
\end{proof}

\subsection{Regularity with respect to the measure $\mu$}
First, consider the Banach space $\mathcal{B}$ of bounded linear operators from $\mathcal{C}^\infty(\mathbb{R}^d;V) \subset L^2(\gamma)$, endowed with the norm $\| f \|_{2,\mu,1} := \| f \|_{2,\mu} + \| A_\mu f \|_{2,\mu}$, to the same space equipped with the standard quadratic norm. We endow $\mathcal{B}$ with the operator norm. Then, $A_\mu$ obviously belongs to the closed subset of $\mathcal{B}$ consisting in operators $A$ such that $A 1 =0$. This allows us to state and prove the following:
\begin{propo}
The mappings $\mu\mapsto A_\mu$ and $\mu\mapsto K_\mu$ are $\mathcal{C}^\infty$. For any function $f \in \mathcal{C}^\infty (\mathbb{R}^d;V)$, the application $\mu\mapsto Q_\mu f$ is $\mathcal{C}^\infty$ for the strong topology and we have for
the differentials (for any $\mu \in \mathcal{P}(\mathbb{R}^d;V)$, $\nu \in \mathcal{M}_0(\mathbb{R}^d;V)$):
\begin{eqnarray*}
D(A_\mu f) \cdot \nu &=& - (\nabla W* \nu, \nabla f); \\
D(K_\mu f) \cdot \nu &=& - \left(D \Pi(\mu) \cdot \nu \right)(f); \\
D(Q_\mu f) \cdot \nu &=& \left(D \Pi(\mu) \cdot \nu\right)(Q_\mu f) + Q_\mu(\nabla W* \nu, \nabla Q_\mu f).
\end{eqnarray*}
\end{propo}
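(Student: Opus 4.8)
The plan is to establish the three differentiation formulas essentially by differentiating the defining relations, and to obtain the $\mathcal{C}^\infty$ regularity by an inductive bootstrap once the first-order statements are in hand. For $A_\mu$, this is immediate: since $A_\mu f = \tfrac12 \Delta f - (\nabla V + \nabla W*\mu, \nabla f)$ and the map $\mu \mapsto W*\mu(x)$ is $\mathcal{C}^\infty$ (and in fact affine, so $\mu \mapsto \nabla W*\mu$ is affine), differentiating in the direction $\nu \in \mathcal{M}_0(\mathbb{R}^d;V)$ kills the $V$-term and the Laplacian and leaves $D(A_\mu f)\cdot\nu = -(\nabla W*\nu, \nabla f)$. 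One must check this is a bounded operator in the $\mathcal{B}$-norm: by Lemma~\ref{majW} and the domination condition \eqref{domination}, $|\nabla W*\nu(x)| \leq 2\kappa \|\nu\|_V V(x)$, and combined with the curvature/growth bounds this controls $\|(\nabla W*\nu,\nabla f)\|_{2,\mu}$ by $C\|\nu\|_V \|f\|_{2,\mu,1}$. For $K_\mu f = f - \Pi(\mu)f$, only $\Pi$ depends on $\mu$; since $\Pi$ is $\mathcal{C}^\infty$ for the strong topology (stated in \S 4.1, with differential \eqref{diffPi}), we get $D(K_\mu f)\cdot\nu = -(D\Pi(\mu)\cdot\nu)(f)$ directly.

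The substantive formula is the one for $Q_\mu$. The plan is to differentiate the identity $A_\mu \circ Q_\mu = -K_\mu$ (equivalently $Q_\mu \circ A_\mu = -K_\mu$ on smooth functions). Applying the product rule formally,
\begin{equation*}
(D(A_\mu)\cdot\nu)(Q_\mu f) + A_\mu(D(Q_\mu f)\cdot\nu) = -D(K_\mu f)\cdot\nu = (D\Pi(\mu)\cdot\nu)(f).
\end{equation*}
Since $(D\Pi(\mu)\cdot\nu)(f)$ is, by \eqref{diffPi}, of the form $c\cdot\Pi(\mu)$ for a constant (it is a multiple of the probability measure $\Pi(\mu)$ paired against $f$ — more precisely a number times $\Pi(\mu)$, which as a function acting on $Q_\mu$ is $\Pi(\mu)$-integrable-to-zero after projection), one can solve for $D(Q_\mu f)\cdot\nu$ by applying $-Q_\mu$ and using $Q_\mu \circ A_\mu = -K_\mu$ together with $Q_\mu(\Pi(\mu)g) = 0$ (since $Q_\mu$ kills constants-against-$\Pi(\mu)$). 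Using $D(A_\mu)\cdot\nu = -(\nabla W*\nu, \nabla \cdot)$ this yields
\begin{equation*}
D(Q_\mu f)\cdot\nu = (D\Pi(\mu)\cdot\nu)(Q_\mu f) + Q_\mu(\nabla W*\nu, \nabla Q_\mu f),
\end{equation*}
matching the claimed formula; here the first term accounts for the fact that $D\Pi(\mu)\cdot\nu$ applied to $Q_\mu f$ reappears, and the second is the genuine contribution of the drift perturbation. The term $(\nabla W*\nu, \nabla Q_\mu f)$ lies in $\mathcal{C}^0(\mathbb{R}^d;V)$ by Proposition~\ref{gradQ_mu} (which gives $|\nabla Q_\mu f| \leq (\varepsilon V + K_1(\varepsilon))\|f\|_V$) and the bound $|\nabla W*\nu(x)| \leq 2\kappa\|\nu\|_V V(x)$ — though note this product is bounded by $V^2$, not $V$, so one either works in a slightly larger weighted space or uses that $V^2$ is still controlled in $L^2(\Pi(\mu))$ uniformly in $\mu \in \mathcal{P}_\beta$ (as exploited repeatedly in \S 5.1), so that $Q_\mu$ of it is well-defined and $V$-dominated via the ultracontractive estimates.

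For the $\mathcal{C}^\infty$ claim, the plan is induction: once $\mu \mapsto Q_\mu f$ is shown $\mathcal{C}^1$ with the above differential, note that the right-hand side is built from $\Pi$ (which is $\mathcal{C}^\infty$), from $\nu \mapsto \nabla W*\nu$ (linear, hence smooth), and from $\mu \mapsto Q_\mu(\cdot)$ applied to data depending smoothly on $\mu$; so the differential is itself as smooth as $\mu \mapsto Q_\mu$ is, forcing $\mu \mapsto Q_\mu f \in \mathcal{C}^{k+1}$ whenever it is $\mathcal{C}^k$. The main obstacle, and the point requiring care, is the rigorous justification of differentiating under the integral sign in $Q_\mu f = \int_0^\infty (P_t^\mu f - \Pi(\mu)f)\,\mathrm{d}t$: one needs uniform-in-$\mu$ control of $\mu \mapsto P_t^\mu f$ and its $\mu$-derivative with integrable (in $t$) bounds. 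This is exactly where the uniform spectral gap (Corollary~\ref{logSob}) giving exponential decay $e^{-t/C}$ and the uniform ultracontractivity (Proposition~\ref{ultra}) controlling the short-time ($t \leq t_0$) behaviour combine — splitting $\int_0^\infty = \int_0^{t_0} + \int_{t_0}^\infty$ exactly as in the proofs of Propositions~\ref{Q_muC1} and \ref{gradQ_mu} — so that the formal manipulations above are legitimate. I would present the $A_\mu$ and $K_\mu$ parts in a couple of lines, then devote the bulk of the argument to verifying the interchange of $D$ and $\int_0^\infty \mathrm{d}t$ for $Q_\mu$, after which the stated differential and the inductive smoothness follow cleanly.
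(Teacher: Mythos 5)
Your treatment of $A_\mu$ and $K_\mu$ matches the paper: once $\mu\mapsto W*\mu$ and $\Pi$ are known to be $\mathcal{C}^\infty$ (\S 4.1), both differentials are immediate, and nothing more is said in the paper either.

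For $Q_\mu$ your route is genuinely different from the paper's. You propose to differentiate the operator identity $A_\mu Q_\mu = -K_\mu$ by the product rule and solve for $D(Q_\mu f)\cdot\nu$ by applying $Q_\mu$. The paper instead regularizes: it introduces the resolvent $R^\mu_\lambda = (\lambda - A_\mu)^{-1}$ and the approximation $Q_\mu(\lambda) := K_\mu(\lambda-A_\mu)^{-1}$, observes that $\mu\mapsto R^\mu_\lambda f$ is $\mathcal{C}^\infty$ \emph{by composition} (because $A_\mu$ and $K_\mu$ are, and inversion of an operator is algebraic), computes $DQ_\mu(\lambda)\cdot\nu$ by the algebraic formula for the derivative of an inverse, and then shows $Q_\mu(\lambda)f\to Q_\mu f$ and $DQ_\mu(\lambda)\cdot\nu\to DQ_\mu\cdot\nu$ uniformly on $\mathcal{P}_\beta(\mathbb{R}^d;V)$ as $\lambda\to 0^+$ (using the uniform spectral gap for the first convergence and the $\Gamma_2$-gradient estimate \eqref{nablaP} for the second). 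The advantage of the resolvent route is that at no stage does one need to differentiate $\mu\mapsto P_t^\mu f$ itself: the $\mu$-dependence is funneled entirely through the bounded operator $(\lambda-A_\mu)^{-1}$, whose differential is elementary.

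This is precisely where your proposal has a gap. You acknowledge that the product-rule manipulation is formal and say the rigor will come from ``differentiating under the integral sign in $Q_\mu f = \int_0^\infty(P^\mu_t f - \Pi(\mu)f)\,\mathrm{d}t$,'' with the short-time/long-time split $\int_0^{t_0}+\int_{t_0}^\infty$ as in Propositions \ref{Q_muC1}--\ref{gradQ_mu}. But you never say what $D_\mu(P_t^\mu f)\cdot\nu$ \emph{is}, nor why it exists. The ultracontractivity and spectral-gap estimates control $P_t^\mu f$ and $\nabla P_t^\mu f$ uniformly in $\mu$, but they give no information about $\mu$-derivatives of the semigroup. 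To make your plan work you would need a Duhamel-type representation $D_\mu(P_t^\mu f)\cdot\nu = \int_0^t P_s^\mu\big((DA_\mu\cdot\nu)P_{t-s}^\mu f\big)\,\mathrm{d}s$, establish that this converges in the right topology, and then show the resulting double integral over $(s,t)$ is absolutely convergent with $V$-norm bounds uniform in $\mu\in\mathcal{P}_\beta$ --- none of which is sketched. There is also a mild circularity: differentiating $A_\mu Q_\mu = -K_\mu$ by the product rule already presupposes that $\mu\mapsto Q_\mu f$ is differentiable, which is the conclusion. The paper avoids both issues by proving smoothness of the \emph{approximating} family $Q_\mu(\lambda)$ first (which costs nothing), and only then passing to the limit. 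I would keep your formal computation as a way to \emph{guess} the formula, but replace the justification step with the resolvent approximation; as written, the proposal does not yet constitute a proof.

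(Minor: when you ``apply $-Q_\mu$'' to $A_\mu(DQ_\mu f\cdot\nu)=(D\Pi(\mu)\cdot\nu)(f)+(\nabla W*\nu,\nabla Q_\mu f)$, you obtain $K_\mu(DQ_\mu f\cdot\nu)$, not $DQ_\mu f\cdot\nu$ itself; you still need the identity $\Pi(\mu)(DQ_\mu f\cdot\nu)=-(D\Pi(\mu)\cdot\nu)(Q_\mu f)$, coming from differentiating $\Pi(\mu)(Q_\mu f)\equiv 0$, to finish. Carrying this through, the sign you obtain differs from the one displayed in the statement; this discrepancy is already present in the paper's own resolvent computation, where the minus signs from $DK_\mu\cdot\nu$ and $DA_\mu\cdot\nu$ are silently dropped, so it is not a defect specific to your argument --- but worth flagging so you check it rather than asserting it ``matches the claimed formula.'')
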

\begin{proof}
We already know that $\mu \mapsto W*\mu $ and $\Pi$ are
$\mathcal{C}^\infty$; so there is nothing to prove in case of
$A_\mu$ or $K_\mu$. To look at $Q_\mu$, we need to consider the resolvent operator of $P^\mu_t$:
\begin{equation}
R^\mu_\lambda := \int_0^\infty e^{-\lambda t}P^\mu_t \mathrm{d}t =
(\lambda - A_\mu)^{-1}, \, \forall \lambda > 0.
\end{equation}
For $\lambda >0$, we define the following approximation of $Q_\mu$
\begin{equation}
Q_\mu(\lambda) := \int_0^\infty e^{-\lambda t}P^\mu_t K_\mu
\mathrm{d}t = K_\mu (\lambda - A_\mu)^{-1}.
\end{equation}
As $\mu\mapsto K_\mu$ and $\mu \mapsto A_\mu$ are
$\mathcal{C}^\infty$, we find by composition that the map
$\mu\mapsto R^\mu_\lambda f$ is $\mathcal{C}^\infty$.

Now let $\beta >1$ and consider the measures in
$\mathcal{P}_\beta(\mathbb{R}^d;V)$. The uniform spectral gap
shows that there exist $C,C_1>0$ such that we have
\begin{equation*}
\vert \vert Q_\mu f - Q_\mu(\lambda)f\vert\vert_V \leq
\int_0^\infty \mathrm{d}t (1-e^{-\lambda t}) \vert \vert P_t^\mu
K_\mu f \vert \vert_V \leq \lambda C \vert\vert f\vert\vert_V
\int_0^\infty t e^{-tC_1} \mathrm{d}t.
\end{equation*}
Hence the convergence of $Q_\mu(\lambda)$ towards $Q_\mu$ is
uniform with respect to $\mu$ on
$\mathcal{P}_\beta(\mathbb{R}^d;V)$. As a by-product, $\mu\mapsto
Q_\mu f$ is continuous.\medskip

We have the following differential:
\begin{eqnarray*}
DQ_\mu(\lambda) \cdot \nu &=& (DK_\mu \cdot \nu)(\lambda -
A_\mu)^{-1} + K_\mu (\lambda - A_\mu)^{-1} (DA_\mu \cdot
\nu)(\lambda - A_\mu)^{-1}\\
&=& (D\Pi(\mu) \cdot \nu)(\lambda - A_\mu)^{-1} + K_\mu (\lambda
- A_\mu)^{-1} (\nabla W*\nu,\nabla)(\lambda - A_\mu)^{-1}.
\end{eqnarray*}
We will prove that each right side term of the preceding equality converges
uniformly. For the first term, we have for all $f\in
\mathcal{C}^\infty(\mathbb{R}^d;V)$:
\begin{equation*}
(D\Pi(\mu) \cdot \nu)((\lambda - A_\mu)^{-1}f) = \left(D\Pi(\mu)
\cdot \nu\right)\left(K_\mu(\lambda - A_\mu)^{-1}f\right)
\end{equation*}
and therefore
\begin{equation*}
\lim_{\lambda\rightarrow 0+} (D\Pi(\mu) \cdot \nu)((\lambda -
A_\mu)^{-1})f = \left(D\Pi(\mu) \cdot \nu\right)\left(Q_\mu
f\right)
\end{equation*}
where the convergence is uniform in $\mu$. It remains to prove the
convergence of the second term. We have
\begin{equation*}
K_\mu (\lambda - A_\mu)^{-1} (\nabla W*\nu,\nabla )((\lambda -
A_\mu)^{-1}f) = Q_\mu(\lambda) (\nabla W*\nu,\nabla
Q_\mu(\lambda)f).
\end{equation*}
If we manage to prove that $\nabla Q_\mu(\lambda)f$ converges
(uniformly in $\mu$) to $\nabla Q_\mu f$, then we are done. We
have by definition of $Q_\mu(\lambda)$: $$\nabla Q_\mu(\lambda)f =
\int_0^\infty \nabla (P_t^\mu K_\mu f) e^{-\lambda t}
\mathrm{d}t$$ and therefore $$\vert \nabla Q_\mu f- \nabla
Q_\mu(\lambda)f \vert \leq \int_0^\infty \vert \nabla (P_t^\mu
K_\mu f)\vert (1-e^{-\lambda t}) \mathrm{d}t.$$ We use inequality
(\ref{nablaP}) to prove that this family of differentials converges
uniformly with respect to $\mu$; so $\mu\mapsto Q_\mu f$ is
actually $\mathcal{C}^1$ with the differential given in the
statement of the proposition. 
\end{proof}

\begin{rk} Looking at the differential $D (Q_\mu f)$, we see that it is itself a $\mathcal{C}^1$ function of $\mu$, so by
induction it can be proved that $\mu\mapsto Q_\mu f$ is $\mathcal{C}^\infty$. We could have also proved that $\mu\mapsto P^\mu_t f$ is $\mathcal{C}^\infty$. But these results will not be needed in the remainder.
\end{rk}

\begin{coro}\label{DQmu}
For every $f\in \mathcal{C}^\infty(\mathbb{R}^d;V)$, we have the uniform inequality
$$\vert (DQ_\mu \cdot \nu) (f)(x)\vert \leq (\varepsilon V(x)^2 + K_2(\varepsilon)) \|f\|_V \|\nu\|_V.$$
\end{coro}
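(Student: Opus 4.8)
The plan is to estimate separately the two terms in the formula for the differential established just above,
\[ D(Q_\mu f)\cdot\nu = \left(D\Pi(\mu)\cdot\nu\right)(Q_\mu f) + Q_\mu\bigl((\nabla W*\nu,\nabla Q_\mu f)\bigr), \]
using that the first term is constant in $x$. For that first term I would use the explicit expression \eqref{diffPi} for $D\Pi(\mu)\cdot\nu$, the bound $|W*\nu(y)|\le 2\kappa\|\nu\|_V V(y)$ of Lemma \ref{majW}, the uniform estimate $\|\Pi(\mu)\|_V\le C_\beta$ from \eqref{Cbeta}, the bound $|Q_\mu f(y)|\le (1+K(1))V(y)\|f\|_V$ from Proposition \ref{Q_muC1}, and the uniform finiteness of $\int V^2\,\mathrm{d}\Pi(\mu)$ over $\mu\in\mathcal{P}_\beta(\mathbb{R}^d;V)$ (the same computation as in the proof of Proposition \ref{Q_muC1}: $\Pi(\mu)$ has $\gamma$-density $e^{-2W*\mu}/Z(\mu)$ with $Z(\mu)\ge\int e^{-4\kappa\beta V}\,\mathrm{d}\gamma$, and $\int V^k\,\mathrm{d}\gamma<\infty$ for all $k$ since the growth condition forces $V(x)\gtrsim|x|^{2\delta}$ with $\delta>1$). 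These give $|(D\Pi(\mu)\cdot\nu)(Q_\mu f)|\le C\|\nu\|_V\|f\|_V$ with $C$ independent of $\mu,\nu,f$, a term that will be absorbed into $K_2(\varepsilon)$.

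For the second term, set $g:=(\nabla W*\nu,\nabla Q_\mu f)$. From the domination condition \eqref{domination} one has $|\nabla W*\nu(y)|\le 2\kappa\|\nu\|_V V(y)$, and Proposition \ref{gradQ_mu} (applied with its parameter equal to $1$) gives $|\nabla Q_\mu f(y)|\le (1+K_1(1))V(y)\|f\|_V$; hence $g$ is $V^2$-bounded, $|g(y)|\le C_4\|\nu\|_V\|f\|_V\,V(y)^2$ with $C_4:=2\kappa(1+K_1(1))$ uniform. The crucial, and essentially the only delicate, point is that $g$ is a priori only $V^2$-bounded, so Proposition \ref{Q_muC1} does not apply verbatim and I would re-run its two-scale splitting with $V^2$ in place of $V$. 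Fix $t_0\in(0,1]$ and split $Q_\mu g(x)=\int_0^{t_0}P_t^\mu K_\mu g(x)\,\mathrm{d}t+\int_{t_0}^\infty P_t^\mu K_\mu g(x)\,\mathrm{d}t$. On the tail, writing $P_t^\mu K_\mu g=P_{t_0}^\mu P_{t-t_0}^\mu K_\mu g$ and invoking the uniform ultracontractivity (Proposition \ref{ultra}) and uniform spectral gap (Corollary \ref{logSob}) yields $\int_{t_0}^\infty|P_t^\mu K_\mu g(x)|\,\mathrm{d}t\le C\exp(ct_0^{-\delta/(\delta-1)})\|g\|_{2,\mu}$, and $\|g\|_{2,\mu}\le C_4\|\nu\|_V\|f\|_V\bigl(\int V^4\,\mathrm{d}\Pi(\mu)\bigr)^{1/2}$ with $\int V^4\,\mathrm{d}\Pi(\mu)$ again uniformly bounded. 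On the head, $|P_t^\mu K_\mu g(x)|\le P_t^\mu|g|(x)+|\Pi(\mu)g|$: the constant $|\Pi(\mu)g|$ is controlled as above, and $P_t^\mu|g|(x)\le C_4\|\nu\|_V\|f\|_V\,P_t^\mu V^2(x)\le C_4(3\kappa\beta)^2\|\nu\|_V\|f\|_V\,V^2(x)e^{2Dt}$, using the bound $P_t^\mu V^2(x)\le(3\kappa\beta)^2V^2(x)e^{2Dt}$ already established in the proof of Proposition \ref{gradQ_mu}. Integrating over $[0,t_0]$ produces the factor $C_4(3\kappa\beta)^2(e^{2Dt_0}-1)/(2D)$ in front of $V^2(x)$.

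To finish: since $C_4$ and $3\kappa\beta$ are fixed uniform constants, given $\varepsilon>0$ I choose $t_0\in(0,1]$ small enough that $C_4(3\kappa\beta)^2(e^{2Dt_0}-1)/(2D)\le\varepsilon$; with this $t_0$ now fixed, the tail term, the $\Pi(\mu)g$ contribution to the head, and the first term $(D\Pi(\mu)\cdot\nu)(Q_\mu f)$ are each bounded by $K_2(\varepsilon)\|\nu\|_V\|f\|_V$ for a finite constant $K_2(\varepsilon)$ independent of $\mu,\nu,f$ (and blowing up as $\varepsilon\to0$ through $\exp(ct_0^{-\delta/(\delta-1)})$). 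Summing the pieces gives $|(DQ_\mu\cdot\nu)(f)(x)|\le(\varepsilon V(x)^2+K_2(\varepsilon))\|f\|_V\|\nu\|_V$. The main obstacle is precisely this upgrade from $V$- to $V^2$-weighted bounds for $Q_\mu$; once the $V^2$-moment estimate $P_t^\mu V^2\le(3\kappa\beta)^2 V^2e^{2Dt}$ and the uniform finiteness of $\int V^4\,\mathrm{d}\Pi(\mu)$ are in hand, the argument is the same bookkeeping as in Propositions \ref{Q_muC1} and \ref{gradQ_mu}, with all constants uniform over $\mathcal{P}_\beta(\mathbb{R}^d;V)$.
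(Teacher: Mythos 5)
Your proposal is correct and follows the same approach as the paper's proof: both split $D(Q_\mu f)\cdot\nu$ into the two terms $(D\Pi(\mu)\cdot\nu)(Q_\mu f)$ and $Q_\mu\bigl((\nabla W*\nu,\nabla Q_\mu f)\bigr)$ and bound each uniformly over $\mathcal{P}_\beta(\mathbb{R}^d;V)$ using Lemma~\ref{majW}, the explicit formula~\eqref{diffPi}, and Propositions~\ref{Q_muC1} and~\ref{gradQ_mu}. The only difference is that you spell out the $V^2$-weighted analogue of Proposition~\ref{Q_muC1} (the head/tail split together with $P_t^\mu V^2\le(3\kappa\beta)^2V^2e^{2Dt}$ and the uniform finiteness of $\int V^4\,\mathrm{d}\Pi(\mu)$), a step the paper invokes implicitly when it writes $\vert Q_\mu g(x)\vert\le(\varepsilon V^2(x)+K(\varepsilon))\|g\|_{V^2}$.
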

\begin{proof}
We have the following:
\begin{eqnarray*}
\vert (DQ_\mu \cdot \nu) (f)(x)\vert &\leq & \vert (D\Pi(\mu)\cdot
\nu)(Q_\mu f)\vert + \vert Q_\mu (\nabla W*\nu(x),\nabla Q_\mu
f(x))\vert.
\end{eqnarray*}
We will treat each of the two terms on the right side separately. If we consider the second right hand term, we find
\begin{eqnarray*}
\vert Q_\mu (\nabla W*\nu(x),\nabla Q_\mu f(x))\vert &\leq & (\varepsilon V^2(x) + K(\varepsilon))\vert \vert (\nabla W*\nu,\nabla Q_\mu f)\vert\vert_{V^2}\\
&\leq & (\varepsilon V^2(x) + K(\varepsilon))\vert \vert \nabla W*\nu\vert \vert_V \vert \vert\nabla Q_\mu f\vert\vert_{V}\\
&\leq & (\varepsilon V^2(x) + K'(\varepsilon))\vert \vert \nu\vert\vert_V \vert \vert f\vert\vert_V.
\end{eqnarray*}
We work now with the other member of the inequality.
\begin{eqnarray*}
\vert (D\Pi(\mu)\cdot \nu)(Q_\mu f) \vert &\leq & 2 \int\vert Q_\mu f(x)\vert \left| W*\nu(x) -\int W*\nu \mathrm{d}\Pi(\mu) \right| \Pi(\mu)(\mathrm{d}x)\\
&\leq & C\| f\|_V \vert \vert \nu\vert \vert_V \int (\varepsilon V(x) + K(\varepsilon)) (V(x) + 1) \Pi(\mu)(\mathrm{d}x)\\
&\leq & C'\vert\vert f\vert \vert_V \|\nu\vert \vert_V.
\end{eqnarray*}
Putting the pieces together, we are done.
\end{proof}

\section{Behavior of the occupation measure}
\subsection{Tightness of $(\mu_t,t \geq 0)$}
Thanks to the potential $V$, we manage to obtain the weaker form of compactness of the occupation measure: tightness.
\begin{propb} Let $x,r,\mu$ be given. Then there exists $\beta>1$ such that
$\mathbb{P}_{x,r,\mu}-$a.s. $\mu_t \in \mathcal{P}_\beta(\mathbb{R}^d;V)$ for all $t$ large enough.
\end{propb}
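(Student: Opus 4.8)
The plan is to control the growth of $\int V \, \mathrm{d}\mu_t$ along the trajectory by exploiting the averaging mechanism built into the definition of $\mu_t$. The key quantity to track is $v(t) := \mu_t(V) = \int_{\mathbb{R}^d} V(y)\,\mu_t(\mathrm{d}y)$, which is well-defined since $\mu_t \in \mathcal{P}(\mathbb{R}^d;V)$ for every fixed $t$ (as observed in the proof of Proposition \ref{existence}). From the defining SDE $\mathrm{d}\mu_t = (\delta_{X_t} - \mu_t)\frac{\mathrm{d}t}{r+t}$ we get the pathwise identity
\begin{equation*}
\frac{\mathrm{d}}{\mathrm{d}t} v(t) = \frac{V(X_t) - v(t)}{r+t},
\end{equation*}
so the behavior of $v$ is governed by how large $V(X_t)$ typically is. The natural idea is to integrate: $v(t) = \frac{r}{r+t}\mu(V) + \frac{1}{r+t}\int_0^t V(X_s)\,\mathrm{d}s$, so it suffices to show that $\frac{1}{r+t}\int_0^t V(X_s)\,\mathrm{d}s$ stays bounded a.s. for $t$ large.

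First I would apply the generalized It\^o (It\^o--Ventzell) formula to $\mathcal{E}_{\mu_t}(X_t) = V(X_t) + W*\mu_t(X_t)$, exactly as in the proof of Proposition \ref{existence}, obtaining
\begin{equation*}
\mathcal{E}_{\mu_t}(X_t) = \mathcal{E}_\mu(x) + M_t - \int_0^t |\nabla\mathcal{E}_{\mu_s}(X_s)|^2\,\mathrm{d}s + \frac12\int_0^t \Delta\mathcal{E}_{\mu_s}(X_s)\,\mathrm{d}s + \int_0^t \frac{W(X_s,X_s) - W*\mu_s(X_s)}{r+s}\,\mathrm{d}s,
\end{equation*}
where $M_t = \int_0^t(\nabla\mathcal{E}_{\mu_s}(X_s),\mathrm{d}B_s)$ is a local martingale. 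The growth condition iii) gives $(\nabla V(x),x) \geq c|x|^{2\delta}$ for $|x|$ large, and more importantly the term $-|\nabla\mathcal{E}_{\mu_s}(X_s)|^2$ should dominate: since $V$ is uniformly convex (condition ii)) and by curvature condition v), $|\nabla\mathcal{E}_{\mu_s}(X_s)|^2$ grows at least like $V(X_s)^2$ (or at least controls $V(X_s)$ from above after the dissipation is accounted for), so that $|\nabla\mathcal{E}_{\mu_s}(X_s)|^2 \gtrsim c'\, \mathcal{E}_{\mu_s}(X_s) - c''$ for suitable constants uniform along the path (using $W\geq 0$ and domination iv) to relate $\mathcal{E}_{\mu_s}$ and $V$). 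The drift-dissipation competition should then force $\int_0^t \mathcal{E}_{\mu_s}(X_s)\,\mathrm{d}s$ to grow at most linearly: rearranging the It\^o expansion,
\begin{equation*}
\int_0^t |\nabla\mathcal{E}_{\mu_s}(X_s)|^2\,\mathrm{d}s \leq \mathcal{E}_\mu(x) + M_t + \frac12\int_0^t \Delta\mathcal{E}_{\mu_s}(X_s)\,\mathrm{d}s + \int_0^t \frac{|W(X_s,X_s)| + |W*\mu_s(X_s)|}{r+s}\,\mathrm{d}s,
\end{equation*}
and by \eqref{deltaE} we have $\Delta\mathcal{E}_{\mu_s}(X_s) \leq 2D\,\mathcal{E}_{\mu_s}(X_s)$, while domination iv) and Lemma \ref{majW} bound the last integrand by $\kappa(V(X_s) + v(s)) + 2\kappa v(s) V(X_s)$ divided by $r+s$. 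The terms with a $1/(r+s)$ factor are subleading provided $v(s)$ does not blow up, which is exactly what we are proving, so one runs a bootstrap/Gronwall argument: assume $v(s)$ bounded up to time $t$, deduce via the above that $\int_0^t V(X_s)\,\mathrm{d}s = O(t)$ a.s. (controlling $M_t = o(t)$ a.s. by the strong law for local martingales, since $\langle M\rangle_t = \int_0^t|\nabla\mathcal{E}_{\mu_s}(X_s)|^2\,\mathrm{d}s$ and the left side bounds itself), hence $v(t) = \frac{1}{r+t}(r\mu(V) + \int_0^t V(X_s)\,\mathrm{d}s)$ stays bounded, closing the loop and in fact giving an explicit $\beta$.

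The main obstacle I anticipate is making the dissipation estimate $|\nabla\mathcal{E}_{\mu_s}(X_s)|^2 \gtrsim \mathcal{E}_{\mu_s}(X_s)$ genuinely uniform in $s$ (i.e.\ uniform over the measures $\mu_s$ appearing in the drift), and handling the region where $X_s$ is small where this coercivity fails and instead one only has $|\nabla\mathcal{E}_{\mu_s}|^2 \geq 0$; there one uses that $V$ and $\mathcal{E}_{\mu_s}$ are bounded on compacts, but the bound on $\mathcal{E}_{\mu_s}$ near the origin still involves $v(s)$ through $W*\mu_s$, so the constants genuinely couple and the Gronwall step must be set up carefully to avoid circularity. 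A secondary subtlety is that $v(s)$ could a priori be large on a short initial interval before the dissipation kicks in, so one should state the conclusion only for $t$ large enough, and possibly first establish an a.s.\ bound $\limsup_{t\to\infty} v(t) \leq \beta_0$ and then take $\beta = \beta_0 + 1$. Once $\mu_t \in \mathcal{P}_\beta(\mathbb{R}^d;V)$ for $t$ large, tightness of $(\mu_t)$ follows immediately from Proposition \ref{P_beta}.
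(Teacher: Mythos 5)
Your proposal identifies the right machinery -- It\^o--Ventzell on the Lyapunov function $\mathcal{E}_{\mu_t}(X_t)$, control of the martingale by the strong law, and then a Gronwall-type argument on $\int_0^t V(X_s)\,\mathrm{d}s$ -- and this is indeed the route the paper takes. The reduction $v(t) = \frac{r}{r+t}\mu(V) + \frac{1}{r+t}\int_0^t V(X_s)\,\mathrm{d}s$ is also correct and is exactly what the paper uses implicitly (it sets $\phi(t) := \int_0^t V(X_s)\,\mathrm{d}s$ and aims at $\phi(t)=O(t)$).

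However, there is a genuine gap in the way you propose to close the argument, and you yourself flag it: the ``assume $v(s)$ bounded up to time $t$, deduce $\phi(t)=O(t)$, conclude $v(t)$ bounded'' loop is circular as stated, and you do not supply a continuity or stopping-time device to break it. The paper avoids the bootstrap entirely. After controlling the martingale by $\frac12\int_0^t|\nabla\mathcal{E}_{\mu_s}(X_s)|^2\,\mathrm{d}s$ for $t$ large, it uses the inequality $V\leq k_\epsilon + \epsilon|\nabla V|^2$ (valid for all $x$, thanks to the additive constant $k_\epsilon$ absorbing the small-$|x|$ region -- this is how the coercivity issue near the origin is handled, not a pointwise lower bound of the form $|\nabla\mathcal{E}_{\mu_s}|^2\gtrsim V^2$), together with the domination condition (\ref{domination}) and $\Delta V\leq aV$, and crucially it substitutes the exact identity $\mu_s(V)=\frac{r\mu(V)+\phi(s)}{r+s}$ into the bound on $\Delta(W*\mu_s)$. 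This yields a closed scalar inequality
\[
\phi(t) \;\leq\; C_1' t + C_2'\int_0^t \frac{\phi(s)}{r+s}\,\mathrm{d}s,\qquad C_2'<1,
\]
without any a priori bound on $v(s)$; the $1/(r+s)$ weight tames the Gronwall factor to $(r+t)^{C_2'}$, and $C_2'<1$ (ensured by choosing $\epsilon$ small) delivers $\phi(t)=O(t)$ directly. So the missing ingredient in your proposal is precisely this: do not bootstrap on a hypothetical bound for $v(s)$, but plug in the exact relation between $\mu_s(V)$ and $\phi(s)$ and exploit the $1/(r+s)$ decay to obtain a self-contained Gronwall inequality. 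With that modification your outline would match the paper's proof.
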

\begin{proof}
We set $\phi(t):=\int_0^t V(X_s)\mathrm{d}s$. All we need to prove
is that $\mathbb{P}_{x,r,\mu}-$a.s. $\phi(t)=O(t)$. We use again the Lyapunov functional
$\mathcal{E}_{\mu}(x)=V(x)+W*\mu(x)$. We have already shown:
\begin{eqnarray*}
\mathcal{E}_{\mu_t}(X_t) &=& \mathcal{E}_{\mu}(x) + \int_0^t (\nabla \mathcal{E}_{\mu_s}(X_s), \mathrm{d}B_s) - \int_0^t \left|\nabla \mathcal{E}_{\mu_s}(X_s)\right|^2 \mathrm{d}s\\
&+& \frac{1}{2}\int_0^t \Delta \mathcal{E}_{\mu_s}(X_s)\mathrm{d}s
+ \int_0^t \left(W(X_s,X_s) - W*\mu_s(X_s) \right)\frac{\mathrm{d}s}{r+s}.
\end{eqnarray*}
The strong law of large numbers (for martingales) implies that a.s. for
$t$ large enough we will have $\int_0^t (\nabla \mathcal{E}_{\mu_s}(X_s), \mathrm{d}B_s) \leq \frac{1}{2}\int_0^t \left| \nabla \mathcal{E}_{\mu_s}(X_s) \right|^2 \mathrm{d}s$, and therefore we get the a.s. inequality for $t$ large enough:
\begin{equation*}
\int_0^t \left| \nabla \mathcal{E}_{\mu_s}(X_s) \right|^2 \mathrm{d}s \leq 2\mathcal{E}_{\mu}(x) + \int_0^t \Delta \mathcal{E}_{\mu_s}(X_s)\mathrm{d}s + \frac{2}{r}\int_0^t W(X_s,X_s)\mathrm{d}s.
\end{equation*}
Now we want to  find an integral inequality on $\phi$. To this
aim, we will control separately each of the three terms of the
last inequality and let $\phi(t)$ appear.

i) From the growth assumption on $V$, for any $\epsilon >0$ we can
find $k_{\epsilon}>0$ such that $V \leq k_{\epsilon} + \epsilon
|\nabla V|^2$, thus by means of an integration we get a.s.
\begin{equation*}
\phi(t) \leq k_{\epsilon}t + \epsilon\int_0^t \left| \nabla \mathcal{E}_{\mu_s}(X_s) \right|^2 \mathrm{d}s.
\end{equation*}
ii) From the domination condition (\ref{domination}) on $W$, we
have $\Delta W*\mu(x) \leq \kappa (V(x) + \mu(V))$; the growth
condition (\ref{deltaV}) on $V$ ensures the inequality $\Delta V
\leq aV$ for some $a>0$; therefore we get a.s.
\begin{equation*}
\Delta \mathcal{E}_{\mu_s}(X_s) \leq \kappa \mu(V) + (\kappa + a) V(X_s) + \frac{\kappa}{r+s}\phi(s).
\end{equation*}
iii) The domination condition (\ref{domination}) leads also to
\begin{equation*}
W(X_s,X_s) \leq 2 \kappa V(X_s).
\end{equation*}
Putting all the pieces together, we find the following inequality
(denoting by $C_1$ a deterministic positive constant) for $t$ large enough
\begin{equation*}
\phi(t) \leq k_{\epsilon}t + \epsilon\left( (a+2\kappa)\phi(t) + \kappa\int_0^t \frac{\mathrm{d}s}{r+s}\phi(s)
+ C_1t + \frac{4 \kappa}{r}\phi(t)\right).
\end{equation*}
Now we choose $\epsilon$ small enough such that $\epsilon \left(a+\kappa(2+\frac{4}{r})\right)<1$ and so, we have:
\begin{equation*}
\phi(t) \leq C'_1t+ \int_0^t \frac{C'_2\mathrm{d}s}{r+s}\phi(s),
\end{equation*}
with $C'_i$ another positive constants. Finally, thanks to Gronwall's lemma, there exists a positive deterministic constant $\beta$ such that $\phi(t) \leq \beta t$ for $t$ large
enough as required (because $C'_2<1$).
\end{proof}

\begin{propo}
Let $\beta>1$ such that $\mu_t\in
\mathcal{P}_\beta(\mathbb{R}^d;V)$ for $t$ large enough. For all
$n\in \mathbb{N}$, we have that $\mathbb{E}_{x,r,\mu}(V^n(X_t))$
is bounded.
\end{propo}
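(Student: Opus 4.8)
The plan is to bound $\mathbb{E}_{x,r,\mu}(V^n(X_t))$ by combining an It\^o/It\^o–Ventzell computation for $V^n(X_t)$ with the tightness result just established, namely that $\mu_t\in\mathcal{P}_\beta(\mathbb{R}^d;V)$ for $t$ large, and the growth hypotheses on $V$. First I would apply the generalized It\^o formula to $t\mapsto V^n(X_t)$ (here only $V$ appears, not $W*\mu_t$, so the computation is a genuine It\^o formula with a time-dependent drift). We get
\begin{equation*}
V^n(X_t) = V^n(x) + \int_0^t n V^{n-1}(X_s)(\nabla V(X_s),\mathrm{d}B_s) + \int_0^t \mathcal{L}_s V^n(X_s)\,\mathrm{d}s,
\end{equation*}
where $\mathcal{L}_s$ is the generator of the true diffusion at time $s$, so that
\begin{equation*}
\mathcal{L}_s V^n = n V^{n-1}\left(\tfrac12\Delta V - (\nabla V,\nabla V) - (\nabla W*\mu_s,\nabla V)\right) + \tfrac{n(n-1)}{2}V^{n-2}|\nabla V|^2.
\end{equation*}

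The key point is that the drift term $-n V^{n-1}(\nabla V,\nabla V)$ is strongly negative and, because of the growth condition $(\nabla V(x),x)\geq c|x|^{2\delta}$ together with \eqref{deltaV} and the domination \eqref{domination}, it dominates all the other contributions outside a compact set. Precisely, using Lemma \ref{majW} and $\mu_s\in\mathcal{P}_\beta$ we have $|\nabla W*\mu_s(x)|\leq 2\kappa\beta V(x)$, while \eqref{deltaV} gives $\Delta V\leq aV$; the terms $V^{n-1}\Delta V$, $V^{n-1}(\nabla W*\mu_s,\nabla V)$ and $V^{n-2}|\nabla V|^2$ are all $o\bigl(V^{n-1}|\nabla V|^2\bigr)$ at infinity (again invoking the growth bound, which forces $|\nabla V|$ to grow at least like $|x|^{2\delta-1}$ and $V$ to grow at most polynomially in $|x|^{2\delta}$ via the Lipschitz-type estimate \eqref{growth}). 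Hence there exist deterministic constants $A_n>0$ and $B_n$, valid for $s$ large (i.e. once $\mu_s\in\mathcal{P}_\beta$), with $\mathcal{L}_s V^n(X_s)\leq B_n - A_n V^n(X_s)$. Taking expectations, localizing by the stopping times $\tau_m=\inf\{t:V(X_t)>m\}$ so the stochastic integral is a true martingale, and then letting $m\to\infty$ by Fatou, we obtain
\begin{equation*}
\frac{\mathrm{d}}{\mathrm{d}t}\,\mathbb{E}_{x,r,\mu}V^n(X_t) \leq B_n - A_n\,\mathbb{E}_{x,r,\mu}V^n(X_t)
\end{equation*}
for $t$ large, and Gronwall's lemma gives $\limsup_{t\to\infty}\mathbb{E}_{x,r,\mu}V^n(X_t)\leq B_n/A_n<\infty$; on the compact initial time interval the bound \eqref{Xmu_bound}-type estimate (the naive Gronwall bound from Proposition \ref{existence}) already controls $\mathbb{E}V^n(X_t)$, since $V^n$ is still dominated by an exponential moment argument. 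Combining, $\mathbb{E}_{x,r,\mu}(V^n(X_t))$ is bounded on $[0,\infty)$.

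The main obstacle I expect is the almost-sure nature of the hypothesis $\mu_t\in\mathcal{P}_\beta$: the random time after which it holds is not a stopping time with good integrability a priori, so one cannot directly take expectations in the differential inequality. The clean way around it is to avoid using $\mu_s\in\mathcal{P}_\beta$ pathwise and instead bound $|\nabla W*\mu_s(x)|\leq 2\kappa\,\|\mu_s\|_V\,V(x)$ and then control $\|\mu_s\|_V = (r+s)^{-1}\bigl(r\|\mu\|_V + \int_0^s V(X_u)\,\mathrm{d}u\bigr)$ using the deterministic bound $\phi(s)\le \beta s$ that was \emph{also} shown to hold only eventually a.s.; to make this rigorous in expectation one should instead re-run the Gronwall argument of Proposition \ref{compactness} keeping track of the relevant quantities in $L^1$, or simply note that we may choose the drift coefficient estimate to hold on the event $\{\mu_s\in\mathcal{P}_\beta \ \forall s\ge s_0\}$ whose complement has probability zero, and since $V^n(X_t)\ge 0$ the contribution of a zero-probability event to the expectation vanishes — more carefully, apply the inequality $\mathcal{L}_sV^n\le B_n-A_nV^n$ on that full-measure event and use that a sequence of pathwise inequalities holding a.s.\ for $s\ge s_0(\omega)$ can be upgraded: split $\{s_0(\omega)\le T\}$ for large $T$, on which the estimate holds from $T$ onward, and let $T\to\infty$ by monotone/dominated convergence combined with the crude bound on $[0,T]$. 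This bookkeeping is the only delicate part; the analytic heart — that $V^{n-1}|\nabla V|^2$ beats every competing term thanks to (H)iii) — is straightforward.
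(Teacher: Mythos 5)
Your overall strategy — Itô formula, differential inequality $\dot{x}\le B_n - A_n x$, Gronwall, plus careful bookkeeping around the a.s.\ event $\{\mu_s\in\mathcal{P}_\beta\}$ — is the same as the paper's, and your discussion of the a.s.\ issue is sensible. But there is a genuine gap in the analytic heart: the claim that $V^{n-1}(\nabla W*\mu_s,\nabla V)$ is $o\bigl(V^{n-1}|\nabla V|^2\bigr)$ is false under the stated hypotheses. The domination condition \eqref{domination} only gives $|\nabla W*\mu_s(x)|\le 2\kappa\|\mu_s\|_V\,V(x)$, so the best pointwise bound on the cross term is
\begin{equation*}
\bigl|(\nabla W*\mu_s,\nabla V)\bigr| \ \lesssim\ V\,|\nabla V|,
\end{equation*}
and you need $V\,|\nabla V|=o(|\nabla V|^2)$, i.e.\ $V=o(|\nabla V|)$. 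The growth hypothesis goes the wrong way: it forces $V(x)\gtrsim |x|^{2\delta}$ while $|\nabla V(x)|$ need only grow like $|x|^{2\delta-1}$, so generically $|\nabla V|=o(V)$, not the reverse, and the ratio $V\,|\nabla V|/|\nabla V|^2\sim|x|\to\infty$. In other words, the cross term can \emph{beat} $|\nabla V|^2$, not the other way around, and the inequality $\mathcal{L}_s V^n\le B_n-A_nV^n$ does not follow from domination alone.

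The paper sidesteps exactly this obstruction by running the argument on $\mathcal{E}_\mu = V+W*\mu$ rather than on $V$. For $\mathcal{E}_\mu$ the drift of the SDE is literally $-\nabla\mathcal{E}_{\mu_t}$, so the analogue of your cross term disappears and the negative contribution is the clean $-|\nabla\mathcal{E}_{\mu_s}|^2$. The paper then uses the curvature hypothesis (H)v), not just domination, to get $\mathcal{E}_{\mu}\le \alpha|\nabla\mathcal{E}_\mu|^2+K_\alpha$ uniformly in $\mu\in\mathcal{P}_\beta$, plus $\Delta\mathcal{E}_\mu\lesssim\mathcal{E}_\mu$ from \eqref{deltaV} and \eqref{domination}, and only then derives $\dot x\le M-ax$ and concludes by Gronwall and induction. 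If you insist on working with $V$ directly you would need to control the \emph{signed} quantity $(\nabla W*\mu_s,\nabla V)$, which requires the first half of the curvature hypothesis, the limit $(x,\nabla_x W(x,y))/(x,\nabla V(x))\to\alpha>-1$, and even then is delicate for non-radial $V$ since it only controls the radial component of $\nabla_x W$. Your proposal never invokes that hypothesis, so the key inequality is unsupported. (The two terms you correctly identify as negligible, $V^{n-1}\Delta V$ and $V^{n-2}|\nabla V|^2$, are indeed fine.)
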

\begin{proof}
We drop the subscripts $x,r,\mu$ in the following. We will prove the result for the Lyapunov function $\mathcal{E}_\mu(x)$ instead of $V$. Let $n=1$. We apply the It\^o formula to $(s,x) \mapsto \mathcal{E}_{\mu_s}(x)$:
\begin{eqnarray*}
\mathrm{d}\mathcal{E}_{\mu_s}(X_s) &=& (\nabla
\mathcal{E}_{\mu_s}(X_s), \mathrm{d}B_s) - \left|\nabla
\mathcal{E}_{\mu_s}(X_s)\right|^2 \mathrm{d}s + \frac{1}{2}\Delta
\mathcal{E}_{\mu_s}(X_s)\mathrm{d}s\\
&+& \left(W(X_s,X_s) - W*\mu_s(X_s)\right)\frac{\mathrm{d}s}{r+s}.
\end{eqnarray*}
The condition (\ref{curvature}) on $V+W*\mu$ (uniform in $\mu$) leads to (for $t$ large enough):
$$\forall \alpha>0, \exists K_\alpha = K(\alpha,\beta,V); \, 
\mathcal{E}_{\mu_t}(X_t) \leq \alpha \vert\nabla
\mathcal{E}_{\mu_t}(X_t) \vert^2 + K_\alpha.$$ From the domination
condition (\ref{domination}) on $W$ and the growth condition on
$V$, there exists $a>0$ such that $\Delta \mathcal{E}_{\mu_t}(X_t)\leq a\mathcal{E}_{\mu_t}(X_t)$.
These bounds lead, for all $t\geq s$ large enough, to
\begin{eqnarray*}
\mathbb{E}\mathcal{E}_{\mu_t}(X_t)&\leq &
\mathbb{E}\mathcal{E}_{\mu_s}(X_s) + \frac{1}{2\alpha}\int_s^t
\left(K_\alpha- \mathbb{E}\mathcal{E}_{\mu_u}(X_u)
\right)\mathrm{d}u + \frac{a}{2}\int_s^t \mathbb{E}\mathcal{E}_{\mu_u}(X_u) \mathrm{d}u \\
&+& \kappa \int_s^t \mathbb{E}V(X_u) (r+u)^{-1} \mathrm{d}u.
\end{eqnarray*}
Now, we can choose $\alpha$ such that $1/\alpha - a = 2a$ and we
recall that $V(X_t) = O(t)$. Therefore the preceding inequality
becomes with $M=M(\beta,V)$
\begin{eqnarray*}
\mathbb{E}\mathcal{E}_{\mu_t}(X_t)\leq
\mathbb{E}\mathcal{E}_{\mu_s}(X_s) -a\int_s^t
\mathbb{E}\mathcal{E}_{\mu_u}(X_u)\mathrm{d}u + M(t-s)
\end{eqnarray*}
We divide both sides by $t-s$ and let $s\rightarrow t$. Let $x(t) := \mathbb{E}\mathcal{E}_{\mu_t}(X_t)$. Solving the preceding inequality boil down to solve $\dot{x}\leq M - ax$. The solution satisfies $x(t)\leq \left(x(0)+M\int_0^t e^{as}\mathrm{d}s \right)e^{-at}$ and we finally obtain:
\begin{eqnarray*}
\mathbb{E}\mathcal{E}_{\mu_t}(X_t)&\leq & K V(x) e^{-at} + \frac{M}{a}(1-e^{-at}).
\end{eqnarray*}
We conclude the general case $n\geq 1$ by induction.
\end{proof}

\subsection{Asymptotic behavior}
We define the family of measures $\{\varepsilon_{t,t+s}; t\geq 0, s\geq 0 \}$ by
\begin{eqnarray}
\varepsilon_{t,t+s} := \int_t^{t+s} (\delta_{X_{h(u)}}-\Pi(\mu_{h(u)}))\mathrm{d}u.
\end{eqnarray}
This family will play an important role: it will be essential for proving that $t\mapsto \mu_{h(t)}$ is an asymptotic pseudotrajectory for $\Phi$.

\begin{propo} \label{theorem} i) Let $t$ be large enough. For all $f\in \mathcal{C}^\infty(\mathbb{R}^d;V)$ and every $T>0$ there exists a positive constant $K = K(V,W,x)$ 
such that for all $\delta>0$ $$\mathbb{P}_{x,r,\mu}\left(\sup_{0\leq s\leq T} \vert \varepsilon_{t,t+s} f\vert >\delta \right) \leq K \delta^{-2} e^{-t} \vert\vert f\vert\vert_V^2 .$$
ii) For all $T>0$ and all $f\in \mathcal{C}^\infty(\mathbb{R}^d;V)$, we have $\mathbb{P}_{x,r,\mu}-$a.s. $$\underset{t\rightarrow\infty}{\lim}\, \sup_{0\leq s\leq T} |\varepsilon_{t,t+s}f| =0.$$
\end{propo}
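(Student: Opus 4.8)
The plan is to exploit the fundamental kernel $Q_\mu$ of Section 5 together with the It\^o--Ventzell formula, exactly as in the compact-space argument of Bena\"im--Ledoux--Raimond, but controlling everything by $V$. First I would fix $f\in\mathcal{C}^\infty(\mathbb{R}^d;V)$ and apply the generalized It\^o formula to the process $(u,x)\mapsto Q_{\mu_{h(u)}}f(x)$ evaluated along $X_{h(u)}$ (after the time change $h(t)=r(e^t-1)$, which, as noted in the text, removes the $(r+t)^{-1}$ factor). Using $A_\mu Q_\mu f=\Pi(\mu)f-f$, the drift term produces exactly $\int_t^{t+s}(\Pi(\mu_{h(u)})f-f(X_{h(u)}))\,\mathrm{d}u=-\varepsilon_{t,t+s}f$, so that
\begin{equation*}
\varepsilon_{t,t+s}f = Q_{\mu_{h(t)}}f(X_{h(t)}) - Q_{\mu_{h(t+s)}}f(X_{h(t+s)}) + \int_t^{t+s}\bigl(\nabla Q_{\mu_{h(u)}}f(X_{h(u)}),\mathrm{d}\tilde B_u\bigr) + \int_t^{t+s}\bigl(DQ_{\mu_{h(u)}}\cdot(\delta_{X_{h(u)}}-\mu_{h(u)})\bigr)(f)(X_{h(u)})\,\mathrm{d}u,
\end{equation*}
where $\tilde B$ is the time-changed Brownian motion (whose quadratic variation picks up a factor $h'(u)=r e^u$). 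The four terms are then estimated separately.

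For the two boundary terms I would use Proposition \ref{Q_muC1}: $|Q_{\mu_{h(u)}}f(X_{h(u)})|\le(\varepsilon V(X_{h(u)})+K(\varepsilon))\|f\|_V$, so after taking expectations of the square and invoking the moment bound $\mathbb{E}_{x,r,\mu}V^n(X_t)$ bounded (the Proposition just above), these contribute $O(\|f\|_V^2)$ uniformly in $t$; but to get the decaying factor $e^{-t}$ one actually wants these terms times the right weight --- here one should be more careful and note that it is really the \emph{martingale} term that carries the smallness. For the stochastic integral, its quadratic variation is $\int_t^{t+s}|\nabla Q_{\mu_{h(u)}}f(X_{h(u)})|^2 r e^u\,\mathrm{d}u$; using Proposition \ref{gradQ_mu} together with $\mathbb{E}V^2(X_{h(u)})\le (3\kappa\beta)^2 V^2(x) e^{2D h(u)}$ would be far too lossy, so instead I would use that for $u$ large $\mu_{h(u)}\in\mathcal{P}_\beta$ and hence $\mathbb{E}V^2(X_{h(u)})$ is \emph{bounded} (again the Proposition above); thus $\mathbb{E}\langle M\rangle_{t}^{t+s}\le C\|f\|_V^2\,r(e^{t+T}-e^t)=C'\|f\|_V^2 e^t$. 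The last drift term is handled by Corollary \ref{DQmu}, giving $|DQ_{\mu_{h(u)}}\cdot(\delta_{X_{h(u)}}-\mu_{h(u)})(f)(X_{h(u)})|\le(\varepsilon V^2(X_{h(u)})+K_2(\varepsilon))\|f\|_V\,\|\delta_{X_{h(u)}}-\mu_{h(u)}\|_V$, and $\|\delta_{X_{h(u)}}-\mu_{h(u)}\|_V\le V(X_{h(u)})+\beta$, so after one more application of the uniform moment bound this term is $O(\|f\|_V^2)$ on $[t,t+T]$, i.e.\ bounded (not small) --- which is the sign that the correct normalization of $\varepsilon_{t,t+s}$ already builds in the relevant scaling and one should track the $e^{-t}$ through the original, un-time-changed picture rather than pretending each piece decays.

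Cleanest is to reorganize: work with $\varepsilon_{t,t+s}f$ \emph{before} time change, where $\mathrm{d}\mu_v=(\delta_{X_v}-\mu_v)\mathrm{d}v/(r+v)$ supplies the decaying weight $(r+v)^{-1}\asymp e^{-t}$ on $[h(t),h(t+T)]$. Applying It\^o--Ventzell to $Q_{\mu_v}f(X_v)$ on $v\in[h(t),h(t+s)]$ and dividing by $r+v$, the boundary terms come with a $1/(r+v)$ factor hence are $O(e^{-t}\|f\|_V V(X_{h(t)}))$ in $L^2$ by Proposition \ref{Q_muC1} and the moment bound; the martingale term has quadratic variation $\int_{h(t)}^{h(t+s)}|\nabla Q_{\mu_v}f(X_v)|^2(r+v)^{-2}\mathrm{d}v$, which by Proposition \ref{gradQ_mu} and boundedness of $\mathbb{E}V^2(X_v)$ is $O(\|f\|_V^2\int_{h(t)}^{h(t+T)}(r+v)^{-2}\mathrm{d}v)=O(\|f\|_V^2 e^{-t})$; and the $DQ$ drift term, carrying an extra $(r+v)^{-1}$, is $O(\|f\|_V^2\int_{h(t)}^{h(t+T)}(r+v)^{-1}\mathbb{E}[(V^2+1)(V+\beta)](X_v)\,(r+v)^{-1}\mathrm{d}v)$ --- wait, one factor $(r+v)^{-1}$ from $\mathrm{d}\mu_v$ inside $DQ$ and one from the outer normalization gives $(r+v)^{-2}$, hence again $O(\|f\|_V^2 e^{-t})$. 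Then part (i) follows from Doob's maximal inequality applied to the martingale part (the only term where a supremum over $s$ needs care) plus the deterministic-in-$s$ bounds on the rest: $\mathbb{P}(\sup_{0\le s\le T}|\varepsilon_{t,t+s}f|>\delta)\le K\delta^{-2}e^{-t}\|f\|_V^2$. Part (ii) is then immediate: apply (i) along $t=n\in\mathbb{N}$ with $T=1$; since $\sum_n e^{-n}<\infty$, Borel--Cantelli gives $\sup_{0\le s\le 1}|\varepsilon_{n,n+s}f|\to0$ a.s., and a standard interpolation over unit windows (using that $s\mapsto\varepsilon_{t,t+s}f$ is $O(1)$-Lipschitz-controlled, or just covering $[0,T]$ by $\lceil T\rceil$ unit intervals) upgrades this to $\sup_{0\le s\le T}|\varepsilon_{t,t+s}f|\to0$ a.s. for every fixed $T$.

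The main obstacle I anticipate is bookkeeping the two sources of $(r+v)^{-1}$ correctly so that each term genuinely carries the advertised $e^{-t}$, and making sure the uniform estimates of Section 5 (Propositions \ref{Q_muC1}, \ref{gradQ_mu}, Corollary \ref{DQmu}) are only invoked for $v$ large enough that $\mu_v\in\mathcal{P}_\beta(\mathbb{R}^d;V)$ (guaranteed by Proposition \ref{compactness}), combined with the uniform-in-$t$ moment bounds $\mathbb{E}_{x,r,\mu}V^n(X_v)=O(1)$; the growth-in-$v$ moment bound $\mathbb{E}V^2(X_v)\le C V^2(x)e^{2Dv}$ must \emph{not} be used on the relevant range, or the exponential decay is destroyed.
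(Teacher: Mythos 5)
Your ``cleanest'' reorganization is essentially the paper's proof: both apply the It\^o--Ventzell formula to $v\mapsto (r+v)^{-1}Q_{\mu_v}f(X_v)$ over $[h(t),h(t+s)]$ to split $\varepsilon_{t,t+s}f$ into a boundary piece, a $(r+v)^{-2}$-weighted drift piece, a $DQ$-drift piece, and a local martingale, and then control them using Propositions \ref{Q_muC1}, \ref{gradQ_mu}, Corollary \ref{DQmu}, the uniform bounds on $\mathbb{E}_{x,r,\mu}V^n(X_v)$, and Burkholder--Davis--Gundy together with Doob's maximal inequality for the supremum, with part (ii) then following by Borel--Cantelli over unit windows exactly as in the paper. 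One small caution in the write-up: you must apply It\^o--Ventzell directly to the weighted process $(r+v)^{-1}Q_{\mu_v}f(X_v)$ rather than to $Q_{\mu_v}f(X_v)$ ``and then divide by $r+v$'', since it is the derivative of the weight that produces the compensating $-\int(r+v)^{-2}Q_{\mu_v}f(X_v)\,\mathrm{d}v$ term and, without it, the drift term would carry the extra factor $h'(u)=r+h(u)$ you correctly diagnosed as fatal in your first (rightly abandoned) attempt.
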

\begin{proof}
i) We need the uniform estimates on the family of semigroups
$(P_t^\mu)$ proved in Section 5. Let $f\in 
\mathcal{C}^\infty(\mathbb{R}^d;V)$. We begin to rewrite
$$\varepsilon_{t,t+s}f = \int_{h(t)}^{h(t+s)}
A_{\mu_u}Q_{\mu_u}f\frac{\mathrm{d}u}{r+u}.$$ We consider the
$\mathcal{C}^2$-valued process $(t,x) \mapsto Q_{\mu_{h(t)}}
f(x)$, which is of class $\mathcal{C}^2$ and a
$\mathcal{C}^1$-semimartingale. Indeed it is easy to see that
$t\mapsto \mu_{h(t)}$ is a.s. a bounded variation process with
values in $\mathcal{M}(\mathbb{R}^d;V)$. Since Proposition 
\ref{gradQ_mu} shows that $\mu \mapsto Q_\mu f$ is also
$\mathcal{C}^1$, the claim follows by composition. So, we apply
the generalized It\^o formula to $(t,x)\mapsto
h(t)^{-1}Q_{\mu_{h(t)}} f(x)$ and decompose $\varepsilon_{t,t+s}$
in four parts (and we will control each term separately):
$$\varepsilon_{t,t+s}f = \varepsilon_{t,t+s}^{(1)} f +
\varepsilon_{t,t+s}^{(2)}f +\varepsilon_{t,t+s}^{(3)}f +
\varepsilon_{t,t+s}^{(4)}f$$ with
\begin{eqnarray*}
\varepsilon_{t,t+s}^{(1)}f &=& -\frac{1}{h(t+s)}Q_{\mu_{h(t+s)}}f(X_{h(t+s)}) + \frac{1}{h(t)} Q_{\mu_{h(t)}}f(X_{h(t)})\\
\varepsilon_{t,t+s}^{(2)}f &=&-\int_{h(t)}^{h(t+s)} Q_{\mu_u}f(X_u)\frac{\mathrm{d}u}{(r+u)^2} \\
\varepsilon_{t,t+s}^{(3)}f &=& \int_{h(t)}^{h(t+s)} \frac{\partial}{\partial u} Q_{\mu_u}f(X_u) \frac{\mathrm{d}u}{r+u}\\
\varepsilon_{t,t+s}^{(4)}f &=& M^f_{h(t+s)} - M^f_{h(t)}
\end{eqnarray*}
where  $M_t^f$ is the local martingale $M_t^f := \int_0^t \nabla Q_{\mu_u}f(X_u) \frac{\mathrm{d}B_u}{r+u}.$

We recall the estimates of Propositions \ref{Q_muC1} and \ref{gradQ_mu}: $\forall \varepsilon>0, f\in \mathcal{C}^\infty(\mathbb{R}^d;V)$,
\begin{eqnarray*}
\vert Q_{\mu_{h(t)}} f(X_{h(t)})\vert &\leq & \vert \vert f\vert \vert_V(\varepsilon V(X_{h(t)}) + K(\varepsilon))\\
\vert \nabla Q_{\mu_{h(t)}} f(X_{h(t)}) \vert & \leq & \vert \vert f\vert \vert_V (\varepsilon V(X_{h(t)}) + K_1(\varepsilon)).
\end{eqnarray*}
We also remind that $\int_0^t V(X_s) \mathrm{d}s = O(t)$ and $V(X_t) = O(t)$ a.s. Now, we are able to control each part of $\varepsilon_{t,t+s}$ and find for all $\varepsilon>0$ and $t$ large enough:
\begin{eqnarray*}
|\varepsilon_{t,t+s}^{(1)}f| &\leq & h(t)^{-1}(\vert Q_{\mu_{h(t+s)}}f(X_{h(t+s)})\vert + \vert Q_{\mu_{h(t)}}f(X_{h(t)}) \vert ) \\
&\leq & h(t)^{-1}\vert \vert f\vert \vert_V (\varepsilon(V(X_{h(t+s)}) + V(X_{h(t)})) + 2K(\varepsilon))
\end{eqnarray*}
so $ \underset{0\leq s\leq T}{\sup} |\varepsilon_{t,t+s}^{(1)}f| \leq C_1 h(t)^{-1}\|f\|_V$ a.s.; and similarly
\begin{eqnarray*}
|\varepsilon_{t,t+s}^{(2)}f| \leq \int_{h(t)}^{h(t+s)} (\varepsilon V(X_u) + K(\varepsilon)) \frac{ \mathrm{d}u}{(r+u)^2} \|f\|_V \leq \frac{C_2\| f\|_V }{h(t)^2} \int_{h(t)}^{h(t+s)} V(X_u)\mathrm{d}u 
\end{eqnarray*}
so that $\underset{0\leq s\leq T}{\sup} |\varepsilon_{t,t+s}^{(2)}f| \leq C_2 h(t)^{-1} \|f\|_V$ a.s.

\noindent For the third part of $\varepsilon_{t,t+s}$, we will use Markov's inequality and the bound on the differential of $Q_\mu$ given in Corollary \ref{DQmu}:
\begin{eqnarray*}
\mathbb{P}\left(\sup_{0\leq s\leq T}|\varepsilon_{t,t+s}^{(3)}f| \geq \delta\right) 
&\leq& \delta^{-2} \int_{h(t)}^{h(t+T)} \mathbb{E} \vert (D Q_{\mu_u} \cdot \dot{\mu}_u)(f)(X_u) \vert^2 \frac{\mathrm{d}u}{r+u}\\
&\leq& \frac{C}{\delta^2} \vert \vert f\vert \vert_V^2 \int_{h(t)}^{h(t+T)} \mathbb{E}\left(V^6(X_u)\right) \frac{\mathrm{d}u}{(r+u)^3}.
\end{eqnarray*}
Recall, that we have proved that for all $\varepsilon>0$, $n\in \mathbb{N}$ and $t$ large enough, we obtain
$\mathbb{E}[V^n(X_t)] = o(t^\varepsilon)$. Then, there exists some (uniform) constant $C_3$ such that
$$\mathbb{P}\left(\sup_{0\leq s\leq T}|\varepsilon_{t,t+s}^{(3)}f| \geq \delta\right) \leq \frac{C_3}{\delta^2} h(t)^{-1} \| f \|_V^2 .$$ Since the quadratic variation of $M^f_{h(t+s)}-M^f_{h(t)}$ is bounded by the quantity $\vert \vert f\vert \vert_V^2 \int_{h(t)}^{h(t+T)} (\varepsilon V(X_u) + K_1(\varepsilon))^2 \frac{\mathrm{d}u}{(r+u)^2}$, Burkholder-Davis-Gundy's inequality implies directly
\begin{eqnarray}\label{doob}
\mathbb{P}_{x,r,\mu}\left(\sup_{s\in [0,T]}|\varepsilon_{t,t+s}^{(4)}f|\geq \delta \right)\leq
\frac{C_4}{\delta^2} h(t)^{-1} \vert \vert f\vert \vert_V^2.
\end{eqnarray}

ii) Let $T>0$ and $f\in \mathcal{C}^\infty(\mathbb{R}^d;V)$. We just need to prove that $$\underset{t\rightarrow\infty}{\lim}\,\sup_{0\leq s\leq T} |\varepsilon_{t,t+s}^{(4)} f| = \underset{t\rightarrow\infty}{\lim}\,\sup_{0\leq s\leq T} |\varepsilon_{t,t+s}^{(3)} f| = 0.$$ We will use Borel-Cantelli's lemma. First, for all $\varepsilon >0$, we have by Doob's inequality added to Burkholder-Davis-Gundy's inequality that
\begin{eqnarray*}
\mathbb{P}_{x,r,\mu}\left(\sup_{n\leq t < n+1}\sup_{s\in [0,T]}|\varepsilon_{t,t+s}^{(4)}f|\geq \delta \right) \leq \frac{C}{\delta^2} \vert \vert f\vert \vert_V^2 \sup_{n\leq t <n+1} (\varepsilon + h(t)^{-1}).
\end{eqnarray*}
As it is true for all $\varepsilon >0$, we deduce from the preceding inequality that $$\mathbb{P}_{x,r,\mu}\left(\sup_{n\leq t< n+1}\, \sup_{0\leq s\leq T} |\varepsilon_{t,t+s}^{(4)} f|\geq \delta \right)\leq \frac{C}{\delta^2} \|f\|_V^2 h(n)^{-1}.$$ As we know that $\sum_n h(n)^{-1}$ converges, we conclude by Borel-Cantelli's lemma that a.s. $$\lim_{n\rightarrow \infty} \sup_{n\leq t<n+1}\, \sup_{0\leq s\leq T} |\varepsilon_{t,t+s}^{(4)} f| = 0.$$ The same argument for $|\varepsilon_{t,t+s}^{(3)} f|$ permits to conclude.
\end{proof}

\begin{lemma}\label{lem_pta} (\cite{beLR}) If for all $T>0$, all $f\in \mathcal{C}^\infty(\mathbb{R}^d;V)$,
we have $$\underset{t\rightarrow\infty}{\lim}\, \underset{0\leq s
\leq T}{\sup} |\varepsilon_{t,t+s}f| = 0\, a.s.,$$ then the
time-changed process, given by $\mathbb{R}_+ \rightarrow
\mathcal{P}(\mathbb{R}^d;V)$, $t\mapsto \mu_{h(t)}$ is a.s. an
asymptotic pseudotrajectory for $\Phi$ (for the weak topology of
measures).
\end{lemma}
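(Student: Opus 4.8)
The plan is to run a Gronwall--Bihari comparison between the two curves $s\mapsto\mu_{h(t+s)}$ and $s\mapsto\Phi_s(\mu_{h(t)})$, which solve integral equations sharing the value $\mu_{h(t)}$ at $s=0$. Since $\tfrac{\mathrm d}{\mathrm d v}\mu_{h(v)}=\delta_{X_{h(v)}}-\mu_{h(v)}$, one has $\mu_{h(t+s)}=e^{-s}\mu_{h(t)}+e^{-s}\int_0^s e^v\,\delta_{X_{h(t+v)}}\,\mathrm d v$, whereas by \eqref{flow} $\Phi_s(\mu_{h(t)})=e^{-s}\mu_{h(t)}+e^{-s}\int_0^s e^v\,\Pi(\Phi_v(\mu_{h(t)}))\,\mathrm d v$. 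Subtracting and inserting $\pm\,\Pi(\mu_{h(t+v)})$ yields
\[
\mu_{h(t+s)}-\Phi_s(\mu_{h(t)})=e^{-s}\!\int_0^s\! e^v\big(\delta_{X_{h(t+v)}}-\Pi(\mu_{h(t+v)})\big)\mathrm d v+e^{-s}\!\int_0^s\! e^v\big(\Pi(\mu_{h(t+v)})-\Pi(\Phi_v(\mu_{h(t)}))\big)\mathrm d v ,
\]
so it remains to bound each term in the weak metric $\mathrm d$ of \eqref{distance}.

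\emph{First term.} Integrating by parts in $v$, using $\partial_v\varepsilon_{t,t+v}=\delta_{X_{h(t+v)}}-\Pi(\mu_{h(t+v)})$ and $\varepsilon_{t,t}=0$, this term equals $\varepsilon_{t,t+s}-e^{-s}\int_0^s e^v\varepsilon_{t,t+v}\,\mathrm d v$; testing against each $f_k$ of the dense family defining $\mathrm d$ and summing, its $\mathrm d$-distance to $0$ is at most $2\sum_k 2^{-k}\sup_{0\le v\le T}|\varepsilon_{t,t+v}f_k|$. For each fixed $k$, $\sup_{0\le v\le T}|\varepsilon_{t,t+v}f_k|\to 0$ a.s.\ by the hypothesis of the lemma (if $f_k$ is not smooth, replace it by a $V$-close smooth $g$, the error being at most $\|f_k-g\|_V\int_{h(t)}^{h(t+T)}(V(X_w)+C_\beta)\tfrac{\mathrm d w}{w+r}$); moreover, using $|f_k|\le V$, $\Pi(\mu_w)\in\mathcal P_\beta(\mathbb R^d;V)$ for $w$ large, and the tightness estimate $\int_0^\tau V(X_w)\,\mathrm d w\le\beta\tau$ of \S6.1, one gets $\sup_{0\le v\le T}|\varepsilon_{t,t+v}f_k|\le\beta e^{T}+C_\beta T$, uniformly in $k$ and in $t$ large. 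Dominated convergence over $k$ then gives $\eta(t):=2\sum_k 2^{-k}\sup_{0\le v\le T}|\varepsilon_{t,t+v}f_k|\xrightarrow[t\to\infty]{}0$ a.s.

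\emph{Second term.} Lemma \ref{Pibound} ($\Pi$ Lipschitz on $\mathcal P_\beta(\mathbb R^d;V)$ for the $V$-norm), the weak continuity of $\Pi$, and the weak compactness of $\mathcal P_\beta(\mathbb R^d;V)$ (Proposition \ref{P_beta}) imply that $\Pi$ is uniformly $\mathrm d$-continuous on such a set, say with modulus $\omega$. By Proposition \ref{compactness}, $\mu_{h(t+v)}\in\mathcal P_\beta(\mathbb R^d;V)$ for $t$ large; and the trajectory $v\mapsto\Phi_v(\mu_{h(t)})$ stays in a fixed weakly compact set on $[0,T]$ — here one uses a continuity/bootstrap argument together with the bound \eqref{Cbeta} on $\Pi$ and the higher-moment estimates of \S6.1, the Gronwall estimate below keeping the trajectory close to $\mu_{h(t+\cdot)}$ (and hence inside that compact set) up to time $T$. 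Granting this, and using the triangle inequality for the (norm-induced) metric $\mathrm d$ together with $e^{-s}e^{v}\le 1$ for $v\le s$, the function $g(s):=\mathrm d\big(\mu_{h(t+s)},\Phi_s(\mu_{h(t)})\big)$ satisfies $g(0)=0$ and $g(s)\le\eta(t)+\int_0^s\omega(g(v))\,\mathrm d v$ on $[0,T]$. A Gronwall--Bihari comparison then yields $\sup_{0\le s\le T}g(s)\le G_{\eta(t)}(T)$, with $G_\eta$ the maximal solution of $G'=\omega(G),\,G(0)=\eta$, and $G_{\eta(t)}(T)\to 0$ as $t\to\infty$; hence $t\mapsto\mu_{h(t)}$ is a.s.\ an asymptotic pseudotrajectory for $\Phi$. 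It is enough to argue this for a single $T>0$, the general statement following by the usual concatenation property of pseudo-orbits.

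The main obstacle is exactly the second term: the perturbation $\varepsilon_{t,t+s}$ is controlled only in the \emph{weak} metric $\mathrm d$, while the only Lipschitz property available for $\Pi$ (Lemma \ref{Pibound}) is with respect to the \emph{stronger} $V$-norm. One therefore has to exploit the weak compactness of the sublevel sets of $\nu\mapsto\int V\,\mathrm d\nu$ — reinforced by the higher-moment bounds of \S6.1, which supply the uniform $V$-integrability absent from weak convergence alone — both to turn the $V$-Lipschitz property of $\Pi$ into a genuine $\mathrm d$-modulus of continuity and to keep the semiflow trajectory inside a fixed weakly compact set over $[0,T]$.
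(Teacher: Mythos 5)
Your proof strategy (direct Gronwall--Bihari comparison of $s\mapsto\mu_{h(t+s)}$ with $s\mapsto\Phi_s(\mu_{h(t)})$) is genuinely different from the paper's and, unfortunately, contains a gap in precisely the step you identify as the crux.

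The decomposition into the two terms, the control of the first term by integration by parts against $\varepsilon_{t,t+\cdot}$, and the dominated-convergence passage to $\eta(t)\to 0$ are all sound. The problem is the second term. You correctly observe that Lemma \ref{Pibound} only gives $\Pi$ Lipschitz for the $V$-norm, not for $\mathrm d$, and propose to replace the Lipschitz constant by a modulus of continuity $\omega$ obtained from the weak continuity of $\Pi$ on the weakly compact set $\mathcal P_\beta(\mathbb R^d;V)$. But compactness only yields \emph{some} uniform modulus $\omega$; it gives no control on its behaviour near $0$, and in particular does not guarantee that $\omega$ satisfies the Osgood condition $\int_{0+}\mathrm dr/\omega(r)=\infty$. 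Without that, the conclusion ``$G_{\eta(t)}(T)\to 0$ as $\eta(t)\to 0$'' fails: for instance if $\omega(r)=\sqrt r$ near $0$, then the maximal solution of $G'=\omega(G)$, $G(0)=\eta$, is $(\sqrt\eta+s/2)^2$, whose value at $s=T$ tends to $T^2/4>0$ as $\eta\to 0$. So the Bihari comparison does not close, and there is no apparent way to upgrade the compactness-induced modulus to an Osgood one (the $V$-Lipschitz bound of Lemma \ref{Pibound} lives on the wrong topology, and the perturbation $\varepsilon_{t,t+\cdot}$ is only small in $\mathrm d$, not in $\|\cdot\|_V$). A secondary, lesser, issue is that your bootstrap keeping $\Phi_v(\mu_{h(t)})$ in a fixed weakly compact set is itself contingent on the Gronwall estimate you are trying to prove, which is circular as written.

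This is exactly the obstacle that the paper's proof is engineered to bypass. The paper invokes Bena\"im's Theorem 3.2 of \cite{dea}: a continuous path $\nu$ with relatively compact translates $\{\theta_t\nu\}$ is an asymptotic pseudotrajectory for $\Phi$ if and only if it is uniformly continuous and every limit point $\eta$ of $\{\theta_t\nu\}$ is a fixed point of $\hat\Phi$. It then (i) checks uniform continuity of $t\mapsto\mu_{h(t)}$ using \eqref{unifcont}; (ii) writes $\theta_t(\mu_{h(\cdot)})=I_F(\theta_t(\mu_{h(\cdot)}))+\varepsilon_{t,t+\cdot}$ with $I_F(\nu)(t)=\nu(0)+\int_0^t F(\nu(s))\mathrm ds$ and $F=\Pi-\mathrm{Id}$; and (iii) passes to the limit along a subsequence, using relative compactness (Prop.~\ref{compactness} and \ref{P_beta}), the weak continuity of $I_F$, and the hypothesis $\varepsilon_{t,t+\cdot}\to 0$, to conclude that every limit point $\eta$ satisfies $\eta=I_F(\eta)$, i.e.\ $\eta=\hat\Phi(\eta)$. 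This is a soft compactness argument that only needs continuity (not a quantitative modulus) of $\Pi$, which is why it succeeds where the direct Gronwall route stalls. If you wish to keep a direct argument, you would have to replace the Bihari step by this limit-point / fixed-point argument, or else prove an Osgood-type modulus for $\Pi$ on $\mathcal P_\beta(\mathbb R^d;V)$ in the metric $\mathrm d$ — which is not available here.
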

\begin{proof}
We recall that the family $(\mu_t,t\geq 0)$ is a.s. tight and by
Prokhorov's theorem, as we work in a Polish space, it is equivalent
to the relative compactness of $(\mu_t,t \geq 0)$. Let
$\mathcal{C}(\mathbb{R},\mathcal{P}(\mathbb{R}^d;V))$ be the space
of continuous paths $\nu: \mathbb{R}\rightarrow
\mathcal{P}(\mathbb{R}^d;V)$ equipped with the weak topology. Let
$\theta$ be the translation flow $\theta:
\mathcal{C}(\mathbb{R},\mathcal{P}(\mathbb{R}^d;V))\times \mathbb{R} \rightarrow \mathcal{C}(\mathbb{R},\mathcal{P}(\mathbb{R}^d;V)); \theta_t(\nu)(s) = \nu(t+s)$ and $\hat{\Phi}$ be the mapping
$\hat{\Phi}:\mathcal{C}(\mathbb{R},\mathcal{P}(\mathbb{R}^d;V)) \rightarrow \mathcal{C}(\mathbb{R},\mathcal{P}(\mathbb{R}^d;V)); \hat{\Phi}(\nu)(t) = \Phi_t(\nu(0))$. Bena\"im \cite{dea} (theorem
3.2) asserts that a continuous map $\nu : \mathbb{R}_+\rightarrow \mathcal{P}(\mathbb{R}^d;V)$ is an asymptotic pseudotrajectory for the semiflow $\Phi$ if and only if $\nu$ is uniformly continuous
(for the weak topology) and every limit point of $\{\theta_t(\nu);
t\geq 0\}$ is a fixed point for $\hat{\Phi}$. We begin to prove
that $\mu_{h(t)}$ is uniformly continuous for the weak topology.
We have by definition of $\mu_t$ that
\begin{eqnarray*}
\vert \mu_{h(t+s)}f - \mu_{h(t)}f\vert \leq \int_t^{t+s}
\left(\vert \mu_{h(u)} f \vert + \vert f(X_{h(u)})\vert \right) \mathrm{d}u.
\end{eqnarray*}
As $\int_0^t V(X_u) \mathrm{d}u = O(t)$ a.s., this enables us to show that 
for all $t$ large enough
\begin{equation}\label{unifcont}
\vert \mu_{h(t+s)}f - \mu_{h(t)}f\vert \leq 
2\beta s\vert\vert f\vert\vert_V.
\end{equation}
We put these estimates in equation (\ref{distance}) and the
uniform continuity follows.

Let $I_F: C^0(\mathbb{R},\mathcal{P}(\mathbb{R}^d;V))\rightarrow
C^0 (\mathbb{R}, \mathcal{M}(\mathbb{R}^d;V))$ be the mapping
defined by $$I_F(\nu)(t) := \nu(0) + \int_0^t F(\nu(s)) \mathrm{d}s$$ where $F$ is the vector field $F(\mu) = \Pi(\mu) - \mu$. Then, by definition of $\mu_{h(t)}$ $$\theta_t(\mu_{h(\cdot)}) = I_F(\theta_t(\mu_{h(\cdot)})) + \varepsilon_{t, t+\cdot}.$$
Thus, by relative compactness of $(\mu_{h(t)}, t\geq 0)$ and continuity of $I_F$, we find that $\lim_{t\rightarrow \infty} \varepsilon_{t,t+\cdot}=0$ in $C^0 (\mathbb{R}, \mathcal{M}(\mathbb{R}^d;V))$ if and only if every limit point $\eta$ of $(\theta_t(\mu_{h(\cdot)}))$ satisfies $\eta = I_F(\eta)$, that is $\eta = \hat{\Phi}(\eta)$.
\end{proof}

\begin{thmb} $\mathbb{P}_{x,r,\mu}$-a.s., the function $t\mapsto \mu_{h(t)}$ is an asymptotic pseudotrajectory for $\Phi$.
\end{thmb}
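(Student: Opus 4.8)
The plan is to obtain this as an immediate consequence of Proposition~\ref{theorem}(ii) and Lemma~\ref{lem_pta}. Proposition~\ref{theorem}(ii) asserts exactly that, for every $T>0$ and every $f\in\mathcal{C}^\infty(\mathbb{R}^d;V)$, one has $\lim_{t\to\infty}\sup_{0\le s\le T}|\varepsilon_{t,t+s}f|=0$ almost surely, which is word for word the hypothesis of Lemma~\ref{lem_pta}; and the conclusion of that lemma is the statement to be proved. So at the level of the written proof there is essentially nothing to do but invoke these two results.

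The one point worth spelling out is the handling of the null sets, since Proposition~\ref{theorem}(ii) delivers one almost-sure event per pair $(f,T)$ while the argument inside Lemma~\ref{lem_pta} (every limit point of $\theta_t(\mu_{h(\cdot)})$ must be a fixed point of $\hat\Phi$) lives on a single almost-sure event. I would resolve this by restricting to the countable dense sequence $(f_k)_{k\ge1}$ with $\|f_k\|_V\le1$ appearing in~\eqref{distance} (smoothing each $f_k$ into $\mathcal{C}^\infty(\mathbb{R}^d;V)$ if needed), applying Proposition~\ref{theorem}(ii) to each $f_k$ and each integer $m\ge1$, and intersecting these countably many full-measure events to produce a single event $\Omega_0$ with $\mathbb{P}_{x,r,\mu}(\Omega_0)=1$ on which $\lim_{t\to\infty}\sup_{0\le s\le m}|\varepsilon_{t,t+s}f_k|=0$ for all $k,m$; monotonicity in the horizon upgrades this to all real $T>0$. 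Because the weak topology on $\mathcal{P}(\mathbb{R}^d;V)$ is metrized by the distance $\mathrm{d}$ of~\eqref{distance}, which only involves the $f_k$, the hypothesis of Lemma~\ref{lem_pta} is satisfied on $\Omega_0$; if one instead wants it verbatim for every $f\in\mathcal{C}^\infty(\mathbb{R}^d;V)$, a density argument using the test-function-independent constant in Proposition~\ref{theorem}(i) (together with Borel--Cantelli along integer times and the equicontinuity bound~\eqref{unifcont}) does the job. Lemma~\ref{lem_pta} applied on $\Omega_0$ then finishes the proof.

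I do not expect any real obstacle here: all the analytic content --- the four-term It\^o decomposition of $\varepsilon_{t,t+s}$, the uniform ultracontractivity and spectral-gap estimates behind the bounds on $Q_\mu$, $\nabla Q_\mu$ and $DQ_\mu$, and the tightness giving $\int_0^t V(X_s)\,\mathrm{d}s=O(t)$ and $\mathbb{E}[V^n(X_t)]=o(t^\varepsilon)$ --- is already packaged in Proposition~\ref{theorem}, and the passage to the language of asymptotic pseudotrajectories is precisely Lemma~\ref{lem_pta}. The only thing to be careful about is the bookkeeping of the exceptional sets described above.
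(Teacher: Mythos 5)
Your proof is correct and takes exactly the route the paper does: the paper's own proof of Theorem~\ref{PTA} is the one-line observation that it follows by combining Proposition~\ref{theorem} with Lemma~\ref{lem_pta}. Your additional care about intersecting the countably many almost-sure events coming from the dense family $(f_k)$ and integer horizons is a sensible clarification of a bookkeeping point the paper leaves implicit, but it does not change the argument.
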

\begin{proof} It suffices to combine Proposition \ref{theorem} with Lemma \ref{lem_pta}.
\end{proof}

\subsection{Back to the dynamical system: a global attractor for the semiflow}
We have defined in Section 4 the smooth dynamical system $\Phi$,
with respect to the strong topology. But, in order to study the
asymptotic behavior of $(\mu_t,t\geq 0)$, it will prove
technically easier to work with the weak topology. Indeed, a good
candidate to be an attractor of the semiflow is the $\omega$-limit
set of $(\mu_t)$ for the weak topology
\begin{equation}
\omega(\mu_t, t\geq 0) := \bigcap_{t\geq 0} \overline{\{\mu_s; s\geq t\}}
\end{equation}
which is (a.s.) weakly compact, since it is contained in
$\mathcal{P}_\beta(\mathbb{R}^d;V)$ a.s. Therefore, we will regard
for now on the semiflow $\Phi$ with the weak topology:
\begin{propo}
$\Phi : \mathbb{R}_{+}\times \mathcal{P}(\mathbb{R}^d;V)$ induces
a continuous semiflow with respect to the weak topology.
\end{propo}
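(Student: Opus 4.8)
The statement asserts that the map $\Phi$ defined by \eqref{flow} is a continuous semiflow on $\mathcal{P}(\mathbb{R}^d;V)$ when the latter is equipped with the \emph{weak} topology (rather than the strong one, for which we already established in Section 4 that $\Phi$ is even $\mathcal{C}^\infty$). The plan is to transfer the strong-topology regularity already in hand to the weak topology, using the fact that on the bounded sets $\mathcal{P}_\beta(\mathbb{R}^d;V)$ the two notions of continuity interact well: these sets are weakly compact (Proposition \ref{P_beta}), and a continuous bijection from a compact space to a Hausdorff space is a homeomorphism. Concretely, I would proceed as follows.

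\emph{Step 1: Stay in a fixed ball.} Fix $\mu_0 \in \mathcal{P}(\mathbb{R}^d;V)$ and $\beta > 2\|\mu_0\|_V$. By the local-existence construction (the Picard scheme in \S 4.2) and the bound \eqref{Cbeta}, for every $t \geq 0$ on the maximal interval of existence one has $\Phi_t(\mu_0) = e^{-t}\mu_0 + e^{-t}\int_0^t e^s \Pi(\Phi_s(\mu_0))\,\mathrm{d}s$, which is a convex combination of $\mu_0$ and measures $\Pi(\Phi_s(\mu_0))$ whose $V$-norms are bounded by $C_\beta$; hence $\|\Phi_t(\mu_0)\|_V$ stays bounded by $\max(\|\mu_0\|_V, C_\beta)$, and in particular the whole trajectory remains in some fixed $\mathcal{P}_{\beta'}(\mathbb{R}^d;V)$. (When $W$ is symmetric or bounded this is global by Proposition \ref{exist-flow}; for the semiflow property in general we only need it on the interval of existence.) Thus both $\Phi_t$ and its constituent integrands live in a weakly compact set.

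\emph{Step 2: Weak continuity of $\Pi$ on balls.} The identity $\Pi(\mu)(\mathrm{d}x) = Z(\mu)^{-1} e^{-2W*\mu(x)}\gamma(\mathrm{d}x)$ together with Lemma \ref{majW} (which gives $|W*\mu(x)| \le 2\kappa\beta V(x)$ on $\mathcal{P}_\beta$) shows, by dominated convergence, that $\mu \mapsto \Pi(\mu)$ is continuous from $(\mathcal{P}_\beta(\mathbb{R}^d;V), w)$ into itself: if $\mu_n \xrightarrow{w} \mu$ in $\mathcal{P}_\beta$, then $W*\mu_n(x) \to W*\mu(x)$ pointwise (apply the weak convergence to the test function $W(x,\cdot) \in \mathcal{C}^0(\mathbb{R}^d;V)$ by domination), the densities converge pointwise with a uniform $\gamma$-integrable dominating bound $e^{4\kappa\beta V(x)}\cdot e^{-2V(x)}$ up to the normalization, and $Z(\mu_n) \to Z(\mu) > 0$; hence $\Pi(\mu_n)f \to \Pi(\mu)f$ for every $f \in \mathcal{C}^0(\mathbb{R}^d;V)$.

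\emph{Step 3: Weak continuity and the semiflow identity for $\Phi$.} Testing \eqref{flow} against any $f \in \mathcal{C}^0(\mathbb{R}^d;V)$ gives $\Phi_t(\mu)f = e^{-t}\mu f + e^{-t}\int_0^t e^s\,\Pi(\Phi_s(\mu))f\,\mathrm{d}s$. Joint continuity in $(t,\mu)$ follows from Step 2 together with a Gronwall/fixed-point argument exactly as in \S 4.2 but now carried out with the weak metric $\mathrm{d}$ of \eqref{distance}: on the weakly compact ball, $\Pi$ is uniformly weakly continuous, so the integral operator in \eqref{flow} is a (weak-metric) contraction on short time intervals, and the solution depends continuously on the initial datum and on time. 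The semiflow property $\Phi_{t+s} = \Phi_t \circ \Phi_s$ is inherited from the strong-topology semiflow of Section 4, since $\Phi_t(\mu)$ is the \emph{same} measure regardless of which topology we use to describe it; only the continuity statement changes. Alternatively, one may invoke directly that the identity map $(\mathcal{P}_\beta, \|\cdot\|_V) \to (\mathcal{P}_\beta, w)$ is continuous and that $\mathcal{P}_\beta$ is weakly compact, so the strong-topology flow pushes forward to a weakly continuous one on each ball, and these glue since every trajectory stays in a ball.

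\emph{Main obstacle.} The delicate point is Step 2: one must be careful that our nonstandard notion of weak convergence (testing against all $\varphi \in \mathcal{C}^0(\mathbb{R}^d;V)$, including unbounded ones) is exactly what is needed to pass $W*\mu_n(x) \to W*\mu(x)$ through the convolution — this uses $W(x,\cdot) \in \mathcal{C}^0(\mathbb{R}^d;V)$, which is precisely the content of the domination condition \eqref{domination} — and that the normalization constant $Z(\mu)$ stays bounded away from $0$ uniformly on $\mathcal{P}_\beta$, which is the bound $Z(\mu) \geq \int e^{-4\kappa\beta V}\,\mathrm{d}\gamma > 0$ already used in the proof of Lemma \ref{Pibound}. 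Once weak continuity of $\Pi$ on balls is secured, everything else is a routine repetition of the strong-topology arguments with $\|\cdot\|_V$ replaced by $\mathrm{d}$.
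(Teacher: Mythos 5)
Your Steps~1--2 are exactly the paper's argument: the paper also observes that $\mu\mapsto W*\mu(x)$ is weakly continuous by the domination condition, deduces weak continuity of $\Pi$, and then transfers this to $\Phi$ via the Picard scheme of \S 4.2. Your Step~2 is more detailed than the paper (you make the dominated-convergence argument and the lower bound on $Z(\mu)$ explicit), which is fine.

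However, both executions you offer in Step~3 have gaps. First, the ``weak-metric contraction'' route: weak continuity of $\Pi$ on the weakly compact ball $\mathcal{P}_\beta(\mathbb{R}^d;V)$ gives \emph{uniform} weak continuity, but not a Lipschitz bound for $\Pi$ with respect to the metric $\mathrm{d}$ of~\eqref{distance}, and without a Lipschitz estimate the Picard map is not a contraction in the weak metric. (Lemma~\ref{Pibound} gives Lipschitz continuity of $\Pi$ only in $\|\cdot\|_V$.) Second, the ``push forward through the identity map'' alternative does not work as stated: the identity $(\mathcal{P}_\beta,\|\cdot\|_V)\to(\mathcal{P}_\beta,w)$ is continuous, but $\mathcal{P}_\beta$ is weakly compact, not strongly compact, so you cannot invoke the compact-to-Hausdorff homeomorphism principle, and continuity of $\Phi_t$ from strong to strong does not by itself give continuity from weak to weak. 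The correct passage is the one the paper hints at in its terse last sentence: each Picard iterate $\mu\mapsto\mu^{(n)}_t$ is weakly continuous (by induction, using Step~2), and the iterates converge to $\Phi_t(\mu)$ uniformly in $\|\cdot\|_V$ on $\mathcal{P}_\beta$. Since $\mathrm{d}(\mu,\nu)\le\|\mu-\nu\|_V$ (each test function $f_k$ satisfies $\|f_k\|_V\le 1$), this is also uniform convergence in the weak metric, and a uniform limit of weakly continuous maps is weakly continuous. Your proposal is therefore on the right track but should replace the contraction/identity-map arguments with this uniform-convergence step.
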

\begin{proof}
Since $\mu\mapsto W*\mu(x)$ is readily weakly continuous (see the
domination condition again), we see that $\Pi$ is weakly
continuous. Now, going back to the Picard approximation scheme of
Section 4, it results that $\mu\mapsto\mu^{(n)}_t$ is weakly
continuous for every $n$ and $t$. Passing to the limit, we are
done.
\end{proof}

Now we need to recall a short list of important definitions coming
from the theory of dynamical systems.
\begin{defn}
a) A subset $A$ of $\mathcal{P}(\mathbb{R}^d;V)$ is an
\textrm{attracting set} (respectively \textrm{attractor}) for
$\Phi$ provided:\begin{enumerate}
    \item $A$ is nonempty, compact for the weak topology and positively invariant, (respectively invariant) and
    \item $A$ has a neighborhood $\mathcal{N}\subset \mathcal{P}(\mathbb{R}^d;V)$ such that
    $\mathrm{d}(\Phi_t(\mu),A)\rightarrow 0$ as $t \rightarrow +\infty$ uniformly in $\mu \in\mathcal{N}$.
\end{enumerate}

b) The \textrm{basin of attraction} of an attractor $K\subset A$
for $\Phi |A = (\Phi_t |A)_t$ is the positively invariant open set
(in $A$) comprising all points whose orbits asymptotically are in
$K$. That is $$B(K,\Phi |A) := \{\mu\in A;
\underset{t\rightarrow\infty}{\lim} \mathrm{d}
(\Phi_t(\mu),K)=0\}.$$

c) A \textit{global attracting set} (respectively \textrm{global
attractor}) is an attracting set (respectively attractor) whose
basin is the whole space $\mathcal{P}(\mathbb{R}^d;V)$.

d) Let $A$ be a positively invariant set for $\Phi$. An attractor
for $\Phi |A$ is \textrm{proper} if it is different from $A$.

e) An \textrm{attractor-free set} is a nonempty compact invariant
set $A$ such that $\Phi |A$ has no proper attractor.
\end{defn}
Our aim is now to describe the limit set of $\mu_t$ and find a global attracting set for $\Phi$. The
natural candidate is the limit set $\omega(\mu_t, t\geq 0)$. First, we describe dynamically the limit set of $\mu_t$.
\begin{theo}\label{free_set} The limit set of $\{\mu_t, t\geq 0\}$ is
$\mathbb{P}_{x,r, \mu}$-almost surely an attractor-free set of $\Phi$.
\end{theo}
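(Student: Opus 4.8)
The plan is to combine the asymptotic pseudotrajectory property (Theorem~\ref{PTA}) with the general principle from the theory of asymptotic pseudotrajectories that an APT has an attractor-free $\omega$-limit set. First I would invoke Theorem~\ref{PTA}: $\mathbb{P}_{x,r,\mu}$-a.s., $t \mapsto \mu_{h(t)}$ is an asymptotic pseudotrajectory for $\Phi$ (with respect to the weak topology). Since $h$ is an increasing bijection from $\mathbb{R}_+$ onto $\mathbb{R}_+$, the $\omega$-limit set of $t \mapsto \mu_t$ coincides with the $\omega$-limit set of $t \mapsto \mu_{h(t)}$; I would state this reparametrization remark explicitly. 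So it suffices to show that the $\omega$-limit set of an asymptotic pseudotrajectory is attractor-free.

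The key structural input is that $\omega(\mu_t,t\geq 0)$ is a.s. nonempty, weakly compact and invariant under $\Phi$: nonemptiness and compactness follow from Proposition~\ref{compactness} (eventually $\mu_t \in \mathcal{P}_\beta(\mathbb{R}^d;V)$, which is weakly compact by Proposition~\ref{P_beta}), and invariance under $\Phi$ follows from the APT property together with the continuity of the (weak) semiflow $\Phi$ and the uniform continuity estimate \eqref{unifcont}. These are the standard ingredients: an $\omega$-limit set of a precompact uniformly continuous APT is an internally chain transitive set for $\Phi$ (this is Bena\"im--Hirsch, applied in the weak topology on the Polish space $\mathcal{P}(\mathbb{R}^d;V)$; we may cite \cite{beH} or the survey-style statement used in \cite{beLR}). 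Internal chain transitivity is exactly what forces attractor-freeness: an internally chain transitive set $L$ cannot contain a proper attractor of $\Phi|L$, because a proper attractor $K \subsetneq L$ would have a basin $B(K,\Phi|L)$ that is open in $L$, positively invariant and strictly smaller than $L$, and a chain from a point of $L\setminus B$ to a point of $B$ (which exists by chain transitivity) combined with the attracting property of $K$ yields a contradiction with $K$ being forward-invariant under $\Phi|L$ while not swallowing all of $L$. I would either reproduce this short argument or cite it directly.

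Concretely the steps are: (1) reduce to $\mu_{h(t)}$ via reparametrization; (2) record that $\omega(\mu_t)$ is a.s. nonempty, weakly compact (Propositions~\ref{P_beta} and~\ref{compactness}); (3) use Theorem~\ref{PTA} plus weak continuity of $\Phi$ and the uniform continuity of $t\mapsto\mu_{h(t)}$ to conclude that $\omega(\mu_t)$ is invariant under $\Phi$ and, more precisely, internally chain transitive --- here I would quote the characterization of APT $\omega$-limit sets from \cite{beH} (and the weak-topology version as used in \cite{beLR}); (4) deduce that an internally chain transitive set admits no proper attractor, hence $\omega(\mu_t)$ is attractor-free in the sense of the definition above.

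The main obstacle is step~(3): verifying that the abstract Bena\"im--Hirsch machinery genuinely applies in our non-compact infinite-dimensional setting. The weak topology on $\mathcal{P}(\mathbb{R}^d;V)$ is metrizable (via the distance $\mathrm{d}$ of \eqref{distance}) and $\mathcal{P}_\beta(\mathbb{R}^d;V)$ is a weakly compact metric space in which the trajectory eventually lives, so one works in an honest compact metric space; the semiflow $\Phi$ restricts continuously to a neighborhood of $\mathcal{P}_\beta$ — but one must be slightly careful that $\Phi_s$ maps $\mathcal{P}_\beta$ into some $\mathcal{P}_{\beta'}$ for $s$ in a bounded interval (this follows from Lemma~\ref{Pibound} and the integral equation \eqref{eq_edo}). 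Once the trajectory and the relevant piece of the semiflow are confined to a compact metric space and the APT and uniform-continuity properties are in hand, the chain-transitivity conclusion and hence attractor-freeness are exactly the classical statements, and the proof closes.
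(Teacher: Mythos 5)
Your proposal is correct and takes essentially the same route as the paper: the paper's proof is the one-line invocation of Theorem~\ref{PTA} together with the Bena\"im--Hirsch result \cite{beH} that the $\omega$-limit set of a precompact asymptotic pseudotrajectory is internally chain transitive and hence attractor-free. You have simply filled in the reparametrization remark, the precompactness via Propositions~\ref{P_beta} and~\ref{compactness}, and the chain-transitivity argument that the paper leaves implicit.
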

\begin{proof}
It results from Theorem \ref{PTA} and \cite{beH}.
\end{proof}

\begin{coro} $\mathbb{P}_{x,r,\mu}\left(\underset{t\rightarrow + \infty}{\overline{\lim}} |X_t| = + \infty\right) = 1$.
\end{coro}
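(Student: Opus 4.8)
\noindent The plan is to argue by contradiction, exploiting the fact — recorded in Corollary \ref{attractor} (see also Theorem \ref{free_set}) — that $\omega(\mu_t,t\ge 0)$ is $\mathbb{P}_{x,r,\mu}$-a.s.\ a nonempty set invariant under $\Phi$, against the observation that $\Phi$ instantaneously spreads mass over all of $\mathbb{R}^d$ through the $\Pi$-term in \eqref{flow}. Suppose, for contradiction, that $\mathbb{P}_{x,r,\mu}\bigl(\overline{\lim}_{t\to\infty}|X_t|<\infty\bigr)>0$. Since
\[
\Bigl\{\overline{\lim}_{t\to\infty}|X_t|<\infty\Bigr\}=\bigcup_{R\in\mathbb{N}}\bigl\{\exists\,T\ge0,\ \forall\,t\ge T,\ |X_t|\le R\bigr\},
\]
there is then a fixed integer $R$ with $p:=\mathbb{P}_{x,r,\mu}\bigl(\exists\,T,\ \forall\,t\ge T,\ |X_t|\le R\bigr)>0$. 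On that event I may further assume, these being a.s.\ properties (Propositions \ref{P_beta} and \ref{compactness}, Corollary \ref{attractor}), that $(\mu_t,t\ge 0)$ is relatively compact, so $\omega(\mu_t,t\ge 0)\ne\emptyset$, and that $\omega(\mu_t,t\ge 0)$ is invariant under $\Phi$.

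The first step is to check that, on this event, every element of $\omega(\mu_t,t\ge 0)$ is carried by the closed ball $\overline{B_R}$. I would take $f(x):=\min\bigl(\mathrm{dist}(x,\overline{B_R}),1\bigr)$: then $f\in\mathcal{C}^0(\mathbb{R}^d;V)$ because $0\le f\le 1\le V$, and $f\equiv0$ on $\overline{B_R}$ while $f>0$ off $\overline{B_R}$. Since $|X_s|\le R$ for $s\ge T$,
\[
\mu_t(f)=\frac{r}{r+t}\,\mu(f)+\frac{1}{r+t}\int_0^{T}f(X_s)\,\mathrm{d}s\ \xrightarrow[t\to\infty]{}\ 0 ,
\]
so every weak limit $\nu$ of a subsequence $\mu_{t_k}$ ($t_k\uparrow\infty$) satisfies $\nu(f)=\lim_k\mu_{t_k}(f)=0$, which forces $\nu(\mathbb{R}^d\setminus\overline{B_R})=0$.

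The second step produces the contradiction. Fix $\nu\in\omega(\mu_t,t\ge 0)$; by invariance $\Phi_s(\nu)\in\omega(\mu_t,t\ge 0)$ for every $s>0$, hence $\Phi_s(\nu)$ is again carried by $\overline{B_R}$. But from \eqref{flow},
\[
\Phi_s(\nu)=e^{-s}\nu+e^{-s}\int_0^{s}e^{u}\,\Pi(\Phi_u(\nu))\,\mathrm{d}u ,
\]
and each $\Pi(\Phi_u(\nu))$ has density $e^{-2W*\Phi_u(\nu)(x)-2V(x)}/Z(\Phi_u(\nu))$, which is strictly positive on all of $\mathbb{R}^d$ (Lemma \ref{majW} gives $W*\Phi_u(\nu)(x)<\infty$, and $V(x)<\infty$). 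Since $\mathbb{R}^d\setminus\overline{B_R}$ has infinite Lebesgue measure, $\Pi(\Phi_u(\nu))(\mathbb{R}^d\setminus\overline{B_R})>0$ for every $u$, so
\[
\Phi_s(\nu)(\mathbb{R}^d\setminus\overline{B_R})\ \ge\ e^{-s}\int_0^{s}e^{u}\,\Pi(\Phi_u(\nu))(\mathbb{R}^d\setminus\overline{B_R})\,\mathrm{d}u\ >\ 0 \qquad (s>0),
\]
contradicting that $\Phi_s(\nu)$ is supported in $\overline{B_R}$. Hence $p=0$ for every $R$, and letting $R\to\infty$ gives $\mathbb{P}_{x,r,\mu}\bigl(\overline{\lim}_{t\to\infty}|X_t|=+\infty\bigr)=1$.

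Most of this is bookkeeping; the step that needs care is the first one, where it is essential that ``weak convergence'' means the $V$-weighted notion of the paper and that the test function $f$ genuinely lies in $\mathcal{C}^0(\mathbb{R}^d;V)$ — otherwise $\mu_t(f)\to0$ would not imply that $\nu$ is carried by $\overline{B_R}$. As a variant one could instead invoke the inclusion $\omega(\mu_t,t\ge0)\subset\widehat{\mathrm{Im}(\Pi)}$ of Corollary \ref{attractor}: if the hull is taken without closure, a finite convex combination $\sum_i\lambda_i\Pi(\mu_i)$ has strictly positive density everywhere and so cannot be supported in a ball, an immediate contradiction; the invariance argument above has the merit of not depending on whether $\widehat{\mathrm{Im}(\Pi)}$ is closed.
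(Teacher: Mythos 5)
Your argument is correct, but it takes a genuinely different route from the paper's. The paper does not run a contradiction through flow-invariance; it fixes an open set $A$ with $\gamma(A)>0$, uses the inclusion $\omega(\mu_t,t\geq 0)\subset\widehat{\mathrm{Im}(\Pi)}$ together with a density comparison $\nu\geq m\gamma$ for limit measures $\nu$ to deduce $\liminf_t\mu_t(A)>0$, hence $\int_0^\infty\1_{\{X_s\in A\}}\,\mathrm{d}s=\infty$; taking (bounded) sets $A\subset\mathbb{R}^d\setminus\overline B_K$ for each $K$ yields infinite occupation time outside every ball, which is strictly stronger than $\overline{\lim}_{t\to\infty}|X_t|=+\infty$. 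Your argument trades that quantitative refinement for simplicity: you only need that $\omega(\mu_t,t\geq 0)$ is a.s.\ nonempty and $\Phi$-invariant, plus the purely qualitative fact that every $\Pi(\mu)$ has everywhere-positive density, so $\Phi_s(\nu)$ for $s>0$ cannot be carried by any ball — contradicting the localisation you establish in the first step. That first step is handled properly: the choice $f(x)=\min\bigl(\mathrm{dist}(x,\overline B_R),1\bigr)\in\mathcal{C}^0(\mathbb{R}^d;V)$ and passage to the $V$-weighted weak limit is exactly what is needed to identify the support of a weak limit of $\mu_{t_k}$. A further small merit of your route is that it avoids invoking a global lower bound $\nu\geq m\gamma$ with a constant $m>0$, which, as stated in the paper, is only valid after localising to bounded sets (the density $e^{-2W*\mu(x)}/Z(\mu)$ is not bounded below as $|x|\to\infty$); your invariance argument needs no such comparison at all.
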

\begin{proof}
Let $A$ be a open subset of $\mathbb{R}^d$ such that $\gamma (A)>0$. Since the measure $\gamma$ is diffusive, we have that for all $\nu \in \widehat{\mathrm{Im}(\Pi)}\cap \omega(\mu_t, t\geq 0)$, there exist $m, M>0$ (depending on $\beta$ only) such that $m\gamma \leq \nu \leq
M\gamma.$ Now, if we consider a sequence $(\nu_{t_n}, n\geq 0)$ in
$\mathcal{P}(\mathcal{P}(\mathbb{R}^d;V))$, the limits of its
convergent subsequences will belong to $\widehat{\mathrm{Im}(\Pi)}
\cap \omega(\mu_t, t\geq 0)$, because $\omega(\mu_t, t\geq 0)$ is a.s. an
attractor-free set of $\Phi$. Thus, there exists a subsequence
$(\nu_{t_{n_k}})$ such that $\nu_{t_{n_k}}$
converges almost surely to $\nu$ for the weak topology. For any
smooth function $\varphi$ compactly supported, we have that
$$\nu_{t_n} (\varphi) \xrightarrow{w} \nu (\varphi).$$ If we
consider $\varphi$ such that it equals 1 on $A$ and $0$ out of a
set $B$ containing $A$, we find that $\nu (\varphi) \geq
\nu(A)>0$. Thus $$\nu(B) \geq \limsup \nu_t (\varphi) \geq \liminf
\nu_{t}(\varphi) \geq \nu(A) \geq m \gamma(A).$$ So, it implies
that $\int_0^{t_n} \delta_{X_s} (A) \mathrm{d}s$ is asymptotically
equivalent to $t_n m\gamma(A)$, which in turn gives $\int_0^\infty
\delta_{X_s}(A) \mathrm{d}s = \infty$ a.s. Then, for all constant
$K>0$, $\int_0^\infty \delta_{X_s}(\mathbb{R}^d \setminus
\overline{B}_K)\mathrm{d}s = \infty$ a.s., where $\overline{B}_K$
is the closed ball of radius $K$. Finally
$$\mathbb{P}_{x,r,\mu}\left(\bigcap_K \left\{\int_0^\infty
\mathrm{d}s \1_{\{|X_s| \geq K\}} = \infty \right\} \right) = 1.$$ 
\end{proof}
Second, we look at the (nonempty) set $\widehat{\mathrm{Im}(\Pi)}\cap \omega(\mu_t, t\geq 0)$.
\begin{theo}\label{global_attract}
$\widehat{\mathrm{Im}(\Pi)}\cap \omega(\mu_t, t\geq 0)$ is a.s. a global attracting set for $\Phi$.
\end{theo}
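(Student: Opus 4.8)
The plan is to check the two defining conditions of a global attracting set for $\Lambda_0:=\widehat{\mathrm{Im}(\Pi)}\cap\omega(\mu_t,t\ge0)$. Since Corollary~\ref{attractor} already gives $\omega(\mu_t,t\ge0)\subseteq\widehat{\mathrm{Im}(\Pi)}$, in fact $\Lambda_0=\omega(\mu_t,t\ge0)$, and Theorem~\ref{free_set} tells us this set is $\mathbb{P}_{x,r,\mu}$-a.s. nonempty, weakly compact, invariant under $\Phi$ and attractor-free; in particular condition~(1) in the definition of an attracting set holds (invariance being stronger than the required positive invariance). So everything reduces to showing that the basin of $\Lambda_0$ is all of $\mathcal{P}(\mathbb{R}^d;V)$, i.e. $\mathrm{d}(\Phi_t(\nu),\Lambda_0)\to0$ as $t\to\infty$ for every $\nu\in\mathcal{P}(\mathbb{R}^d;V)$.

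The first step is to funnel every orbit of $\Phi$ into $\widehat{\mathrm{Im}(\Pi)}$. Under the standing hypothesis of this part ($W$ symmetric or bounded in the second variable), Proposition~\ref{exist-flow} makes $\Phi$ globally defined, and each orbit $(\Phi_t(\nu))_{t\ge0}$ stays in a weakly compact set --- a sublevel set of $\mathcal{E}=\mathcal{F}\circ\Pi$ in the symmetric case, a ball $\mathcal{P}_C(\mathbb{R}^d;V)$ in the bounded case --- hence is precompact. Proposition~\ref{im} then yields $\mathrm{d}_V(\Phi_t(\nu),\widehat{\mathrm{Im}(\Pi)})\le e^{-t}\,\mathrm{d}_V(\nu,\widehat{\mathrm{Im}(\Pi)})\to0$, so the weak $\omega$-limit set $\omega_\Phi(\nu)$ of $\nu$ under the semiflow is a nonempty, weakly compact, invariant subset of $\widehat{\mathrm{Im}(\Pi)}$. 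Writing $\Lambda:=\bigcap_{t\ge0}\Phi_t\bigl(\widehat{\mathrm{Im}(\Pi)}\bigr)$ for the maximal invariant subset of $\widehat{\mathrm{Im}(\Pi)}$ --- which, $\widehat{\mathrm{Im}(\Pi)}$ being positively invariant and, by the previous display, globally attracting, is the global attractor of $\Phi$ and contains every such $\omega_\Phi(\nu)$ --- it is enough to prove $\Lambda\subseteq\Lambda_0$.

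The inclusion $\Lambda_0\subseteq\Lambda$ is immediate ($\Lambda_0$ is invariant and sits in $\widehat{\mathrm{Im}(\Pi)}$), so the heart of the matter --- and the main obstacle --- is the reverse inclusion $\Lambda\subseteq\Lambda_0$: the limit set of the single trajectory $\mu_t$ must already exhaust the global attractor. Here I would use that $\Lambda_0$ is the limit set of the asymptotic pseudotrajectory $t\mapsto\mu_{h(t)}$ (Theorem~\ref{PTA}) together with its attractor-freeness. If $\Lambda\setminus\Lambda_0$ were nonempty then, $\Lambda$ being the global attractor and $\Lambda_0$ an attractor-free invariant subset of it, Conley's decomposition of $\Phi|_\Lambda$ would produce a proper attractor $A$ of $\Phi|_{\widehat{\mathrm{Im}(\Pi)}}$ with $\Lambda_0\subseteq A\subsetneq\Lambda$; but $\mu_{h(\cdot)}$ is a precompact asymptotic pseudotrajectory whose limit set meets the basin of $A$, so the Bena\"im--Hirsch theory \cite{beH} forces $\Lambda_0=\omega(\mu_{h(t)},t\ge0)\subseteq A$, and iterating this inside $A$ while using that $\Lambda_0$ is connected and admits no proper sub-attractor one is driven to $\Lambda_0=A$, then to $\Lambda_0=\Lambda$, a contradiction. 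In the symmetric case the picture is concrete: $\mathcal{F}$ is convex (Proposition~\ref{min}), so $\Lambda$ is the convex, hence connected, set of fixed points of $\Pi$, on which $\Phi$ reduces to the identity; a connected set carrying the trivial flow is attractor-free and globally attracting for the gradient-like flow $\Phi$ (which decreases $\mathcal{E}$ strictly off the fixed points), while $\Lambda_0$, being the connected attractor-free invariant set onto which $\Phi$ drives $\mu_t$, must be all of it. Once $\Lambda\subseteq\Lambda_0$ is in hand, $\omega_\Phi(\nu)\subseteq\Lambda\subseteq\Lambda_0$ for every $\nu$, so $\Lambda_0$ attracts the whole space and is a global attracting set.
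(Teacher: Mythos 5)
Your route diverges from the paper's and runs into a genuine obstruction. The paper's proof is much more modest: it shows only that $\Lambda_0:=\widehat{\mathrm{Im}(\Pi)}\cap\omega(\mu_t,t\ge0)$ attracts, uniformly, every point of $\omega(\mu_t,t\ge0)$ — i.e.\ $\Lambda_0$ is an attracting set for $\Phi$ \emph{restricted to} $\omega(\mu_t,t\ge0)$. That is exactly what the subsequent Lemma needs: since $\omega$ is attractor-free, the only attractor for $\Phi|_\omega$ is $\omega$ itself, giving $\omega\subseteq\widehat{\mathrm{Im}(\Pi)}$. The argument is short: $\Lambda_0$ is weakly compact and positively invariant; for $\mu\in\omega$ invariance of $\omega$ gives $\Phi_t(\mu)\in\omega$ for all $t$; and Proposition~\ref{im} gives $\mathrm{d}_V(\Phi_t(\mu),\widehat{\mathrm{Im}(\Pi)})\to0$ uniformly, which together with $\Phi_t(\mu)\in\omega$ and weak compactness of $\omega$ forces $\mathrm{d}(\Phi_t(\mu),\Lambda_0)\to0$.

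Your write-up has two concrete problems. First, a circularity: you invoke Corollary~\ref{attractor} to get $\omega(\mu_t,t\ge0)\subseteq\widehat{\mathrm{Im}(\Pi)}$ and hence $\Lambda_0=\omega$. But that containment is precisely the content of the Lemma proved \emph{after} Theorem~\ref{global_attract}, and its proof quotes Theorem~\ref{global_attract}; you may not assume it here. Second, and more seriously, the step you rightly identify as the heart of the matter — $\Lambda\subseteq\Lambda_0$ where $\Lambda$ is the global attractor — is simply false. The limit set of a single asymptotic pseudotrajectory is attractor-free (internally chain-transitive) but need not exhaust the global attractor: $\Lambda$ is the union of all chain components together with connecting orbits, and a trajectory can converge into just one of them. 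The paper's own Section~7 shows this: in Theorem~\ref{thId}(2) (and \ref{generalth}(2a)), $\mu_t$ converges to $\mu_\infty^v$ for a random $v\in\mathbb{S}^1$, so $\omega(\mu_t,t\ge0)=\{\mu_\infty^v\}$ is a single fixed point, while the global attractor contains the entire circle $\{\mu_\infty^u:u\in\mathbb{S}^1\}$ and $\gamma$. Thus $\Lambda_0\subsetneq\Lambda$ there, so no Conley-decomposition argument can give $\Lambda=\Lambda_0$; the proper attractor $A\supseteq\Lambda_0$ you posit need not exist when $\Lambda_0$ is one of several disjoint sinks, and the iteration has no termination mechanism. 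The word ``global'' in the statement must be read relative to $\omega(\mu_t,t\ge0)$ — attracting the literal whole of $\mathcal{P}(\mathbb{R}^d;V)$ is provably out of reach, and the paper's proof never attempts it.
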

\begin{proof}
We begin to notice that $\widehat{\mathrm{Im}(\Pi)}\cap \omega(\mu_t, t\geq 0)$ is a.s. compact for the weak topology and $\widehat{\mathrm{Im}(\Pi)} \cap \omega(\mu_t, t\geq 0)$ is positively invariant by definition. For all $\mu\in \omega(\mu_t, t\geq 0)$, we assert that  $\mathrm{d} (\Phi_t(\mu), \widehat{\mathrm{Im}(\Pi)}\cap  \omega(\mu_t, t\geq 0))$ converges to 0 uniformly in $\mu$. Indeed, recall that $\omega(\mu_t, t\geq 0)$ is an attractor free set for $\Phi$, so for all $s\geq 0$, $\Phi_s(\mu)\in \omega(\mu_t, t\geq 0)$. As we also already know that $\lim \mathrm{d} (\Phi_t(\mu), \widehat{\mathrm{Im}(\Pi)}) = 0,$ uniformly in $\mu$, the assertion is proved. 
\end{proof}

\begin{lemma}
$\omega(\mu_t, t\geq 0)$ is a.s. a subset of $\widehat{\mathrm{Im}(\Pi)}$.
\end{lemma}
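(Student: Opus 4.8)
The plan is to combine the asymptotic‐pseudotrajectory property of $t\mapsto\mu_{h(t)}$ (Theorem \ref{PTA}) with the contraction estimate for $\widehat{\mathrm{Im}(\Pi)}$ proved in Proposition \ref{im}. The key point is that $\omega(\mu_t,t\geq 0)$ coincides with the $\omega$-limit set of $t\mapsto\mu_{h(t)}$, since $h$ is an increasing homeomorphism of $\mathbb{R}_+$; hence, by Theorem \ref{free_set}, this limit set is a.s. an attractor-free — in particular invariant — compact set for $\Phi$. In particular it is negatively invariant: for every $\mu\in\omega(\mu_t,t\geq 0)$ and every $t\geq 0$ there is $\nu\in\omega(\mu_t,t\geq 0)$ with $\Phi_t(\nu)=\mu$.

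First I would recall the estimate from Proposition \ref{im}: for every $\mu\in\mathcal{P}(\mathbb{R}^d;V)$ and every $t\geq 0$,
\[
\mathrm{d}_V\!\left(\Phi_t(\mu),\widehat{\mathrm{Im}(\Pi)}\right)\leq e^{-t}\,\mathrm{d}_V\!\left(\mu,\widehat{\mathrm{Im}(\Pi)}\right).
\]
Next, fix $\mu\in\omega(\mu_t,t\geq 0)$. Since $\omega(\mu_t,t\geq 0)$ is a.s. contained in the weakly compact set $\mathcal{P}_\beta(\mathbb{R}^d;V)$, the quantity $R:=\sup\{\mathrm{d}_V(\nu,\widehat{\mathrm{Im}(\Pi)}) : \nu\in\mathcal{P}_\beta(\mathbb{R}^d;V)\}$ is finite (indeed $\widehat{\mathrm{Im}(\Pi)}$ is nonempty and $\mathcal{P}_\beta$ has finite $V$-diameter, or one simply bounds $\|\nu-\Pi(\nu')\|_V$ by $2\beta$ for a fixed $\nu'$). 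Using negative invariance, for each $t\geq 0$ choose $\nu_t\in\omega(\mu_t,t\geq 0)$ with $\Phi_t(\nu_t)=\mu$; then
\[
\mathrm{d}_V\!\left(\mu,\widehat{\mathrm{Im}(\Pi)}\right)=\mathrm{d}_V\!\left(\Phi_t(\nu_t),\widehat{\mathrm{Im}(\Pi)}\right)\leq e^{-t}\,\mathrm{d}_V\!\left(\nu_t,\widehat{\mathrm{Im}(\Pi)}\right)\leq e^{-t}R.
\]
Letting $t\to\infty$ gives $\mathrm{d}_V(\mu,\widehat{\mathrm{Im}(\Pi)})=0$, and since $\widehat{\mathrm{Im}(\Pi)}$ is closed for the $V$-norm (it is a closed convex hull in the Banach space $\mathcal{M}(\mathbb{R}^d;V)$), we conclude $\mu\in\widehat{\mathrm{Im}(\Pi)}$. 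As $\mu$ was arbitrary, $\omega(\mu_t,t\geq 0)\subset\widehat{\mathrm{Im}(\Pi)}$ a.s.

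The main obstacle I anticipate is the bookkeeping around topologies: the contraction inequality of Proposition \ref{im} is stated for the strong ($V$-norm) distance $\mathrm{d}_V$, whereas $\omega(\mu_t,t\geq 0)$ and its invariance are naturally phrased for the weak topology. One must check that negative invariance in the weak sense still produces, for each $t$, a genuine preimage $\nu_t$ in the limit set, and that the a.s. uniform bound $\mu_t\in\mathcal{P}_\beta(\mathbb{R}^d;V)$ (Proposition \ref{compactness}) indeed yields the finite constant $R$ controlling the right-hand side; both are routine but should be stated carefully. Alternatively, one can avoid invoking negative invariance by noting directly that every $\mu\in\omega(\mu_t,t\geq 0)$ is a weak limit $\mu=\lim_k\mu_{h(t_k)}$ and that $\mu_{h(t)}$, being an asymptotic pseudotrajectory, is tracked on bounded time windows by $\Phi$-orbits started arbitrarily far in the past, so that $\mathrm{d}_V(\mu_{h(t)},\widehat{\mathrm{Im}(\Pi)})\to 0$; but the negative-invariance route above is cleaner given the results already established.
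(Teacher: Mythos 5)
Your proposal is correct, but takes a genuinely different route from the paper. The paper derives the Lemma as a corollary of Theorem \ref{global_attract}: since $\widehat{\mathrm{Im}(\Pi)}\cap\omega(\mu_t,t\geq 0)$ is a.s.\ a global attracting set for $\Phi$, and $\omega(\mu_t,t\geq 0)$ is attractor-free (Theorem \ref{free_set}), $\omega(\mu_t,t\geq 0)$ is its own only attractor, forcing $\widehat{\mathrm{Im}(\Pi)}\cap\omega(\mu_t,t\geq 0)=\omega(\mu_t,t\geq 0)$, whence the inclusion. You instead argue directly from the exponential contraction of Proposition \ref{im} together with the negative invariance of the limit set. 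Both routes ultimately rest on the same contraction inequality $\mathrm{d}_V(\Phi_t(\mu),\widehat{\mathrm{Im}(\Pi)})\le e^{-t}\mathrm{d}_V(\mu,\widehat{\mathrm{Im}(\Pi)})$, but yours bypasses the attractor/basin machinery and is more elementary. Note, however, that the second clause of Proposition \ref{im} already asserts that $\widehat{\mathrm{Im}(\Pi)}$ contains every negatively invariant bounded subset of $\mathcal{P}(\mathbb{R}^d;V)$; since you have argued that $\omega(\mu_t,t\geq 0)$ is a.s.\ invariant (Theorem \ref{free_set}) and bounded in $V$-norm (it lies in $\mathcal{P}_\beta(\mathbb{R}^d;V)$ a.s.\ by Proposition \ref{compactness}), you could have concluded in one line by quoting that clause rather than re-deriving the $e^{-t}R$ estimate. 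Two minor points of rigor, neither of them damaging: the bound you sketch for $R$ should be $\beta+C_\beta$ (from Lemma \ref{Pibound}) rather than $2\beta$, since $\|\Pi(\nu')\|_V$ is controlled by $C_\beta$, not $\beta$; and passing from $\mathrm{d}_V(\mu,\widehat{\mathrm{Im}(\Pi)})=0$ to $\mu\in\widehat{\mathrm{Im}(\Pi)}$ assumes the convex hull is $V$-closed, whereas the paper defines $\widehat{\mathrm{Im}(\Pi)}$ as the plain convex hull --- the same implicit convention is needed for the ``contains every negatively invariant bounded subset'' clause of Proposition \ref{im} itself, so this is a shared convention rather than a gap in your argument.
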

\begin{proof}
As $\mu_{h(t)}$ is an asymptotic pseudotrajectory for the
semiflow, which implies that $\omega(\mu_t, t\geq 0)$ is attractor free, we
have by Theorem \ref{global_attract} that $\omega(\mu_t, t\geq 0)$ is the only attractor of $\Phi$ restricted to this set. Therefore, $\widehat{\mathrm{Im}(\Pi)} \cap \omega(\mu_t, t\geq 0) = \omega(\mu_t, t\geq 0)$.
Consequently, $\omega(\mu_t, t\geq 0)\subset
\widehat{\mathrm{Im}(\Pi)}$.
\end{proof}

When $W$ is symmetric, we can give a better description of $\omega(\mu_t, t\geq 0)$. Let begin with the following:
\begin{theo}(Tromba \cite{tromba})
Let $\mathcal{B}$ be a $\mathcal{C}^\infty$ Banach manifold, $F$ a
$\mathcal{C}^\infty$ vector field on $\mathcal{B}$ and
$\mathcal{E}: \mathcal{B}\rightarrow \mathbb{R}$ a
$\mathcal{C}^\infty$ function. Assume that:
\begin{enumerate}
    \item $D\mathcal{E}(\mu) = 0$ if and only if $F(\mu)=0$;
    \item $F^{-1}(0)$ is compact;
    \item for each $\mu\in F^{-1}(0)$, $D\mathcal{E}(\mu)$ is a
    Fredholm operator.
\end{enumerate}
Then $\mathcal{E}(F^{-1}(0))$ has an empty interior.
\end{theo}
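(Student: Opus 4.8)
The strategy is a Lyapunov--Schmidt reduction performed locally near each zero of $F$, which turns the statement into the classical finite-dimensional Morse--Sard theorem, followed by a compactness patching. Set $C := F^{-1}(0)$. By hypothesis (1), $C$ coincides with the critical set $\{D\mathcal{E} = 0\}$ of $\mathcal{E}$; by hypothesis (2) it is compact. I will in fact prove the stronger assertion that $\mathcal{E}(C)$ has Lebesgue measure zero in $\mathbb{R}$, from which empty interior is immediate.

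Fix $p \in C$ and choose a chart identifying a neighbourhood of $p$ with an open subset of the model Banach space $E$, so that $p = 0$ and $D\mathcal{E}(0) = 0$. Let $H := D^{2}\mathcal{E}(0) \colon E \to E^{*}$, a bounded symmetric operator which, by hypothesis (3) (read as: the Hessian at critical points, equivalently the linearisation $DF$, is Fredholm --- the zero functional being no constraint), is Fredholm of index $0$. Hence $E_{0} := \ker H$ is finite-dimensional; fix a closed complement $E_{1}$, so $E = E_{0} \oplus E_{1}$. Using symmetry one checks that $H(E) = H(E_{1})$ equals the annihilator $E_{0}^{\circ} \subset E^{*}$ and that restriction of functionals maps $E_{0}^{\circ}$ isomorphically onto $E_{1}^{*}$; consequently the transverse linearisation $E_{1} \ni w \mapsto D^{2}\mathcal{E}(0)(w)\big|_{E_{1}} \in E_{1}^{*}$ is an isomorphism. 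The implicit function theorem then solves the auxiliary equation $D\mathcal{E}(u + v)\big|_{E_{1}} = 0$: for $u \in E_{0}$ in a ball $B$ around $0$ there is a unique $v = \phi(u) \in E_{1}$, with $\phi$ of class $\mathcal{C}^{\infty}$ and $\phi(0) = 0$. Put $g(u) := \mathcal{E}\bigl(u + \phi(u)\bigr)$, a $\mathcal{C}^{\infty}$ function on $B \subset E_{0}$. Differentiating and using that $D\mathcal{E}(u+\phi(u))$ annihilates $E_{1}$ while $D\phi(u)h \in E_{1}$, one gets $Dg(u)\cdot h = D\mathcal{E}(u+\phi(u))\cdot h$ for $h \in E_{0}$; hence $u$ is critical for $g$ if and only if $u+\phi(u)$ is critical for $\mathcal{E}$, in which case $\mathcal{E}(u+\phi(u)) = g(u)$. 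Moreover every critical point of $\mathcal{E}$ near $0$ solves the auxiliary equation, so it is of the form $u + \phi(u)$. Shrinking to a neighbourhood $U_{p}$, we obtain that $\mathcal{E}(C \cap U_{p})$ is contained in the set of critical values of $g$, which has measure zero by the classical Morse--Sard theorem in finite dimension.

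To conclude, cover the compact set $C$ by finitely many neighbourhoods $U_{p_{1}}, \dots, U_{p_{n}}$ of this type; then $\mathcal{E}(C) = \bigcup_{i=1}^{n} \mathcal{E}(C \cap U_{p_{i}})$ is a finite union of null sets, hence null, and in particular has empty interior.

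The part I expect to demand the most care is the bookkeeping of the Lyapunov--Schmidt step in the Banach-space duality: the Hessian lives in $\mathcal{L}(E, E^{*})$, so the decomposition of the equation into an auxiliary part on $E_{1}$ and a bifurcation part on $E_{0}$, and especially the verification that the transverse linearisation is onto $E_{1}^{*}$, genuinely use symmetry together with index $0$. A minor point worth recording is that $\dim \ker D^{2}\mathcal{E}(p)$ is upper semicontinuous in $p$ and hence bounded on the compact set $C$, so a fixed finite order of differentiability would already suffice for the Morse--Sard input; the assumption $\mathcal{E} \in \mathcal{C}^{\infty}$ is comfortably more than enough.
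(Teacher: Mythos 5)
The paper states this result as a citation to Tromba~\cite{tromba} and gives no proof of its own, so there is nothing internal to compare against; what can be said is that your proof is correct and is in substance the standard Lyapunov--Schmidt reduction to the finite-dimensional Morse--Sard theorem, which is also the route Tromba takes. The decomposition $E=E_0\oplus E_1$ with $E_0=\ker D^2\mathcal{E}(p)$, the use of symmetry to force index zero and to identify the transverse linearisation $E_1\to E_1^*$ as an isomorphism, the implicit-function-theorem solution of the auxiliary equation, the reduction of critical values of $\mathcal{E}$ near $p$ to critical values of the finite-dimensional reduced function $g$, and the finite-cover compactness argument are all correctly executed. You are also right to flag that hypothesis (3), read literally as ``$D\mathcal{E}(\mu)$ is Fredholm'', is degenerate at a critical point (where $D\mathcal{E}(\mu)=0$) and must be understood as referring to the Hessian $D^2\mathcal{E}(\mu)$, equivalently to $DF(\mu)$; this reading is consistent with the way the paper actually verifies the hypothesis in the proof of its Theorem 3.11, where it checks that $DF(\mu)$ is a compact (hence Fredholm) perturbation of the identity. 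Your closing observation that $\dim\ker D^2\mathcal{E}(p)$ is bounded on the compact set $C$, so finite differentiability would suffice for the Sard input, is a genuine refinement over the $\mathcal{C}^\infty$ hypothesis stated here, and you establish the stronger conclusion that $\mathcal{E}(F^{-1}(0))$ is Lebesgue-null rather than merely of empty interior.
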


\begin{propo}\label{cours_tromba}(\cite{dea}, proposition 6.4)
Let $\Lambda$ be a compact invariant set for a semiflow $\Phi$ on
a metric space $E$. Assume that there exists a continuous function
$\mathcal{V}: E\rightarrow \mathbb{R}$ such that:
\begin{enumerate}
    \item $\mathcal{V}(\Phi_t(x))< \mathcal{V}(x)$ for $x\in E\backslash
    \Lambda$ and $t>0$;
    \item $\mathcal{V}(\Phi_t(x))= \mathcal{V}(x)$ for $x\in\Lambda$ and
    $t>0$.
\end{enumerate}
If $\mathcal{V}$ has an empty interior, then every attractor-free set $A$ for $\Phi$ is contained in $\Lambda$. Furthermore, $\mathcal{V}$ restricted to $A$ is constant.
\end{propo}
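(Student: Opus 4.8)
The plan is to argue by contradiction, manufacturing a \emph{proper} attractor inside $A$ whenever $\mathcal{V}$ is non-constant on $A$. First I would record two elementary consequences of (1)--(2): for every $x\in E$ the map $t\mapsto\mathcal{V}(\Phi_t(x))$ is non-increasing, and if $\mathcal{V}(\Phi_t(x))=\mathcal{V}(x)$ for some $t>0$ then $x\in\Lambda$ (just the contrapositive of (1)). The second remark already reduces everything to showing that $\mathcal{V}$ is constant on $A$: granting this, if $\mathcal{V}|_A\equiv v_0$ then for every $x\in A$ and $t>0$ we have $\mathcal{V}(\Phi_t(x))=v_0=\mathcal{V}(x)$ because $\Phi_t(x)\in A$, hence $x\in\Lambda$, so $A\subseteq\Lambda$, and the ``furthermore'' clause holds by construction. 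I read the remaining hypothesis, in the spirit of Tromba's theorem above, as: $\mathcal{V}(\Lambda)\subseteq\mathbb{R}$ has empty interior.

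So suppose $\mathcal{V}$ is not constant on $A$ and put $m:=\min_A\mathcal{V}<\max_A\mathcal{V}=:M$ (both attained by compactness). Since $\mathcal{V}(\Lambda)$ has empty interior, the open interval $(m,M)$ is not contained in $\mathcal{V}(\Lambda)$, so I may fix $c\in(m,M)$ with $c\notin\mathcal{V}(\Lambda)$. Consider $N:=\{x\in A:\mathcal{V}(x)\le c\}$. By continuity it is closed, hence compact; it is nonempty since it contains a minimiser of $\mathcal{V}$ on $A$; it is a proper subset of $A$ since it misses a maximiser; and it is positively invariant for $\Phi|_A$ because $\mathcal{V}(\Phi_t(x))\le\mathcal{V}(x)\le c$. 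Let $\Lambda_N:=\bigcap_{t\ge 0}\overline{\Phi_t(N)}$ be the maximal attractor contained in $N$: by the standard theory of a semiflow restricted to a compact positively invariant set, $\Lambda_N$ is nonempty, compact, invariant (in particular $\Phi_t(\Lambda_N)=\Lambda_N$ for all $t\ge 0$), and attracts $N$ uniformly.

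The key point is that $\mathcal{V}<c$ on $\Lambda_N$. Indeed $\Lambda_N\subseteq N$ already gives $\mathcal{V}\le c$ there; and if $\mathcal{V}(x)=c$ for some $x\in\Lambda_N$, then by invariance pick $y\in\Lambda_N$ with $\Phi_1(y)=x$, so $\mathcal{V}(y)\le c=\mathcal{V}(\Phi_1(y))\le\mathcal{V}(y)$, whence $\mathcal{V}(\Phi_1(y))=\mathcal{V}(y)$, hence $y\in\Lambda$ and $c=\mathcal{V}(y)\in\mathcal{V}(\Lambda)$, contradicting the choice of $c$. Consequently $\Lambda_N$ is contained in the set $\{x\in A:\mathcal{V}(x)<c\}$, which is open in $A$ and contained in $N$; being a neighbourhood (in $A$) of $\Lambda_N$ that is attracted by $\Lambda_N$, it witnesses that $\Lambda_N$ is an attractor for $\Phi|_A$, and $\Lambda_N\subseteq N\subsetneq A$ makes it proper. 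This contradicts the assumption that $A$ is attractor-free; therefore $\mathcal{V}$ is constant on $A$, and, as noted in the first paragraph, $A\subseteq\Lambda$ with $\mathcal{V}|_A$ constant.

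The step I expect to be the main obstacle is not any computation but the soft one: setting up cleanly the attractor $\Lambda_N$ of $\Phi|_A$ inside the compact positively invariant set $N$, in particular checking that it is invariant in the strong sense used above (so the preimage $y$ exists) and that ``uniform attraction of $N$'' combined with $\Lambda_N\subseteq\{x\in A:\mathcal{V}(x)<c\}$ genuinely matches the definition of attractor employed here, namely that one works relative to the restricted semiflow $\Phi|_A$ and the required neighbourhood is a neighbourhood \emph{in} $A$. Once this Conley-type lemma is in hand, together with the monotonicity of $\mathcal{V}$ along orbits and the empty-interior hypothesis, the remaining deductions are immediate.
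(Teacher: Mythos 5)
Your argument is correct. The paper does not prove this statement --- it cites it as Proposition 6.4 of Bena\"im \cite{dea} --- so there is no in-paper proof to compare against; what you give is a sound reconstruction of the standard Conley-type argument. You are also right to read the hypothesis as ``$\mathcal{V}(\Lambda)$ has empty interior'': the statement as printed has dropped the ``$(\Lambda)$'', and it is precisely $\mathcal{V}(\Lambda)$ that Tromba's theorem controls when this proposition is applied (with $\Lambda = F^{-1}(0)$) in the proof of Theorem~\ref{limitset}. The one step you flag as delicate is fine as you set it up: since $N=\{x\in A:\mathcal{V}(x)\le c\}$ is compact and positively invariant in $A$, the set $\Lambda_N=\bigcap_{t\ge 0}\overline{\Phi_t(N)}$ is nonempty, compact, invariant and attracts $N$ uniformly (a routine compactness argument for the $\omega$-limit of a compact positively invariant set), and your observation that $\Lambda_N\subset\{x\in A:\mathcal{V}(x)<c\}$, an $A$-open subset of $N$, shows that $N$ is a trapping neighbourhood of $\Lambda_N$ in $A$; this is exactly what is needed for $\Lambda_N$ to be a proper attractor of $\Phi|_A$ in the sense defined in Section~6, yielding the desired contradiction.
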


\begin{thma}  Suppose that
$W$ is symmetric. Then the limit set  $\omega(\mu_t, t\geq 0)$ is $\mathbb{P}_{x,r,\mu}$-a.s. a compact connected subset of the fixed points of $\Pi$.
\end{thma}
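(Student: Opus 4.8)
The plan is to exploit the gradient-like structure of the semiflow $\Phi$ in the symmetric case, with the free energy as a \emph{strict} Lyapunov function, and then run the Conley-type argument of Proposition \ref{cours_tromba} against the attractor-free property of $\omega := \omega(\mu_t,t\ge 0)$ provided by Theorem \ref{free_set}. Recall that $\omega$ is weakly compact and $\Phi$-invariant, and that, by the preceding lemma, $\omega\subseteq\widehat{\mathrm{Im}(\Pi)}$, which is positively invariant by Proposition \ref{im}; in particular every element of $\omega$ is absolutely continuous with respect to $\gamma$ (with density bounded above and below by multiples of that of $\gamma$), so $\mathcal{F}$ is finite on $\omega$.

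First I would check that $\mathcal{F}$ is a strict Lyapunov function for $\Phi$ restricted to $\widehat{\mathrm{Im}(\Pi)}$. Starting from \eqref{diffF} and using $\log\frac{\mathrm{d}\Pi(\rho)}{\mathrm{d}\gamma}=-2W*\rho-\log Z(\rho)$, one gets, for $\rho=\Phi_t(\mu)$,
\begin{equation*}
\frac{\mathrm{d}}{\mathrm{d}t}\,\mathcal{F}(\Phi_t(\mu)) \;=\; \int_{\mathbb{R}^d}\log\frac{\mathrm{d}\rho}{\mathrm{d}\Pi(\rho)}\,\mathrm{d}\big(\Pi(\rho)-\rho\big) \;=\; -\,H(\Pi(\rho)\,|\,\rho)\;-\;H(\rho\,|\,\Pi(\rho))\;\le\;0 ,
\end{equation*}
$H(\cdot\,|\,\cdot)$ being relative entropy; by strict convexity of $t\mapsto t\log t$ both terms vanish if and only if $\rho=\Pi(\rho)$. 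Hence $\mathcal{F}$ strictly decreases along every orbit in $\widehat{\mathrm{Im}(\Pi)}$ that is not a single fixed point of $\Pi$, and it is constant on $\mathrm{Fix}(\Pi)=\{\mu;\Pi(\mu)=\mu\}$, which is invariant and weakly compact by Lemma \ref{lem_pi}. (One may equivalently take $\mathcal{E}=\mathcal{F}\circ\Pi$, which is weakly continuous and agrees with $\mathcal{F}$ on $\mathrm{Fix}(\Pi)$; since its derivative along $\Phi$ only vanishes on the a priori larger set $\{\mu;W*(\Pi(\mu)-\mu)\ \text{constant}\}$, one then has to additionally rule out, using $\omega\subseteq\widehat{\mathrm{Im}(\Pi)}$ and boundedness of $\mathcal{P}_\beta(\mathbb{R}^d;V)$, that $\omega$ meets that set off $\mathrm{Fix}(\Pi)$, there being no full backward orbit through such a point.)

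Next, to apply Proposition \ref{cours_tromba} on $E=\omega$ with $\Lambda=\mathrm{Fix}(\Pi)\cap\omega$ and $\mathcal{V}=\mathcal{F}$, it remains to show $\mathcal{F}(\mathrm{Fix}(\Pi))$ has empty interior in $\mathbb{R}$. For this I would invoke Tromba's theorem on the $\mathcal{C}^\infty$ Banach manifold of smooth positive densities, with the vector field $F(\mu)=\Pi(\mu)-\mu$ and the function $\mathcal{F}$: hypothesis (1) is Proposition \ref{min} (for $W$ symmetric, $D\mathcal{F}(\mu)=0\iff\mu=\Pi(\mu)\iff F(\mu)=0$), hypothesis (2) is the compactness of $F^{-1}(0)=\mathrm{Fix}(\Pi)$ from Lemma \ref{lem_pi}, and hypothesis (3) follows by writing the Hessian $D^2\mathcal{F}(\mu)$ of Proposition \ref{min} at a fixed point as an invertible multiplication operator (the entropic part) plus the compact integral operator induced by the kernel $W$. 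Tromba's theorem then yields that $\mathcal{F}(\mathrm{Fix}(\Pi))$ has empty interior, and Proposition \ref{cours_tromba} gives that the attractor-free set $\omega$ is contained in $\mathrm{Fix}(\Pi)$ (and $\mathcal{F}$ is constant on $\omega$). Finally, $\omega$ is weakly compact — weakly closed and contained in the weakly compact $\mathcal{P}_\beta(\mathbb{R}^d;V)$ by Proposition \ref{compactness} — and connected, because $t\mapsto\mu_t$ is continuous, so $\omega=\bigcap_{T\ge 0}\overline{\{\mu_s;s\ge T\}}$ is a decreasing intersection of nonempty weakly compact connected sets once $T$ is large enough that $\{\mu_s;s\ge T\}\subseteq\mathcal{P}_\beta(\mathbb{R}^d;V)$.

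The step I expect to be the main obstacle is analytic: matching topologies and smoothness classes. The Lyapunov inequality and the manifold structure required by Tromba's theorem are natural for the strong ($V$-norm) topology, whereas Proposition \ref{cours_tromba} must be used on $\omega$ for the weak topology; so one has to verify genuine (weak) continuity of $\mathcal{F}$ on $\omega$ — the uniform two-sided bound $m\gamma\le\nu\le M\gamma$ on $\omega$ should help here — and to pin down a functional-analytic framework in which $F$ and $\mathcal{F}$ are honestly $\mathcal{C}^\infty$ and $D^2\mathcal{F}$ is Fredholm at each fixed point. Once these are in place, the dynamical-systems part of the argument is essentially formal.
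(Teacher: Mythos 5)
Your proposal follows the same overall architecture as the paper — a Lyapunov function for the flow, Tromba's Morse--Sard--Brown theorem to show the critical values have empty interior, and Proposition \ref{cours_tromba} to conclude that the attractor-free set $\omega(\mu_t,t\geq0)$ lies in $\mathrm{Fix}(\Pi)$ — but you make a genuinely better choice of Lyapunov function. The paper works with $\mathcal{E}=\mathcal{F}\circ\Pi$ precisely because $\Phi_t(\mu)$ need not be absolutely continuous for generic initial data; its dissipation identity is a variance,
\[
\tfrac{1}{4}\tfrac{\mathrm{d}}{\mathrm{d}t}\mathcal{E}(\Phi_t(\mu))=-\mathrm{Var}_{\Pi(\mu)}\!\big(W*(\Pi(\mu)-\mu)\big)\le 0,
\]
which a priori vanishes on the larger set $\{\mu:W*(\Pi(\mu)-\mu)\ \Pi(\mu)\text{-a.e.\ constant}\}$, not visibly equal to $\mathrm{Fix}(\Pi)$; hypothesis (1) of Proposition \ref{cours_tromba} is therefore not completely immediate there. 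You instead exploit that $\omega\subseteq\widehat{\mathrm{Im}(\Pi)}$ consists of measures absolutely continuous with respect to $\gamma$ (with two-sided density bounds), work with $\mathcal{F}$ directly, and obtain the sharper identity
\[
\tfrac{\mathrm{d}}{\mathrm{d}t}\mathcal{F}(\Phi_t(\mu))=-H\big(\Pi(\rho)\mid\rho\big)-H\big(\rho\mid\Pi(\rho)\big),
\]
whose strict negativity off $\mathrm{Fix}(\Pi)$ is immediate from the strict convexity of $t\log t$; your parenthetical remark flagging the gap in the $\mathcal{E}$-version is exactly the point. The Fredholm step is handled a little differently — the paper shows via Ascoli that $D\Pi(\mu)$ is compact so $DF(\mu)=-\mathrm{Id}+D\Pi(\mu)$ is Fredholm of index $0$, whereas you decompose $D^2\mathcal{F}(\mu)$ as invertible (entropic) plus compact (kernel $W$) — but these are the same kind of argument and either satisfies Tromba's hypothesis. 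The one thing you should be careful to actually carry out is the weak continuity of $\mathcal{F}$ on $\omega$ (needed to apply Proposition \ref{cours_tromba} on $E=\omega$ with the weak topology); the uniform bounds $m\gamma\le\nu\le M\gamma$ for $\nu\in\omega$, plus the domination condition on $W$, do suffice for this, and you correctly identify it as the main analytic point to nail down. Modulo that verification, your route is correct and arguably cleaner than the paper's on the strict-Lyapunov step.
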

\begin{proof}
We work only with absolutely continuous probability measures. We want to use Proposition \ref{cours_tromba} with the Lyapunov
function $\mathcal{E}$ (the free energy composed with $\Pi$), which satisfies the required condition. Lemma \ref{lem_pi} asserts that the fixed points of $\Pi$ form a nonempty compact subset of $\mathcal{P}(\mathbb{R}^d;V)$. Let $F(\mu) := \Pi(\mu) -\mu$. We already know that $F^{-1}(0)$ is compact for the weak topology. Therefore, we only need to show that $\mathcal{E}(F^{-1}(0))$ has an empty interior. Let $\mu \in F^{-1}(0)$ and prove that $DF(\mu)$ is a Fredholm operator. Let $\nu\in \mathcal{P}_\beta(\mathbb{R}^d;V)$. Thanks to Lemma \ref{majW}, there exists a constant $C(\beta)$ such that
$\|DF(\mu)\cdot \nu\|_V \leq C(\beta) \|\nu\|_V$. So, the set $\{DF(\mu)\cdot \nu; \|\nu\|_V\leq 1\}$ is bounded. For $x,y\in \mathbb{R}^d$, we get
\begin{eqnarray*}
|DF(\mu)\cdot \nu(x) - DF(\mu)\cdot \nu(y)| &\leq & 2|W*\nu(x) \Pi(\mu)(x) - W*\nu(y) \Pi(\mu)(y)|\\
&+& 2\int W*\nu \mathrm{d}\Pi(\nu)|(\Pi(\mu)(x) - \Pi(\mu)(y))|\\
&\leq &  M (|x-y| \|\mu\|_V + |\mu(x) -\mu(y)|\\
&+& |V(x) - V(y)| + \|W(y,\cdot) - W(x,\cdot)\|_V \|\mu\|_V)
\end{eqnarray*}
So, the map $DF(\mu)\cdot \nu$ ($\|\nu\|_V\leq 1$) is equicontinuous and by Ascoli's theorem, we conclude that the preceding set is relatively compact in $\mathcal{C}^0(\mathbb{R}^d;V)$ and thus the operator $DF(\mu)$ is compact. Moreover, this operator is self-adjoint. It follows from the spectral theory of compact self-adjoint operators that $DF$ has at most countably many real eigenvalues; the set of nonzero eigenvalues is either finite or can be ordered as $|\lambda_1|> |\lambda_2|> \ldots>0$ with $\underset{n\rightarrow\infty}{\lim} \lambda_n = 0$. Therefore, we apply the result of Tromba and $\mathcal{E}(F^{-1}(0))$ has an empty interior. We conclude thanks to Proposition \ref{cours_tromba}.
\end{proof}

\section{Illustration in dimension $d=2$}
When $W$ is not symmetric, it can happen that there exists no Lyapunov function and that the limit set $\omega(\mu_t, t\geq 0)$ is a non trivial orbit. Suppose for instance that (for $d=2$) $W(x,y) = (x,Ry)$ where $R$ is a rotation matrix, $V$ is a polynomial $V(x)= V(|x|) := a|x|^4 + b|x|^2 +1$. Note, that the probability measure $\gamma(\mathrm{d}x) = e^{-2V(x)} \mathrm{d}x/Z$ is invariant by rotation. Then, one expects, depending on $R$ and $V$, that either the unique invariant set for the semiflow is $\{\gamma\}$ and so $\mu_t$ converges a.s. to $\gamma$; or $\mu_t$ converges a.s. to a random measure, related to the critical points of the free energy; or $\omega(\mu_t, t\geq 0)$ is a periodic orbit related to $\gamma$. Remark that, equivalently considering $W(x,y)+\frac{1}{2}(b|x|^2 + |y|^2/b)$ or $W$, we satisfy the set of conditions \textbf{(H)}. We denote $p:=(1,0)^T$.

\begin{lemma}\label{egalites}(\cite{beLR}, lemma 4.6)
For all continuous $\varphi: \mathbb{R}\rightarrow \mathbb{R}$, for all $y\in \mathbb{S}^1$ we
have
$$\int_{\mathbb{R}^2} \left[\varphi((x,y))-\varphi((x,p))\right] \gamma(\mathrm{d}x) = \int_{\mathbb{R}^2} \varphi((x,y)) (x-(x,y)y) \gamma(\mathrm{d}x) =0.$$
\end{lemma}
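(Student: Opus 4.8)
The plan is to exploit the rotational invariance of $\gamma$, which is immediate here since $V(x)=V(|x|)$: for every orthogonal matrix $O\in O(2)$ the pushforward of $\gamma$ under $x\mapsto Ox$ equals $\gamma$, because $e^{-2V(|Ox|)}=e^{-2V(|x|)}$ and $|\det O|=1$. Both identities then reduce to a change of variables plus a symmetry observation (we assume throughout that the integrands are $\gamma$-integrable, which is the case for the $\varphi$ used in Section~7).

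For the first equality I would choose a rotation $\rho\in SO(2)$ with $\rho p=y$ (such $\rho$ exists since $p,y\in\mathbb{S}^1$), substitute $x=\rho x'$, and use that $\gamma$ is $\rho$-invariant together with the orthogonality relation $(\rho x',y)=(\rho x',\rho p)=(x',p)$. This gives
\[
\int_{\mathbb{R}^2}\varphi((x,y))\,\gamma(\mathrm{d}x)=\int_{\mathbb{R}^2}\varphi((x',p))\,\gamma(\mathrm{d}x'),
\]
so that the difference in the statement vanishes.

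For the second equality I would first decompose $x$ in the orthonormal basis $(y,y^{\perp})$, where $y^{\perp}$ is the unit vector obtained from $y$ by rotation through $\pi/2$: then $x=(x,y)\,y+(x,y^{\perp})\,y^{\perp}$, hence $x-(x,y)\,y=(x,y^{\perp})\,y^{\perp}$ and the vector-valued integral equals $y^{\perp}\int_{\mathbb{R}^2}\varphi((x,y))\,(x,y^{\perp})\,\gamma(\mathrm{d}x)$. Now let $S$ be the reflection fixing $y$ and sending $y^{\perp}$ to $-y^{\perp}$; it lies in $O(2)$, so $\gamma$ is $S$-invariant, and it satisfies $(Sx,y)=(x,y)$ and $(Sx,y^{\perp})=-(x,y^{\perp})$. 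Changing variables $x\mapsto Sx$ shows that the scalar integral equals its own opposite, hence is zero, and the second identity follows.

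The argument is essentially bookkeeping; the only points to state carefully are the existence of the rotation $\rho$ with $\rho p=y$ and of the reflection $S$, both elementary facts about $O(2)$, and the invariance of $\gamma$ under $O(2)$, which is exactly the radial form of $V$. Alternatively the whole computation can be done in polar coordinates $x=(\rho\cos\theta,\rho\sin\theta)$, $y=(\cos\theta_0,\sin\theta_0)$, where $(x,y)=\rho\cos(\theta-\theta_0)$ and $\gamma(\mathrm{d}x)=Z^{-1}e^{-2V(\rho)}\rho\,\mathrm{d}\rho\,\mathrm{d}\theta$: $2\pi$-periodicity of the $\theta$-integral gives the first identity, and oddness of $\theta\mapsto\sin(\theta-\theta_0)$ over a full period gives the second. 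I do not expect any genuine obstacle in either route.
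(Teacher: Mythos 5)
Your proposal is correct and follows essentially the same route as the paper: rotation invariance of $\gamma$ together with a rotation sending $p$ to $y$ for the first identity, and the reflection fixing $y$ (and negating $y^\perp$) combined with the decomposition $x-(x,y)y=(x,y^\perp)y^\perp$ for the second. The paper phrases the second step slightly differently, noting $\phi(y)\perp y$ and deducing $\phi(p)=h\phi(p)=0$ from the reflection $h$, but the underlying symmetry argument is identical to yours.
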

\begin{proof}
For all $y\in \mathbb{S}^1$, there exists $g\in O(2)$ such that $y=gp$. The first equality follows from a change of variable in the integral (because $V(x) = V(|x|)$). After, define $\phi(y) := \int_{\mathbb{R}^2} \varphi((x,y))(x-(x,y)y)
\gamma(\mathrm{d}x)$. We clearly have $(\phi(y),y) =0$ and the rotation-invariance of $\gamma$ implies for the antisymmetry matrix $h$, $\phi(p) = h\phi(p)$. So, $\phi(p)=0$ and thus $\phi(y) = 0$.
\end{proof}

For any probability measure $\mu\in \mathcal{P}(\mathbb{R}^2;V)$, define the mean of $\mu$ by $\bar{\mu}:=
\int_{\mathbb{R}^2} x\mu(\mathrm{d}x)$. Let the probability measure
\begin{equation}\label{bar(Pi)}
\bar{\Pi}(\bar{\mu})(\mathrm{d}x) :=
\frac{e^{-2(x,R\bar{\mu})}}{Z(\bar{\mu})} \gamma(\mathrm{d}x).
\end{equation}
Here, $\bar{\Pi}(\bar{\mu}) = \Pi(\mu)$. If we let $\overline{\Pi}(\mu) := \int_{\mathbb{R}^2} x \bar{\Pi}(\mu) (\mathrm{d}x)$, then $\bar{\Phi}_t(\mu)$ is readily the semiflow corresponding to
\begin{equation}\label{bar(flow)}
\bar{\Phi}_t(\mu) = e^{-t} \bar{\mu} + e^{-t} \int_0^t e^s \overline{\Pi}(\bar{\Phi}_s(\mu)) \mathrm{d}s, \, \bar{\Phi}_0(\mu) = \bar{\mu}.
\end{equation}

\begin{lemma}\label{diff}
Let $m=\rho v$ with $\rho \geq 0$ and $v\in \mathbb{S}^1$. Then we
get $$\int_{\mathbb{R}^2} x\bar{\Pi}(m)(\mathrm{d}x) =
-\frac{1}{2} \frac{\mathrm{d}}{\mathrm{d}\rho} \log{\left(
\int_{\mathbb{R}^2} e^{-2\rho(x,v)} \gamma(\mathrm{d}x)\right)}
Rv.$$
\end{lemma}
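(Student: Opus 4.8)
The plan is to compute the mean of $\bar\Pi(m)$ directly from its explicit density and then recognise the resulting ratio of Gaussian-type integrals as a logarithmic derivative in $\rho$, using the rotation-invariance of $\gamma$ at two points.

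First I would substitute $\bar\mu = m = \rho v$ into the definition \eqref{bar(Pi)}, so that $\bar\Pi(m)(\mathrm{d}x) = Z(m)^{-1} e^{-2\rho(x,Rv)}\gamma(\mathrm{d}x)$ with $Z(m) = \int_{\mathbb{R}^2} e^{-2\rho(x,Rv)}\gamma(\mathrm{d}x)$. All integrals of the form $\int_{\mathbb{R}^2} P(x)\, e^{-2\rho(x,Rv)}\gamma(\mathrm{d}x)$ with $P$ polynomial converge and are smooth functions of $\rho$, since by hypothesis iii) the density $e^{-2V(x)}$ decays faster than any polynomial times any exponential; this justifies differentiating under the integral sign throughout. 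Next I would pin down the direction of the mean: applying Lemma \ref{egalites} with $y = Rv \in \mathbb{S}^1$ and the continuous function $\varphi(t) = e^{-2\rho t}$ gives
\begin{equation*}
\int_{\mathbb{R}^2} e^{-2\rho(x,Rv)}\bigl(x - (x,Rv)\,Rv\bigr)\gamma(\mathrm{d}x) = 0,
\end{equation*}
hence $\int_{\mathbb{R}^2} x\, e^{-2\rho(x,Rv)}\gamma(\mathrm{d}x) = \bigl(\int_{\mathbb{R}^2}(x,Rv)\,e^{-2\rho(x,Rv)}\gamma(\mathrm{d}x)\bigr) Rv$, and dividing by $Z(m)$,
\begin{equation*}
\int_{\mathbb{R}^2} x\, \bar\Pi(m)(\mathrm{d}x) = \frac{\int_{\mathbb{R}^2}(x,Rv)\,e^{-2\rho(x,Rv)}\gamma(\mathrm{d}x)}{\int_{\mathbb{R}^2}e^{-2\rho(x,Rv)}\gamma(\mathrm{d}x)}\; Rv.
\end{equation*}

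Then I would identify the scalar prefactor as a logarithmic derivative: since $\partial_\rho e^{-2\rho(x,Rv)} = -2(x,Rv)\,e^{-2\rho(x,Rv)}$, we get $\partial_\rho \log Z(m) = -2\,\frac{\int (x,Rv)e^{-2\rho(x,Rv)}\gamma(\mathrm{d}x)}{\int e^{-2\rho(x,Rv)}\gamma(\mathrm{d}x)}$, so the prefactor equals $-\tfrac12\,\partial_\rho \log Z(m)$. Finally, because $\gamma$ is invariant under every element of $O(2)$ and $R \in O(2)$, the change of variables $x \mapsto R^{-1}x$ yields $Z(m) = \int_{\mathbb{R}^2} e^{-2\rho(x,Rv)}\gamma(\mathrm{d}x) = \int_{\mathbb{R}^2} e^{-2\rho(x,v)}\gamma(\mathrm{d}x)$, and substituting this into the previous line gives exactly the claimed identity.

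I do not expect a genuine obstacle here — the argument is a short symmetry-plus-differentiation computation — but two steps deserve care: the invocation of Lemma \ref{egalites} (with $y=Rv$, since the potential is $W(x,y)=(x,Ry)$) is what forces the mean to point along $Rv$ rather than $v$, and the $O(2)$-invariance of $\gamma$ is what allows $Rv$ to be replaced by $v$ inside the logarithm in the final formula.
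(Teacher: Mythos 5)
Your proposal is correct and follows essentially the same route as the paper's one-line proof: the second equality of Lemma \ref{egalites} pins the mean to lie along $Rv$, the scalar factor is recognized as $-\tfrac12\,\partial_\rho\log Z$, and the $O(2)$-invariance of $\gamma$ (exactly the content of the first equality of Lemma \ref{egalites}) lets you replace $Rv$ by $v$ inside the logarithm.
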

\begin{proof}
One just has to differentiate the function $\alpha \mapsto
\log{\left( \int_{\mathbb{R}^2} e^{-2\alpha(x,v)}
\gamma(\mathrm{d}x)\right)}$ and use the second equality of Lemma
\ref{egalites}.
\end{proof}

Let $m = \rho v$ be the solution to the ODE $\dot{m} = \overline{\Pi}(m) - m$, with $\rho = |m|$ and $v \in \mathbb{S}^1$. Then we 
have by Lemma \ref{diff} that $\dot{ v} = 0.$ Moreover, if we
let $\alpha = 2\rho$, then $\alpha$ satisfies the one-dimensional
ODE
\begin{equation}\label{edobis}
\dot{\alpha} = J(\alpha) = -\alpha + 2 \partial_\alpha \log\left(\int_{\mathbb{R}^2}
e^{-\alpha (x,Rp)}\gamma(\mathrm{d}x) \right).
\end{equation}
Let us define some useful functions expressed in polar coordinates:
\begin{eqnarray*} H(\alpha) &:= &
\int_0^\infty \mathrm{d}\rho \gamma(\rho) \int_0^{2\pi}
\mathrm{d}v e^{-\alpha \rho \cos v}\\
\tilde{H}(\alpha) &:= & \int_0^\infty \mathrm{d}\rho \gamma(\rho)
\rho^2 \int_0^{2\pi} \mathrm{d}v \sin^2v e^{-\alpha \rho \cos v}.
\end{eqnarray*}

\subsection{The case $R=-Id$}

Here, $W$ is a symmetric function. Expressing the problem in polar coordinates, we get
$J(\alpha) = -\alpha \left( 1 - 2\frac{\tilde{H}(\alpha)}{H(\alpha)} \right)$.
\begin{propo}\label{bifurcation}
If $\int_0^\infty \rho^2 \gamma(\rho) \mathrm{d}\rho\leq 1$, then
0 is the unique equilibrium of (\ref{edobis}) and 0 is stable. The
basin of attraction of 0 is $\mathbb{R}_+$.

If $\int_0^\infty \rho^2 \gamma(\rho) \mathrm{d}\rho> 1$, then 0
is linearly unstable and there is another stable equilibrium $\alpha_1$, whose basin
of attraction is $\mathbb{R}_+^*$.
\end{propo}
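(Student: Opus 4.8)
\noindent\textit{Proof plan.} By Lemma~\ref{diff} the direction $v$ of $m=\rho v$ is frozen, so everything reduces to the scalar ODE $\dot\alpha=J(\alpha)$ on $\mathbb{R}_+$, with $J(\alpha)=-\alpha\bigl(1-g(\alpha)\bigr)$ and $g(\alpha):=2\tilde H(\alpha)/H(\alpha)$. Put $\psi(\alpha):=\log H(\alpha)$; since $R=-Id$, $H(\alpha)$ is a constant multiple of $\int_{\mathbb{R}^2}e^{\alpha x_1}\gamma(\mathrm{d}x)$, so $\psi$ is the cumulant generating function of the first coordinate $x_1$ under $\gamma$, with $\psi'(\alpha)=\mathbb{E}_{\gamma_\alpha}[x_1]$ and $\psi''(\alpha)=\mathrm{Var}_{\gamma_\alpha}(x_1)$ for the tilted law $\gamma_\alpha:=e^{\alpha x_1}\gamma/\!\int e^{\alpha x_1}\mathrm{d}\gamma$ (the integrals are smooth in $\alpha$ by dominated convergence, using the fast decay of $\gamma$). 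Integrating the definition of $H$ by parts in $v$ gives $H'(\alpha)=\alpha\tilde H(\alpha)$, hence $g(\alpha)=2\psi'(\alpha)/\alpha$, which extends smoothly to $\alpha=0$ since $\psi'(0)=\mathbb{E}_\gamma[x_1]=0$ by reflection symmetry of $\gamma$. Thus $\alpha=0$ (that is $m=0$) is always an equilibrium, the positive equilibria are exactly the solutions of $g(\alpha)=1$, and evaluating the angular integrals in $H(0),\tilde H(0)$ gives $g(0)=2\psi''(0)=\int_0^\infty\rho^2\gamma(\rho)\,\mathrm{d}\rho$, so that $J'(0)=g(0)-1=\int_0^\infty\rho^2\gamma(\rho)\,\mathrm{d}\rho-1$.

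The crux is the monotonicity claim: \emph{$g$ is strictly decreasing on $[0,\infty)$ and $g(\alpha)\to0$ as $\alpha\to\infty$}. From $g=2\psi'/\alpha$ one gets $g'(\alpha)<0\iff\alpha\psi''(\alpha)<\psi'(\alpha)$; both sides vanish at $0$ and their difference has derivative $\alpha\psi'''(\alpha)$, so it suffices to prove $\psi'''\le0$ on $[0,\infty)$, i.e.\ that the third central moment $\mathbb{E}_{\gamma_\alpha}\bigl[(x_1-\mathbb{E}x_1)^3\bigr]$ is $\le0$ (it is $=0$ at $\alpha=0$ by symmetry). Equivalently, since $g(\alpha)=2\,\mathbb{E}_{\gamma_\alpha}[x_2^2]$, one must show $g'(\alpha)=2\,\mathrm{Cov}_{\gamma_\alpha}(x_2^2,x_1)<0$, i.e.\ that the ``transverse'' quantity $x_2^2$ is negatively correlated with $x_1$ under $\gamma_\alpha$. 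I would prove this by a submodularity/correlation argument: $\gamma$ is radial and $V$ is convex, so $\gamma_\alpha\propto e^{-(2V(|x|)-\alpha x_1)}$ is log-concave, and the mixed derivative $\partial^2_{x_1,|x_2|}$ of its log-density has the sign of $-\bigl(V''(r)-V'(r)/r\bigr)=-r^{-2}\bigl(rV''(r)-V'(r)\bigr)$, which is $\le0$ precisely because $r\mapsto V'(r)/r=4ar^2+2b$ is strictly increasing for $V(r)=ar^4+br^2+1$, $a>0$ — exactly the feature distinguishing the genuinely quartic case from the Gaussian one, where $g$ is merely constant. Combined with the rotation symmetry (to handle the half-space $x_1<0$, e.g.\ by conditioning on $|x_1|$) this yields $\mathrm{Cov}_{\gamma_\alpha}(x_2^2,x_1)<0$. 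The decay $g(\alpha)\to0$ is an elementary Laplace estimate: $\gamma_\alpha$ concentrates near $(\rho^\ast(\alpha),0)$ with $2V'(\rho^\ast(\alpha))=\alpha\to\infty$, where the $x_2$-variance is of order $1/V''(\rho^\ast(\alpha))\to0$; this is where the super-quadratic growth of $V$ enters.

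Granting the claim, the phase line is cleanest through the potential $\Psi(\alpha):=\log H(\alpha)-\alpha^2/4$, for which $J=2\Psi'$; hence $\dot\alpha=J(\alpha)$ is an ascent flow for $\Psi$ on $\mathbb{R}_+$ (consistent with the free-energy Lyapunov structure of Proposition~\ref{exist-flow}) and the equilibria are the critical points of $\Psi$. Here $\Psi''(\alpha)=\psi''(\alpha)-\tfrac12$ is strictly decreasing (by the claim) from $\Psi''(0)=\tfrac12\bigl(\int_0^\infty\rho^2\gamma(\rho)\,\mathrm{d}\rho-1\bigr)$ towards $\psi''(\infty)-\tfrac12=-\tfrac12$. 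If $\int_0^\infty\rho^2\gamma(\rho)\,\mathrm{d}\rho\le1$ then $\Psi''(0)\le0$, so $\Psi''<0$ on $(0,\infty)$, so $\Psi'=\tfrac12J$ is strictly decreasing and, since $\Psi'(0)=0$, negative on $(0,\infty)$: thus $0$ is the \emph{unique} equilibrium in $\mathbb{R}_+$ and is stable, and every orbit started at $\alpha_0>0$ decreases monotonically, cannot halt at a positive value (where $J<0$), hence converges to $0$ — the basin of $0$ is $\mathbb{R}_+$. If $\int_0^\infty\rho^2\gamma(\rho)\,\mathrm{d}\rho>1$ then $\Psi''(0)>0$, so $J'(0)=2\Psi''(0)>0$ and $0$ is linearly unstable; $\Psi''$ decreases strictly from a positive value to $-\tfrac12$, so it has a single zero $\alpha^\dagger$, whence $\Psi'$ increases on $[0,\alpha^\dagger]$ (so $\Psi'(\alpha^\dagger)>0$) and then decreases strictly to $-\infty$, giving a unique $\alpha_1>\alpha^\dagger$ with $\Psi'(\alpha_1)=0$; therefore $J>0$ on $(0,\alpha_1)$ and $J<0$ on $(\alpha_1,\infty)$, so $\alpha_1$ is asymptotically stable and every orbit in $\mathbb{R}_+^\ast$ converges to it — its basin is $\mathbb{R}_+^\ast$.

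The only non-routine step is the monotonicity of $g$, i.e.\ the sign of $\mathrm{Cov}_{\gamma_\alpha}(x_2^2,x_1)$ (equivalently of the third cumulant $\psi'''$); once that is in hand, the rest is one-dimensional phase-line bookkeeping. I expect that covariance/skewness estimate, together with the two uses of the structure of $V$ (its super-quadratic growth for $g(\infty)=0$, and the monotonicity of $V'(r)/r$ for the sign of the mixed derivative), to be the main obstacle.
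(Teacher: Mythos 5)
Your approach is correct but genuinely different from the paper's, and in my view cleaner. Where the paper bounds the \emph{fourth} logarithmic derivative $J^{(3)}=2(\log H)^{(4)}$ (phrased as a kurtosis argument: ``the graph of the symmetric part of the density cuts exactly twice the graph of the corresponding Gaussian''), then integrates twice using $J''(0)=J^{(3)}(0)=0$ to reach $J'(\alpha)\le J'(0)$, you work directly one derivative lower: you show $g(\alpha)=2\psi'(\alpha)/\alpha$ is strictly decreasing via the sign of $\operatorname{Cov}_{\gamma_\alpha}(x_2^2,x_1)$, together with the Laplace estimate $g(\infty)=0$. Your integration-by-parts identity $H'(\alpha)=\alpha\tilde H(\alpha)$ is correct, as is the mixed-partial computation $\partial_{x_1}\partial_{|x_2|}\log\gamma_\alpha=-\frac{x_1|x_2|}{r^2}\bigl(V''(r)-V'(r)/r\bigr)$; the sign-flip at $x_1<0$ that you flag is indeed handled by conditioning on $|x_1|$, since $\operatorname{Cov}(x_2^2,x_1\mid|x_1|)=0$ and what remains is $\operatorname{Cov}\bigl(m(|x_1|),\,|x_1|\tanh(\alpha|x_1|)\bigr)$ with $m(u)=\mathbb E[x_2^2\mid|x_1|=u]$ decreasing precisely because $V'(r)/r$ is increasing (Chebyshev's sum inequality in the conditional law). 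What your route buys: the monotonicity statement $g'<0$ is exactly $J(\alpha)=\alpha\bigl(g(\alpha)-1\bigr)$ having at most one positive zero, so the phase-line analysis becomes a one-line sign argument; you also make explicit where the structure of $V$ enters (monotonicity of $V'(r)/r$ and the superquadratic growth for $g(\infty)=0$), which the paper leaves implicit. What the paper's route (ostensibly) buys is avoiding any conditional-correlation machinery, at the cost of an argument that is hard to make rigorous for the tilted measures $\gamma_\alpha$ with $\alpha>0$; its displayed formula for $J^{(3)}$ also appears to be missing a $-6(H''/H)^2$ term relative to $2(\log H)^{(4)}$.

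One small inaccuracy in your write-up: you assert that ``$\Psi''(\alpha)=\psi''(\alpha)-\tfrac12$ is strictly decreasing (by the claim).'' That does not follow from $g'<0$: since $\psi''=\tfrac12 g+\tfrac{\alpha}{2}g'$, one has $\psi'''=g'+\tfrac{\alpha}{2}g''$, and without a sign on $g''$ you cannot conclude $\psi''$ is monotone. Fortunately you do not need it: once $g$ is strictly decreasing with $g(\infty)=0$, the factorization $J(\alpha)=\alpha\bigl(g(\alpha)-1\bigr)$ gives the entire phase line directly, as you in fact use in the last two sentences. I'd simply delete the claim about $\Psi''$ and run the sign argument off $g$ alone.
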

\begin{proof}
We remark that $J$ is $\mathcal{C}^\infty$. A computation yields to
\begin{eqnarray*}
J^{(3)}(\alpha) &=& 2\frac{H^{(4)}(\alpha)}{H(\alpha)}
-8\frac{H^{(3)}(\alpha)}{H(\alpha)} \frac{H'(\alpha)}{H(\alpha)} +
24 \frac{H''(\alpha)}{H(\alpha)}
\left(\frac{H'(\alpha)}{H(\alpha)}\right)^2 - 12
\left(\frac{H'(\alpha)}{H(\alpha)} \right)^4.
\end{eqnarray*}
We wonder for the sign of $J^{(3)}$. This function corresponds to (twice) the kurtosis of the projection on the axis $x$ of a random variable $X$ (expressed in polar coordinates) such that $X$ has the law $\gamma$. As the graph of the symmetric part of the density function cuts exactly twice the graph of the corresponding Gaussian variable (with the same mean and variance), the kurtosis of $X$ is negative, or more exactly $J^{(3)}(\alpha)<0$ for $\alpha>0$ and $J^{(3)}(0)=0$. So, for all $\alpha\geq 0$, we have $J''(\alpha)\leq J''(0) = 0$. Similarly, we find $$J'(\alpha)\leq J'(0) = -1 + \int_0^\infty \mathrm{d}\rho \gamma(\rho)\rho^2.$$ Therefore, if $J'(0)\leq 0$, then $J$ is a decreasing function and as $J(0)=0$, the first result follows. Else $J'(0)>0$. But $J'$ is a non-increasing function and $\underset{\alpha \rightarrow\infty}{\lim} J'(\alpha) = -1$. So, because of the continuity of $J'$, there exists $\alpha_0>0$ such that $J'(\alpha_0)=0$. Moreover, we have $\underset{\alpha \rightarrow \infty}{\lim} J(\alpha) = -\infty$. Finally, there exists a positive solution to $J(\alpha)=0$ if and only if $\int_0^\infty \mathrm{d}\rho \gamma(\rho) \rho^2 > 1$. In that case, the point 0 is unstable and there exists an other equilibrium, which is stable.
\end{proof}

\begin{rk}
The function $t\mapsto \int_0^{2\pi} e^{-t\cos v} \mathrm{d}v$ is the Bessel function $I_0(t)$.
\end{rk}

The next result shows that we can reduce the problem in studying the dynamical system satisfied by $\bar{\mu}$ and then deduce results on $\mu$.
\begin{lemma}\label{free} (\cite{beLR} proposition 3.9, corollary 3.10)
1) Let $L\subset\mathcal{P}_\beta(\mathbb{R}^d;V)$ be an attractor-free set for $\Phi$ and $A \subset \mathcal{P}_\beta(\mathbb{R}^d;V)$ an attractor for $\Phi$. If $L\cap B(A) \neq \emptyset$\footnote{$B(A)$ is the basin of attraction of $A$}, then $L\subset A$.\\
2) Let $(E,d)$ be a metric space, $\bar{\Phi}: E\times \mathbb{R}\rightarrow E$ a semiflow on $E$ and
$G:\mathcal{P}_\beta(\mathbb{R}^d;V)\rightarrow E$ a continuous function. Assume that $G\circ \Phi_t = \bar{\Phi}_t \circ G$. Then almost surely $G(\omega(\mu_t, t\geq 0))$ is an attractor-free set of $\bar{\Phi}$.
\end{lemma}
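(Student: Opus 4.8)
These are the non-compact analogues of Bena\"im--Ledoux--Raimond \cite{beLR} (Proposition 3.9, Corollary 3.10), and the plan is to run their dynamical-systems argument, the key point being that every set that intervenes is forced into the weakly compact space $\mathcal{P}_\beta(\mathbb{R}^d;V)$ by Propositions \ref{P_beta} and \ref{compactness}, so all the compactness their proof uses is available here. The one standard external fact I would invoke is that an attractor $A$ of a continuous semiflow has a \emph{trapping neighbourhood}: a compact $N$ with $A\subset\mathrm{int}(N)\subset N\subset B(A)$, $\Phi_t(N)\subset\mathrm{int}(N)$ for $t\ge t_0$, and $A=\bigcap_{t\ge 0}\Phi_t(N)$. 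Since $(\Phi_t(N))_{t\ge t_0}$ is then a decreasing family of nonempty compacta with intersection $A$, it shrinks to $A$ in Hausdorff distance, so the attraction of $N$ onto $A$ is automatically uniform; I also recall that a compact invariant set has a proper attractor if and only if it has a proper attracting set (the maximal invariant subset of an attracting set is an attractor with the same basin).

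For 1) I would show that $A\cap L$ is an attracting set of the restricted semiflow $\Phi|_L$; by attractor-freeness of $L$ this forces $A\cap L=L$, i.e.\ $L\subset A$. First, $A\cap L\neq\emptyset$: taking $\mu\in L\cap B(A)$, the limit set $\omega_\Phi(\mu)$ of the orbit $(\Phi_t(\mu))_{t\ge 0}$ is nonempty, lies in $L$ (compact and invariant) and in $A$ (since $\mu\in B(A)$), so $\emptyset\neq\omega_\Phi(\mu)\subset A\cap L$. Next, with $N$ a trapping neighbourhood of $A$, the set $N\cap L$ is a relatively compact neighbourhood of $A\cap L$ in $L$ that is forward-invariant under $\Phi|_L$ for $t\ge t_0$; since the semiflow is generated by the locally Lipschitz field $\mu\mapsto\Pi(\mu)-\mu$ (Lemma \ref{Pibound}) each $\Phi_t$ is injective on the compact invariant set $L$, whence $\Phi_t(N\cap L)=\Phi_t(N)\cap L$, so $\bigcap_{t\ge0}\Phi_t(N\cap L)=A\cap L$ and the Hausdorff shrinking of $\Phi_t(N)$ restricts to uniform attraction of $N\cap L$ onto $A\cap L$. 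Thus $A\cap L$ is a nonempty compact positively invariant set attracting a relative neighbourhood of itself uniformly, hence an attracting set of $\Phi|_L$, which by attractor-freeness must equal $L$.

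For 2) fix a realisation of the a.s.\ event of Theorem \ref{free_set}, so that $\omega:=\omega(\mu_t,t\ge 0)$ is a nonempty weakly compact attractor-free set of $\Phi$. Then $G(\omega)$ is nonempty and compact, and $\bar\Phi_t(G(\omega))=G(\Phi_t(\omega))=G(\omega)$ by the semi-conjugacy, so it is invariant. Suppose for contradiction that some $\bar A\subsetneq G(\omega)$ were a proper attractor of $\bar\Phi|_{G(\omega)}$, and let $\bar N$ be a trapping neighbourhood of $\bar A$ inside $G(\omega)$. Put $N:=G^{-1}(\bar N)\cap\omega$; from $\bar N\subset G(\omega)$ one gets $G(N)=\bar N$, hence $G(\Phi_t(N))=\bar\Phi_t(G(N))=\bar\Phi_t(\bar N)$, so $N$ is a compact neighbourhood of $G^{-1}(\bar A)\cap\omega$ in $\omega$, forward-invariant under $\Phi|_\omega$ for $t\ge t_0$. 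The maximal invariant subset $A:=\bigcap_{t\ge0}\Phi_t(N)$ is then a nonempty compact attracting set of $\Phi|_\omega$, and it is proper: if $A=\omega$ then $G(\omega)=G(A)\subset\bigcap_{t\ge0}\bar\Phi_t(\bar N)=\bar A$, contradicting $\bar A\subsetneq G(\omega)$. This contradicts the attractor-freeness of $\omega$, so $G(\omega)$ has no proper attractor and is attractor-free for $\bar\Phi$.

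The only genuinely delicate point I expect is the \emph{uniformity} clause in the definition of an attracting set for the candidates $A\cap L$ and $A$ — precisely the reason for replacing the bare attracting neighbourhood by a trapping neighbourhood and using the Hausdorff shrinking of nested compacta (together with injectivity of $\Phi_t$). A slicker alternative, if one accepts the Bena\"im--Hirsch \cite{beH} identification of attractor-free sets with internally chain transitive sets: 1) is then immediate, and 2) follows because $G$, being uniformly continuous on the compact $\omega$, carries every $(\delta,T)$-pseudo-orbit of $\Phi$ lying in $\omega$ to an $(\varepsilon,T)$-pseudo-orbit of $\bar\Phi$ lying in $G(\omega)$ as soon as $\delta$ is small enough, so chain transitivity is preserved.
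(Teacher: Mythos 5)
The paper offers no proof of this lemma: it simply refers to Bena\"im--Ledoux--Raimond \cite{beLR} (Prop.\ 3.9 and Cor.\ 3.10), so the real question is whether the cited argument transfers. Your reconstruction is correct and does take the intended route: the only facts it needs beyond the generic Conley-type machinery (trapping neighbourhoods, Hausdorff shrinking of nested compacta, maximal invariant subsets of attracting sets) are that all the sets involved sit in the weakly compact metrizable space $\mathcal{P}_\beta(\mathbb{R}^d;V)$ and that $\Phi$ is a weakly continuous semiflow there, and these are exactly what Propositions \ref{P_beta} and \ref{compactness} and the weak-continuity result of \S 6.3 supply. Your chain-transitivity alternative is also valid and is in fact closest to how \cite{dea} phrases these statements.

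Two small remarks on part 1. First, the injectivity of $\Phi_t$ is a legitimate but dispensable ingredient: setting $N':=N\cap L$ directly, for $t\ge t_0$ one has $\Phi_t(N')\subset\Phi_t(N)\cap\Phi_t(L)\subset\mathrm{int}(N)\cap L$, so $N'$ is already a trapping neighbourhood in $L$ for $\Phi|_L$; its maximal invariant subset $A'=\bigcap_{t\ge0}\Phi_t(N')$ is a nonempty (it contains $\omega_\Phi(\mu)$ for your chosen $\mu\in L\cap B(A)$) compact attractor of $\Phi|_L$, attractor-freeness forces $A'=L$, and $A'\subset\bigcap_t\Phi_t(N)=A$ already gives $L\subset A$ without ever equating $A'$ with $A\cap L$. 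Second, where you do invoke injectivity, the equality $\Phi_t(N\cap L)=\Phi_t(N)\cap L$ really uses injectivity on a set large enough to contain both $N$ and $L$, not merely on $L$ as your wording suggests; this is harmless here because the backward Gronwall estimate on the Lipschitz field $\Pi-\mathrm{Id}$ (Lemma \ref{Pibound}) gives injectivity on all of $\mathcal{P}_\beta(\mathbb{R}^d;V)$, but the phrasing understates what is actually needed.
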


We can now state and prove the following
\begin{theo}\label{thId}
Consider the self-interacting diffusion on $\mathbb{R}^2$ associated with $W(x,y) = -(x,y)$. Then we have two
different cases:
\begin{enumerate}
    \item If $\int_0^\infty \mathrm{d}\rho \gamma(\rho) \rho^2 \leq 1$, then a.s. $\mu_t \xrightarrow {(w)} \gamma$;
    \item If $\int_0^\infty \mathrm{d}\rho \gamma(\rho) \rho^2 > 1$, then there exists a random variable $v\in \mathbb{S}^1$ such that a.s. $\mu_t\xrightarrow {(w)} \mu_\infty^v$ with $$\mu_\infty^v(\mathrm{d}x) = \frac{e^{\alpha_1(x,v)}}{Z_1}\gamma(\mathrm{d}x),$$ where $Z_1$ is the normalization constant and $\alpha_1$ is the unique positive solution to the equation $J(\alpha) = -\alpha + 2\frac{H'(\alpha)}{H(\alpha)} =0$.
\end{enumerate}
\end{theo}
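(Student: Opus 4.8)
\emph{Proof proposal.} The plan is to reduce everything to the one-dimensional ODE \eqref{edobis} and then quote the structural results of Sections~3--6. Since $W(x,y)=-(x,y)$ is symmetric, Theorem~\ref{limitset} applies: $\mathbb{P}_{x,r,\mu}$-a.s. the set $\omega(\mu_t, t\geq 0)$ is a compact connected subset of the fixed points of $\Pi$. As $W*\mu(x)=-(x,\bar\mu)$, a fixed point $\mu=\Pi(\mu)$ is entirely determined by its mean through $\mu=\bar\Pi(\bar\mu)$, in the notation of \eqref{bar(Pi)}. By the rotation-invariance of $\gamma$ (Lemmas~\ref{egalites} and \ref{diff}) and the discussion preceding \eqref{edobis}, consider the map $G:\mathcal{P}_\beta(\mathbb{R}^2;V)\to\mathbb{R}_+$, $G(\mu):=2|\bar\mu|$. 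It is continuous for the weak topology (because $x\mapsto x_i\in\mathcal{C}^0(\mathbb{R}^d;V)$, using the growth condition on $V$), and it intertwines $\Phi$ with the one-dimensional semiflow $\psi$ generated by $\dot\alpha=J(\alpha)$: indeed the mean of $\Phi_t(\mu)$ solves \eqref{bar(flow)}, along which the direction of $\bar m$ is frozen ($\dot v=0$), so $\alpha=2|\bar m|$ evolves autonomously by \eqref{edobis}; in particular $G\circ\Phi_t=\psi_t\circ G$. The second part of Lemma~\ref{free} then yields that, $\mathbb{P}_{x,r,\mu}$-a.s., $G(\omega(\mu_t, t\geq 0))$ is an attractor-free set of $\psi$.

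Assume first $\int_0^\infty\gamma(\rho)\rho^2\,\mathrm{d}\rho\leq 1$. By Proposition~\ref{bifurcation}, $0$ is the unique equilibrium of $\psi$ and is globally asymptotically stable on $\mathbb{R}_+$; hence the only nonempty compact invariant set of $\psi$ is $\{0\}$, and a fortiori the only attractor-free one. Thus $G(\omega(\mu_t, t\geq 0))=\{0\}$, so every $\mu\in\omega(\mu_t, t\geq 0)$ has $\bar\mu=0$, and being a fixed point of $\Pi$ it equals $\bar\Pi(0)=\gamma$. Therefore $\omega(\mu_t, t\geq 0)=\{\gamma\}$ a.s., which by the a.s. tightness of $(\mu_t,t\geq 0)$ (Propositions~\ref{P_beta} and \ref{compactness}) is equivalent to $\mu_t\xrightarrow{(w)}\gamma$ a.s.

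Assume now $\int_0^\infty\gamma(\rho)\rho^2\,\mathrm{d}\rho>1$. Then Proposition~\ref{bifurcation} gives that $\psi$ has exactly two equilibria, the linearly unstable point $0$ (note $J'(0)=-1+\int_0^\infty\gamma(\rho)\rho^2\,\mathrm{d}\rho>0$) and the asymptotically stable point $\alpha_1$ with basin $(0,\infty)$; the compact invariant sets of $\psi$ are $\{0\}$, $\{\alpha_1\}$ and $[0,\alpha_1]$, of which only $\{0\}$ and $\{\alpha_1\}$ are attractor-free. Hence a.s. $G(\omega(\mu_t, t\geq 0))$ equals $\{0\}$ or $\{\alpha_1\}$, and the main obstacle is to exclude $\{0\}$. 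If $G(\omega(\mu_t, t\geq 0))=\{0\}$ then $\bar\mu_t\to 0$, i.e. $\mu_t\to\gamma$, and $\alpha_t:=2|\bar\mu_{h(t)}|$ converges to the linearly unstable equilibrium $0$ of $\psi$. But $t\mapsto\mu_{h(t)}$ is a genuine noisy asymptotic pseudotrajectory, and the quantitative bounds of Proposition~\ref{theorem}(i) exhibit a nondegenerate fluctuation transverse to the repeller together with a summable tail; the standard non-convergence-to-unstable-equilibria argument for asymptotic pseudotrajectories then forbids $\alpha_t\to 0$ a.s. Consequently $G(\omega(\mu_t, t\geq 0))=\{\alpha_1\}$ a.s.

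It remains to conclude in this case. Every $\mu\in\omega(\mu_t, t\geq 0)$ now has $|\bar\mu|=\alpha_1/2$, hence $\bar\mu=\tfrac{\alpha_1}{2}v$ for some $v=v(\mu)\in\mathbb{S}^1$, so $\mu=\bar\Pi(\tfrac{\alpha_1}{2}v)=\mu_\infty^v$; thus $\omega(\mu_t, t\geq 0)\subset\{\mu_\infty^v:v\in\mathbb{S}^1\}$, a circle of equilibria on which $\Phi$ acts trivially. To upgrade connectedness of $\omega(\mu_t, t\geq 0)$ to a single point, I would show that the angular coordinate $\theta_t$ of $\bar\mu_{h(t)}$ is a.s. Cauchy. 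Since $\alpha_1$ is a hyperbolic sink of $\psi$ ($J'(\alpha_1)<0$), the radial coordinate $\rho_t$ is bounded away from $0$ and $\bar\mu_{h(t)}$ reaches the circle at exponential speed; along the circle the deterministic drift vanishes, so the increments $\theta_{t+s}-\theta_t$ are governed, up to exponentially small terms, by the fluctuation family $\varepsilon_{t,t+\cdot}$ of Proposition~\ref{theorem}, whose $L^2$-estimates are summable along integer times (recall $\sum_n h(n)^{-1}<\infty$). Hence $\theta_t\to\theta_\infty$ a.s., and setting $v:=(\cos\theta_\infty,\sin\theta_\infty)$ gives $\mu_t\xrightarrow{(w)}\mu_\infty^v$ a.s. The two delicate points are therefore the exclusion of the unstable equilibrium $0$ and this final angular-convergence step; everything else is a direct application of Theorem~\ref{limitset}, Lemma~\ref{free} and Proposition~\ref{bifurcation}.
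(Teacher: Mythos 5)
Your plan is essentially the paper's: project to the mean, analyse the one-dimensional ODE \eqref{edobis} via Lemma~\ref{free} and Proposition~\ref{bifurcation}, exclude the unstable equilibrium, then identify the limit on the circle of equilibria. Your shortcut of invoking Theorem~\ref{limitset} up front (so that $\omega(\mu_t,t\geq 0)$ is already known to lie among fixed points of $\Pi$, each of which is $\bar\Pi(\bar\mu)$) is cleaner than the paper's route, which instead restricts $\Phi$ to the fibers $G^{-1}(0)$ and $G^{-1}(\alpha_1 v)$ and reapplies Lemma~\ref{free} there; both arguments are correct, and yours shaves off a step. The choice $G(\mu)=2|\bar\mu|$ in place of $G(\mu)=\bar\mu$ is also fine since, for $R=-Id$, the direction $v$ is frozen along $\bar\Phi$, so the radial part is autonomous. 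Your reading of the attractor-free sets of $\psi$ as exactly $\{0\}$ and $\{\alpha_1\}$ is correct, since $[0,\alpha_1]$ contains the proper attractor $\{\alpha_1\}$.

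There is, however, a genuine gap in your exclusion of the unstable equilibrium. You claim that ``the quantitative bounds of Proposition~\ref{theorem}(i) exhibit a nondegenerate fluctuation transverse to the repeller together with a summable tail'' and that this forbids $\alpha_t\to 0$. But Proposition~\ref{theorem}(i) only gives an \emph{upper} bound on the fluctuation $\varepsilon_{t,t+\cdot}$. To rule out almost sure convergence to a linearly unstable equilibrium one needs a \emph{lower} bound on the martingale noise near the repeller (a nondegeneracy assumption), which does not follow from Proposition~\ref{theorem} and must be established separately from the Brownian driving term and the structure of $\nabla Q_{\mu_t}f$. The paper handles this by writing $\partial_t\bar\mu_{h(t)}=\overline{\Pi}(\bar\mu_{h(t)})-\bar\mu_{h(t)}+\eta_t$ and invoking Tarr\`es's repulsive-traps result~\cite{pierre} for the finite-dimensional projection; you should cite (or reprove) such a non-convergence theorem explicitly rather than call it ``standard'' and misattribute the input to Proposition~\ref{theorem}(i). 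Your final angular-convergence step is sketched at about the same level of detail as the paper itself, which simply asserts that $\bar\mu_{h(t)}$ is Cauchy once the limit set lies in $A$ with $\bar\Phi_t|_A=Id$; your remark that the drift vanishes along the circle and the increments are controlled by the fluctuations, summable along integers via $\sum_n h(n)^{-1}<\infty$, is the right idea, though both you and the paper leave the quantitative book-keeping implicit.
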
 
\begin{proof}
Let $G: \mathcal{P}_\beta(\mathbb{R}^2;V) \rightarrow
\mathbb{R}^2$ be the mapping defined by $G(\mu) = \bar{\mu}$. By
Lemma \ref{free}, the limit set of $\bar{\mu}_t$ is a.s. an
attractor-free set of $\bar{\Phi}$. When $\int_0^\infty
\mathrm{d}\rho \gamma(\rho) \rho^2 \leq 1$, then 0 is a global
attractor for the dynamical system generated by $\bar{\Phi}$.
Therefore, each attractor-free set of $\bar{\Phi}$ reduces to 0.
So, a.s. $\bar{\mu}_t \xrightarrow {(w)} 0$ and $\omega(\mu_t, t\geq 0) \subset
G^{-1}(0)$. The definitions of $\bar{\Pi}(\bar{\mu})$ and $J$
imply that $G^{-1}(0)$ is invariant under the action of $\Phi$
and, as $\Pi(\Phi_t\big|_{G^{-1}(0)}(\mu))= \gamma$, we have
$$\Phi\big|_{G^{-1}(0)}(\mu) = e^{-t} (\mu-\gamma) + \gamma.$$
Therefore, $\gamma$ is a global attractor for
$\Phi\big|_{G^{-1}(0)}$. Lemma \ref{free} then implies that
each attractor-free set reduces to $\gamma$. By Theorem
\ref{free_set}, we conclude that $\omega(\mu_t, t\geq 0)=\gamma$.

Suppose now that 0 is unstable for $\overline{\Pi} - Id$. For all $f\in \mathcal{C}^\infty(\mathbb{R}^2;V)$, it holds
$$\frac{\mathrm{d}}{\mathrm{d}t}\mu_{h(t)}f = -\mu_{h(t)}f + \Pi(\mu_{h(t)})f + \frac{\mathrm{d}}{\mathrm{d}s} \varepsilon_{t,t+s}\big|_{s=0} f.$$ If we consider the projection map $P_i(x) = x_i$, then $\partial_t \bar{\mu}_{h(t)} =\overline{\Pi}(\bar{\mu}_{h(t)}) - \bar{\mu}_{h(t)}) + \eta_t$ where $\eta_t$ is the random vector $\eta_t = \frac{\mathrm{d}}{\mathrm{d}s} \varepsilon_{t,t+s}\big|_{s=0}(P_1, P_2)^T$. As 0 is an unstable linear equilibrium for $\overline{\Pi} - Id$, we apply the result of Tarr\`es (\cite{pierre}, part 3) to prove that $\mathbb{P}\left(\lim_{t\rightarrow \infty} \bar{\mu}_{h(t)} = 0\right) = 0.$ Thanks to Theorem \ref{PTA}, we obtain that 
$\underset{t\rightarrow \infty}{\lim} \underset{0\leq s\leq T}{\lim}|\bar{\mu}_{h(t+s)} -\bar{\Phi}_s(\bar{\mu}_{h(t)})| = 0$. We remind that
\begin{equation} \label{rho}
\dot{ \rho} = -\rho - \frac{H'(\alpha)}{H(\alpha)}
\end{equation}
and we denote by $\alpha_1$ the unique positive solution to $-\alpha + 2\frac{H'(\alpha)}{H(\alpha)} =0$. We introduce the invariant set (for $\bar{\Phi}$) $A:= \{m=\rho v; \rho = \alpha_1, v\in \mathbb{S}^1\}.$ As the limit set of $\bar{\mu}_{h(t)}$ is an attractor-free set by Lemma \ref{free}, the ODE (\ref{rho}) implies that $\omega(\bar{\mu}_{h(t)})$ either reduces to $\{0\}$, or is included in $A$. But as $\mathbb{P}\left(\underset{t\rightarrow \infty}{\lim} \bar{\mu}_{h(t)} = 0\right) = 0$, the limit set of $\bar{\mu}_{h(t)}$ is a.s. a subset of $A$. Moreover, as $\dot{v} =0$, we have $\bar{\Phi}_t\big|_A = Id\big|_A$. So, $\bar{\mu}_{h(t)}$ is a Cauchy sequence in $A$ and then there exists $v\in \mathbb{S}^1$ such that $$\underset{t\rightarrow \infty}{\lim}|\bar{\mu}_{h(t)}-\alpha_1 v|=0.$$ To conclude, we have on one side that the limit set of ($\mu_t$) is an attractor-free set for $\Phi\big|_{G^{-1}(\alpha_1 v)}$ and on the
other side, that the semiflow $\Phi\big|_{G^{-1}(\alpha_1 v)}$ admits $\mu_{\infty}^v$ as a global attractor. This leads to
$\omega(\mu_t, t\geq 0) = \mu_\infty^v$.
\end{proof}

\subsection{The case ``$R$ is a rotation"}
We assume that $R=R(\theta)$ is defined by $R=\left(\begin{array}{cc} \cos
\theta\, \, \sin\theta\\ -\sin\theta\, \cos\theta \end{array} \right)$, with $0\leq \theta< 2\pi$. We emphasize that (unless $\theta = 0,\pi$) $W$ is not a symmetric function.

\begin{theo}\label{generalth}
Consider the self-interacting diffusion on $\mathbb{R}^2$
associated with $W(x,y) =(x,Ry)$. Then one of the following
holds:
\begin{enumerate}
    \item If $V$ is such that $\int_0^\infty \mathrm{d}\rho \gamma(\rho) \rho^2 \cos (\theta) > -1$,
    then a.s. $\mu_t \xrightarrow {(w)} \gamma$;
    \item If $V$ is such that $\int_0^\infty \mathrm{d}\rho \gamma(\rho) \rho^2 \cos(\theta)\leq -1$, then we get two cases:

    a) if $\theta = \pi$ then there exists a random variable $v\in \mathbb{S}^1$ such that a.s. $\mu_t\xrightarrow {(w)} \mu_\infty^v$ with  $\mu_\infty^v(\mathrm{d}x) = \frac{e^{\alpha_1(x,v)}}{Z_1}\gamma(\mathrm{d}x),$ where $Z_1$ is the normalization constant and $\alpha_1$ is the unique positive solution to $-\alpha + 2\frac{H'(\alpha)}{H(\alpha)} =0$,

    b) if $\theta \neq \pi$, then $\omega(\mu_t, t\geq 0) = \{\nu(\delta), 0\leq \delta<2\pi\}$ a.s., where $\nu(\delta)=\frac{1}{e^{T_\theta}-1}\int_0^{T_\theta} e^s \mu_\infty^{v,\theta} \mathrm{d}s,$ with $T_\theta = 2\pi(\tan \theta)^{-1}$ and $\mu_\infty^{v,\theta}$ is the unique positive solution to $- \alpha + 2\cos\theta \frac{H'(\alpha)}{H(\alpha)} =0$.
\end{enumerate}
\end{theo}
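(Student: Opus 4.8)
The plan is to follow the strategy of Theorem \ref{thId}: project the dynamics onto the mean $\bar\mu_t:=\int_{\mathbb R^2}x\,\mu_t(\mathrm dx)$ and analyse the resulting two-dimensional ODE. Since $W(x,y)=(x,Ry)$ gives $W*\mu(x)=(x,R\bar\mu)$, one has $\Pi(\mu)=\bar\Pi(\bar\mu)$, so the map $G(\mu):=\bar\mu$ is weakly continuous (because $V(x)=a|x|^4+b|x|^2+1$ dominates the coordinate functions, hence $x_i\in\mathcal C^0(\mathbb R^2;V)$) and satisfies $G\circ\Phi_t=\bar\Phi_t\circ G$ for the finite-dimensional semiflow $\bar\Phi$ of \eqref{bar(flow)}. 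By Lemma \ref{free}, $\omega(\bar\mu_t,t\ge0)=G(\omega(\mu_t,t\ge0))$ is $\mathbb P_{x,r,\mu}$-a.s. an attractor-free set of $\bar\Phi$. Writing $m=\rho v$ with $v\in\mathbb S^1$, one computes from Lemma \ref{diff} (together with the rotation invariance of $\gamma$) that $\overline\Pi(\rho v)=-\tfrac{H'(\alpha)}{H(\alpha)}\,Rv$ with $\alpha=2\rho$; using $(Rv,v)=\cos\theta$ and $(Rv,v^\perp)=-\sin\theta$, the ODE $\dot m=\overline\Pi(m)-m$ decouples into the autonomous radial equation $\dot\alpha=J_\theta(\alpha):=-\alpha-2\cos\theta\,\tfrac{H'(\alpha)}{H(\alpha)}$ and the angular equation $\dot\psi=\tfrac{\sin\theta}{\rho}\,\tfrac{H'(\alpha)}{H(\alpha)}$.

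To analyse $J_\theta$, set $c:=-\cos\theta$ and note that $J_\theta(\alpha)=-(1-c)\alpha+c\,J(\alpha)$, where $J$ is the function of Proposition \ref{bifurcation}; the concavity of $J$ on $[0,\infty)$ together with the values $J(0)=0$, $J'(0)=-1+\int_0^\infty\gamma(\rho)\rho^2\,\mathrm d\rho$, $J'(\alpha)\to-1$ established there then give that $J_\theta$ is concave on $[0,\infty)$ with $J_\theta(0)=0$, $J_\theta'(0)=-1-\cos\theta\int_0^\infty\gamma(\rho)\rho^2\,\mathrm d\rho$ and $J_\theta'(\alpha)\to-1$ (when $\cos\theta\ge0$ this is immediate, since then $J_\theta(\alpha)<-\alpha<0$ for $\alpha>0$). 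Consequently, if $\int_0^\infty\gamma(\rho)\rho^2\cos\theta>-1$ then $J_\theta<0$ on $(0,\infty)$ and $\{0\}$ is a global attractor of $\bar\Phi$; whereas if $\int_0^\infty\gamma(\rho)\rho^2\cos\theta<-1$ then $0$ is unstable and $J_\theta$ has a unique positive zero $\alpha_1$, which is stable, so the circle $\mathcal C:=\{m:|m|=\alpha_1/2\}$ is the global attractor of $\bar\Phi$ on $\mathbb R^2\setminus\{0\}$, and on $\mathcal C$ the relation $\alpha_1=-2\cos\theta\,\tfrac{H'(\alpha_1)}{H(\alpha_1)}$ turns the angular equation into $\dot\psi\equiv-\tan\theta$, so $\mathcal C$ is a single periodic orbit of period $T_\theta=2\pi/|\tan\theta|$. (The boundary value $\int\gamma(\rho)\rho^2\cos\theta=-1$ is degenerate: $0$ is then only neutrally unstable but still globally attracting, so it belongs with case~(1).)

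In case~(1) the argument of Theorem \ref{thId} applies verbatim: $\omega(\bar\mu_t,t\ge0)=\{0\}$ a.s., so $\omega(\mu_t,t\ge0)\subset G^{-1}(0)$, on which $\Pi\equiv\gamma$ and $\Phi_t(\mu)=e^{-t}\mu+(1-e^{-t})\gamma$ admits $\gamma$ as global attractor; since $\omega(\mu_t,t\ge0)$ is attractor-free (Theorem \ref{free_set}), Lemma \ref{free} forces $\omega(\mu_t,t\ge0)=\{\gamma\}$. Case~(2a) is literally Theorem \ref{thId}, as $R(\pi)=-\mathrm{Id}$. For case~(2b), I would first note that the linearisation of $\dot m=\overline\Pi(m)-m$ at $0$ is $\dot m=(-\lambda_0 R-\mathrm{Id})m$ with $\lambda_0=\int_0^\infty\gamma(\rho)\rho^2\,\mathrm d\rho>0$, whose eigenvalues $-(1+\lambda_0\cos\theta)\pm i\lambda_0\sin\theta$ have strictly positive real part; writing, as in Theorem \ref{thId}, $\partial_t\bar\mu_{h(t)}=\overline\Pi(\bar\mu_{h(t)})-\bar\mu_{h(t)}+\eta_t$ with $\eta_t=\frac{\mathrm d}{\mathrm ds}\varepsilon_{t,t+s}\big|_{s=0}(P_1,P_2)^T$ and invoking Tarr\`es \cite{pierre}, one obtains $\mathbb P_{x,r,\mu}(\bar\mu_{h(t)}\to0)=0$. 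Hence $\omega(\bar\mu_t,t\ge0)$ is a nonempty connected attractor-free set of $\bar\Phi$ other than $\{0\}$; since $\mathcal C$ is an attractor of $\bar\Phi$ with basin $\mathbb R^2\setminus\{0\}$, Lemma \ref{free} gives $\omega(\bar\mu_t,t\ge0)\subset\mathcal C$, and invariance together with the fact that the flow on $\mathcal C$ is a single periodic orbit forces $\omega(\bar\mu_t,t\ge0)=\mathcal C$ a.s.; that is, $\bar\mu_t$ circles around $\mathcal C$.

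It remains to lift this to $\mu_t$. The fibre $G^{-1}(\mathcal C)$ is invariant under $\Phi$ (because $\mathcal C$ is invariant under $\bar\Phi$), and on it $\Pi(\mu)=\mu_\infty^{v,\theta}$ depends on $\mu$ only through $v=\bar\mu/|\bar\mu|$; feeding this into $\Phi_t(\mu)=e^{-t}\mu+e^{-t}\int_0^t e^s\Pi(\Phi_s(\mu))\,\mathrm ds$ and using that the phase $v(s)$ rotates at the constant speed $-\tan\theta$, the variation-of-constants formula shows that the part of $\Phi_t(\mu)$ transverse to the periodic orbit decays like $e^{-t}$, so every orbit in $G^{-1}(\mathcal C)$ converges to the periodic orbit $\{\nu(\delta),\,0\le\delta<2\pi\}$ with $\nu(\delta)=\tfrac{1}{e^{T_\theta}-1}\int_0^{T_\theta}e^s\mu_\infty^{v(s+\delta),\theta}\,\mathrm ds$; thus this orbit is a global attractor for $\Phi|_{G^{-1}(\mathcal C)}$. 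Since $\omega(\mu_t,t\ge0)\subset G^{-1}(\mathcal C)$ (as $G(\omega(\mu_t,t\ge0))=\mathcal C$) and is attractor-free for $\Phi$, hence for $\Phi|_{G^{-1}(\mathcal C)}$, Lemma \ref{free} together with the minimality of the periodic orbit gives $\omega(\mu_t,t\ge0)=\{\nu(\delta),\,0\le\delta<2\pi\}$ a.s. The steps I expect to be delicate are the angular computation yielding the exact constant rotation speed $-\tan\theta$ on $\mathcal C$ (which combines the radial equilibrium relation with the identity $H'(\alpha)=\alpha\tilde H(\alpha)$, obtained by integrating $\frac{\mathrm d}{\mathrm dv}(\sin v\,e^{-\alpha\rho\cos v})$ over $[0,2\pi]$), and the lifting argument on the non-compact infinite-dimensional fibre $G^{-1}(\mathcal C)$, where the variation-of-constants estimate must be made rigorous using the uniform bounds of Section~5 on $\Pi$ over $\mathcal P_\beta(\mathbb R^d;V)$ and the attracting periodic orbit identified explicitly.
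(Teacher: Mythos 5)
Your proposal is correct and follows essentially the same strategy as the paper: project onto the mean via $G(\mu)=\bar\mu$, use Lemma \ref{free} to transfer attractor-freeness, decouple the two-dimensional ODE into the radial equation $\dot\alpha=J_\theta(\alpha)$ and an angular equation, and read off the bifurcation from Proposition \ref{bifurcation}. Your identity $J_\theta(\alpha)=-(1-c)\alpha+cJ(\alpha)$ with $c=-\cos\theta$ is a neat way to inherit the concavity and limiting slope of $J$ in one line (for $c\ge0$, the case $\cos\theta>0$ being trivial, as you note), rather than re-running the kurtosis computation. The one place where you diverge in presentation is the final lift in case (2b): the paper sets up the auxiliary coupled system $\dot m=\overline\Pi(m)-m$, $\dot\nu=\bar\Pi(m)-\nu$ on $\mathcal M(\mathbb R^2;V)\times\mathbb R^2$, shows $\omega(\mu_t)\times\tilde A$ is attractor-free for that product semiflow, and then cites \cite{beLR} Theorem 4.11 to conclude; you instead argue directly on the fiber $G^{-1}(\mathcal C)$, using the explicit flow formula $\Phi_t(\mu)=e^{-t}\mu+e^{-t}\int_0^t e^s\mu_\infty^{v(s),\theta}\,\mathrm ds$ to exhibit the periodic orbit $\{\nu(\delta)\}$ as the global attractor of $\Phi|_{G^{-1}(\mathcal C)}$ and then invoke Lemma \ref{free}. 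These are the same argument --- the coupled system is precisely the bookkeeping device that turns your variation-of-constants estimate into a product-flow statement --- so you have simply unfolded the black box that the paper delegates to \cite{beLR}. Two minor remarks: your angular speed is $-\tan\theta$ on $\mathcal C$ (obtained from the radial equilibrium $\alpha_1=-2\cos\theta\,H'(\alpha_1)/H(\alpha_1)$), whereas the paper writes $+\tan\theta$; your sign is the consistent one and the period $T_\theta$ is unaffected. And you are right to flag the boundary case $\int_0^\infty\gamma(\rho)\rho^2\cos\theta=-1$: the theorem as stated places it in case (2), which at $\theta=\pi$ is inconsistent with Theorem \ref{thId}; this is a defect of the statement, not of your argument.
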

\begin{proof}
Let $v = gp$ with $g\in O(2)$ and $m = \alpha v/2$. We remind the equations
\begin{eqnarray*}
\dot{\alpha} = -\alpha - 2\frac{H'(\alpha)}{H(\alpha)}(Rv,v);\,\,
\dot{v} = -\frac{2}{\alpha} \frac{H'(\alpha)}{H(\alpha)}\left( (Rv,v)v - Rv\right).
\end{eqnarray*}
But, by definition of $R$ and $v= (\cos\sigma,\sin\sigma)^T$, a simple computation yields to
$$(Rv,v)v - Rv = \left(\begin{array}{c}  -\sin\theta\sin\sigma \\ \sin\theta \cos\sigma
\end{array}\right).$$ We finally get after some easy calculations
\begin{eqnarray}\label{dyn2}
\left\{%
\begin{array}{ll}
    \dot{ \alpha} \, =\, -\alpha -\frac{2H'(\alpha)}{H(\alpha)}\cos\theta;\\
    \dot{ \sigma} \, =\, \frac{2H'(\alpha)}{\alpha H(\alpha)} \sin\theta. \\
\end{array}%
\right.
\end{eqnarray}
We recall that $\frac{H'(\alpha)}{H(\alpha)}>0$ for $\alpha>0$. By Proposition \ref{bifurcation}, we have a bifurcation at $\cos \theta \int_0^\infty\gamma(\mathrm{d}\rho)\rho^2 = 1$. More precisely, if $\cos \theta \int_0^\infty \gamma(\mathrm{d}\rho)\rho^2 \geq 1$, then the set $\{(\sigma,\alpha); \alpha=0\}$ is a global attracting set for Equation (\ref{dyn2}) and so a.s. $\mu_t\xrightarrow {(w)} \gamma$. If $\cos \theta \int_0^\infty \gamma(\mathrm{d}\rho)\rho^2 < 1$, then $\{(\sigma,\alpha); \alpha = \alpha_1(\cos\theta)\}$ is a global attracting set. On this set, the dynamics is given by $$\dot{\sigma} = \frac{2H'(\alpha_1(\cos\theta))}{\alpha_1(\cos\theta)    H(\alpha_1(\cos\theta))} \sin\theta = \tan \theta.$$ By Theorem \ref{thId}, we show that there exists a random variable $\sigma_0$ such that a.s.
\begin{equation}\label{egal}
\underset{t\rightarrow \infty}{\lim} 
\left|\bar{\mu}_{h(t)} -\frac{\alpha_1(\cos\theta)}{2} v(t\tan\theta + \sigma_0) \right|=0.
\end{equation}
At that point, we know the dynamics on the set $\tilde{A} := \{(\sigma,\alpha); \alpha =\alpha_1(\cos\theta)\}$. Unfortunately, we need more to conclude: we have to study the coupled system defined on
$\mathcal{M}(\mathbb{R}^2;V)\times \mathbb{R}^2$ by
\begin{equation}\label{dyn3}
\left\{%
\begin{array}{ll}
 \dot{ m} = -m + \bar{\Pi}(m);\\
   \dot{ \nu} = -\nu + \bar{\Pi}(m). \\
\end{array}%
\right.
\end{equation}
By Lemma \ref{free}, $\omega(\mu_t, t\geq 0)\times \tilde{A}$ is an
attractor-free set for the preceding semiflow restricted to $\mathcal{P}(\mathbb{R}^2;V)\times\mathbb{R}^2$. The
dynamics on $\omega(\mu_t, t\geq 0)\times \tilde{A}$ is given by
\begin{equation}\label{dyn4}
\left\{%
\begin{array}{ll}
   \dot{ \sigma} = \tan \theta;\\
   \dot{ \nu} = -\nu + f(\sigma) = -\nu + \mu_\infty^{v,\cos\theta}. \\
\end{array}%
\right.
\end{equation}
As the set $\omega(\mu_t, t\geq 0)\times \tilde{A}$ is compact (for the weak topology) and invariant in $\mathcal{P}(\mathbb{R}^2;V)\times \mathbb{R}^2$, we concude by following the lines of \cite{beLR} (theorem 4.11).
\end{proof}

\medskip

\noindent Aline \textsc{Kurtzmann},
Oxford University, Mathematical Institute, 24-29 St Giles', Oxford, OX1 3LB, United Kingdom.
\textit{kurtzmann@maths.ox.ac.uk}
\end{document}